\pgfplotsset{compat=1.15}
\theoremstyle{plain}
\newtheorem{theorem}{Theorem}[section]
\newtheorem{lemma}[theorem]{Lemma}
\newtheorem{corollary}[theorem]{Corollary}
\newtheorem{proposition}[theorem]{Proposition}
\theoremstyle{remark}
\newtheorem{definition}[theorem]{Definition}
\newtheorem{remark}[theorem]{Remark}
\def\ed{\stackrel{d}{=}}
\def\convd{\stackrel{d}{\longrightarrow}}
\def\E{ {\mathbb E }}
\def\P{ {\mathbb P }}
\DeclareMathOperator{\Ima}{Im}
\newcommand{\diam}{\operatorname{diam}}
\newcommand{\rG}{\mathrm{G}}
\newcommand{\ftau}{\boldsymbol{\tau}}
\newcommand{\dd}{{\rm d}}
\newcommand{\un}{{\bf 1}}
\newcommand{\I}[1]{\boldsymbol{1}_{\{#1\}}}
\newcommand{\cq}{$\hfill \square$}
\newcommand{\dl}[1][]{d^{\mathsf{L}}_{#1}}
\newcommand{\dlt}[1][]{\widetilde{d}^{\mathsf{L}}_{#1}}
\newcommand{\dtree}[1][]{d^{\mathsf{T}}_{#1}}
\newcommand{\dv}[1][]{d^{\mathsf{V}}_{#1}}
\newcommand{\dvt}[1][]{\widetilde{d}^{\mathsf{V}}_{#1}}
\newcommand{\tree}[1][]{\mathscr{T}_{#1}}
\newcommand{\lop}[1][]{\mathscr{L}_{#1}}
\newcommand{\vern}[1][]{\mathscr{V}_{#1}}
\newcommand{\lopt}[1][]{\widetilde{\mathscr{L}}_{#1}}
\newcommand{\vernt}[1][]{\widetilde{\mathscr{V}}_{#1}}
\begin{document}


\definecolor{qqqqff}{rgb}{0,0,1} 
\definecolor{qqffqq}{rgb}{0,0.6,0}
\definecolor{qqffff}{rgb}{0,1,1}
\definecolor{ffdxqq}{rgb}{1,0.8431372549019608,0}
\definecolor{ffqqqq}{rgb}{1,0,0}
\definecolor{ffqqff}{rgb}{1,0,1}
\definecolor{xdxdff}{rgb}{0.49019607843137253,0.49019607843137253,1}
\definecolor{ududff}{rgb}{0.30196078431372547,0.30196078431372547,1}
\definecolor{ffudud}{rgb}{1,0.30196078431372547,0.30196078431372547}
\definecolor{mystery}{rgb}
{0.7,0.7,0}

\begin{frontmatter}

\title{Convergences of looptrees coded by excursions}
\runtitle{Convergences of looptrees coded by excursions}

\begin{aug}
\orcid{0009-0007-6561-2939}               
\author[A]{\inits{RK}\fnms{Robin}~\snm{Khanfir}\ead[label=e1]{robin.khanfir@mcgill.ca}}
\address[A]{Department of Mathematics and Statistics, McGill University, 845 Rue Sherbrooke O, Montreal, Quebec, Canada\printead[presep={,\ }]{e1}}

\end{aug}

\begin{abstract}
In order to study convergences of looptrees, we construct continuum trees and looptrees from real-valued càdlàg functions without negative jumps called excursions. We then provide a toolbox to manipulate the two resulting codings of metric spaces by excursions and we formalize the principle that jumps correspond to loops and that continuous growths correspond to branches. Combining these codings creates new metric spaces from excursions that we call vernation trees. They consist of a collection of loops and trees glued along a tree structure so that they unify trees and looptrees. We also propose a topological definition for vernation trees, which yields what we argue to be the right space to study convergences of looptrees. However, those first codings lack some functional continuity, so we adjust them. We thus obtain several limit theorems. Finally, we present some probabilistic applications, such as proving an invariance principle for random discrete looptrees.
\end{abstract}

\begin{abstract}[language=french]
Afin d'étudier les convergences d'arbres à boucles, on construit des arbres et des arbres à boucles continus à partir d'excursions, c'est-à-dire des fonctions à valeurs réelles, càdlàg, et dont les sauts sont positifs. On fournit plusieurs outils pour manipuler au mieux les deux codages résultants d'espaces métriques par des excursions. Ceux-ci sont régis par le principe que les boucles correspondent aux sauts de l'excursion et que les branches correspondent aux portions de croissance continue. En combinant les deux codages, on définit de nouveaux espaces métriques, généralisant à la fois arbres et arbres à boucles car formés par une famille de boucles et de branches greffées entre elles selon une structure arborescente, que l'on nomme arbres à vernation. On propose aussi une caractérisation topologique des limites des arbres à vernation. Cependant, nos codages se comportent mal avec la convergence fonctionnelle des excursions codantes, donc on les ajuste pour pouvoir répondre à notre objectif initial. Enfin, on donne quelques applications probabilistes, comme la preuve d'un principe d'invariance pour des arbres à boucles discrets aléatoires.
\end{abstract}

\begin{keyword}[class=MSC]
\kwd[Primary ]{60F17}
\kwd{54C30}
\kwd{54E70}
\kwd[; secondary ]{05C05}
\kwd{54F50}
\end{keyword}

\begin{keyword}
\kwd{Looptree}
\kwd{Tree}
\kwd{Coding by real-valued functions}
\kwd{Limit theorem}
\kwd{Scaling limit}
\kwd{Random metric space}
\kwd{Geodesic space}
\end{keyword}

\end{frontmatter}


\section{Introduction and presentation of the results}
\label{intro}

\subsection{Background and motivations}

Informally, a looptree consists of a collection of loops tangently glued together along some genealogical structure. Perhaps more clearly, one can associate a looptree with a finite ordered tree by replacing each vertex with a circle of length proportional to its degree and each edge with a unique point of tangency between the corresponding circles, thereby preserving the ordered tree structure. An example will be given in Figure~\ref{discrete_looptree_associated} in Section~\ref{subsection_discrete_loop}. Ever since their introduction by Curien \& Kortchemski~\cite{curien2014}, looptrees have generated a growing interest thanks to their natural appearances in some random geometric models and because they can provide useful tools or insights to study other objects. Let us mention some of them. To begin with, when a sequence of trees admits a scaling limit and if the maximum degree becomes negligible against the height, then the scaling limit may be a continuum tree whose all branch points are of infinite degree. In this situation, it is then difficult to recover the limiting joint distribution of the scaled degrees from that object. However, a scaling limit of the associated looptrees would not only give such information (merely via the lengths of the loops) but also the whole asymptotic degree structure. This is exactly what happens for large critical Galton--Watson trees whose offspring distribution belongs to the domain of attraction of an $\alpha$-stable law, with $\alpha\in(1,2)$. While Duquesne~\cite{duquesne_contour_stable} showed that these random trees converge, after suitable scaling, towards the $\alpha$-stable tree introduced in \cite{legall1998,levytree_DLG}, Curien \& Kortchemski proved that the rescaled associated looptrees converge towards a random compact metric space they called the $\alpha$-stable looptree. There are even more striking cases where the associated looptrees admit a non-degenerate scaling limit but the trees themselves do not, so they are an interesting means to give sense to scaling limits of highly dense trees. An instance of such a situation is studied by Curien, Duquesne, Kortchemski \& Manolescu~\cite{loopLPAM} with the model of random trees built by linear preferential attachment, which was introduced and popularized in \cite{Szymanski1987,barabasi,bollobas}. The scaling limit for this model is called the Brownian looptree and is distinct from the stable looptrees.

In addition to their help in studying trees, looptrees spontaneously appear as scaling limits of other random structures. For example, random Boltzmann dissections of a regular polygon, that were introduced in \cite{lamination}, have a similar geometry as looptrees in some regimes. Indeed, Curien \& Kortchemski~\cite{curien2014} proved that their scaling limit as the number of sides of the polygon tends to infinity is the $\alpha$-stable looptree, depending on the regime. Let us mention links between looptrees and dissections were also detected in \cite{haas}. Several instances of looptrees can be found within random planar maps too. The goal of this active area of research is to understand the universal large-scale properties of graphs or their proper embeddings in the two-dimensional sphere. We refer to \cite{abraham_surveymaps,miermont_surveymaps} for surveys of this field. Curien \& Kortchemski~\cite{perco_curien} considered boundaries of percolation clusters on the uniform infinite planar triangulation introduced in \cite{UIPT}. These boundaries are almost looptrees and the authors showed that in the critical case of the percolation, their scaling limit is the $3/2$-stable looptree. Similarly, Kortchemski \& Richier~\cite{KR2020,richier18} discussed asymptotics for the boundaries of Boltzmann planar maps conditioned on having a large perimeter. Once again, stable looptrees arise as scaling limits in some regimes. 

Nevertheless, the most important application of looptrees may be their substantial connections with the scaling limits of random planar maps. The bijections of Bouttier, Di Francesco \& Guitter \cite{BDFG} and of Janson \& Stef\'{a}nsson \cite{stefansson} yield a one-to-one correspondence between planar maps and some labeled trees (where each vertex is equipped with an integer), which can be reformulated into a bijection between planar maps and some labeled looptrees. Looptrees seem to be the right objects to consider here because the scaling limits of random planar maps with large faces introduced by Le Gall \& Miermont~\cite{LGM11} are implicitly constructed from a Gaussian field on stable looptrees. Furthermore, Marzouk~\cite{marzouk19} obtained limit theorems for planar maps without a strong control on the corresponding trees, but by observing their Lukasiewicz walks instead. As we will see in Section~\ref{subsection_discrete_loop}, the Lukasiewicz walk directly codes the geometry of the associated looptree. For more detailed discussions on the relations between maps and looptrees, we refer to Marzouk~\cite{marzouk_similaire}.
\medskip

The purpose of the present work is to build a framework and to provide a toolbox to construct, manipulate, and demonstrate convergences involving looptrees under the most general setting possible. The first issue is how to define compact continuum fractal looptrees that may appear as scaling limits. Although compact continuum fractal trees have been well-known since the introduction of the Brownian Continuum Random Tree (CRT) by Aldous~\cite{aldousI}, mimicking the discrete setting to define continuum looptrees associated with them is not so easy. On the one hand, it could be not clear how to choose the lengths of the loops while ensuring the compactness of the resulting space. On the other hand, when the branch points are dense in the tree, it might be possible that two different cycles in the associated looptree would never share a common point. It is indeed what happens for the $\alpha$-stable looptree $\mathscr{L}_\alpha$ of Curien \& Kortchemski~\cite{curien2014}, with any $\alpha\in(1,2)$. They avoided the above difficulties by building directly $\mathscr{L}_\alpha$ from the excursion $X^{\mathsf{exc},(\alpha)}$ of an $\alpha$-stable spectrally positive Lévy process. This process also codes the $\alpha$-stable tree, and Curien \& Kortchemski justified that $\lop[\alpha]$ can be interpreted as its associated looptree.

The idea to encode metric spaces by real-valued functions is in line with the founding work of Le Gall~\cite{legall_trees} which studies and constructs compact real trees from continuous excursions. Namely, if $f:[0,1]\longrightarrow [0,\infty)$ is continuous with $f(0)=f(1)=0$ then setting
\begin{equation}
\label{classical_tree_distance}
\forall 0\leq s\leq t\leq 1,\quad d_f(t,s)=d_f(s,t)=f(s)+f(t)-2\inf_{[s,t]}f
\end{equation}
defines a pseudo-distance on $[0,1]$, which in turn induces a compact real tree by quotient. See \cite{legall_trees,duquesne_coding} for an extensive study of this coding. The definition of real trees will be reminded in Section~\ref{topo} and the quotient metric space induced by a pseudo-distance is defined in Section~\ref{space}. Similarly, the $\alpha$-stable looptree is induced by another pseudo-distance inherited from $X^{\mathsf{exc},(\alpha)}$. This construction of Curien \& Kortchemski~\cite{curien2014} can be straightforwardly adapted to build a pseudo-distance $\dl[f]$ from a càdlàg function $f$ without negative jumps, called an excursion. We denote by $\lop[f]$ and we call the looptree coded by $f$ the quotient metric space induced by $\dl[f]$. Without immediately giving the definition of $\dl[f]$, let us already state that each jump of $f$ of height $\Delta$ corresponds to a circle of length $\Delta$ in $\lop[f]$, that those circles are dense in $\lop[f]$, and that if $g$ is a 'sub-excursion' of $f$ then $\lop[g]$ is a subset of $\lop[f]$. This method has many benefits. While it allows an automatic construction of a large diversity of complex looptrees, it is also fairly simple to find a function coding a given discrete looptree. This kind of behavior is especially useful for tackling limit problems.

However, $\dl$ alone is not enough to understand all convergences of looptrees. Indeed, a looptree consisting of a chain of $n$ loops of lengths $1/n$ put back to back is asymptotically close to a segment that has no loops. It is not an unusual situation, as \cite{haas}, \cite{curien2014}, \cite{perco_curien}, and \cite{KR2020} provide as many examples of looptrees that converge towards loopless compact real trees. In particular, it ensures that $\dl$ lacks the key property of being continuous with respect to the coding function. Furthermore, the coding brought by $\dl$ does not fully exploit the diversity of excursions because it only cares about the jumps: if $f$ is continuous, then $\dl[f]=0$ and the looptree coded by $f$ is reduced to a single point. In fact, $\dl$ disregards continuous growth that would classically code a real tree via (\ref{classical_tree_distance}). In contrast, applying (\ref{classical_tree_distance}) to a discontinuous excursion would still give a tree but without distinguishing between jumps and continuous growth. These observations motivate us to define a new pseudo-distance $\dtree[f]$ that induces a real tree but that only harnesses the continuous growth of $f$. Informally, $\dtree[f]$ is obtained by stripping $f$ of its jumps and then by using (\ref{classical_tree_distance}).

A linear combination of $\dtree[f]$ and $\dl[f]$ gives a pseudo-distance that induces a new hybrid metric space, denoted by $\vern[f]$, composed of loops and trees tangently glued together along some genealogical structure. More precisely, we choose $\dv[f]=\dtree[f]+2\dl[f]$ because the diameter of a metric circle is half of its length. We call $\vern[f]$ the \emph{vernation tree} coded by $f$. The name vernation tree was chosen to distinguish this new object from the looptree $\lop[f]$. It refers to the arrangement of bud scales or young leaves in a leaf bud before it opens. Figure~\ref{collection} presents some examples and non-examples of vernation trees. Let us already mention that independently, Blanc-Renaudie~\cite{blanc-renaudie} and Marzouk~\cite{marzouk_similaire} construct the same kind of spaces as part of their studies of scaling limits of models with prescribed degrees. While Blanc-Renaudie took the point of view of stick-breaking constructions and managed to mimic the association of a looptree with a discrete tree, Marzouk found the same coding by excursions as us and also identified the decomposition into a pure jump part and a continuous part. However, our methods to study convergences are completely different and we truly believe both should be useful according to context. Let us now describe our codings more precisely.

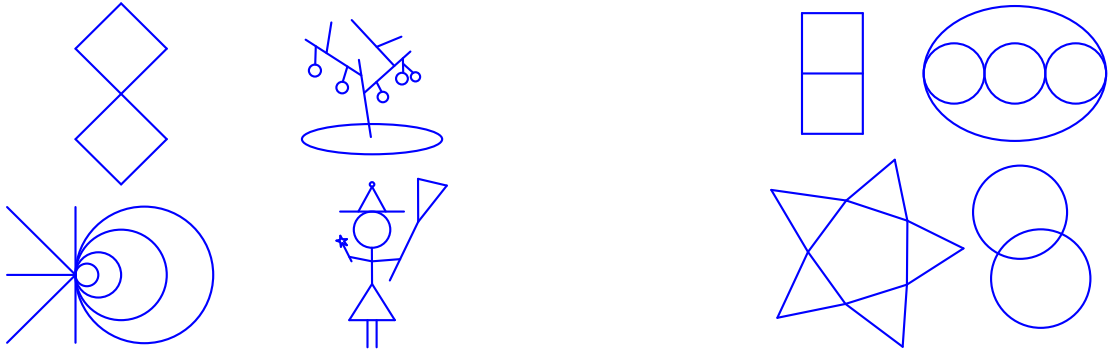
\begin{figure}
\begin{center}
\begin{subfigure}{0.4\textwidth}
\begin{tikzpicture}[line cap=round,line join=round,>=triangle 45,x=1cm,y=1cm,scale=0.6]
\clip(-2.089510074745934,-2.002876489918182) rectangle (8.15112747699881,6.292448935223836);
\draw [line width=0.8pt,color=qqqqff] (0.5,0) circle (0.5cm);
\draw [line width=0.8pt,color=qqqqff] (1,0) circle (1cm);
\draw [line width=0.8pt,color=qqqqff] (1.5091945987813442,0) circle (1.5091945987813442cm);
\draw [line width=0.8pt,color=qqqqff] (0.25,0) circle (0.25cm);
\draw [line width=0.8pt,color=qqqqff] (0,1.5)-- (0,-1.5);
\draw [line width=0.8pt,color=qqqqff] (0,0)-- (-1.5,0);
\draw [line width=0.8pt,color=qqqqff] (-1.5,1.5)-- (0,0);
\draw [line width=0.8pt,color=qqqqff] (0,0)-- (-1.5,-1.5);
\draw [rotate around={0:(6.5,3)},line width=0.8pt,color=qqqqff] (6.5,3) ellipse (1.536800161822768cm and 0.3342973786592923cm);
\draw [line width=0.8pt,color=qqqqff] (6.474546526743044,3.0468898303959677)-- (6.212063998583023,4.753026263436108);
\draw [line width=0.8pt,color=qqqqff] (6.324946658092304,4.0192889766257816)-- (7.340738869671115,4.9367640331481235);
\draw [line width=0.8pt,color=qqqqff] (6.98891195716452,4.618989971919239)-- (6.05457448168701,5.63234273277218);
\draw [line width=0.8pt,color=qqqqff] (6.265167400233894,4.407854152705443)-- (5.044016748270928,5.199246561308144);
\draw [line width=0.8pt,color=qqqqff] (5.502010324575042,4.902434174651965)-- (5.608354183814974,5.579846227140176);
\draw [line width=0.8pt,color=qqqqff] (6.601158329416467,5.039535245465522)-- (7.143876973551099,5.264867193348151);
\draw [line width=0.8pt,color=qqqqff] (5.267023825205505,5.054722126532627)-- (5.254002770798945,4.648033252172099);
\draw [line width=0.8pt,color=qqqqff] (7.175667075462864,4.78766931476466)-- (7.183249352775102,4.464295482460084);
\draw [line width=0.8pt,color=qqqqff] (5.96000390087916,4.605621787995783)-- (5.857712585566994,4.267433586340068);
\draw [line width=0.8pt,color=qqqqff] (6.597216229191291,4.265205943979864)-- (6.710780802087063,4.044323437404049);
\draw [line width=0.8pt,color=qqqqff] (5.248009637127754,4.5159147275574245) circle (0.1322543844171494cm);
\draw [line width=0.8pt,color=qqqqff] (5.8460129238969305,4.140309357355298) circle (0.12766147295941013cm);
\draw [line width=0.8pt,color=qqqqff] (7.178450487302816,4.668960579050869)-- (7.4027983398662744,4.466492968320303);
\draw [line width=0.8pt,color=qqqqff] (7.155689543680665,4.337996394303786) circle (0.12927104372764345cm);
\draw [line width=0.8pt,color=qqqqff] (7.452220099103397,4.377533801693483) circle (0.10176563080446378cm);
\draw [line width=0.8pt,color=qqqqff] (6.737590831207744,3.9317268275128194) circle (0.11574443494375693cm);
\draw [line width=0.8pt,color=qqqqff] (6.5,2) circle (0.05cm);
\draw [line width=0.8pt,color=qqqqff] (6.5,1) circle (0.4cm);
\draw [line width=0.8pt,color=qqqqff] (6.2,1.4)-- (6.5,1.95);
\draw [line width=0.8pt,color=qqqqff] (6.8,1.4)-- (6.5,1.95);
\draw [line width=0.8pt,color=qqqqff] (5.8,1.4)-- (7.2,1.4);
\draw [line width=0.8pt,color=qqqqff] (6.496197401372517,0.600018075103789)-- (6.5,-0.2);
\draw [line width=0.8pt,color=qqqqff] (6,-1)-- (6.5,-0.2);
\draw [line width=0.8pt,color=qqqqff] (6.5,-0.2)-- (7,-1);
\draw [line width=0.8pt,color=qqqqff] (6,-1)-- (7,-1);
\draw [line width=0.8pt,color=qqqqff] (6.4,-1)-- (6.4,-1.6);
\draw [line width=0.8pt,color=qqqqff] (6.6,-1)-- (6.6,-1.6);
\draw [line width=0.8pt,color=qqqqff] (6.4976234832448645,0.29998870408100486)-- (6,0.4);
\draw [line width=0.8pt,color=qqqqff] (6.05,0.3)-- (5.85,0.7);
\draw [line width=0.8pt,color=qqqqff] (6.4976234832448645,0.29998870408100486)-- (7.115614979076993,0.3485656359143744);
\draw [line width=0.8pt,color=qqqqff] (6.88931860611223,-0.1233811005228917)-- (7.115614979076993,0.3485656359143744);
\draw [line width=0.8pt,color=qqqqff] (7.115614979076993,0.3485656359143744)-- (7.51116499477031,1.1734950705246188);
\draw [line width=0.8pt,color=qqqqff] (5.85,0.7)-- (5.817221688752327,0.6219348017540037);
\draw [line width=0.8pt,color=qqqqff] (5.85,0.7)-- (5.948235085801955,0.6562478343146206);
\draw [line width=0.8pt,color=qqqqff] (5.948235085801955,0.6562478343146206)-- (5.88740743717177,0.7435900990143727);
\draw [line width=0.8pt,color=qqqqff] (5.817221688752327,0.6219348017540037)-- (5.801624855770229,0.7248738994358545);
\draw [line width=0.8pt,color=qqqqff] (5.801624855770229,0.7248738994358545)-- (5.718961640965107,0.7576272486982615);
\draw [line width=0.8pt,color=qqqqff] (5.718961640965107,0.7576272486982615)-- (5.799309147272558,0.7766976444268715);
\draw [line width=0.8pt,color=qqqqff] (5.799309147272558,0.7766976444268715)-- (5.804212928141515,0.8659464562418926);
\draw [line width=0.8pt,color=qqqqff] (5.804212928141515,0.8659464562418926)-- (5.8493277121359215,0.7904282308599516);
\draw [line width=0.8pt,color=qqqqff] (5.8493277121359215,0.7904282308599516)-- (5.949364841862647,0.7904282308599516);
\draw [line width=0.8pt,color=qqqqff] (5.949364841862647,0.7904282308599516)-- (5.88740743717177,0.7435900990143727);
\draw [line width=0.8pt,color=qqqqff] (1,4)-- (0,3);
\draw [line width=0.8pt,color=qqqqff] (0,3)-- (1,2);
\draw [line width=0.8pt,color=qqqqff] (1,4)-- (2,3);
\draw [line width=0.8pt,color=qqqqff] (1,4)-- (0,5);
\draw [line width=0.8pt,color=qqqqff] (0,5)-- (1,6);
\draw [line width=0.8pt,color=qqqqff] (1,6)-- (2,5);
\draw [line width=0.8pt,color=qqqqff] (2,5)-- (1,4);
\draw [line width=0.8pt,color=qqqqff] (2,3)-- (1,2);
\draw [line width=0.8pt,color=qqqqff] (7.51116499477031,1.1734950705246188)-- (7.508161941815038,2.1233465036423635);
\draw [line width=0.8pt,color=qqqqff] (7.51116499477031,1.1734950705246188)-- (8.138707295804616,1.9741049997395035);
\draw [line width=0.8pt,color=qqqqff] (7.508161941815038,2.1233465036423635)-- (8.138707295804616,1.9741049997395035);
\end{tikzpicture}
\end{subfigure}
\hfill
\begin{subfigure}{0.4\textwidth}
\begin{tikzpicture}[line cap=round,line join=round,>=triangle 45,x=1cm,y=1cm,scale=0.4]
\clip(-2.990171685705195,-7.443217522374598) rectangle (10.532308911918086,4.5165045313244345);
\draw [line width=0.8pt,color=qqqqff] (0,0)-- (2,0);
\draw [line width=0.8pt,color=qqqqff] (2,0)-- (2,4);
\draw [line width=0.8pt,color=qqqqff] (2,4)-- (0,4);
\draw [line width=0.8pt,color=qqqqff] (0,4)-- (0,0);
\draw [line width=0.8pt,color=qqqqff] (0,2)-- (2,2);
\draw [line width=0.8pt,color=qqqqff] (7.17,-2.6) circle (1.5455743269089322cm);
\draw [line width=0.8pt,color=qqqqff] (7.85,-4.8) circle (1.6330646037435268cm);
\draw [line width=0.8pt,color=qqqqff] (0.1971095009337085,-3.9166367134837556)-- (1.43,-5.64);
\draw [line width=0.8pt,color=qqqqff] (1.43,-5.64)-- (3.45,-5);
\draw [line width=0.8pt,color=qqqqff] (3.45,-5)-- (3.4655381582084956,-2.881094960683824);
\draw [line width=0.8pt,color=qqqqff] (3.4655381582084956,-2.881094960683824)-- (1.4551412681039195,-2.2115396274529946);
\draw [line width=0.8pt,color=qqqqff] (1.4551412681039195,-2.2115396274529946)-- (0.1971095009337085,-3.9166367134837556);
\draw [line width=0.8pt,color=qqqqff] (0.1971095009337085,-3.9166367134837556)-- (-1.01,-1.86);
\draw [line width=0.8pt,color=qqqqff] (-1.01,-1.86)-- (1.4551412681039195,-2.2115396274529946);
\draw [line width=0.8pt,color=qqqqff] (1.4551412681039195,-2.2115396274529946)-- (3.05,-0.86);
\draw [line width=0.8pt,color=qqqqff] (3.05,-0.86)-- (3.4655381582084956,-2.881094960683824);
\draw [line width=0.8pt,color=qqqqff] (3.4655381582084956,-2.881094960683824)-- (5.31,-3.8);
\draw [line width=0.8pt,color=qqqqff] (5.31,-3.8)-- (3.45,-5);
\draw [line width=0.8pt,color=qqqqff] (3.45,-5)-- (3.31,-7.06);
\draw [line width=0.8pt,color=qqqqff] (3.31,-7.06)-- (1.43,-5.64);
\draw [line width=0.8pt,color=qqqqff] (1.43,-5.64)-- (-0.81,-6.1);
\draw [line width=0.8pt,color=qqqqff] (-0.81,-6.1)-- (0.1971095009337085,-3.9166367134837556);
\draw [line width=0.8pt,color=qqqqff] (5,2) circle (1cm);
\draw [line width=0.8pt,color=qqqqff] (7,2) circle (1cm);
\draw [line width=0.8pt,color=qqqqff] (9,2) circle (1cm);
\draw [rotate around={0:(7,2)},line width=0.8,color=qqqqff] (7,2) ellipse (3cm and 2.23606797749979cm);
\end{tikzpicture}
\end{subfigure}
\end{center}
\caption{All eight figures have to be understood as intrinsic one-dimensional spaces. \textit{Left:} Four examples of vernation trees. \textit{Right:} Four non-examples of vernation trees.}
\label{collection}
\end{figure}

\subsection{Description of vernation trees coded by excursions}
\label{intro_desc}

A \emph{càdlàg} function is a right-continuous function that has left limits everywhere. We denote by $\mathbb{D}([-1,1])$ the space of all càdlàg function from $[-1,1]$ to $\mathbb{R}$. The space $\mathbb{D}([-1,1])$ is equipped with the Skorokhod distance $\rho$ defined by
\begin{equation}
\label{sko_dist}
\forall f_1,f_2\in \mathbb{D}([-1,1]),\quad \rho(f_1,f_2)=\inf_{\lambda\in\Lambda}\max\left(||\lambda-\mathsf{id}||_\infty,||f_1-f_2\circ\lambda||_\infty\right),
\end{equation}
where $\Lambda$ is the set of increasing bijections from $[-1,1]$ into itself. The distance $\rho$ is not complete but induces a \emph{Polish} (namely separable and completely metrizable) topology on $\mathbb{D}([-1,1])$ called the Skorokhod (J1) topology. We refer to Billingsley~\cite[Chapter 3]{billingsley2013convergence} and Jacod \& Shiryanev~\cite[Chapter VI]{jacod} for background. Let $f\in\mathbb{D}([-1,1])$. Throughout this work, we adopt the following notations for all $s,t\in[-1,1]$:
\begin{equation}
\label{delta_x}
f(t-)=\lim_{s\to t-}f(s),\quad \Delta_t(f)=f(t)-f(t-),\quad\text{ and }\quad x_s^t(f)=\I{s\leq t}\max\Big(\inf_{[s,t]}f-f(s-)\, ,\, 0\Big).
\end{equation}
We may simply write $\Delta_t$ or $x_s^t$ when the function is obvious according to context. We now define our coding functions.

\begin{definition}
\label{excursion}
An \emph{excursion} is a nonnegative càdlàg function $f:[-1,1]\longrightarrow[0,\infty)$ that satisfies the following:
\begin{enumerate}
\item[$(a)$] $f(s)=f(1)=0$ for all $s\in[-1,0)$;
\item[$(b)$] $\Delta_t(f)\geq 0$ for all $t\in[0,1]$.
\end{enumerate}
We denote by $\mathbb{H}$ the space of all excursions. This is a closed subset of $\mathbb{D}([-1,1])$, so it is Polish.\cq
\end{definition}

\noindent
The reason why we work with the Skorokhod topology on $[-1,1]$ rather than $[0,1]$ is to allow convergences $f_n\to f$ where $f(0)\neq 0$ but $f_n(0)=0$ for all $n\geq 1$: for example, when $f(t)=\un_{[0,1]}(t)(1-t)$ and $f_n(t)=\un_{[1/n,1]}(t)(1-t)$. We fix the value $f(1)=0$ to make the time $1$ correspond to a distinguished point of our coded metric spaces. The absence of negative jumps allows us to interpret them as lengths of some loops. We denote by $(\mathcal{C},\delta)$ the metric circle of perimeter $1$, seen as $[0,1]$ endowed with the pseudo-distance $\delta$ given by 
\begin{equation}
\label{circle_dist_cano}
\forall a,b\in[0,1],\quad \delta(a,b)=\min\big(|a-b|,1-|a-b|\big),
\end{equation}
so that $0$ and $1$ are identified. For all $s\in[0,1]$, we rescale $\delta$ with $\Delta_s$ by setting $\delta_s=0$ when $\Delta_s=0$, and
\begin{equation}
\label{circle_dist_s}
\forall x,y\in[0,\Delta_s],\quad \delta_s(x,y)=\Delta_s\delta\left(\frac{x}{\Delta_s},\frac{y}{\Delta_s}\right)=\min\left(|x-y|,\Delta_s-|x-y|\right)
\end{equation}
when $\Delta_s>0$. If needed, we would precise the dependence on $f$ by writing $\delta_s^f$. Equipped with the pseudo-distance $\delta_s$, the segment $[0,\Delta_s]$ is isometric to a circle of length $\Delta_s$, viewed as the loop associated with the time $s$. Then for any $t\in[0,1]$, the quantity $x_s^t\in[0,\Delta_s]$ will represent the position on this loop where the sub-looptree containing the point corresponding to $t$ is attached. In other words, the shortest path on the looptree from $t$ to the loop arrives at the position $x_s^t$. In particular, when $t=1$ corresponds to the root of the looptree, the position $x_s^1=0$ is the origin of the loop. Therefore, any path from the root to some $t$ with $x_s^t\neq 0$ must visit the loop associated with $s$. This motivates the following expression of Curien \& Kortchemski~\cite{curien2014} for the underlying genealogy of the looptree coded by $f$: for $s,t\in[0,1]$, we write
\begin{equation}
\label{genealogy_def}
s\preceq_f t\quad \text{ when both }\quad s\leq t\ \text{ and }\ f(s-)\leq \inf_{[s,t]}f.
\end{equation}
We justify that this indeed defines a genealogy in Section~\ref{genealogy_sec}. We also provide some tools to study it.
\smallskip

We are now ready to define our codings. First, the \emph{looptree pseudo-distance} coded by an excursion $f$ is given by
\begin{equation}
\label{dl_easy}
\forall s,t\in[-1,1],\quad \dl[f](s,t)=\sum_{r\in[0,1]}\delta_r(x_r^s,x_r^t).
\end{equation}
This expression gives the same quantity constructed by Curien \& Kortchemski~\cite{curien2014} for the stable looptrees in terms of the genealogy (\ref{genealogy_def}). See Section~\ref{pseudo_dist_coded_sec} for that other expression. The term $\delta_r(x_r^s,x_r^t)$ corresponds to the minimal distance to run on the loop associated with $r$ by a path linking the points corresponding to $s$ and $t$. Next, we modify the classic expression (\ref{classical_tree_distance}) to define the \emph{tree pseudo-distance} coded by $f$ as
\begin{equation}
\label{dtree_easy}
\forall -1\leq s\leq t\leq 1,\quad \dtree[f](s,t)=\dtree[f](t,s)=f(s)+f(t)-2\inf_{[s,t]}f-\sum_{r\in[0,1]}|x_r^s-x_r^t|.
\end{equation}
Here, we have subtracted the terms $|x_r^s-x_r^t|$ to compensate for the contribution of the jumps of the excursion, since this information was already used to construct the looptree pseudo-distance. Nevertheless, if the excursion is continuous on $[-1,1]$ then we recover the formula (\ref{classical_tree_distance}). To combine the distances covered on loops and on branches, we set
\begin{equation}
\label{dv_easy}
\dv[f]=\dtree[f]+2\dl[f],
\end{equation}
that we call the \emph{vernation-tree pseudo-distance} coded by $f$. The constant $2$ before $\dl$ could be replaced with another one, but this choice will be justified by the fact that it ensures the greatest possible continuity to the coding $f\longmapsto\dv[f]$. However, Marzouk~\cite{marzouk_similaire} showed that metrics $\dtree+a\dl$ with $a\neq 2$ can naturally appear as scaling limits of uniformly random discrete looptrees with prescribed degrees.
\smallskip

We show that $\dl[f],\dtree[f],\dv[f]$ are continuous pseudo-distances on $[0,1]$ in Section~\ref{section_coding}. Hence, they each induce a compact metric space after the quotient by the equivalent relation generated by their vanishing set. We refer to Section~\ref{space} for details. We respectively call and denote these metric spaces as follows:
\begin{enumerate}
\item[$\bullet$] the \emph{looptree} $(\lop[f],\dl[f])$ coded by $f$,
\item[$\bullet$] the \emph{tree} $(\tree[f],\dtree[f])$ coded by $f$,
\item[$\bullet$] the \emph{vernation tree} $(\vern[f],\dv[f])$ coded by $f$.
\end{enumerate}
See Figure~\ref{example_codings} for an example. Moreover, we equip all these spaces with the canonical projection of $1$, and with the pushforward measure of the Lebesgue measure on $[0,1]$. Hence, they are pointed weighted metric spaces, i.e.~metric spaces endowed with a distinguished point and a Borel probability measure. If the excursion $f$ is obvious from the context, we may drop the subscript $_f$ from the notation.
\smallskip

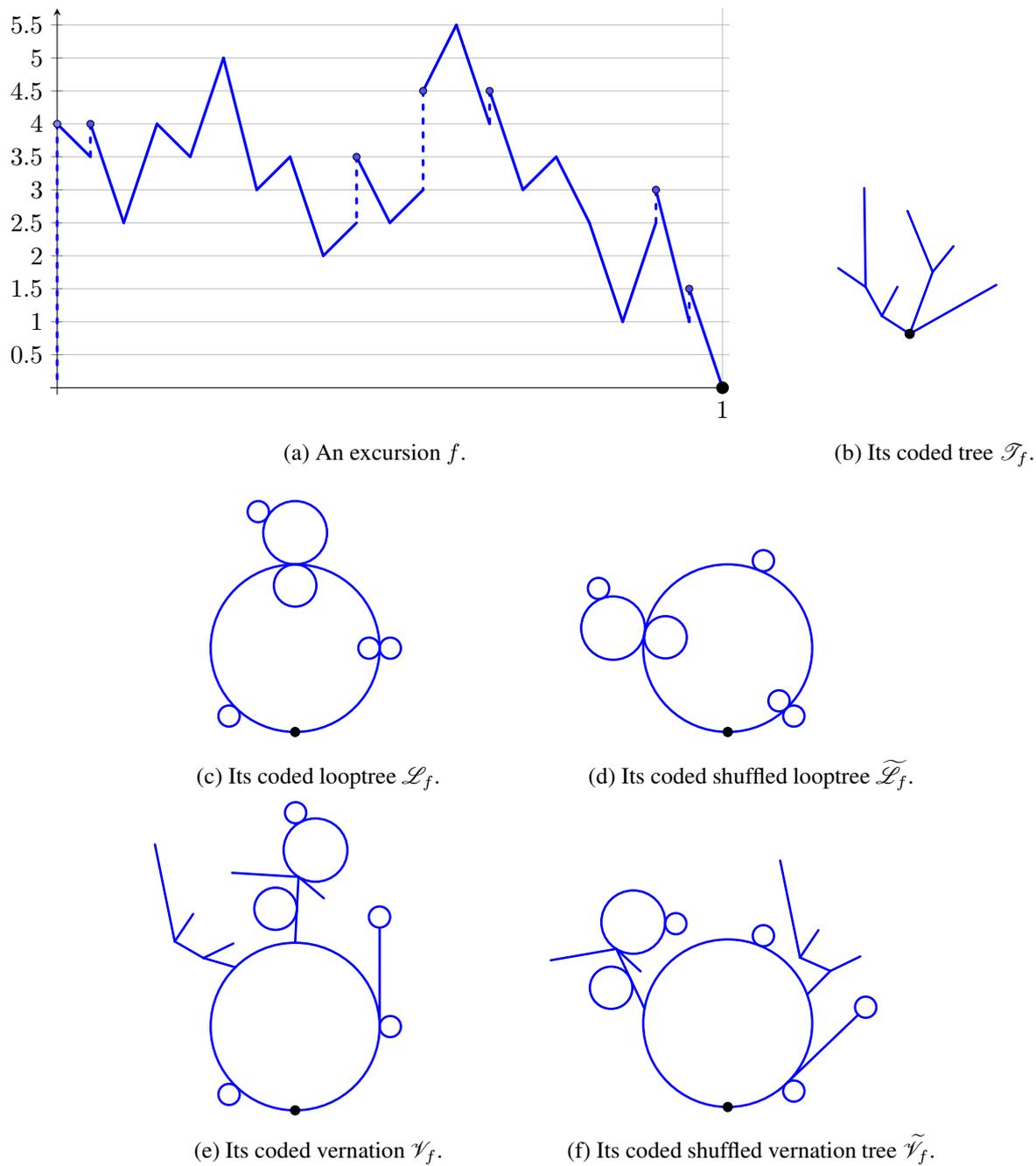
\begin{figure}
\begin{center}
\begin{subfigure}[b]{0.7\textwidth}
\begin{tikzpicture}[line cap=round,line join=round,>=triangle 45,x=1cm,y=1cm,scale=1]
\begin{axis}[
x=10cm,y=1cm,
axis lines=middle,
ymajorgrids=true,
xmajorgrids=true,
xmin=-0.01,
xmax=1.01,
ymin=-0.1,
ymax=5.75,
xtick={-1,0,1},
ytick={0,0.5,...,5.5},]
\clip(-0.2,-0.2) rectangle (1.1,6);
\draw [line width=1.2pt,color=qqqqff] (0,4)-- (0.05,3.5);
\draw [line width=1.2pt,color=qqqqff] (0.05,4)-- (0.1,2.5);
\draw [line width=1.2pt,color=qqqqff] (0.15,4)-- (0.1,2.5);
\draw [line width=1.2pt,color=qqqqff] (0.15,4)-- (0.2,3.5);
\draw [line width=1.2pt,color=qqqqff] (0.25,5)-- (0.2,3.5);
\draw [line width=1.2pt,color=qqqqff] (0.25,5)-- (0.3,3);
\draw [line width=1.2pt,color=qqqqff] (0.35,3.5)-- (0.3,3);
\draw [line width=1.2pt,color=qqqqff] (0.35,3.5)-- (0.4,2);
\draw [line width=1.2pt,color=qqqqff] (0.4,2)-- (0.45,2.5);
\draw [line width=1.2pt,color=qqqqff] (0.45,3.5)-- (0.5,2.5);
\draw [line width=1.2pt,color=qqqqff] (0.5,2.5)-- (0.55,3);
\draw [line width=1.2pt,color=qqqqff] (0.55,4.5)-- (0.6,5.5);
\draw [line width=1.2pt,color=qqqqff] (0.6,5.5)-- (0.65,4);
\draw [line width=1.2pt,color=qqqqff] (0.65,4.5)-- (0.7,3);
\draw [line width=1.2pt,color=qqqqff] (0.7,3)-- (0.75,3.5);
\draw [line width=1.2pt,color=qqqqff] (0.75,3.5)-- (0.8,2.5);
\draw [line width=1.2pt,color=qqqqff] (0.95,1.5)-- (1,0);
\draw [line width=1.2pt,color=qqqqff] (0.8,2.5)-- (0.85,1);
\draw [line width=1.2pt,color=qqqqff] (0.9,2.5)-- (0.85,1);
\draw [line width=1.2pt,color=qqqqff] (0.9,3)-- (0.95,1);
\draw [line width=1.2pt,dash pattern=on 2pt off 4pt,color=qqqqff] (0.05,4)-- (0.05,3.5);
\draw [line width=1.2pt,dash pattern=on 2pt off 4pt,color=qqqqff] (0.45,3.5)-- (0.45,2.5);
\draw [line width=1.2pt,dash pattern=on 2pt off 4pt,color=qqqqff] (0.55,4.5)-- (0.55,3);
\draw [line width=1.2pt,dash pattern=on 2pt off 4pt,color=qqqqff] (0.65,4.5)-- (0.65,4);
\draw [line width=1.2pt,dash pattern=on 2pt off 4pt,color=qqqqff] (0.9,2.5)-- (0.9,3);
\draw [line width=1.2pt,dash pattern=on 2pt off 4pt,color=qqqqff] (0.95,1)-- (0.95,1.5);
\draw [line width=1.2pt,dash pattern=on 2pt off 4pt,color=qqqqff] (0,4)-- (0,0);
\begin{scriptsize}
\draw [fill=xdxdff] (0,4) circle [radius=1.5pt];
\draw [fill=ududff] (0.05,4) circle [radius=1.5pt];
\draw [fill=ududff] (0.45,3.5) circle [radius=1.5pt];
\draw [fill=ududff] (0.55,4.5) circle [radius=1.5pt];
\draw [fill=ududff] (0.65,4.5) circle [radius=1.5pt];
\draw [fill=ududff] (0.9,3) circle [radius=1.5pt];
\draw [fill=ududff] (0.95,1.5) circle [radius=1.5pt];
\draw [fill=black] (1,0) circle [radius=2.5pt];
\end{scriptsize}
\end{axis}
\end{tikzpicture}
\caption{An excursion $f$.}
\end{subfigure}
\hfill
\begin{subfigure}[b]{0.25\textwidth}
\begin{tikzpicture}[line cap=round,line join=round,>=triangle 45,x=1cm,y=1cm,scale=1]
\clip(-1.9294436651515683,-1.1254056570576816) rectangle (1.2,2.5450839334064272);
\draw [line width=1pt,color=qqqqff] (-0.3009698028201784,0.27736368789891436)-- (-0.7201916109704967,0.5498578631966218);
\draw [line width=1pt,color=qqqqff] (-0.7201916109704967,0.5498578631966218)-- (-0.48501028864838136,0.9910944727734126);
\draw [line width=1pt,color=qqqqff] (-0.7201916109704967,0.5498578631966218)-- (-0.9628298500909859,0.987038243493483);
\draw [line width=1pt,color=qqqqff] (-0.9628298500909859,0.987038243493483)-- (-0.9839808473738328,2.4868891145183127);
\draw [line width=1pt,color=qqqqff] (-0.9628298500909859,0.987038243493483)-- (-1.3730667513695263,1.2728803062298497);
\draw [line width=1pt,color=qqqqff] (-0.3009698028201784,0.27736368789891436)-- (1.0023659843192807,1.01987013632988);
\draw [line width=1pt,color=qqqqff] (-0.3009698028201784,0.27736368789891436)-- (0.04509190898878812,1.2155754399143364);
\draw [line width=1pt,color=qqqqff] (0.04509190898878812,1.2155754399143364)-- (-0.3333576725163929,2.1411973466659826);
\draw [line width=1pt,color=qqqqff] (0.04509190898878812,1.2155754399143364)-- (0.35768641119605926,1.6058121337166855);
\begin{scriptsize}
\draw [fill=black] (-0.3009698028201784,0.27736368789891436) circle (2pt);
\end{scriptsize}
\end{tikzpicture}
\caption{Its coded tree $\tree[f]$.}
\end{subfigure}
\begin{subfigure}[b]{0.4\textwidth}
\begin{tikzpicture}[line cap=round,line join=round,>=triangle 45,x=1cm,y=1cm,scale=0.5]
\clip(-5.853370904701763,-1.5504041341823112) rectangle (3,2.694532449652895);
\draw [line width=1pt,color=qqqqff] (0,0) circle (1.27cm);
\draw [line width=1pt,color=qqqqff] (-0.9933614697563665,-1.0286559144842706) circle (0.16cm);
\draw [line width=1pt,color=qqqqff] (1.43,0) circle (0.16cm);
\draw [line width=1pt,color=qqqqff] (0,0.95) circle (0.32cm);
\draw [line width=1pt,color=qqqqff] (0,1.75) circle (0.48cm);
\draw [line width=1pt,color=qqqqff] (-0.5542562584220398,2.07) circle (0.16cm);
\draw [line width=1pt,color=qqqqff] (1.11,0) circle (0.16cm);
\begin{scriptsize}
\draw [fill=black] (0,-1.27) circle (2pt);
\end{scriptsize}
\end{tikzpicture}
\caption{Its coded looptree $\lop[f]$.}
\end{subfigure}
\begin{subfigure}[b]{0.4\textwidth}
\begin{tikzpicture}[line cap=round,line join=round,>=triangle 45,x=1cm,y=1cm,scale=0.5]
\clip(-5.853370904701763,-1.4864723210673187) rectangle (3,1.7501599002246733);
\draw [line width=1pt,color=qqqqff] (0,0) circle (1.27cm);
\draw [line width=1pt,color=qqqqff] (0.5356874285847545,1.325872912030506) circle (0.16cm);
\draw [line width=1pt,color=qqqqff] (0.9933614697563667,-1.0286559144842717) circle (0.16cm);
\draw [line width=1pt,color=qqqqff] (-0.9355673653615976,0.1649657687835838) circle (0.32cm);
\draw [line width=1pt,color=qqqqff] (-1.723413567771364,0.30388431091712803) circle (0.48cm);
\draw [line width=1pt,color=qqqqff] (-1.9423064594997914,0.9052875882201087) circle (0.16cm);
\draw [line width=1pt,color=qqqqff] (0.7710707912094866,-0.7984671783759024) circle (0.16cm);
\begin{scriptsize}
\draw [fill=black] (0,-1.27) circle (2pt);
\end{scriptsize}
\end{tikzpicture}
\caption{Its coded shuffled looptree $\lopt[f]$.}
\end{subfigure}
\begin{subfigure}[b]{0.4\textwidth}
\begin{tikzpicture}[line cap=round,line join=round,>=triangle 45,x=1cm,y=1cm]
\clip(-2.853370904701763,-1.4798132100189718) rectangle (3,3.5176475422690736);
\draw [line width=1pt,color=qqqqff] (0,0) circle (1.27cm);
\draw [line width=1pt,color=qqqqff] (-0.9933614697563665,-1.0286559144842706) circle (0.16cm);
\draw [line width=1pt,color=qqqqff] (-0.8980256121069153,0.8980256121069156)-- (-1.3778062697376139,1.0387754631315416);
\draw [line width=1pt,color=qqqqff] (-1.3778062697376139,1.0387754631315416)-- (-0.9289692571342836,1.2591055248964262);
\draw [line width=1pt,color=qqqqff] (-1.3778062697376139,1.0387754631315416)-- (-1.8099324381923567,1.2903047348607446);
\draw [line width=1pt,color=qqqqff] (-1.8099324381923567,1.2903047348607446)-- (-2.107481896102372,2.7604966679420877);
\draw [line width=1pt,color=qqqqff] (-1.8099324381923567,1.2903047348607446)-- (-1.5326749492089442,1.7063916065625717);
\draw [line width=1pt,color=qqqqff] (0,1.27)-- (0.050888389428292505,2.2687043465517682);
\draw [line width=1pt,color=qqqqff] (-0.2941411961824201,1.7856364578929376) circle (0.32cm);
\draw [line width=1pt,color=qqqqff] (0.30620323747696576,2.6751700011474338) circle (0.48cm);
\draw [line width=1pt,color=qqqqff] (1.43,0) circle (0.16cm);
\draw [line width=1pt,color=qqqqff] (1.27,0)-- (1.27,1.5);
\draw [line width=1pt,color=qqqqff] (1.2682824259940673,1.6599907807954388) circle (0.16cm);
\draw [line width=1pt,color=qqqqff] (0.007067025981794206,3.2409592967092262) circle (0.16cm);
\draw [line width=1pt,color=qqqqff] (0.050888389428292505,2.2687043465517682)-- (-0.9472039569811677,2.3304430543359294);
\draw [line width=1pt,color=qqqqff] (0.050888389428292505,2.2687043465517682)-- (0.4318648632963328,1.9448881543082759);
\begin{scriptsize}
\draw [fill=black] (0,-1.27) circle (2pt);
\end{scriptsize}
\end{tikzpicture}
\caption{Its coded vernation $\vern[f]$.}
\end{subfigure}
\begin{subfigure}[b]{0.4\textwidth}
\begin{tikzpicture}[line cap=round,line join=round,>=triangle 45,x=1cm,y=1cm]
\clip(-2.853370904701763,-1.47735716833149693) rectangle (3,2.58995545091422112);
\draw [line width=1pt,color=qqqqff] (0,0) circle (1.27cm);
\draw [line width=1pt,color=qqqqff] (0.5356874285847545,1.325872912030506) circle (0.16cm);
\draw [line width=1pt,color=qqqqff] (1.1934096283981037,0.43436558202359926)-- (1.5431134361292944,0.7917270892656445);
\draw [line width=1pt,color=qqqqff] (1.5431134361292944,0.7917270892656445)-- (1.9919504487326247,1.0120571510305292);
\draw [line width=1pt,color=qqqqff] (1.5431134361292944,0.7917270892656445)-- (1.0861779549856567,0.9947294687394631);
\draw [line width=1pt,color=qqqqff] (1.0861779549856567,0.9947294687394631)-- (0.7886284970756419,2.4649214018208063);
\draw [line width=1pt,color=qqqqff] (1.0861779549856567,0.9947294687394631)-- (1.3634354439690692,1.4108163404412903);
\draw [line width=1pt,color=qqqqff] (-1.2507058463255043,0.22053318563700153)-- (-1.675308821535275,1.1259128371882228);
\draw [line width=1pt,color=qqqqff] (-1.7527288224267799,0.537350059345486) circle (0.32cm);
\draw [line width=1pt,color=qqqqff] (-1.4199939734866016,1.5323784917838883) circle (0.48cm);
\draw [line width=1pt,color=qqqqff] (0.9933614697563667,-1.0286559144842717) circle (0.16cm);
\draw [line width=1pt,color=qqqqff] (0.8822161304829266,-0.913561546430087)-- (1.9612258309909028,0.1284260092584084);
\draw [line width=1pt,color=qqqqff] (2.0734007386014333,0.24251715748020936) circle (0.16cm);
\draw [line width=1pt,color=qqqqff] (-0.7804379645931981,1.5085434023496456) circle (0.16cm);
\draw [line width=1pt,color=qqqqff] (-1.675308821535275,1.1259128371882228)-- (-2.6603561300253475,0.95362876933381);
\draw [line width=1pt,color=qqqqff] (-1.675308821535275,1.1259128371882228)-- (-1.3093114975452367,0.7852576466624788);
\begin{scriptsize}
\draw [fill=black] (0,-1.27) circle (2pt);
\end{scriptsize}
\end{tikzpicture}
\caption{Its coded shuffled vernation tree $\vernt[f]$.}
\end{subfigure}
\end{center}
\caption{An example of an excursion and the quotient metric spaces that it codes. We stress that the representations of the shuffled looptree and the shuffled vernation tree are only likely examples because they depend on the chosen shuffle $\Phi$. The representations given are isometric with respect to the scale of the $y$-axis of the graph of the excursion.}
\label{example_codings}
\end{figure}

Our construction of $\dv$ as the combination of $\dl$ and $\dtree$ follows the intuitive principle that jumps code for loops and continuous growths code for branches in the associated vernation tree. More accurately, it is possible to decompose an excursion $f$ into a pure jump part and a continuous part, that respectively solely induces $\dl[f]$ and $\dtree[f]$, by setting
\begin{equation}
\label{J_def}
Jf:t\in[-1,1]\longmapsto \sum_{r\leq t}x_r^t(f).
\end{equation}
The operator $J:f\mapsto Jf$ should extract the pure jump part of the excursion, which motivates the following definition.
\begin{definition}
\label{type_exc}
Let $f$ be an excursion.
\begin{enumerate}
\item[$(i)$] We say that $f$ is a \emph{continuous excursion} when $Jf=0$, which happens if and only if $f$ is continuous on $[-1,1]$.
\item[$(ii)$] We say that $f$ is a \emph{pure jump growth (PJG) excursion} when $Jf=f$.\cq
\end{enumerate}
\end{definition}

\noindent
The fact that an excursion $f$ is continuous on $[-1,1]$ if and only if $Jf=0$ is immediate via the elementary inequality $0\leq x_r^t\leq\Delta_r\leq Jf(r)$ for all $r,t\in[-1,1]$. Finally, the following result confirms the above decomposition heuristic and entails that the set of vernation trees coded by excursions contains all looptrees and trees coded by excursions.

\begin{theorem}
\label{intro_uni}
For all $f\in\mathbb{H}$, the function $Jf$ is well-defined and càdlàg on $[-1,1]$. Furthermore, the following holds.
\begin{enumerate}
\item[$(i)$] $Jf$ is a PJG excursion such that $\dl[f]=\dl[Jf]$.

\item[$(ii)$] $f-Jf$ is a continuous excursion such that $\dtree[f]=\dtree[f-Jf]$.
\end{enumerate}
Furthermore, it holds that $2\dl[f]=\dv[Jf]$ and $\dtree[f]=\dv[f-Jf]$.
\end{theorem}

\subsection{Shuffling and functional continuity of the coding}
\label{intro_shuffling}

The unification of trees and looptrees via vernation trees reveals an interesting observation. Curien \& Kortchemski~\cite{curien2014} have proved that the $\alpha$-stable looptree converges in distribution to the Brownian CRT (up to a constant multiplicative factor) as $\alpha$ tends to $2$, while the excursion $X^{\mathsf{exc},(\alpha)}$ converges in distribution to a multiple of the Brownian excursion. Plus, the multiplicative constants match. Since we will see in Section~\ref{stable} that the excursions $X^{\mathsf{exc},(\alpha)}$ are PJG and the Brownian excursion is continuous, we deduce that the vernation trees coded by the $X^{\mathsf{exc},(\alpha)}$ converge to the vernation tree coded by the limiting excursion. Furthermore, \cite{curien2014} and \cite{KR2020} exhibit several instances where rescaled looptrees associated with discrete trees converge to the looptree or the tree coded by the scaling limit of their Lukasiewicz walks. Thereby, one can hope that the coding $f\in\mathbb{H}\longmapsto\vern[f]$ enjoys some kind of functional continuity. Such a continuity, already enjoyed by the classic coding (\ref{classical_tree_distance}), would reduce the problem of understanding convergences of looptrees to the study of convergences of excursions, which is a much simpler task thanks to a lot of tools and methods that were developed over the years.

Sadly, the coding of vernation trees by excursions is not continuous. Indeed, \cite[Theorem 13]{haas} and \cite[Theorem 2]{KR2020} provide counterexamples where the limit differs by a strange multiplicative factor from what one could expect. The reason behind this failure is that a succession of small jumps may generate a large variation for the excursion but a small distance with respect to $\dl$ at the same time, since $\delta_t(0,\Delta_t)=0$, whereas a large variation of a continuous excursion is always correlated with a large distance for $\dtree$. For example, the two sequences of PJG excursions defined by
\[\left(f_n(t),g_n(t)\right)=\begin{cases}
					\left(k/2n+1/n-(t-k/2n),(k+1)/2n\right) &\text{ if }\frac{k}{2n}\leq t<\frac{k+1}{2n}\text{ with }0\leq k\leq n-1,\\
					\left(1/2-(t-1/2),1/2-(t-1/2)\right) &\text{ if }1/2\leq t\leq 1,
				\end{cases}\]
uniformly converge to the same continuous excursion $f(t)=1/2-|t-1/2|, t\in[0,1]$ but the vernation trees they code have very different asymptotic behavior. On the one hand, $\vern[f_n]$ consists of a chain of $n$ loops of length $2/n$ put back to back and so converges to a segment of length $1$. On the other hand, $\vern[g_n]$ consists of a bouquet of $n$ loops of length $1/n$ all glued at the same point. This sequence converges to a single point because the sequence of the diameters tends to $0$. Nevertheless, our next result asserts that the former situation, where many small jumps merge into a continuous part at the limit, is the only thing that prevents the codings from being continuous.

\begin{theorem}
\label{gh-continu_particular}
Let $f$ be an excursion and let $(f_n)$ be a sequence of excursions such that $f_n\longrightarrow f$ for the Skorokhod topology on $[-1,1]$. We assume that at least one of the following assumptions holds true.
\begin{enumerate}
\item[$(a)$] There is $N\geq 1$ such that all the $f_n$ have at most $N$ jumps.
\item[$(b)$] The excursion $f$ is PJG in the sense of Definition~\ref{type_exc}.
\end{enumerate}
Then, the three convergences $\lop[f_n]\longrightarrow\lop[f]$, $\tree[f_n]\longrightarrow\tree[f]$, and $\vern[f_n]\longrightarrow\vern[f]$ hold for the pointed GHP topology.
\end{theorem}

\noindent
The \emph{pointed Gromov--Hausdorff--Prokhorov (GHP) topology} formalizes convergences of compact metric spaces endowed with a distinguished point and a Borel probability measure. We will recall its definition in Section~\ref{def_GH}. 
\smallskip

To solve the issue discussed above, we need to match the jump variations of an excursion with those of $\dl$. To this end, we are going to replace every position $x\in[0,\Delta_t]$ on the loop associated with any $t\in[0,1]$ with some $\phi(x)$ so as to have $2\delta_t(0,\phi(x))\simeq x$ when $\Delta_t$ is small. This roughly means that we are going to shuffle the positions on which the components of the vernation tree are glued onto every loop, while keeping the same loops and branches arranged in the same genealogical structure. See Figure~\ref{example_codings} above for an example. The formal construction of this new coding depends on an object $\Phi$ called \emph{shuffle} that contains the shuffling instruction and that must satisfy some assumptions.

Recall from (\ref{circle_dist_cano}) that $(\mathcal{C},\delta)=([0,1],\delta)$ stands for the metric circle of perimeter $1$. We say that a function is c\`agl\`ad when it is left-continuous with right limits (\textit{continue \`a gauche, limite \`a droite} in French). We denote by $\overleftarrow{\mathbb{D}}([0,1],\mathcal{C})$ the set of c\`agl\`ad functions from $[0,1]$ to $\mathcal{C}$ and we endow it with the Skorokhod distance $\overleftarrow{\rho}$ given by 
\begin{equation}
\label{sko_dist_circle}
\overleftarrow{\rho}(\phi_1,\phi_2)=\inf_{\lambda\in\Lambda}\max\Big(||\lambda-\mathsf{id}||_\infty,\sup_{x\in[0,1]}\delta\big(\phi_1(x),\phi_2\circ\lambda(x)\big)\Big)
\end{equation}
for all $\phi_1,\phi_2\in \overleftarrow{\mathbb{D}}([0,1],\mathcal{C})$, where $\Lambda$ is the set of increasing bijections from $[0,1]$ into itself.

\begin{definition}
\label{shuffle}
A \emph{shuffle} is a Borel application $\Phi:\Delta\in (0,\infty)\mapsto\phi_\Delta\in\overleftarrow{\mathbb{D}}([0,1],\mathcal{C})$ such that for all $\Delta>0$, $\phi_\Delta$ is surjective,
\begin{equation}
\label{11}
\sup_{u\in(0,\Delta]}\sup_{x\in(0,1]}\left|\frac{2}{x}\delta\left(\phi_u(0),\phi_u(x)\right)-1\right|<\infty,\quad\text{ and }\quad\sup_{x\in(0,1]}\left|\frac{2}{x}\delta\left(\phi_\Delta(0),\phi_\Delta(x)\right)-1\right|\xrightarrow[\Delta\rightarrow0^+]{}0.
\end{equation}
\cq
\end{definition}
\noindent
Let us discuss this definition. For all $\Delta>0$, $\Delta\phi_\Delta(\cdot/\Delta)$ describes shuffling on any loop of length $\Delta$, so (\ref{11}) formalizes our desired approximation. We ask $\phi_\Delta$ to be \emph{càglàd} for the functions $t\mapsto \phi_{\Delta_s}(x_s^t/\Delta_s)$ to be \emph{càdlàg} on $[0,1]$. Indeed, recall from (\ref{delta_x}) that $x_s^t$, which represents a gluing position on the loop associated with $s$, is non-increasing with $t$. Moreover, we need $\phi_\Delta$ to be surjective to be able to encode a loop of length $\Delta$ without gaps. However, the surjectivity and the convergence in (\ref{11}) lead to the fact that $\phi_\Delta$ has to oscillate more and more quickly near $x=0$ when $\Delta$ is small enough. Moreover, when $\Delta$ gets smaller, $\phi_\Delta$ needs to jump an infinite number of times near $0$. As a result, there is no simple or canonical choice for $\Phi$. Nevertheless, it is not difficult to construct various examples of shuffles. For instance, setting
\begin{equation}
\label{remark_shuffle}
\phi_{1-\Delta}(x)=\begin{cases}
										x-\frac{\Delta^k}{2}&\text{ if }x\in\left(\frac{\Delta^k+\Delta^{k+1}}{2},\Delta^k\right]\text{ with an integer }k,\\
										1-x+\frac{\Delta^{k+1}}{2}&\text{ if }x\in\left(\Delta^{k+1},\frac{\Delta^k+\Delta^{k+1}}{2}\right]\text{ with an integer }k,\\
										0&\text{ if }x=0,
									\end{cases}
\end{equation}
when $\Delta\in(0,1)$ and $\phi_\Delta(x)=x$ when $\Delta\geq 1$, for all $x\in[0,1]$, defines a shuffle.
\smallskip

Now, we fix a shuffle $\Phi$ and let the dependence on it be implicit. Let $f$ be an excursion as in Definition~\ref{excursion}. Recall $\Delta_s$ and $x_s^t$ from (\ref{delta_x}). The shuffled loop associated with $s\in[0,1]$ is the segment $[0,\Delta_s]$ equipped with the pseudo-distance
\begin{equation}
\label{circle_shuffled_dist_s}
\forall x,y\in[0,\Delta_s],\quad\tilde{\delta}_s(x,y)=\Delta_s\delta\left(\phi_{\Delta_s}\left(\frac{x}{\Delta_s}\right),\phi_{\Delta_s}\left(\frac{y}{\Delta_s}\right)\right)
\end{equation}
when $\Delta_s>0$, and with $\tilde{\delta}_s=0$ when $\Delta_s=0$. If needed, we would precise the dependence on $f$ by writing $\tilde{\delta}_s^f$. Recall from (\ref{dtree_easy}) that $\dtree[f]$ stands for the tree pseudo-distance coded by $f$. Then, we define
\begin{equation}
\label{dlt_easy}
\forall s,t\in[-1,1],\quad \dlt[f](s,t)=\sum_{r\in[0,1]}\tilde{\delta}_r(x_r^s,x_r^t)\quad\text{ and }\quad \dvt[f](s,t)=\dtree[f](s,t)+2\dlt[f](s,t),
\end{equation}
and we call $\dlt[f]$ (resp.~$\dvt[f]$) the \emph{shuffled looptree} (resp.~\emph{vernation-tree}) \emph{pseudo-distance} coded by $f$. In Section~\ref{section_coding}, we prove that $\dlt[f]$ and $\dv[f]$ are indeed pseudo-distances on $[0,1]$. We call the quotient metric spaces they induce (see Section~\ref{space})
\begin{enumerate}
\item[$\bullet$] the \emph{shuffled looptree} $(\lopt[f],\dlt[f])$ coded by $f$,
\item[$\bullet$] the \emph{shuffled vernation tree} $(\vernt[f],\dvt[f])$ coded by $f$.
\end{enumerate}
Note that $\dlt[f]$ and $\dvt[f]$ may not be continuous on $[0,1]$, because of the discontinuities of the $\phi_{\Delta_s}$, but we prove in Section~\ref{space} that $\lopt[f]$ and $\vernt[f]$ are compact nonetheless. Moreover, we equip these spaces with the canonical projection of $1$ as their roots, and with the pushforward measure of the Lebesgue measure on $[0,1]$ as their Borel probability measures. If the excursion $f$ is obvious from the context, we may drop the subscript $_f$ from the notation.
\smallskip

Before stating our limit theorem for shuffled vernation trees, we discuss why this coding cannot be continuous at any excursion despite our efforts. Recall that, because of the properties required by Definition~\ref{shuffle}, $\phi_\Delta$ has an infinite number of jumps when $\Delta$ is small. In particular, the function $\hat{\Phi}:(\Delta,x)\in(0,\infty)\times [0,1]\mapsto \phi_\Delta(x)\in\mathcal{C}$ is not continuous everywhere. Hence, the position $\Delta_s\phi_{\Delta_s}(x_s^t/\Delta_s)$ where some component of $\vernt[f]$ is attached to the loop associated with $s$ may greatly vary after even a small perturbation of $f$. To avoid this situation, we need to assume that no gluing position $(\Delta_s,x_s^t/\Delta_s)$ on the unshuffled vernation tree $\vern[f]$ is a discontinuity point of $\hat{\Phi}$. We set $\mathrm{B}(\Phi)=\mathrm{B}^1(\Phi)\cup\mathrm{B}^2(\Phi)$ where
\begin{align*}
\mathrm{B}^1(\Phi)&=\left\{\Delta>0\ :\ \Phi\text{ discontinuous at }x\text{ for the Skorokhod topology}\right\},\\
\mathrm{B}^2(\Phi)&=\left\{(\Delta,x)\in(0,\infty)\times[0,1]\ :\ \phi_\Delta\text{ discontinuous at }x\right\},
\end{align*}
Remark that Definition~\ref{shuffle} implies $\mathrm{B}(\Phi)\cap\left((0,\infty)\!\times\!\{0,1\}\right)=\emptyset$. For any excursion $f$, we define $B_f=B_f^1\cup B_f^2$ where
\begin{align*}
B_f^1&=\left\{\Delta_t\ :\ t\in[0,1]\text{ such that }\Delta_t>0\right\},\\
B_f^2&=\left\{\left(\Delta_r,\frac{x_r^t}{\Delta_r}\right)\ :\ r,s,t\in[0,1]\text{ such that }s\neq t,\Delta_r>0,\text{ and }x_r^t=x_r^s>0\right\}.
\end{align*}

\begin{theorem}
\label{gh-continu}
Let $f$ be an excursion and let $(f_n)$ be a sequence of excursions such that $f_n\longrightarrow f$ for the Skorokhod topology on $[-1,1]$. If $B_f\cap\mathrm{B}(\Phi)=\emptyset$, then the convergence $\vernt[f_n]\longrightarrow\vernt[f]$ holds for the pointed GHP topology.
\end{theorem}

\noindent 
The price for the functional continuity of $f\mapsto\vernt[f]$ is that there is no simple or canonical choice for the shuffle $\Phi$. That makes $\vernt[f]$ inconvenient to use with explicit or discrete examples. Thus, we think it is best to first define a model or to express a given vernation tree by using (unshuffled) $\vern[f]$, and then to study its asymptotic behavior by comparing $\vern[f]$ with $\vernt[f]$. While this technical task will not be automatic, a nice choice of shuffle may ease it. For example, if $X$ is a random excursion, it may be possible to select $\Phi$ such that $\vernt[X]$ and $\vern[X]$ have the same law (see Section~\ref{application_proba}). Another advantage of the flexibility of the choice for the shuffle is that it is easy to ensure that the condition $B_f\cap\mathrm{B}(\Phi)=\emptyset$ holds.

\subsection{Topological aspects of vernation trees}

Theorem~\ref{gh-continu} suggests that vernation trees yield the right framework to study convergences of looptrees. Indeed, Theorem~\ref{intro_uni} justifies that this notion unifies the classic encoding (\ref{classical_tree_distance}) of real trees and the encoding of looptrees introduced by Curien \& Kortchemski~\cite{curien2014}. Moreover, vernation trees can naturally appear as limits of looptrees. Indeed, looptrees may converge to trees or other looptrees, and gluing two looptrees together still gives a looptree. Conversely, the limits of vernation trees are also vernation trees in some sense. To make this point rigorous, we introduce an intrinsic and topological definition for vernation trees that does not involve encoding. 

\begin{definition}
\label{vernation_topo_def}
A metric space $(X,d)$ is a \emph{vernation tree} when it satisfies the following properties:
\begin{enumerate}
\item[$(a)$] for all $x,y\in X$, there is an isometry $g:[0,d(x,y)]\longrightarrow X$ such that $g(0)=x$ and $g(d(x,y))=y$;
\item[$(b)$] for all $L_1,L_2\subset X$ homeomorphic to the topological circle, if $L_1\neq L_2$ then $L_1\cap L_2$ has at most one point.\cq
\end{enumerate}
\end{definition}

\noindent
We let the reader compare this definition with Figure~\ref{collection} above. We then obtain the following characterization.

\begin{theorem}
\label{topo_conclusion}
A compact metric space is a vernation tree (in the sense of Definition~\ref{vernation_topo_def}) if and only if it is the limit of a sequence of vernation trees coded by excursions (as induced by (\ref{dv_easy})) for the GH topology. This holds if and only if it is the limit of a sequence of shuffled vernation trees coded by excursions (as induced by (\ref{dlt_easy})) for the GH topology.
\end{theorem}

\noindent
The definition of the \emph{Gromov--Hausdorff (GH) topology}, which formalizes convergences of (isometry classes of) compact metric spaces, is recalled in Section~\ref{def_GH}.

\subsection{Outline and probabilistic applications}

In Section~\ref{section_coding}, we construct the mentioned pseudo-distances and the metric spaces they induce by quotient. Section~\ref{unification} gives some tools to better understand the relations between trees, looptrees, vernation trees, and excursions. It eventually presents the proof of Theorem~\ref{intro_uni}. We prove Theorems~\ref{gh-continu} and \ref{gh-continu_particular} in Section~\ref{section_cv} by first stating a functional continuity of $f\longmapsto\dvt[f]$. In Section~\ref{topo}, we study the topological notion of vernation trees introduced with Definition~\ref{vernation_topo_def}. We then show Theorem~\ref{topo_conclusion}. Section~\ref{application_proba} is devoted to three probabilistic applications of our work.

The first one, in Section~\ref{random_mapping_sec}, reformulates in metric terms the asymptotics for uniform random mappings found by Aldous, Miermont \& Pitman~\cite{p-mapping} in terms of processes. Indeed, such a mapping $M$ can be represented by a graph on $\{1,2,\ldots,n\}$ with edges $(i,M(i))$, whose each connected component is a vernation tree as a collection of trees rooted on the same cycle. We give scaling limits for these components. The second application, in Section~\ref{stable}, retrieves and completes \cite[Theorem 1.2]{curien2014} of Curien \& Kortchemski about the convergences of $\alpha$-stable looptrees when $\alpha$ tends to $1$ or $2$ by showing that their whole family is continuous in distribution with respect to $\alpha$. The strategy of the proof is to justify that the coding excursions $X^{\mathsf{exc},(\alpha)}$ are PJG and to find a shuffle such that $\vernt[X^{\mathsf{exc},(\alpha)}]$ has the same law as $\vern[X^{\mathsf{exc},(\alpha)}]$. The last application, in Section~\ref{subsection_discrete_loop}, provides two invariance principles for discrete looptrees associated with finite plane trees. We first retrieve \cite[Theorem 4.1]{curien2014} of Curien \& Kortchemski under the PJG case. In the general case, we prove that if a sequence of \emph{exchangeable} random plane trees admits $X$ as the scaling limit of their Lukasiewicz walks and if their heights become negligible against the scaling, then the scaling limit of their associated looptrees is $1/2\cdot\vern[X]$, where $c\cdot E$ stands for the metric space obtained from $E$ by multiplying all distances by $c>0$. Of course, we will clarify what it means for a random plane tree to be exchangeable, but informally, it means its law is invariant by plane reordering.

\section{Tree, looptree, and vernation tree coded by an excursion}
\label{section_coding}

\subsection{Genealogy associated with an excursion}
\label{genealogy_sec}

Let $f$ be an excursion as in Definition~\ref{excursion}. Here, we study the genealogical structure of the metric spaces coded by $f$. Recall from (\ref{genealogy_def}) that for all $s,t\in[0,1]$, we write $s\preceq t$ when $s\leq t$ and $f(s-)\leq\inf_{[s,t]}f$. We also write $s\prec t$ when $s\preceq t$ and $s<t$. This definition from Curien \& Kortchemski~\cite{curien2014} is motivated by the following principle. If $r\in[s,1]$ is the last time such that $f(r)=f(s-)$, then the function $u\in[0,1]\mapsto f(s+u(r-s))-f(s-)$ extends into an excursion that should code the subspace genealogically above the loop associated with $s$. The proposition below justifies that $\preceq$ indeed describes a genealogy. Moreover, the following lemma provides an expression for its most recent common ancestors.

\begin{proposition}
\label{genealogy_prop}
The relation $\preceq$ enjoys the following properties.
\begin{enumerate}
\item[$(i)$] The relation $\preceq$ is a partial order on $[0,1]$.
\item[$(ii)$] The genealogy admits $0$ as its root, namely $0\preceq t$ for all $t\in[0,1]$.
\item[$(iii)$] Two points have a most recent common ancestor: for any $s,t\in[0,1]$, there is a unique $u\in[0,1]$ such that for all $r\in[0,1]$, $r\preceq u\Longleftrightarrow r\preceq s,t$. We denote it by $s\wedge_f t$, or just by $s\wedge t$ if the excursion $f$ is obvious within context.
\item[$(iv)$] The ancestral lineages are totally ordered: for any $t\in[0,1]$,  $\preceq$ induces a total order on $\{s\in[0,1] : s\preceq t\}$.
\end{enumerate}
\end{proposition}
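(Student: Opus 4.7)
The plan is to check (i), (ii), (iv) quickly from the definition together with standard properties of càdlàg functions, then to spend real effort on (iii).

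For (i), reflexivity is $f(s-)\leq f(s)$, which holds by the non-negativity of jumps and the convention $f(0-)=0$; antisymmetry is inherited from $\leq$; and transitivity of $r\preceq_f s\preceq_f t$ (with $r<s$) reduces to the single observation $\inf_{[r,s]}f\leq f(s-)$, which is true because $f(s-)$ is a limit of values inside $[r,s)$. Property (ii) is immediate from $f(0-)=0$ and $f\geq 0$. For (iv), if $s_1\leq s_2$ are both ancestors of $t$, the inclusion $[s_1,s_2]\subseteq[s_1,t]$ gives $\inf_{[s_1,s_2]}f\geq\inf_{[s_1,t]}f\geq f(s_1-)$, hence $s_1\preceq_f s_2$.

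The content lies in (iii). I may assume without loss of generality that $s\leq t$ and set
\[ u^* := \sup\{r\in[0,s]\ :\ r\preceq_f t\}, \]
which is well defined since $0$ lies in the set. Note that $r\in[0,s]$ and $r\preceq_f t$ together force $r\preceq_f s$ (using $[r,s]\subseteq[r,t]$), so this set is exactly the set of common ancestors of $s$ and $t$. Granted $u^*\preceq_f t$, the same interval inclusion yields $u^*\preceq_f s$ for free, and the characterizing equivalence $r\preceq_f u^*\Longleftrightarrow r\preceq_f s,t$ then follows: the direction $\Rightarrow$ is transitivity, while $\Leftarrow$ uses $r\leq u^*$ (by definition of the supremum) together with the chain $f(r-)\leq\inf_{[r,t]}f\leq\inf_{[r,u^*]}f$ coming from $[r,u^*]\subseteq[r,t]$. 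Uniqueness is then immediate from antisymmetry applied to two candidates.

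The main obstacle is therefore to show $u^*$ itself belongs to the set, which need not be closed because the defining condition couples a left-limit with a closed-interval infimum. My strategy is to choose $r_n\preceq_f t$ in $[0,s]$ with $r_n\uparrow u^*$, note that monotonicity of the infimum under interval inclusion gives
\[ f(r_n-) \leq \inf_{[r_n,t]}f \leq \inf_{[u^*,t]}f, \]
and then invoke the left-continuity of the map $r\mapsto f(r-)$ for any càdlàg $f$ (a direct consequence of the existence of left limits: values of $f$ near $u^*$ from the left are close to $f(u^*-)$, so the same holds for their left-limits) to pass to the limit. This yields $f(u^*-)\leq\inf_{[u^*,t]}f$, i.e.\ $u^*\preceq_f t$, as desired.
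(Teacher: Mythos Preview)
Your proof is correct and follows essentially the same route as the paper: the verifications of (i), (ii), (iv) are identical in spirit, and for (iii) both you and the paper define the candidate as the supremum of the set of common ancestors, pick a monotone approximating sequence $r_n\uparrow u^*$, and pass to the limit using $f(r_n-)\to f(u^*-)$ together with $f(r_n-)\leq\inf_{[r_n,t]}f\leq\inf_{[u^*,t]}f$. Your version is slightly more economical in that, assuming $s\leq t$, you observe once that any $r\leq s$ with $r\preceq_f t$ is automatically an ancestor of $s$, so you only need to run the limit argument for $t$ rather than for $s$ and $t$ separately as the paper does.
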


\begin{proof}
Let $t,s,r\in[0,1]$. Obviously, we have $t\leq t$ and $f(t-)\leq f(t)=\inf_{[t,t]}f$ because $f$ is an excursion. So $t\preceq t$ and the relation $\preceq$ is reflexive. If $s\preceq t$ and $t\preceq s$, then by definition $s\leq t\leq s$, so $t=s$, and $\preceq$ is antisymmetric. To prove the transitivity, we suppose that $s\preceq t$ and $t\preceq r$ and we can assume $s< t< r$. We find $f(s-)\leq \inf_{[s,t]}f\leq f(t-)\leq \inf_{[t,r]}f$, which entails $s\preceq r$ and $(i)$. The point $(ii)$ is simply deduced from the fact that $f$ is an excursion. Let us prove $(iii)$. We set $u=\sup\{r\in[0,1]\ :\ r\preceq s,t\}$. By definition, $u\leq s$ and $u\leq t$. We give ourselves a sequence $(r_n)_{n\geq1}$ that converges towards $u$ and such that $0\leq r_n\leq r_{n+1}\leq u$ and $r_n\preceq s,t$ for all $n\geq1$. We have $\displaystyle{f(r_n-)\longrightarrow f(u-)}$ and thanks to \[f(r_n-)\leq\inf_{[r_n,t]} f\leq\inf_{[u,t]} f,\] we obtain $u\preceq t$. Likewise, $u\preceq s$. Conversely, if $r\preceq t$ and $r\preceq s$, then $r\leq u$ by definition and $f(r-)\leq\inf_{[r,t]}f\leq\inf_{[r,u]}f$, so $r\preceq u$. The uniqueness of $s\wedge t$ immediately follows from the antisymmetry of $\preceq$. Let us prove $(iv)$. We suppose $s,r\preceq t$ and without loss of generality, we assume $r\leq s$. Then, $f(r-)\leq\inf_{[r,t]}f\leq\inf_{[r,s]}f$, so $r\preceq s$.
\end{proof}

\begin{lemma}
\label{autre_utile}
Let $s,t\in[0,1]$. If $s\leq t$, then it holds that $\inf_{[s\wedge t,t]}f=\inf_{[s,t]}f$ and \[s\wedge t=\sup\left\{r\leq s\ :\ r\preceq t\right\}=\sup\left\{r\leq s\ :\ f(r-)\leq \inf_{[s,t]}f\right\}.\]
\end{lemma}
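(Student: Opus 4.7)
Set $m:=\inf_{[s,t]}f$ and $u:=s\wedge t$. My plan is to first establish the middle equality $u=\sup\{r\leq s:r\preceq t\}$, then the second equality $u=\sup\{r\leq s:f(r-)\leq m\}$, and finally to deduce the infimum equality as a byproduct of the analysis needed for the second equality.

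The middle equality is essentially a consequence of properties $(iii)$ and $(iv)$ of the preceding proposition. The inclusion $u\in\{r\leq s:r\preceq t\}$ is immediate from $u\preceq s,t$, giving ``$\leq$''. For ``$\geq$'', I use the following simple observation: if $r\leq s$ and $r\preceq t$, then $[r,s]\subseteq[r,t]$ yields $f(r-)\leq\inf_{[r,t]}f\leq\inf_{[r,s]}f$, hence $r\preceq s$; the universal characterization of $u$ in $(iii)$ then gives $r\preceq u$, so $r\leq u$.

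For the second equality, let $w$ denote the corresponding supremum, which is well-defined since $0$ lies in the set (as $f(0-)=0\leq m$). Using that $r\mapsto f(r-)$ is left-continuous (a direct consequence of $f$ being càdlàg), a standard sup argument shows $f(w-)\leq m$, i.e., $w$ itself lies in the set. The easy direction $u\leq w$ comes from $u\preceq t$ and $u\leq s$, which yield $f(u-)\leq\inf_{[u,t]}f\leq m$. For the reverse direction, I plan to show $w\preceq t$; the middle equality above then forces $w\leq u$. By definition of $w$, for every $r\in(w,s]$ one has $f(r-)>m$, and the non-negative jump property $\Delta_r\geq 0$ of excursions gives $f(r)\geq f(r-)>m\geq f(w-)$; together with $f(w)\geq f(w-)$ this yields $\inf_{[w,s]}f\geq f(w-)$. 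Combining with $\inf_{[s,t]}f=m\geq f(w-)$, we obtain $\inf_{[w,t]}f\geq f(w-)$ and thus $w\preceq t$.

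The infimum equality then falls out of the same control: $\inf_{[u,t]}f\leq m$ is trivial from $[s,t]\subseteq[u,t]$; for the reverse, the argument above shows $f(r)>m$ for $r\in(u,s]$, right-continuity of $f$ at $u$ gives $f(u)\geq m$, and $f(r)\geq m$ on $[s,t]$ is automatic. The main obstacle is the step $w\preceq t$: it requires simultaneously exploiting left-continuity of $f(\cdot-)$, the non-negative jump property of excursions, and the supremum property of $w$, in order to bound $\inf_{[w,s]}f$ from below by $f(w-)$. The rest of the proof is essentially bookkeeping.
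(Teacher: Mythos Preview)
Your proof is correct and follows essentially the same route as the paper: both establish the middle equality $u=\sup\{r\leq s:r\preceq t\}$ first via the identical argument (if $r\leq s$ and $r\preceq t$ then $r\preceq s$, hence $r\preceq u$), and both then exploit that for $r\in(u,s]$ the defining inequality must fail, which controls $f$ on $(u,s]$ from below.

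The only organizational difference is the order in which the remaining two claims are derived. The paper argues directly from $r\not\preceq t$ that $f(r-)>\inf_{[r,t]}f$ for $r\in(u,s]$, deduces $\inf_{[r,t]}f=\inf_{[s,t]}f$ on that range, and obtains the infimum identity by letting $r\downarrow u$; the third equality then drops out. You instead introduce $w=\sup\{r\leq s:f(r-)\leq m\}$, use left-continuity of $r\mapsto f(r-)$ to place $w$ in the set, show $w\preceq t$ to conclude $w=u$, and recover the infimum identity afterwards. Your route makes the third equality fully explicit at the cost of one extra (easy) ingredient, the left-continuity of $f(\cdot-)$, which the paper's ordering avoids. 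Both are equally valid and of comparable length.
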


\begin{proof}
If $r\leq s$ and $r\preceq t$, then $f(r-)\leq\inf_{[r,t]}f\leq\inf_{[r,s]}f$, and so $r\preceq s$. Thus, $r\preceq s\wedge t$. Conversely, if $r\preceq s\wedge t$, then $r\leq s$ and $r\preceq t$. So, we have shown $s\wedge t=\sup\{r\leq s\ :\ r\preceq t\}$. In particular, if $s\wedge t<r\leq s$, then we cannot have $r\preceq t$. It follows that for all $r\in(s\wedge t,s]$, it holds $f(r)\geq f(r-)>\inf_{[r,t]}f.$ Since $f$ is c\`adl\`ag, this leads to $\inf_{[r,s]}f>\inf_{[r,t]}f$. Hence, $\inf_{[r,t]}f=\inf_{[s,t]}f$ for all $r\in(s\wedge t,s]$ and we end the proof by making $r$ tend towards $s\wedge t$.
\end{proof}

For $s,t\in[0,1]$, recall from (\ref{delta_x}) the quantity $x_s^t$ that represents, when $s\preceq t$, the relative position of the ancestor of $t$ on the loop associated with $s$. This interpretation is justified by the following lemma, linking the genealogy and the $x_s^t$.

\begin{lemma}
\label{tri_utile}
Let $u,s,t\in[0,1]$. If $u< t\wedge s$, then $x_u^t=x_u^s=x_u^{t\wedge s}$.
\end{lemma}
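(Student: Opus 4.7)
The plan is to reduce the desired equalities to a purely analytic statement about infima of $f$. Since $u<s\wedge t\leq\min(s,t)$, each of $u\leq t$, $u\leq s$, and $u\leq s\wedge t$ holds, so the indicator factors in the definitions of $x_u^t$, $x_u^s$, and $x_u^{s\wedge t}$ are all equal to $1$. It therefore suffices to prove that
\[\inf_{[u,t]}f=\inf_{[u,s]}f=\inf_{[u,s\wedge t]}f,\]
because then each of the three expressions equals $\max\bigl(\inf_{[u,s\wedge t]}f-f(u-),0\bigr)$.

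The main step is to verify $\inf_{[u,t]}f=\inf_{[u,s\wedge t]}f$; the companion equality with $s$ in place of $t$ will follow symmetrically. First, I rely on a general fact about càdlàg functions: for any $a<b$, choosing a sequence $t_n\to b-$ with $f(t_n)\to f(b-)$ gives $\inf_{[a,b]}f\leq f(b-)$. Applied to the interval $[u,s\wedge t]$, which is non-degenerate thanks to the strict inequality $u<s\wedge t$, this bound yields $\inf_{[u,s\wedge t]}f\leq f((s\wedge t)-)$. On the other hand, since $s\wedge t\preceq t$ by the defining property of the most recent common ancestor, we have $f((s\wedge t)-)\leq\inf_{[s\wedge t,t]}f$. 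Chaining the two inequalities gives $\inf_{[u,s\wedge t]}f\leq\inf_{[s\wedge t,t]}f$, so splitting the interval produces
\[\inf_{[u,t]}f=\min\left(\inf_{[u,s\wedge t]}f,\ \inf_{[s\wedge t,t]}f\right)=\inf_{[u,s\wedge t]}f.\]
The identical argument with $t$ replaced by $s$ and the use of $s\wedge t\preceq s$ in place of $s\wedge t\preceq t$ then delivers $\inf_{[u,s]}f=\inf_{[u,s\wedge t]}f$, completing the reduction.

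I do not anticipate any real obstacle: the argument is essentially a bookkeeping exercise that packages the càdlàg upper bound $\inf_{[a,b]}f\leq f(b-)$ with the defining property $s\wedge t\preceq s,t$ of the most recent common ancestor. The only subtlety is the need for a non-degenerate sub-interval on the left, which is precisely why the strict inequality $u<s\wedge t$ appears in the hypothesis, ruling out the boundary case $u=s\wedge t$.
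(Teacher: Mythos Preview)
Your proof is correct and follows essentially the same route as the paper: both arguments rest on the chain $\inf_{[u,s\wedge t]}f\leq f((s\wedge t)-)\leq\min\bigl(\inf_{[s\wedge t,t]}f,\inf_{[s\wedge t,s]}f\bigr)$, from which the equality of the three infima and hence of $x_u^t$, $x_u^s$, $x_u^{s\wedge t}$ follows. The paper simply compresses this into a single displayed inequality, whereas you spell out each step.
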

\begin{proof}
We merely write $\inf_{[u,t\wedge s]}f\leq f(t\wedge s-)\leq\min\left(\inf_{[t\wedge s,t]}f,\inf_{[t\wedge s,s]}f\right)$.
\end{proof}

Finally, we provide a bound for sums of $x_s^t$ that will be useful to control the pseudo-distances coded by excursions.
\begin{equation}
\label{9}
\textit{If }\quad 0\leq s<t\leq 1\quad\textit{ then }\quad\sum_{r\in(s,t)}x_r^t=\sum_{\substack{s<r\\r\prec t}}x_r^t\leq f(t-)-\inf_{[s,t]}f.
\end{equation}

\begin{proof}[Proof of (\ref{9})]
Note that if $x_r^t>0$ then $r\preceq t$, which yields the first equality. Let $r_i,1\leq i\leq n,$ be some elements of $\{r\in[0,1]\ :\ r\prec t,s<r\}$ such that $r_i<r_{i+1}$ for any $1\leq i\leq n-1$. Let us also set $r_{n+1}=t$. With these notations, we have $r_{i+1}\in(r_i,t]$ for any $1\leq i\leq n$, so we can write \[\sum_{i=1}^{n} x_{r_i}^t=\sum_{i=1}^{n} \left(\inf_{[r_i,t]}f-f(r_i-)\right)\leq \sum_{i=1}^{n} f(r_{i+1}-)-f(r_i-)=f(t-)-f(r_1-).\]
Eventually, $f(r_1-)\geq\inf_{[s,t]}f$ because $r_1\in (s,t]$. The lemma follows.
\end{proof}

\subsection{Pseudo-distances coded by an excursion}
\label{pseudo_dist_coded_sec}

Let us fix an excursion $f$, as in Definition~\ref{excursion}, and a shuffle $\Phi$, as in Definition~\ref{shuffle}. Here, we prove the following result.

\begin{proposition}
\label{thm_distance_d}
The functions $\dl$, $\dlt$, and $\dtree$ respectively defined by (\ref{dl_easy}), (\ref{dlt_easy}), and (\ref{dtree_easy}) are pseudo-distances on $[-1,1]$. As sums of pseudo-distances, the functions $\dv$ and $\dvt$ given by (\ref{dv_easy}) and (\ref{dlt_easy}) are pseudo-distances on $[-1,1]$.
\end{proposition}

Observe that for any $s\in[-1,0)$, $\dl(t,s)=\dl(t,1)$ and $\dtree(t,s)=\dtree(t,1)$ for all $t\in[-1,1]$. Thus, we will focus on proving that $\dl$ and $\dtree$ are pseudo-distances on $[0,1]$. But first, we give alternative expressions to relate them to the genealogy $\preceq$ given by (\ref{genealogy_def}) and studied in Section~\ref{genealogy_sec}. For all $r\in[0,1]$, recall from (\ref{circle_dist_s}) and (\ref{circle_shuffled_dist_s}) the pseudo-distances $\delta_r$ and $\tilde{\delta}_r$ on $[0,\Delta_r]$ that induce the loop and the shuffled loop (of length $\Delta_r$) associated with $r$. Let $s,t\in[0,1]$, note from (\ref{delta_x}) that if $r\preceq s$ does not hold then $x_r^s=0$. Plus, Lemma~\ref{tri_utile} ensures that if $r\prec t\wedge s$ then $x_r^s=x_r^t$. Therefore, Proposition~\ref{genealogy_prop} $(iii)$ yields the two identities
\begin{align*}
\delta_r(x_r^s,x_r^t)&=\delta_r(0,x_r^s)\I{s\wedge t\prec r\preceq s}+\delta_r(0,x_r^t)\I{s\wedge t\prec r\preceq t}+\delta_{s\wedge t}(x_{s\wedge t}^s,x_{s\wedge t}^t)\I{s\wedge t=r},\\
\tilde{\delta}_r(x_r^s,x_r^t)&=\tilde{\delta}_r(0,x_r^s)\I{s\wedge t\prec r\preceq s}+\tilde{\delta}_r(0,x_r^t)\I{s\wedge t\prec r\preceq t}+\tilde{\delta}_{s\wedge t}(x_{s\wedge t}^s,x_{s\wedge t}^t)\I{s\wedge t=r}.
\end{align*}
They allow us to recover the expression of $\dl$ proposed by Curien \& Kortchemski~\cite{curien2014}: for all $s,t\in[0,1]$, it holds that
\begin{align}
\label{dl_hard}
\dl(s,t)&=\delta_{s\wedge t}(x_{s\wedge t}^s,x_{s\wedge t}^t)+\sum_{s\wedge t\prec r\preceq s}\delta_r(0,x_r^s)+\sum_{s\wedge t\prec r\preceq t}\delta_r(0,x_r^t),\\
\label{dlt_hard}
\dlt(s,t)&=\tilde{\delta}_{s\wedge t}(x_{s\wedge t}^s,x_{s\wedge t}^t)+\sum_{s\wedge t\prec r\preceq s}\tilde{\delta}_r(0,x_r^s)+\sum_{s\wedge t\prec r\preceq t}\tilde{\delta}_r(0,x_r^t).
\end{align}
Basically, a minimal path on the looptree linking $s$ and $t$ visits the loop associated with $s\wedge t$ but does not go below. Then, each term of the sum corresponds to the length of such a path getting through one of the loops between $s$ and $t$.

Similarly, we can write $|x_r^s-x_r^t|=x_r^s\I{s\wedge t\prec r\preceq s}+x_r^t\I{s\wedge t\prec r\preceq t}+|x_{s\wedge t}^s-x_{s\wedge t}^t|\I{s\wedge t=r}$. Thus, if $s\leq t$ then
\begin{equation}
\label{dtree_hard}
\dtree(s,t)=f(s)+f(t)-2\inf_{[s,t]}f-(x_{s\wedge t}^s-x_{s\wedge t}^t)-\sum_{s\wedge t\prec r\preceq s}x_r^s-\sum_{s\wedge t\prec r\preceq t}x_r^t.
\end{equation}
In particular, we get a simpler formula when $s=s\wedge t$, i.e.~when $s$ is an ancestor of $t$, thanks to Lemma~\ref{autre_utile}:
\begin{equation}
\label{dtree_particular}
\textit{if }\quad s\preceq t\quad\textit{ then }\quad \dtree(s,t)=f(t)-\inf_{[s,t]}f-\sum_{s\prec r\preceq t}x_r^t.
\end{equation}

Let $t\in[0,1]$, the formulas (\ref{dl_easy}), (\ref{dtree_easy}), and (\ref{dlt_easy}) readily imply that $\dl(t,t)=\dtree(t,t)=\dlt(t,t)=0$. They also entail the symmetries of $\dl,\dtree,\dlt$. We now show our quantities are nonnegative and finite. The functions $\dl$ and $\dlt$ are obviously nonnegative because they are series of nonnegative terms. Keeping only the term for $r=t$ in the series in (\ref{dtree_particular}) yields that if $s\prec t$ then $\dtree(s,t)\leq f(t-)-\inf_{[s,t]}f$ . Moreover, we apply (\ref{9}) to get a lower bound. Thus, for all $s,t\in[0,1]$,
\begin{equation}
\label{maj_dtreeO}
\textit{if }\quad s\prec t\quad\textit{ then }\quad0\leq\dtree(s,t)\leq f(t-)-\inf_{[s,t]}f.
\end{equation}
If we only have $s\preceq t$, the upper bound given by $f(t)-\inf_{[s,t]}f$ stays true. Now, we only assume $s<t$. Note $t\wedge s\leq s<t$ and $\delta_{t\wedge s}(x_{s\wedge t}^s,x_{s\wedge t}^t)\leq x_{t\wedge s}^s-x_{t\wedge s}^t$. Plus, we have $\delta_r(0,x_r^r)=\delta_r(0,\Delta_r)=0$ and $\delta_r(0,x_r^t)\leq x_r^t$ for all $t,r\in[0,1]$. Also recall from Lemma~\ref{autre_utile} that $\inf_{[s,t]}f=\inf_{[s\wedge  t,t]}f$. Eventually, if $s<t$ then (\ref{9}), (\ref{dl_hard}), (\ref{dtree_hard}) and (\ref{maj_dtreeO}) yield
\begin{align}
\label{maj_dl}
0\leq \dl(s,t)&\leq f(s)+f(t-)-2\inf_{[s,t]}f,\\
\label{maj_dtree}
0\leq \dtree(s,t)&\leq f(s)+f(t-)-2\inf_{[s,t]}f.
\end{align}
Remark we cannot find these inequalities for $\dlt$ because $\tilde{\delta}_r(0,\Delta_r)\neq 0$ and $\tilde{\delta}_r(a,b)\nleq |a-b|$. Nethertheless, the bound in (\ref{11}) ensures that there is a constant $K\in(0,\infty)$ which only depends on $||f||_\infty$ and $\Phi$ such that $\tilde{\delta}_r(0,x)\leq Kx$ for any $t\in[0,1]$ and $x\in[0,\Delta_r]$. Plus, we still have $\inf_{[s,t]}f=\inf_{[s\wedge  t,t]}f\leq \inf_{[s\wedge t,s]}f$. Thus, if $s\leq t$ then (\ref{9}) and (\ref{dlt_hard}) imply
\begin{equation}
\label{maj_dlt}
0\leq \dlt(s,t)\leq \tilde{\delta}_{s\wedge t}(x_{s\wedge t}^s,x_{s\wedge t}^t)+K\left(f(s)+f(t)-2\inf_{[s,t]}f\right).
\end{equation}
The next result asserts that $\dtree$ behaves like a metric of a tree-like space, and will be needed to get the triangular inequality.

\begin{proposition}
\label{geo_tree}
For all $r,s,t\in[0,1]$, if $r\preceq s\preceq t$ then $\dtree(r,t)=\dtree(r,s)+\dtree(s,t)$. Moreover, it always holds that 
\begin{equation}
\label{four_points_equality}
\dtree(s,t)=\dtree(s\wedge t,s)+\dtree(s\wedge t,t)=\dtree(0,s)+\dtree(0,t)-2\dtree(0,s\wedge t).
\end{equation}
\end{proposition}

\begin{proof}
We first treat the case $r=0$ and we may assume that $s>0$ because we have $\dtree(0,0)=0$. This allows us to write 
\[\dtree(0,t)=\dtree(s,t)+\inf_{[s,t]}f-\inf_{[0,t]}f-x_s^t-\sum_{0\prec u\prec s}x_u^t,\]
by rearranging (\ref{dtree_particular}). Next, we use the equality $x_0^t=\inf_{[0,t]}f$ and we apply Lemma~\ref{tri_utile} with $s=s\wedge t$ to get
\[\dtree(0,t)=\dtree(s,t)+\inf_{[s,t]}f-x_0^t-x_s^t-\sum_{0\prec u\prec s}x_u^s=\dtree(s,t)+f(s-)-x_0^t+\dtree(0,s)+x_s^s-f(s)+x_0^s.\]
Lemma~\ref{tri_utile} entails that $x_0^t=x_0^s$ because $0<s$. The desired result follows from recognizing $x_s^s=f(s)-f(s-)$. Then,
\[\dtree(r,t)=\dtree(0,t)-\dtree(0,r)=\dtree(0,s)+\dtree(s,t)-\dtree(0,r)=\dtree(r,s)+\dtree(s,t),\]
which is the general result. Now, we assume that $s\leq t$ so that Lemma~\ref{autre_utile} gives $\inf_{[s\wedge t,t]}f=\inf_{[s,t]}f$. Combining (\ref{dtree_hard}) and (\ref{dtree_particular}) yields the first equality in (\ref{four_points_equality}). The second one follows the first assertion of the proposition as $0\preceq s\wedge t\preceq s,t$.
\end{proof}

\begin{proof}[Proof of Proposition~\ref{thm_distance_d}]
The only thing left to show is the triangular inequality. The functions $\dl$ and $\dlt$ are sums of the pseudo-distances $\delta_u$ and $\tilde{\delta}_u$, for $u\in[0,1]$, so they readily enjoy the triangular inequality. We now focus on $\dtree$. Let $t,s,r\in[0,1]$. We know that either $s\wedge r\preceq t\wedge s$ or $t\wedge s\preceq s\wedge r$, because the ancestral lineage of $s$ is totally ordered. By symmetry, we can assume $s\wedge r\preceq t\wedge s$ without loss of generality. Then, $s\wedge r\preceq t$ which yields that $s\wedge r\preceq t\wedge r$. Proposition~\ref{geo_tree} and the fact that $\dtree$ is nonnegative, known from (\ref{maj_dtree}), allow us to write
\[\dtree(t,r)=\dtree(t\wedge r,t)+\dtree(t\wedge r,r)\leq \dtree(s\wedge r,t)+\dtree(s\wedge r,r)=\dtree(s\wedge r,t\wedge s)+\dtree(t\wedge s,t)+\dtree(s\wedge r,r).\]
The same argument gives that $\dtree(s\wedge r,t\wedge s)\leq \dtree(s\wedge r,s)=\dtree(s,r)-\dtree(s\wedge r,r)$ and $\dtree(t\wedge s,t)\leq \dtree(t,s)$.
\end{proof}

\begin{remark}
The triangular inequality for $\dtree$ will become even clearer later. Indeed, we will prove with Theorem~\ref{J_prop} that there exists a continuous excursion $g$ with $g(0)=0$ such that $\dtree[f]=d_g$, where $d_g$ is the classic tree pseudo-distance defined by (\ref{classical_tree_distance}). In fact, Marzouk~\cite{marzouk_similaire} defines the same pseudo-distance $\dtree$ as such.  \cq 
\end{remark}

\subsection{Regularity of the pseudo-distances, and compactness of the induced quotient metric spaces}
\label{space}

Let $d$ be a pseudo-distance on a set $X$. Writing $x\sim_d y$ when $d(x,y)=0$ defines an equivalence relation on $X$. Then $d$ induces a genuine distance (still denoted by $d$ with a slight abuse of notation) on the quotient space $X/\!\sim_d$. More precisely, if we denote by $\mathsf{p}:X\to X/\! \sim_d$ the canonical projection, then $d(\mathsf{p}(x),\mathsf{p}(y))=d(x,y)$ for all $x,y\in X$. We call the metric space $(X/\!\sim_d,d)$ the \emph{quotient metric space of $X$ induced by $d$}. Moreover, if $X$ is a compact metric space and if $d$ is continuous on $X^2$, then $\mathsf{p}$ is continuous and $X/\!\sim_d=\mathsf{p}(X)$ is compact. Let $f$ be an excursion. Recall from Sections~\ref{intro_desc} and \ref{intro_shuffling} that $\lop[f],\tree[f],\vern[f],\lopt[f],\vernt[f]$ are the quotient metric spaces of $[0,1]$ respectively induced by $\dl,\dtree,\dv,\dlt,\dvt$.

\begin{proposition}
\label{d_continu}
The functions $\dl,\dtree,\dv$ are continuous on $[-1,1]^2$. The metric spaces $\lop[f],\tree[f],\vern[f]$ are thus compact.
\end{proposition}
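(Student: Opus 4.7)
The plan is to reduce joint continuity on $[0,1]^2$ to the one-variable ``diagonal continuity'' property: for every $r_0\in[0,1]$ and every $d\in\{\dl,\dtree\}$, $d(r,r_0)\to 0$ as $r\to r_0$. Indeed, by the triangular inequality established in Proposition \ref{thm_distance_d}, for all $(s_0,t_0),(s,t)\in[0,1]^2$,
\[|d(s,t)-d(s_0,t_0)|\leq d(s,s_0)+d(t,t_0),\]
so diagonal continuity immediately yields joint continuity of $\dl$ and $\dtree$; then $\dv=\dtree+2\dl$ is continuous as a sum.

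The diagonal continuity follows from the upper bounds (\ref{maj_dl}) and (\ref{maj_dtree}), namely $d(s,t)\leq f(s)+f(t-)-2\inf_{[s,t]}f$ whenever $s<t$. For $r\to r_0^+$, I would apply the bound with $(s,t)=(r_0,r)$: by right-continuity of $f$ at $r_0$, one has $f(r)\to f(r_0)$ and $\inf_{[r_0,r]}f\to f(r_0)$; moreover the c\`agl\`ad function $u\mapsto f(u-)$ has right limit $f(r_0)$ at $r_0$ (a standard c\`adl\`ag fact), giving $f(r-)\to f(r_0)$. The three terms cancel and $d(r_0,r)\to 0$. For $r\to r_0^-$, I would apply the bound with $(s,t)=(r,r_0)$: by definition of a left limit, $f(r)\to f(r_0-)$. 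Because every jump of $f$ is non-negative, one has $f(r_0)\geq f(r_0-)$, so $\inf_{[r,r_0]}f=\inf_{[r,r_0)}f$, and this infimum tends to $f(r_0-)$ as $r\to r_0^-$ by the usual left-limit argument. Hence $d(r,r_0)\to 0$ on both sides, so diagonal continuity is proven.

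With $\dl,\dtree,\dv$ continuous on $[0,1]^2$, compactness of the quotients $\lop[f]$, $\tree[f]$, $\vern[f]$ is then immediate: the canonical projection $\pi:[0,1]\longrightarrow [0,1]/\{d=0\}$ is continuous, because $s_n\to s$ in $[0,1]$ forces $d(s_n,s)\to d(s,s)=0$, which is precisely $\pi(s_n)\to \pi(s)$ in the quotient metric; being also surjective, $\pi$ makes each quotient a continuous image of the compact interval $[0,1]$. The main subtlety I expect to encounter will be at jump points of $f$, where the continuity of $f$ cannot be invoked: the argument that $\inf_{[r,r_0]}f\to f(r_0-)$ as $r\to r_0^-$ when $\Delta_{r_0}>0$ hinges crucially on the non-negativity of jumps, which is exactly the defining feature of an excursion in the sense of Definition \ref{excursion}.
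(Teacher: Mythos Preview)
Your proof is correct and follows essentially the same approach as the paper: both reduce joint continuity to the one-variable statement $d(r,r_0)\to 0$ via the triangular inequality, and both establish the latter from the upper bounds (\ref{maj_dl}) and (\ref{maj_dtree}) by checking the right and left limits separately using that $f$ is c\`adl\`ag with non-negative jumps. You spell out a few standard details (e.g., $f(r-)\to f(r_0)$ as $r\to r_0^+$, the compactness via continuous surjection) that the paper leaves implicit, but the argument is the same.
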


\begin{proof}
Note that if $s\in[-1,0]$ then $\dl(t,s)=\dl(t,1)$ and $\dtree(t,s)=\dtree(t,1)$ for all $t\in[-1,1]$. As $\dl$ is a pseudo-distance, we only need to show that $\dl(t,t_n)\to 0$ when $t_n\uparrow t$ or $t_n\downarrow t$ on $[0,1]$. Thanks to (\ref{maj_dl}), if $t_n<t<t_m$ then 
\begin{align*}
\dl(t,t_n)\leq f(t-)+f(t_n)-2\inf_{[t_n,t]}f&\underset{t_n\rightarrow t-}{\longrightarrow} f(t-)+f(t-)-2f(t-)=0,\\
\dl(t,t_m)\leq f(t)+f(t_m-)-2\inf_{[t,t_m]}f&\underset{t_m\rightarrow t+}{\longrightarrow} f(t)+f(t)-2f(t)=0
\end{align*}
by Definition~\ref{excursion}. We prove the continuity of $\dtree$ in the same way with (\ref{maj_dtree}). The continuity of $\dv=\dtree+2\dl$ follows.
\end{proof}

The pseudo-distance $\dlt$ (as well as $\dvt$) is not continuous in general because of the possible discontinuities of the $\phi_\Delta$ given by the shuffle $\Phi$ (recall Definition~\ref{shuffle}). However, it still enjoys some regularity in the following sense.
\begin{definition}
\label{bivariate}
We denote by $\mathbb{D}([-1,1]^2)$ the set of functions $\psi:[-1,1]^2\longrightarrow\mathbb{R}$ such that for any monotonous sequences $(s_n)$ and $(t_n)$ of elements of $[-1,1]$, the sequence $(\psi(s_n,t_n))$ converges, and such that its limit is $\psi(\lim s_n,\lim t_n)$ when $(s_n)$ and $(t_n)$ are non-increasing. We provide a distance $\rho_2$ to $\mathbb{D}([-1,1]^2)$ by setting for all $\psi_1,\psi_2\in\mathbb{D}([-1,1]^2)$,
\begin{equation}
\label{sko_dist_biv}
\rho_2(\psi_1,\psi_2)=\inf_{\lambda,\mu\in\Lambda}\max\left(||\psi_1-\psi_2\circ(\lambda,\mu)||_{\infty},||\lambda-\mathsf{id}||_\infty,||\mu-\mathsf{id}||_\infty\right),
\end{equation}
 where $\Lambda$ is the set of increasing bijections from $[-1,1]$ into itself. The distance $\rho_2$ is not complete but induces a separable and completely metrizable topology on $\mathbb{D}([-1,1]^2)$, that we call the Skorokhod topology on $[-1,1]^2$.\cq
\end{definition}

\noindent
\begin{remark}
\label{bivariate_unif}
This is a generalization of the Skorokhod space for bivariate functions, and is more precisely presented by Straf~\cite{straf1972}. Let us mention that if $(\psi_n)$ converges to $\psi$ for this topology and if $\psi$ is continuous on $[-1,1]^2$, then the convergence also happens uniformly on $[-1,1]^2$. Indeed, this follows from the uniform continuity of $\psi$ on $[-1,1]^2$. \cq
\end{remark}

\begin{proposition}
\label{dt_continu}
The canonical projection maps from $[0,1]$ to $\lopt[f]$ or $\vernt[f]$ are c\`adl\`ag. The metric spaces $\lopt[f]$ and $\vernt[f]$ are compact, and the functions $\dlt$ and $\dvt$ are in $\mathbb{D}([-1,1]^2)$.
\end{proposition}

\begin{proof}
Note that if $s\in[-1,0)$ then $\dlt(t,s)=\dlt(t,1)$ for all $t\in[-1,1]$. By triangular inequality, it is enough to show that if $s_n\uparrow s$ and $t_n\downarrow t$ on $[0,1]$, then $\dlt(t_n,t)\to 0$ and there is $\bar{s}\in[0,1]$ such that $\dlt(s_n,\bar{s})\to 0$ and $\dtree(s,\bar{s})=0$. The compactness of $\lopt[f]$ and $\vernt[f]$ would then follow from that of $[0,1]$. We begin by proving $\dlt(t,t_n)\to 0$. Since $f$ is c\`adl\`ag, we get $f(t)+f(t_n)-2\inf_{[t,t_n]}f\longrightarrow 0$, so the inequality (\ref{maj_dlt}) entails that we only need to show that $\tilde{\delta}_{t\wedge t_n}(x_{t\wedge t_n}^t,x_{t\wedge t_n}^{t_n})$ tends to $0$. Thanks to Lemma~\ref{autre_utile}, we observe that the sequence $(t\wedge t_n)$ is non-decreasing. Moreover, the set $\{r\in[0,1]\ :\ \Delta_r\geq\varepsilon\}$ is finite for all $\varepsilon>0$, so only two cases are possible: either $\Delta_{t\wedge t_n}\longrightarrow0$, or there exists $\tau\in[0,1]$ with $\Delta_\tau>0$ such that $t\wedge t_n=\tau$ for all large enough $n$. In the first case, we have $\tilde{\delta}_{t\wedge t_n}(x_{t\wedge t_n}^t,x_{t\wedge t_n}^{t_n})\leq\Delta_{t\wedge t_n}\longrightarrow0$. In the second case, when $n$ is large enough, we have $\tilde{\delta}_{t\wedge t_n}(x_{t\wedge t_n}^t,x_{t\wedge t_n}^{t_n})=\tilde{\delta}_\tau(x_\tau^t,x_\tau^{t_n})$. But, $x_\tau^{t_n}\uparrow x_\tau^t$ because $f$ is c\`adl\`ag. By Definition~\ref{shuffle}, the function $\phi_{\Delta_{\tau}}$ is c\`agl\`ad, so $\tilde{\delta}_\tau(x_\tau^t,x_\tau^{t_n})\longrightarrow \hat{\delta}_\tau(x_\tau^t,x_\tau^{t})=0$. Thus, $\dlt(t,t_n)\to 0$.
\smallskip

We set $\displaystyle{s'=\inf\{r\geq s\ :\ f(r)\leq f(s-)\}}$, so that $\inf_{[s,s']}f=f(s-)=f(s')=f(s'-)$ because $f$ is an excursion. Thus, $s\preceq s'$ and $\dtree(s,s')=0$ by the inequality (\ref{maj_dtreeO}). The sequence $(s_n\wedge s')$ is non-decreasing according to Lemma~\ref{autre_utile}, so as before, either $\Delta_{s_n\wedge s'}\longrightarrow0$, or there exists $\sigma\in[0,1]$ with $\Delta_\sigma>0$ such that $s_n\wedge s'=\sigma$ for $n$ large enough.
\begin{itemize}
\item In the first case: we set $\bar{s}=s'$ and get $\dtree(s,\bar{s})=0$. As $f$ is càdlàg, it holds $f(\bar{s})+f(s_n)-2\inf_{[s_n,\bar{s}]}f\to 0$ by definition of $s'=\bar{s}$. Plus, we have $\tilde{\delta}_{\bar{s}\wedge s_n}(x_{\bar{s}\wedge s_n}^{s_n},x_{\bar{s}\wedge s_n}^{\bar{s}})\leq\Delta_{s_n\wedge s'}\longrightarrow0$ here. Then (\ref{maj_dlt}) gives $\dlt(s_n,\bar{s})\longrightarrow0$.

\item In the second case: when $n$ is large enough, it holds $\sigma= s_n\wedge s'\preceq s'$ so $\inf_{[\sigma,s']}f=\inf_{[s_n,s']}f$ according to Lemma~\ref{autre_utile}. But, we see $\inf_{[s_n,s']}f\longrightarrow f(s')$ by definition of $s'$, so the inequality (\ref{maj_dtreeO}) yields $\dtree(\sigma,s')=0$. We already know $\dtree(s,s')=0$ so $\dtree(\sigma,s)=0$. Let us set \[\bar{s}=\inf\left\{r\geq\sigma\ :\ \phi_{\Delta_\sigma}\left(\frac{f(r)-f(\sigma-)}{\Delta_\sigma}\right)=\lim_{n\to\infty}\phi_{\Delta_\sigma}\left(\frac{x_\sigma^{s_n}}{\Delta_\sigma}\right)\right\},\]which is well-defined because $\phi_{\Delta_\sigma}$ is surjective and càglàd, by Definition~\ref{shuffle}, and because $f$ is an excursion. Using Definition~\ref{excursion} of excursions, we find $\sigma\preceq \bar{s}$ and $f(\bar{s})=\inf_{[\sigma,\bar{s}]}f$. As a result, $\sigma\preceq s_n\wedge\bar{s}$ when $n$ is large enough and if $r\in(\sigma,\bar{s}]$ then $x_r^{\bar{s}}=0$. It also follows from (\ref{maj_dtreeO}) that $\dtree(\sigma,\bar{s})=0=\dtree(s,\bar{s})$. Then, (\ref{dlt_hard}) entails that
\[\dlt(s_n,\bar{s})\leq \dlt(s_n,s')-\tilde{\delta}_{s_n\wedge s'}(x_{s_n\wedge s'}^{s_n},x_{s_n\wedge s'}^{s'})+\tilde{\delta}_\sigma(x_\sigma^{s_n},x_\sigma^{\bar{s}})\]
when $n$ is large enough. We check $\phi_{\Delta_\sigma}(x_\sigma^{\bar{s}}/\Delta_\sigma)=\lim\phi_{\Delta_\sigma}(x_\sigma^{s_n}/\Delta_\sigma)$ using the left-continuity of $\phi_{\Delta_\sigma}$. Thus, $\tilde{\delta}_\sigma(x_\sigma^{s_n},x_\sigma^{\bar{s}})\longrightarrow 0$. We complete the proof with (\ref{maj_dlt}) because $f(s')+f(s_n)-2\inf_{[s_n,s']}f\longrightarrow 0$, as seen above.
\end{itemize}
\end{proof}

\subsection{First properties and an example}
\label{fst_properties_sec}

A \emph{pointed metric space} is a triple $(X,d,a)$ where $(X,d)$ is a metric space equipped with a distinguished point $a\in X$, also called a root. We say two pointed metric spaces $(X_1,d_1,a_1)$ and $(X_2,d_2,a_2)$ are \emph{pointed-isometric} when there exists a bijective isometry $\lambda$ from $X_1$ to $X_2$ such that $\lambda(a_1)=a_2$. A \emph{pointed weighted metric space} is a quadruple $(X,d,a,\mu)$ where $(X,d,a)$ is a pointed metric space also equipped with a Borel probability measure $\mu$ on $(X,d)$. We say two pointed weighted metric spaces $(X_1,d_1,a_1,\mu_1)$ and $(X_2,d_2,a_2,\mu_2)$ are \emph{GHP-isometric} when there exists a bijective isometry $\lambda$ from $X_1$ to $X_2$ such that $\lambda(a_1)=a_2$ and $\lambda_*\mu_1=\mu_2$. When no confusion is possible, we shall denote a metric space (possibly endowed with a root and/or a Borel probability measure) by its underlying set. Finally, when $c>0$, we write $c\cdot (X,d,a,\mu)=(X,cd,a,\mu)$ for the pointed weighted metric space obtained after multiplying all distances by $c$. As same, we write $c\cdot (X,d,a)=(X,cd,a)$ and $c\cdot (X,d)=(X,cd)$.

Recall from Definition~\ref{excursion} that $\mathbb{H}$ stands for the space of excursions. We remind from Section~\ref{intro_desc} and \ref{intro_shuffling} that for $f\in\mathbb{H}$, $\lop[f],\tree[f],\vern[f],\lopt[f],\vernt[f]$ are the quotient metric spaces of $[0,1]$ resp.~induced by $\dl,\dtree,\dv,\dlt,\dvt$. Each is equipped with the canonical projection of $1$ and with the pushforward measure of the Lebesgue measure on $[0,1]$. Thus, they are all pointed weighted metric spaces. Here, we provide some properties that make these codings easy to use in practice.

\begin{proposition}
\label{time-change}
The maps $f\in\mathbb{H}\mapsto \dl[f],\dtree[f],\dv[f]$ are homogeneous, and the maps $f\in\mathbb{H}\mapsto \dl[f],\dtree[f],\dlt[f],\dv[f],\dvt[f]$ have the time-change property. These properties are respectively defined below for $\dl[f]$.
\begin{enumerate}
\item[(Homogeneity)] For any $\alpha>0$, it holds that $\dl[\alpha f]=\alpha \dl[f]$. Thus, $\lop[\alpha f]$ and $\alpha\cdot\lop[f]$ are GHP-isometric.

\item[(Time-change)] For any increasing bijection $\lambda:[-1,1]\to [-1,1]$ with $\lambda(0)\leq 0$, it holds that $\dl[f\circ \lambda](s,t)=\dl[f](\lambda(s),\lambda(t))$ for all $s,t\in[-1,1]$. Thus, $\lop[f\circ \lambda]$ and $\lop[f]$ are pointed-isometric.
\end{enumerate}
\end{proposition}

\begin{proof} The first thing to see is that $\alpha f$ and $f\circ \lambda$ are indeed excursions. Then, it is obvious that $x_s^t(\alpha f)=\alpha x_s^t(f)$ and $x_s^t(f\circ \lambda)=x_{\lambda(s)}^{\lambda(t)}(f)$ for all $s,t\in[-1,1]$. In particular, $\Delta_s(\alpha f)=\alpha\Delta_s(f)$ and $\Delta_s(f\circ\lambda)=\Delta_{\lambda(s)}(f)$. By definition (\ref{circle_dist_s}) and (\ref{circle_shuffled_dist_s}) of $\delta_s$ and $\tilde{\delta}_s$, the proposition follows from (\ref{dl_easy}), (\ref{dtree_easy}), (\ref{dv_easy}) and (\ref{dlt_easy}). Let us point out that we have used $\delta_s(\alpha a,\alpha b)=\alpha\delta_s(a,b)$ which does not hold for $\tilde{\delta}_s$, which explains why $f\longmapsto\dlt[f]$ and $f\longmapsto\dvt[f]$ are not homogeneous.
\end{proof}

\noindent
Another important property of the codings is the branching property. It asserts that a 'sub-excursion' codes a subspace of the looptree or the vernation tree. It allows observing the structure of the coded space directly on the graph of the excursion. We have already presented this idea when we defined the genealogy $\preceq$ in Section~\ref{genealogy_sec}.

\begin{definition}
\label{gluing}
Let $(X_0,d_0,a_0)$ and $(X_1,d_1,a_1)$ be two pointed metric spaces and let $a\in X_0$. We write $a_0^*=a$ and $a_1^*=a_1$. The \emph{gluing} of $X_1$ on $X_0$ at $a$ is the pointed metric space denoted by $(X_0\vee_a X_1,d,a_0)$ that is obtained from the quotient of the disjoint union $X_0\sqcup X_1$ by the identification $a\sim a_1$, endowed with the distance $d$ defined by setting $d(x,y)=d_i(x,y)$ if $x,y\in X_i$ with $i\in\{0,1\}$, and by setting $d(x,y)=d_i(x,a_i^*)+d_{1-i}(a_{1-i}^*,y)$ if $x\in X_i$ and $y\in X_{1-i}$ with $i\in\{0,1\}$. We view $X_0$ and $X_1$ as closed subsets of $X_0\vee_a X_1$, so that $X_0\cup X_1=X_0\vee_a X_1$, $X_0\cap X_1=\{a\}=\{a_1\}$, and the distinguished points of $X_0$ and $X_0\vee_a X_1$ are the same.\cq
\end{definition}

\begin{proposition}
\label{branching_property}
The map $f\in\mathbb{H}\longmapsto \dl[f]$ (respectively $\dtree[f]$, $\dlt[f]$, $\dv[f]$, $\dvt[f]$) enjoys the branching property. Namely, let $u,v\in[0,1]$ be such that $u\prec v$ and $f(u-)=f(v-)$, and let us set 
\[g:t\in[-1,1]\longmapsto \begin{cases}
						f(t)&\text{ if }t\notin[u,v),\\
						f(u-)&\text{ if }t\in[u,v),
						\end{cases}\quad\text{ and }\quad h:t\in[-1,1]\longmapsto \begin{cases}
                        f(u+t(v-u))-f(u-)&\text{ if }t\in[0,1),\\
                        0&\text{ if }t\notin[0,1).
                        \end{cases}\]
If $s,t\in[0,1)$ and $a,b\in[0,1]\backslash[u,v)$, then $\dl[f](a,b)=\dl[g](a,b)$ and $\dl[f](u+s(v-u),u+t(v-u))=\dl[h](s,t)$, and
\[\dl[f](u+s(v-u),a)=\dl[g](a,u)+\dl[h](s,1).\]
Thus, the pointed metric space $\lop[f]$ is pointed-isometric to the gluing of $\lop[h]$ on $\lop[g]$ at the canonical projection of $u$.
\end{proposition}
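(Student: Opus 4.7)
My strategy is to exploit the explicit decompositions (\ref{dl_easy}), (\ref{dlt_easy}), (\ref{dtree_easy}) that write each pseudo-distance as a series $\sum_{r\in[0,1]}$ of local contributions depending only on $\Delta_r$, $x_r^s$, and $x_r^t$; the three identities in the branching property will then reduce to matching summands term by term under the two surgeries $f\rightsquigarrow g$ and $f\rightsquigarrow h$. The conclusions for $\dv$ and $\dvt$ follow at once by linearity from the ones for $\dl$, $\dlt$, and $\dtree$, and the pointed-isometry assertion at the end of the statement is a direct consequence of the three distance identities together with Definition \ref{gluing}.

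First I would record two elementary facts that organise all the bookkeeping. \emph{(a)} Because $g$ is constant equal to $f(u-)$ on $[u,v)$ while $\inf_{[u,v]}f=f(u-)$ (from $u\preceq_f v$ and $f(v-)=f(u-)$), one has $\Delta_r(g)=0$ for every $r\in[u,v)$; for $r\notin[u,v)$ one has $f(r-)=g(r-)$, $\Delta_r(f)=\Delta_r(g)$, and $\inf_{[r,a]}f=\inf_{[r,a]}g$ whenever $[r,a]$ either avoids or contains $[u,v]$. In particular $x_r^a(f)=x_r^a(g)$ for $r,a\notin[u,v)$. \emph{(b)} For $a\notin[u,v)$ and $r\in[u,v)$, $x_r^a(f)=0$: it is trivial if $a<u\leq r$; if $a\geq v$ then $f\geq f(u-)$ on $[u,v]$ forces $f(r-)\geq f(u-)=f(v-)\geq \inf_{[r,a]}f$, so the positive part in the definition of $x_r^a$ vanishes.

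Identity (1) is then immediate from (a) and (b): every summand of $\dl[f](a,b)$, $\dlt[f](a,b)$, and the $\sum_r$ piece of $\dtree[f](a,b)$ matches the corresponding summand for $g$, the $r\in[u,v)$ ones vanish on both sides, and the prefactor $f(a)+f(b)-2\inf_{[a,b]}f$ in (\ref{dtree_easy}) coincides with its $g$-analogue by (a). For (2), I would set up the affine bijection $r\in[u,v)\longleftrightarrow \rho=(r-u)/(v-u)\in[0,1)$ and directly check that $\Delta_r(f)=\Delta_\rho(h)$ and $x_r^{u+s(v-u)}(f)=x_\rho^s(h)$ by a shift of $f(u-)$; these identities transport the summands indexed by $r\in[u,v)$ onto those indexed by $\rho$ in the analogous decomposition of $\dl[h](s,t)$. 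For $r\notin[u,v)$: if $r\geq v$ both $x_r$-quantities vanish by the indicator, and if $r<u$ the computation $\inf_{[r,u+s(v-u)]}f=\inf_{[r,u)}f$ (since $f\geq f(u-)$ on $[u,v]$ while $\inf_{[r,u)}f\leq f(u-)$ by left-accumulation) gives $x_r^{u+s(v-u)}(f)=x_r^{u+t(v-u)}(f)$, so the symmetric summand $\delta_r$ cancels.

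Identity (3) merges both analyses. Over $r\notin[u,v)$, the same arguments as in (2) yield $x_r^{u+s(v-u)}(f)=x_r^u(g)$ (both reduce to $\max(\inf_{[r,u)}f-f(r-),0)$ when $r<u$, and both vanish when $r\geq v$), so the $r\notin[u,v)$ contribution rebuilds $\dl[g](u,a)$; over $r\in[u,v)$, fact (b) gives $x_r^a(f)=0$, while $h(1)=0$ forces $x_\rho^1(h)=0$ for every $\rho>0$, so the summand becomes $\delta_\rho^h(x_\rho^s,x_\rho^1)$ and summing over $\rho$ reconstructs $\dl[h](s,1)$. The delicate bookkeeping will concentrate at the boundary $r=u$ (equivalently $\rho=0$), where the loop of length $\Delta_u(f)=h(0)$ sitting at the root of the subexcursion must be correctly matched on both sides via the identification $x_u^{u+s(v-u)}(f)=\inf_{[u,u+s(v-u)]}f-f(u-)=\inf_{[0,s]}h=x_0^s(h)$; once this root contribution is properly pinned down, the rest is a routine verification.
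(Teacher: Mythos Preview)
Your proposal is correct and follows essentially the same approach as the paper: both proofs reduce the three identities to the term-by-term matching of the summands in (\ref{dl_easy}), (\ref{dlt_easy}), (\ref{dtree_easy}) via the same list of elementary identities $x_a^b(f)=x_a^b(g)$, $x_{u+s(v-u)}^{u+t(v-u)}(f)=x_s^t(h)$, $x_a^{u+s(v-u)}(f)=x_a^u(g)$, and $x_r^a(f)=0=x_\rho^1(h)$ for $r\in[u,v)$. Your organisation around facts (a) and (b) is slightly different in presentation but the content is the same; the paper likewise leaves the $\dtree$ verification as ``easy other observations''.
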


\begin{proof}
First, we observe that $g$ and $h$ are indeed excursions. We only show the result for $\dl$ and $\dtree$, as the same proof holds for $\dlt$ and because the branching property is preserved by linear combination. Let $a,b\in[0,1]\backslash[u,v)$ and $s,t\in[0,1)$. We assume $a\leq b$ and $s\leq t$, and we set $w=u+s(v-u)$ to lighten the notations. We immediately observe that $x_{u+s(v-u)}^{u+t(v-u)}(f)=x_s^t(h)$, and if $[u,v)\cap[a,b]=\emptyset$ then $x_a^b(f)=x_a^b(g)$. If $[a,b]\subset [u,v)$ then we still have $x_a^b(f)=x_a^b(g)$ because $\inf_{[u,v)}f=f(u-)=\inf_{[u,v)}g$. Concerning $x_a^w$, if $w<a$ then $x_a^w(f)=x_a^u(g)=0$ by definition, since $u\leq w$. If $a\leq w$ then $a<u\leq w<v$, so $x_a^w(f)=x_a^u(g)$ still holds, because\[\inf_{[u,w]}f\geq \inf_{[u,v]}f\geq f(u-)=\inf_{[u,w]}g\geq\inf_{[a,u]}f=\inf_{[a,u]}g.\] Concerning $x_w^a$, if $a<w$ then $x_w^a(f)=x_w^a(g)=0$ by definition. If $w\leq a$ then $u\leq w<v\leq a$, so $x_w^a(f)=x_w^a(g)=0$ still holds because \[f(w-)\geq \min\left(f(u-),\inf_{[u,v]}f\right)\geq f(u-)=g(w-)=f(v-)=g(v-)\geq \max\left(\inf_{[w,a]}f,\inf_{[w,a]}g\right).\] Moreover, $x_w^u(g)\leq \Delta_w(g)=0$ and $x_s^1(h)\leq h(1)=0$, so that $x_w^u(g)=x_s^1(h)=0$. To sum up, we have 
\begin{align*}
\Delta_a(f)=\Delta_a(g)&\quad\text{ and }\quad\Delta_{u+s(v-u)}(f)=\Delta_s(h),\\
x_a^b(f)=x_a^b(g)&\quad\text{ and }\quad x_{u+s(v-u)}^{u+t(v-u)}(f)=x_s^t(h),\\
x_a^w(f)=x_a^u(g)&\quad\text{ and }\quad x_w^a(f)=x_w^a(g)=x_w^u(g)=x_{s}^1(h)=0.
\end{align*}
Combining these identities with the formula (\ref{dl_easy}), we obtain the branching property for $\dl$. Some easy other observations together with the formula (\ref{dtree_easy}) and the above identities lead to the branching property for $\dtree$.
\end{proof}

Let us discuss how one can visualize the looptree or the vernation tree coded by an excursion $f$ from a glance at its graph. On the one hand, if $f$ does not have any jump, then (\ref{dl_easy}) and (\ref{dtree_easy}) ensure that $\dl=0$ and that $\dtree$ is equal to the classic tree pseudo-distance defined by (\ref{classical_tree_distance}). On the other hand, each jump of $f$ indeed corresponds to a loop in the associated looptree or vernation tree. Recall from (\ref{circle_dist_cano}) that $(\mathcal{C},\delta)$ is the circle with unit length. Let $s\in[0,1]$ such that $\Delta_s>0$ and let $x\in\mathcal{C}$, the following times are well-defined and satisfy $x_s^{\tau(x)}/\Delta_s=x$ and $\phi_{\Delta_s}(x_s^{\tilde{\tau}(x)}/\Delta_s)=x$:
\[\tau(x)=\inf\left\{t\geq s\ :\ \frac{f(t)-f(s-)}{\Delta_s}=x\right\}\quad\text{ and }\quad\tilde{\tau}(x)=\inf\left\{t\geq s\ :\ \phi_{\Delta_s}\left(\frac{f(t)-f(s-)}{\Delta_s}\right)=x\right\}.\]
Then, we check that $\dtree(\tau(x),\tau(y))=\dtree(\tilde{\tau}(x),\tilde{\tau}(y))=0$ and
\begin{equation}
\label{loop_on_coded}
\dl(\tau(x),\tau(y))=\delta_s(x_s^{\tau(x)},x_s^{\tau(y)})=\Delta_s\delta(x,y)=\tilde{\delta}_s(x_s^{\tilde{\tau}(x)},x_s^{\tilde{\tau}(y)})=\dlt(\tilde{\tau}(x),\tilde{\tau}(y))
\end{equation}
 for any $x,y\in\mathcal{C}$. Hence, the metric subspaces of $\lop[f]$ and $\lopt[f]$ respectively induced by $\{\tau(x)\ :\ x\in\mathcal{C}\}$ and $\{\tilde{\tau}(x)\ :\ x\in\mathcal{C}\}$ are both isometric to $\Delta_s\cdot \mathcal{C}$, namely a metric circle of length $\Delta_s$. The metric subspaces of $\vern[f]$ and $\vernt[f]$ induced by the same previous subsets of $[0,1]$ are isometric to $2\Delta_s\cdot \mathcal{C}$, namely a metric circle of length $2\Delta_s$. These observations together with the homogeneity, the time-change property, and the branching property are generally enough to accurately draw $\tree[f]$, $\lop[f]$, or $\vern[f]$ from the graph of $f$ when it has simple variations. We give an example with Figure~\ref{example_codings} above. For $\lopt[f]$ and $\vernt[f]$, the lack of homogeneity and the complexity of the shuffle can make them hard to be precisely drawn. Nevertheless, we can understand them as transformed versions of $\lop[f]$ and $\vern[f]$ where the positions of the joint points between loops or trees were shuffled. See Figure~\ref{example_codings} once again.

\section{Unification and relations between trees, looptrees, and vernation trees}
\label{unification}

Recall Definition~\ref{excursion} of excursions and Definition~\ref{type_exc} of continuous and PJG excursions. In this section, we inspect the decomposition of an excursion into a continuous part and a PJG part, and we explain how it relates to the expression of the vernation-tree distance $\dv$ as the combination of the tree distance $\dtree$ and of the looptree distance $\dl$. We eventually obtain a proof of Theorem~\ref{intro_uni}, which formalizes the principle that jumps correspond to loops and continuous growths correspond to branches in the associated vernation tree. This also justifies that the coding of vernation trees is a natural extension of the classic coding (\ref{classical_tree_distance}) of trees and the coding of looptrees introduced by Curien \& Kortchemski~\cite{curien2014}.

Recall the notations $\Delta_s$ and $x_s^t$ from (\ref{delta_x}). Our study heavily relies on the operator $J$ given by (\ref{J_def}) and recalled below, and on an approximating operator $J^\varepsilon$ defined as follows: for all $\varepsilon>0$ and for any excursion $f$, we set
\begin{equation}
\label{Jeps_def}
\forall t\in[-1,1],\quad Jf(t)=\sum_{r\in[0,1]}x_r^t\quad\text{ and }\quad J^\varepsilon f(t)=\sum_{\substack{r\in[0,1]\\ \Delta_r\geq\varepsilon}}x_r^t.
\end{equation}
Let us show that $Jf$ and $J^\varepsilon f$ are well-defined càdlàg functions. For any $t\in[0,1]$, the quantities $Jf(t)$ and $J^\varepsilon f(t)$ are sums of nonnegative terms, so $0\leq J^\varepsilon f(t)\leq Jf(t)$. Then, observe from (\ref{maj_dtreeO}) that $\dtree[f](0,1)=0$, where $\dtree[f]$ is the tree pseudo-distance given by (\ref{dtree_easy}). Note also that for all $r\in[0,1]$, we have $0\leq x_r^1\leq f(1)=0$. Thus, the formula (\ref{dtree_easy}) becomes
\begin{equation}
\label{calcul_Jf}
\forall t\in[-1,1],\quad \dtree[f](t,0)=\dtree[f](t,1)=f(t)-Jf(t).
\end{equation}
By Proposition~\ref{d_continu}, $\dtree[f]$ is a continuous so (\ref{calcul_Jf}) entails that $Jf$ is c\`adl\`ag and well-defined. For any $r\in[0,1]$, Definition~\ref{excursion} ensures that the function $t\in[-1,1]\mapsto x_r^t(f)$ is c\`adl\`ag and non-increasing on $[0,r)$ and $(r,1]$. Since the càdlàg function $f$ has a finite number of jumps higher than $\varepsilon$, $J^\varepsilon f$ is thus càdlàg as a finite sum of càdlàg functions. Furthermore, $J^\varepsilon f$ satisfies the assumption of the following lemma, which is helpful to identify PJG excursions in simple cases.

\begin{lemma}
\label{discrete_PJG}
Let $f$ be an excursion. If there exists a partition $0=r_0<r_1<\ldots<r_{n}<r_{n+1}=1$ such that $f$ is non-increasing on $[r_i,r_{i+1})$ for all $0\leq i\leq n$, then $f$ is PJG.
\end{lemma}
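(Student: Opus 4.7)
The plan is to reformulate "PJG" via the tree pseudo-distance $\dtree$ and then to proceed by induction on the (finite) number of jumps of $f$, leveraging the branching property.

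I would first observe that applying formula~(\ref{dtree_easy}) at $s=0$, with $x_r^0 = f(0)\un_{r=0}$ and $x_0^t = \inf_{[0,t]}f$, gives $\dtree(0,t) = f(t) - \inf_{[0,t]}f - \sum_{r>0} x_r^t$, which vanishes exactly when $f(t) = \sum_{r\preceq t} x_r^t$. Thus $f$ is PJG if and only if $\dtree(0,\cdot)\equiv 0$, equivalently (by the triangle inequality) if and only if $\dtree[f]\equiv 0$, i.e.\ if $\tree[f]$ is reduced to a single point. The hypothesis also forces all jumps of $f$ to sit at partition points: any $s\in(r_i,r_{i+1})$ satisfies $f(s-)\geq f(s)$ by monotonicity while $\Delta_s\geq 0$, so $\Delta_s=0$; in particular $f$ has only finitely many jumps.

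I would then induct on the number $m$ of jumps of $f$. For the base case $m=0$, piecewise monotonicity combined with continuity at each $r_i$ gives $f$ non-increasing on $[0,1]$, so $x_0^t=\inf_{[0,t]}f=f(t)$ while $x_r^t=0$ for every $r\in(0,1]$ (as $\Delta_r=0$), yielding $f$ PJG. For $m\geq 1$, let $r^*$ be the largest jump of $f$: on $[r^*,1]$, $f$ is continuous and non-increasing with $f(r^*)>f(r^*-)\geq 0=f(1)$, so the intermediate value theorem produces $\tau:=\inf\{s\geq r^*:f(s)\leq f(r^*-)\}\in(r^*,1]$ with $f(\tau)=f(r^*-)$ and $f>f(r^*-)$ on $[r^*,\tau)$. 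A direct check then gives $r^*\prec\tau$ and $f(r^*-)=f(\tau-)$, so Proposition~\ref{branching_property} applies with $u=r^*$ and $v=\tau$.

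With $g$ and $h$ as in that proposition, $g$ agrees with $f$ outside $[r^*,\tau)$ and is constant equal to $f(r^*-)$ on $[r^*,\tau)$: the jump at $r^*$ is erased, no other jump lives in $[r^*,\tau)$ (since $r^*$ was the last one), and $g$ remains non-increasing on each original piece (on pieces contained in $[r^*,\tau)$ it is constant, and at $r^*$ and $\tau$ the flattened value $f(r^*-)=f(\tau)$ matches the neighbouring values by continuity), so $g$ still satisfies the hypothesis with $m-1$ jumps and is PJG by induction. The zoomed excursion $h$ is non-increasing on $[0,1]$, continuous on $(0,1]$, and has no jumps in $(0,1]$, so the base case applies and yields that $h$ is PJG as well. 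The branching property then identifies $\tree[f]$ with the gluing $\tree[g]\vee_{r^*}\tree[h]$, which is a single point, whence $\dtree[f]\equiv 0$ and $f$ is PJG. The main delicate point will be verifying that the flattened $g$ really stays piecewise non-increasing and loses exactly one jump; once this bookkeeping is secured, the branching property trivializes both resulting pieces and the induction closes.
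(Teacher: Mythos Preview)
Your proof is correct and takes essentially the same route as the paper: both recast PJG as the vanishing of $\dtree$ and induct via the branching property at the rightmost relevant point, the only cosmetic difference being that you induct on the number of jumps (after observing they must sit at partition points) while the paper inducts on the number $n$ of partition pieces and branches at $r_n$. One small wording point: when you say ``the base case applies'' to $h$, note that $h$ carries one jump at $0$, so it is not literally the $m=0$ case; what applies is the \emph{argument} you gave there (non-increasing $\Rightarrow$ $x_0^t=h(t)$ and $x_r^t=0$ for $r>0$), which never used $\Delta_0=0$.
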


\begin{proof}
Thanks to (\ref{calcul_Jf}), we exactly need to show that $\dtree(t,1)=0$ for all $t\in[0,1]$. We do this by induction on $n$. If $n=0$, then $f$ is non-increasing on $[0,1]$ so $\dtree(t,1)\leq f(t)-x_0^t=f(t)-\inf_{[0,t]}f=0$, and it follows that $f$ is PJG. If $n\geq 1$, we set $\bar{r}=\inf\{r\geq r_n\ :\ f(r)=f(r_n-)\}$ so that $r_n\preceq \bar{r}$ and $f(r_n-)=f(\bar{r})=f(\bar{r}-)$. We can assume $r_n<\bar{r}$ because otherwise, $f(r_n)=f(r_n-)$ which means that $f$ is in fact non-increasing on $[r_{n-1},1]$ and so $f$ is PJG by induction. We apply the branching property given by Proposition~\ref{branching_property} with $u=r_n$ and $v=\bar{r}$ and we keep the same notations. We see that $h$ is non-increasing on  $[0,1]$ and that $g$ is non-increasing on $[r_{n-1},1]$ and on $[r_i,r_{i+1})$ for all $0\leq i\leq n-2$. By induction, $g$ and $h$ are PJG, so $\dtree[g](t,1)=\dtree[h](t,1)=0$ for all $t\in[0,1]$. We conclude thanks to the branching property.
\end{proof}

\begin{theorem}
\label{J_prop}
Let $\varepsilon>0$. If $f$ is an excursion, then the following points hold.
\begin{enumerate}
\item[$(i)$] The functions $Jf$ and $J^\varepsilon f$ are PJG excursions.

\item[$(ii)$] The function $f-Jf$ is a continuous excursion.

\item[$(iii)$] The function $f-Jf+J^\varepsilon f$ is an excursion that verifies $J(f-Jf+J^\varepsilon f)=J^\varepsilon f$.

\item[$(iv)$] We have $Jf=J^\varepsilon f$ if and only if $\Delta_t(f)=\Delta_t(f)\I{\Delta_t(f)\geq \varepsilon}$ for all $t\in[0,1]$.
\end{enumerate}
\end{theorem}

\begin{proof}
We first note that $0=f(s)=f(1)=Jf(s)=Jf(1)=J^\varepsilon f(s)=J^\varepsilon f(1)$ for any $s\in[-1,0)$. We already saw that $Jf$ and $J^\varepsilon f$ are càdlàg and nonnegative. Proposition~\ref{d_continu} and (\ref{calcul_Jf}) yield that $f-Jf$ is continuous and nonnegative. Thus, we get $(ii)$. This also gives us that $\Delta_t(Jf)=\Delta_t(f)\geq 0$ for any $t\in[0,1]$ so $Jf$ is an excursion. Since $J^\varepsilon f$ is a finite sum of càdlàg functions with a unique jump, we readily get $\Delta_t(J^\varepsilon f)=\Delta_t(f)\I{\Delta_t(f)\geq\varepsilon}\geq 0$. Hence, $J^\varepsilon f$ and $f-Jf+J^\varepsilon f$ are excursions. Moreover, if $Jf=J^\varepsilon f$ then we find $\Delta_t(f)=\Delta_t(f)\I{\Delta_t(f)\geq \varepsilon}$ for any $t\in[0,1]$. The converse is immediate with (\ref{Jeps_def}), so we obtain $(iv)$. Recall that $J^\varepsilon f$ satisfies the assumption of Lemma~\ref{discrete_PJG}, so it is PJG.

It only remains to show that $J(Jf)=Jf$ and $J(f-Jf+J^\varepsilon f)=J^\varepsilon f$. In fact, we prove that for all $s,t\in[-1,1]$,
\begin{equation}
\label{x(Jf)}
x_s^t(Jf)=x_s^t(f)\quad\text{ and }\quad x_s^t(f-Jf+J^\varepsilon f)=x_s^t(f)\I{\Delta_s(f)\geq\varepsilon}.
\end{equation}
Observe with (\ref{Jeps_def}) that (\ref{x(Jf)}) indeed implies the desired result. We assume that $0\leq s\leq t\leq 1$, because (\ref{x(Jf)}) is obvious otherwise. Using the expressions for the jumps of $Jf$ and $J^\varepsilon f$ we have found, we obtain that
\[J^\varepsilon f(s-)=\sum_{r\in[0,s)}x_r^s\I{\Delta_r\geq \varepsilon}\quad\text{ and }\quad Jf(s-)-J^\varepsilon f(s-)=\sum_{r\in[0,s)}x_r^s\I{\Delta_r<\varepsilon}.\]
Recall the genealogical order $\preceq_f$ given by (\ref{genealogy_def}) and let $u\in[0,1]$ such that $s\preceq_f u$.  Lemma~\ref{tri_utile} then implies that
\begin{equation}
\label{step1_Jprop}
J^\varepsilon f(s-)\leq J^\varepsilon f(u)-x_s^u(f)\I{\Delta_s(f)\geq\varepsilon}\quad\text{ and }\quad Jf(s-)-J^\varepsilon f(s-)\leq Jf(u)-J^\varepsilon f(u)-x_s^u(f)\I{\Delta_s(f)<\varepsilon}.
\end{equation}
Moreover, the continuity of $\dtree[f]$, (\ref{calcul_Jf}), and Proposition~\ref{geo_tree} entail that $f(u)-Jf(u)-f(s-)+Jf(s-)=\dtree[f](s,u)\geq 0$. Setting $g=f-Jf+J^\varepsilon f$ to lighten the notation, we deduce from (\ref{step1_Jprop}) and from the previous inequality that
\begin{equation}
\label{step2_Jprop}
x_s^u(f)\leq Jf(u)-Jf(s-)\leq f(u)-f(s-)\quad\text{ and }\quad x_s^u(f)\I{\Delta_s(f)\geq\varepsilon}\leq g(u)-g(s-)\leq f(u)-f(s-).
\end{equation}
Note that $x_s^t(g)=x_s^t(g)\I{\Delta_s(g)>0}=x_s^t(g)\I{\Delta_s(f)\geq\varepsilon}$ because $\Delta_s(g)=\Delta_s(J^\varepsilon f)$. Next, observe from (\ref{genealogy_def}) that if $s\preceq_f t$ then $s\preceq_f u$ for any $u\in[s,t]$. In that case, taking the infimum over $u\in[s,t]$ in (\ref{step2_Jprop}) yields that if $s\preceq_f t$ then (\ref{x(Jf)}) holds. To get (\ref{x(Jf)}) in general, we consider $\bar{s}=\inf\{r\geq s\, :\, f(r)=f(s-)\}$, so that $s\preceq_f \bar{s}$ but $x_s^{\bar{s}}(f)=0$. In particular, either $s\preceq_f t$ or $\bar{s}<t$. The identities (\ref{x(Jf)}) under the assumption that $s\preceq_f t$ entail that $x_s^{\bar{s}}(f)=x_s^{\bar{s}}(Jf)=x_s^{\bar{s}}(g)=0$. Since the functions $u\in[s,1]\mapsto x_s^u$ are non-increasing and nonnegative, we obtain (\ref{x(Jf)}) in all its generality.
\end{proof}

\noindent
This result justifies that if $f$ is an excursion, then $f-Jf$ is its continuous part and $Jf$ is its pure jump growth part. Informally, the excursion $f-Jf+J^\varepsilon f$ is obtained by stripping $f$ of all its small jumps. In contrast, the excursion $J^\varepsilon f$ contains exactly the information about the big jumps.

Let us now fix a shuffle $\Phi$ as in Definition~\ref{shuffle}. For all $r\in[0,1]$, recall from (\ref{circle_dist_s}) and (\ref{circle_shuffled_dist_s}) the pseudo-distances $\delta_r^f$ and $\tilde{\delta}_r^f$ that respectively induce the loop and the shuffled loop coded by $f$ and associated with $r$. Then recall the pseudo-distances $\dl[f]$ and $\dlt[f]$ from (\ref{dl_easy}) and (\ref{dlt_easy}). The following theorem completes the proof of Theorem~\ref{intro_uni}. 

\begin{theorem}
\label{d_J}
Let $f$ be an excursion and let $\varepsilon>0$. It holds that $\dtree[f]=\dtree[f-Jf]$, and $\dl[f]=\dl[Jf]$, and $\dlt[f]=\dlt[Jf]$. Moreover,
\begin{equation}
\label{dl_Jeps}
\forall s,t\in[-1,1],\quad \quad \dl[J^\varepsilon \!f](s,t)=\sum_{\substack{r\in[0,1]\\ \Delta_r(f)\geq\varepsilon}}\delta_r^f\left(x_r^s(f),x_r^t(f)\right)\quad\text{ and }\quad \dlt[J^\varepsilon\! f](s,t)=\sum_{\substack{r\in[0,1]\\ \Delta_r(f)\geq\varepsilon}}\tilde{\delta}_r^f\left(x_r^s(f),x_r^t(f)\right).
\end{equation}
\end{theorem}

\noindent
Indeed, Theorems~\ref{J_prop} and \ref{d_J} contain the assertions $(i)$ and $(ii)$ of Theorem~\ref{intro_uni}. Then, the expressions of $\dv[Jf]$ and $\dv[f-Jf]$ claimed by Theorem~\ref{intro_uni} follow from the simple fact that $\dl[0]=\dtree[0]=0$. Let us now prove Theorem~\ref{d_J}.

\begin{proof}
Recall that for any $r\in[0,1]$, $\delta_r$ and $\tilde{\delta}_r$ only depend on $\Delta_r=x_r^r$. 
Plugging the left identity of (\ref{x(Jf)}) into the formulas (\ref{dl_easy}) and (\ref{dlt_easy}) entails that $\dl[Jf]=\dl[f]$ and $\dlt[Jf]=\dlt[f]$. In particular, we get $\dl[J^\varepsilon f]=\dl[f-Jf+J^\varepsilon f]$ and $\dlt[J^\varepsilon f]=\dlt[f-Jf+J^\varepsilon f]$ thanks to Theorem~\ref{J_prop} $(iii)$. Then, combining the right identity of (\ref{x(Jf)}) with (\ref{dl_easy}) and (\ref{dlt_easy}) yields (\ref{dl_Jeps}), since the $\delta_r$ and $\tilde{\delta}_r$ are pseudo-distances. It remains to show that $\dtree[f-Jf]=\dtree[f]$. Let $s,t\in[0,1]$ with $s<t$. The identities (\ref{calcul_Jf}) and (\ref{dtree_easy}) yield
\[\dtree[f-Jf](s,t)=\dtree[f](0,s)+\dtree[f](0,t)-2\inf_{u\in[s,t]}\dtree[f](0,u)\]because $f-Jf$ is a continuous excursion. Recall from Section~\ref{genealogy_sec} that $s\wedge_f t\leq s$ and $s\wedge_f t\preceq_f t$. Thus, if $u\in[s,t]$ then $s\wedge_f t\preceq_f u$ by (\ref{genealogy_def}), and $\dtree[f](0,u)\geq\dtree[f](0,s\wedge_f t)$ by Proposition~\ref{geo_tree}. Moreover, applying (\ref{maj_dtreeO}) and then Lemma~\ref{autre_utile} yields \[\inf_{u\in[s,t]}\dtree[f](0,u)-\dtree[f](0,s\wedge_f t)=\inf_{u\in[s,t]}\dtree[f]\left(s\wedge_f t,u\right)\leq \inf_{u\in[s,t]}\big(f(u)-\inf_{[s\wedge_f t,u]}f\big)\leq \inf_{u\in[s,t]}f(u)-\inf_{[s\wedge_f t,t]}f=0.\]
Hence, we conclude that $\dtree[f-Jf](s,t)=\dtree[f](0,s)+\dtree[f](0,t)-2\dtree[f](0,s\wedge_f t)=\dtree[f](s,t)$ thanks to Proposition~\ref{geo_tree}.
\end{proof}

\begin{corollary}
\label{propo}
For all excursion $f$ and for all $\varepsilon\in(0,1)$, it both holds that $||\dl[f]-\dl[J^\varepsilon f]||_\infty\leq 2||Jf-J^\varepsilon f||_\infty$ and that $||\dlt[f]-\dlt[J^\varepsilon f]||_\infty\leq 2K_\Phi||Jf-J^\varepsilon f||_\infty$,
where $K_\Phi$ is the finite constant, which only depends on the shuffle $\Phi$, given by\[K_\Phi=\sup_{\Delta\in(0,1]}\sup_{x\in(0,1]}\frac{1}{x}\delta\left(\phi_\Delta(0),\phi_\Delta(x)\right).\]
\end{corollary}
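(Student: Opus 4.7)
The plan is to use Theorem \ref{d_J} to write $\dl[f](s,t) - \dl[J^\varepsilon f](s,t)$ as a single sum over jump times of $f$ whose heights are below $\varepsilon$, then bound each summand by the corresponding quantity $x_r^s(f) + x_r^t(f)$ --- exactly the terms that constitute $(Jf - J^\varepsilon f)(s)$ and $(Jf - J^\varepsilon f)(t)$.

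More precisely, fix $s, t \in [0,1]$ with $s \leq t$ and combine formula (\ref{dl_easy}) applied to $f$ with identity (\ref{dl_Jeps}) of Theorem \ref{d_J} to obtain
\[
\dl[f](s,t) - \dl[J^\varepsilon f](s,t) \;=\; \sum_{r \in [0,1] \,:\, 0 < \Delta_r(f) < \varepsilon} \delta_r^f\bigl(x_r^s(f), x_r^t(f)\bigr).
\]
For non-negative $a, b \in [0, \Delta_r(f)]$, the trivial estimate $\delta_r^f(a,b) \leq |a-b| \leq a + b$ allows the series to be split as
\[
\sum_{0 < \Delta_r(f) < \varepsilon} x_r^s(f) \;+\; \sum_{0 < \Delta_r(f) < \varepsilon} x_r^t(f) \;=\; (Jf - J^\varepsilon f)(s) + (Jf - J^\varepsilon f)(t),
\]
by definition of $J$ and $J^\varepsilon$. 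Both terms are non-negative, hence at most $\|Jf - J^\varepsilon f\|_\infty$, and the first inequality follows after taking the supremum over $s,t$.

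The argument for $\dlt$ is parallel, starting from (\ref{dlt_easy}) and (\ref{dlt_Jeps}) to produce the analogous identity with $\tilde{\delta}_r^f$ in place of $\delta_r^f$. The only adjustment is to replace $\delta_r^f(a,b) \leq a + b$ by its shuffled counterpart, which I would derive from the triangle inequality for the pseudo-distance $\tilde{\delta}_r^f$ together with the pointwise control $\tilde{\delta}_r^f(0, a) \leq K a$. The latter is a direct rescaling of the definition of $K$ given in the statement, valid for $a \in [0, \Delta_r(f)]$ provided $\Delta_r(f) \leq 1$ --- which is ensured here since $\Delta_r(f) < \varepsilon < 1$. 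Finiteness of $K$ itself is a brief check using property (\ref{11}) of a shuffle with $\Delta = 1$.

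I do not expect any genuine obstacle. The one subtle choice is to use the loose bound $\delta_r^f(a,b) \leq a + b$ rather than the sharper $\delta_r^f(a,b) \leq |a-b|$: the former is what makes the sum split cleanly into $(Jf - J^\varepsilon f)(s)$ and $(Jf - J^\varepsilon f)(t)$, producing the $L^\infty$ norm of $Jf - J^\varepsilon f$ on the right-hand side rather than some less tractable quantity involving differences.
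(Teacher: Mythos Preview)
Your proof is correct and follows essentially the same approach as the paper: subtract (\ref{dl_Jeps}) from (\ref{dl_easy}) (resp.\ (\ref{dlt_Jeps}) from (\ref{dlt_easy})) to isolate the sum over small jumps, then bound each term via the triangle inequality through $0$ to recover $(Jf-J^\varepsilon f)(s)+(Jf-J^\varepsilon f)(t)$. The paper presents the bound on $\delta_r^f(0,x_r^t)$ and $\tilde\delta_r^f(0,x_r^t)$ first and then invokes the formulas and triangle inequalities, but the content is identical.
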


\begin{proof}
From Definition~\ref{shuffle} of a shuffle, we know $K_\Phi$ is indeed finite. By definition, we have $\tilde{\delta}_r^f(0,x)\leq K_\Phi x$ for any $r\in[0,1]$ such that $\Delta_r(f)\leq 1$ and for any $x\in[0,\Delta_r(f)]$. Plus, it is clear from (\ref{circle_dist_s}) that $\delta_r^f(0,x)\leq x$. It follows that
\begin{align*}
\sum_{\substack{r\in[0,1]\\ \Delta_r(f)<\varepsilon}}\delta_r^f\left(0,x_r^t(f)\right)&\leq \sum_{\substack{r\in[0,1]\\ \Delta_r(f)<\varepsilon}}x_r^t(f)=Jf(t)-J^\varepsilon f(t),\\
\sum_{\substack{r\in[0,1]\\ \Delta_r(f)<\varepsilon}}\tilde{\delta}_r^f\left(0,x_r^t(f)\right)&\leq \sum_{\substack{r\in[0,1]\\ \Delta_r(f)<\varepsilon}}K_\Phi x_r^t(f)=K_\Phi\left(Jf(t)-J^\varepsilon f(t)\right).
\end{align*}
We conclude by applying the formulas (\ref{dl_easy}), (\ref{dlt_easy}), and (\ref{dl_Jeps}), as well as the triangular inequalities of the $\delta_r^f$ and the $\tilde{\delta}_r^f$.
\end{proof}

We finish this section by providing some tools to study convergences involving $J$ or $J^\varepsilon$.

\begin{proposition}
\label{cv_J}
Let $f$ be an excursion and let $\varepsilon>0$. The following holds.
\begin{enumerate}
\item[$(i)$] If $\lambda:[-1,1]\to[-1,1]$ is an increasing bijection with $\lambda(0)\leq 0$, then $J(f\circ\lambda)=(Jf)\circ\lambda$ and $J^\varepsilon(f\circ\lambda)=(J^\varepsilon f)\circ\lambda$.
\item[$(ii)$] If $\varepsilon\notin\{\Delta_t(f)\, :\, t\in[0,1]\}$, then $J^\varepsilon$ is continuous at $f$ with respect to the topology of uniform convergence.
\item[$(iii)$] The excursion $J^\varepsilon f$ uniformly converges on $[-1,1]$ towards $Jf$ as $\varepsilon\to 0^+$.
\end{enumerate}
\end{proposition}

\begin{proof}
As in the proof of Proposition~\ref{time-change}, we deduce $(i)$ from the equalities $x_s^t(f\circ\lambda)=x_{\lambda(s)}^{\lambda(t)}(f)$ for all $s,t\in[-1,1]$. Let us prove $(ii)$. The excursion $f$ is c\`adl\`ag so the set $\{r\in[0,1]\ :\ \Delta_r(f)>\varepsilon/2\}$ is finite. Thus,
\begin{align*}
\eta_1&=\max\{\Delta_r(f)\ :\ r\in[0,1]\text{ such that }\varepsilon/2<\Delta_r(f)<\varepsilon\}\cup\{\varepsilon/2\},\\
\eta_2&=\min\{\Delta_r(f)\ :\ r\in[0,1]\text{ such that }\Delta_r(f)\geq\varepsilon\}\cup\{2\varepsilon\}
\end{align*}
are well-defined and $\eta_1<\varepsilon\leq \eta_2$. In fact, we have $\varepsilon<\eta_2$ by hypothesis. Let $g$ be an excursion such that $||f-g||_\infty<\min(\varepsilon-\eta_1,\eta_2-\varepsilon)/2$. If $\Delta_r(g)\geq\varepsilon$ then $\Delta_r(f)>\eta_1$ and we obtain $\Delta_r(f)\geq\varepsilon$ by definition of $\eta_1$. Likewise, if $\Delta_r(g)<\varepsilon$ then $\Delta_r(f)<\eta_2$ and we obtain $\Delta_r(f)<\varepsilon$ by definition of $\eta_2$. Hence, we can write\[J^\varepsilon\! g(t)=\sum_{\substack{r\in[0,1]\\ \Delta_r(f)\geq\varepsilon}}x_r^t(g)\] for all $t\in[-1,1]$, so $\left|\left|J^\varepsilon\! f-J^\varepsilon\! g\right|\right|_\infty \leq2N(f)\times ||f-g||_\infty$ where we denote by $N(f)$ the number of jumps of $f$ of height at least $\varepsilon$. The point $(ii)$ follows. To show $(iii)$, we mimic the proof of Dini's theorem. We fix $\nu>0$ and we set
\[U_n=\{t\in[-1,1]\ :\ Jf(t)-J^{\frac{1}{n}}\!f(t)<\nu\text{ and } Jf(t-)-J^{\frac{1}{n}}\!f(t-)<\nu\}\] for all $n\geq 1$. We remark that $U_n\subset U_{n+1}$ because $J^{1/n}f\leq J^{1/(n+1)}f\leq Jf$. The functions $Jf$ and $J^{1/n}f$ are c\`adl\`ag, so $U_n$ is open for all $n\geq 1$. Furthermore, the monotone convergence theorem implies that $J^{1/n}f(t)\underset{n\rightarrow\infty}{\longrightarrow}Jf(t)$ for all $t\in[-1,1]$. Recall (\ref{x(Jf)}) and that $x_t^t(f)=\Delta_t(f)$. When $n$ is large enough (depending on $t$), we have \[\Delta_t(J^{1/n}f)=\Delta_t(f)\I{\Delta_t(f)\geq 1/n}=\Delta_t(f)=\Delta_t(Jf),\] which yields $Jf(t-)-J^{1/n}f(t-)=Jf(t)-J^{1/n}f(t)$. Therefore, the family $(U_n)_{n\geq 1}$ covers $[-1,1]$, so there exists $N\geq 1$ such that $U_N=[-1,1]$ by compactness. As a result, $||Jf-J^{1/n}f||_\infty\leq\nu$ when $n\geq N$.
\end{proof}

\section{Limit theorems}
\label{section_cv}

\subsection{Functional continuity of the shuffled vernation-tree pseudo-distance}
\label{section_cv_tec}

Recall from Definition~\ref{excursion} that the space of excursions $\mathbb{H}$ is a closed subset of the Skorokhod space $\mathbb{D}([-1,1])$, endowed with the Skorokhod distance $\rho$ given by (\ref{sko_dist}). Also recall from Definition~\ref{bivariate} the space $\mathbb{D}([-1,1]^2)$ equipped with the Skorokhod topology on $[-1,1]^2$, which is induced by the distance $\rho_2$ as in (\ref{sko_dist_biv}). Let us fix a shuffle $\Phi$ as in Definition~\ref{shuffle}. We know from Proposition~\ref{dt_continu} that for any $f\in\mathbb{H}$, the shuffled vernation-tree pseudo-distance $\dvt[f]$ given by (\ref{dlt_easy}) is an element of $\mathbb{D}([-1,1]^2)$. Recall the sets $\mathrm{B}(\Phi)$ and $B_f$ that are involved in the assumption of Theorem~\ref{gh-continu}. Here, instead of showing Theorem~\ref{gh-continu}, we first prove the following statement in terms of real functions, which is slightly stronger.

\begin{theorem}
\label{continu}
Let $f$ be an excursion. If $B_f\cap\mathrm{B}(\Phi)=\emptyset$, then the map $g\in\mathbb{H}\longmapsto \dvt[g]\in\mathbb{D}([-1,1]^2)$ is continuous at $f$.
\end{theorem}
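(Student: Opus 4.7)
The approach is to reduce via the Skorokhod representation to a uniform-convergence statement, then to split $\dvt = \dtree + 2\dlt$ and handle each summand using the truncation operator $J^\varepsilon$ from Section \ref{unification}. Given $f_n \to f$ in the Skorokhod topology on $\mathcal{H}$, the Skorokhod representation produces $\lambda_n \in \Lambda$ with $\|\lambda_n - \mathsf{id}\|_\infty \to 0$ and $\|f_n \circ \lambda_n - f\|_\infty \to 0$. The time-changing property of $\dvt$ established in the previous section gives $\dvt[f_n \circ \lambda_n] = \dvt[f_n] \circ (\lambda_n, \lambda_n)$, so testing the bivariate Skorokhod distance $\rho_2$ with the pair $(\lambda_n^{-1}, \lambda_n^{-1})$ yields $\rho_2(\dvt[f_n], \dvt[f_n \circ \lambda_n]) \leq \|\lambda_n^{-1} - \mathsf{id}\|_\infty \to 0$. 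It therefore suffices to prove $\dvt[g_n] \to \dvt[f]$ in $\mathcal{D}([0,1]^2)$ whenever $g_n \to f$ uniformly on $[0,1]$, and this can be split into the $\dtree$- and $\dlt$-parts.

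For the $\dtree$-part, Proposition \ref{d_continu} tells us that $\dtree[f]$ is continuous on $[0,1]^2$, so $\rho_2$-convergence reduces to uniform convergence; by Theorem \ref{d_J} and the classical uniform continuity of $\dclas$ at continuous excursions, this further reduces to $\|Jg_n - Jf\|_\infty \to 0$. The strategy is standard: given $\eta > 0$, pick $\varepsilon > 0$ with $\varepsilon \notin B_f^1$ and $\|Jf - J^\varepsilon f\|_\infty < \eta$ using Proposition \ref{cv_J}$(iv)$, observe that $J^\varepsilon g_n \to J^\varepsilon f$ uniformly by Proposition \ref{cv_J}$(iii)$, and control the remainders $\|Jg_n - J^\varepsilon g_n\|_\infty$ uniformly in $n$ by transferring the small-jump tightness from $f$ to the $g_n$ via the uniform convergence. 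For the $\dlt$-part, Corollary \ref{propo} provides the bound $\|\dlt[g] - \dlt[J^\varepsilon g]\|_\infty \leq 2K\|Jg - J^\varepsilon g\|_\infty$, and the same tightness argument reduces the problem to proving $\dlt[J^\varepsilon g_n] \to \dlt[J^\varepsilon f]$ in $\mathcal{D}([0,1]^2)$ for each $\varepsilon \notin B_f^1$. For such $\varepsilon$, $J^\varepsilon f$ has finitely many jumps $r_1 < \dots < r_N$ with heights $\Delta_i \geq \varepsilon$; for $n$ large the jumps of $J^\varepsilon g_n$ are in bijection with these at positions $r_{i,n} \to r_i$ and heights $\Delta_{i,n} \to \Delta_i$, and formula (\ref{dlt_Jeps}) expresses $\dlt[J^\varepsilon g_n](s,t)$ as an explicit sum of $N$ terms $\tilde{\delta}_{r_{i,n}}^{g_n}(x_{r_{i,n}}^s(g_n), x_{r_{i,n}}^t(g_n))$, reducing the task to analyzing each of them in $\mathcal{D}([0,1]^2)$.

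The heart of the argument is the passage to the limit in these individual terms, and this is where the two halves of $B_f \cap \mathbb{B}(\Phi) = \emptyset$ play complementary roles: $\Delta_i \notin \mathbb{B}^1(\Phi)$ delivers the Skorokhod convergence $\phi_{\Delta_{i,n}} \to \phi_{\Delta_i}$ of the shuffles in $\overleftarrow{D}([0,1], \mathcal{C})$, while $B_f^2 \cap \mathbb{B}^2(\Phi) = \emptyset$ ensures that whenever two distinct times $s \neq t$ reach the $i$-th loop at the same relative position $x_{r_i}^s(f)/\Delta_i = x_{r_i}^t(f)/\Delta_i > 0$, that position is a continuity point of $\phi_{\Delta_i}$. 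The main obstacle is to combine the Skorokhod convergence of the shuffles with the uniform convergence of the ratios $x_{r_{i,n}}^\cdot(g_n)/\Delta_{i,n}$ into a genuine $\rho_2$-convergence of the composed bivariate function, since Skorokhod convergence interacts badly with composition in general; the hypothesis $B_f \cap \mathbb{B}(\Phi) = \emptyset$ is exactly designed to forbid the pathological alignments that would otherwise create spurious discontinuities in the limit, and matching the discontinuities of $\dvt[f]$ (given by Proposition \ref{dt_continu}) against those of the prelimits via an appropriate bivariate time change finally closes the argument.
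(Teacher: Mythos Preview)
Your proposal has a genuine gap: the splitting $\dvt = \dtree + 2\dlt$ and the attempt to prove continuity of each summand separately cannot work, because neither $\dtree$ nor $\dlt$ is continuous at a general $f$. The step that fails is your claim that one can ``control the remainders $\|Jg_n - J^\varepsilon g_n\|_\infty$ uniformly in $n$ by transferring the small-jump tightness from $f$ to the $g_n$ via the uniform convergence.'' There is no such transfer. Take the very sequence $(f_n)$ exhibited at the start of Section~\ref{section_cv}: it is PJG, converges uniformly to the continuous tent excursion $f$, so $Jf_n=f_n$ while $Jf=0$ and $J^\varepsilon f_n=0$ for $n$ large; hence $\|Jf_n-J^\varepsilon f_n\|_\infty=\|f_n\|_\infty\to 1/2$, not $0$. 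Consequently $\|Jg_n-Jf\|_\infty\to 0$ fails, $\dtree[g_n]\to\dtree[f]$ fails, and your reduction of the $\dlt$-part via Corollary~\ref{propo} (which again needs $\|Jg_n-J^\varepsilon g_n\|_\infty$ small) fails for the same reason.

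The paper's proof does not split $\dvt$. The point, and the reason for the factor $2$ in $\dvt=\dtree+2\dlt$, is a cancellation at small jumps: combining the formulas~(\ref{dlt_easy}) and~(\ref{dtree_easy}) shows that the contribution of a jump time $r$ to $\dvt[f]-\dvt[h]$ involves terms of the form $2\tilde{\delta}_r(0,x_r^t)-x_r^t$, and the shuffle axiom~(\ref{11}) forces $|2\tilde{\delta}_r(0,x)-x|\le\varepsilon x$ once $\Delta_r$ is below a threshold. Summing over small $r$ then gives at most $\varepsilon\cdot Jf(t)\le\varepsilon\|f\|_\infty$, \emph{regardless} of how much small-jump mass $f$ or the perturbation carries. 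This is exactly what absorbs the uncontrolled $\|Jg_n-J^\varepsilon g_n\|_\infty$ that wrecks your argument. The hypothesis $B_f\cap\mathbb{B}(\Phi)=\emptyset$ is then used, as you correctly identified, to build an explicit time change $\lambda$ aligning the finitely many large jumps and their shuffle discontinuities; but that part only works once the small-jump problem has been dissolved by the cancellation, not by a tightness transfer.
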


\begin{proof}
The main idea of the proof is to use the convergence in (\ref{11}) to forget the small jumps of $f$. Then, the hypothesis $B_f\cap\mathrm{B}(\Phi)=\emptyset$ and the continuity of $\Phi$ allow us to control the variations generated by the big jumps. 

\paragraph{Choice of some parameters.}Let $\varepsilon\in(0,1)$. We are able to choose some $\Delta\in(0,\varepsilon)$ such that
\begin{equation}
\label{13}
\text{if }u\in(0,2\Delta)\text{ then }\sup_{x\in(0,1]}\left|\frac{2}{x}\delta\left(\phi_u(0),\phi_u(x)\right)-1\right|\leq\varepsilon
\end{equation}
thanks to (\ref{11}). The set $\{s\in[0,1]\ :\ \Delta_s\geq\Delta\}$ is finite because $f$ is c\`adl\`ag. We denote by $N$ its size and by $r_1<r_2<\ldots<r_N$ its elements. The functions $\phi_{\Delta_{r_n}}$ are c\`agl\`ad and continuous at $0$. Hence, we have some $\nu>0$ and some partitions of $[0,1]$ denoted by $0<y_{n,1}<y_{n,2}<\ldots<y_{n,k_n}<1$, where the $y_{n,i}$ are discontinuity points of $\phi_{\Delta_{r_n}}$ for any $1\leq i\leq k_n$, such that for all $1\leq n\leq N$,
\begin{align}
\max_{1\leq i\leq k_n}\sup_{\substack{y,y'\in(y_{n,i},y_{n,i+1}]\\|y-y'|\leq\nu}}\delta(\phi_{\Delta_{r_n}}(y),\phi_{\Delta_{r_n}}(y'))&\leq\frac{\varepsilon}{N+1}, \label{14}\\
\sup_{\substack{y,y'\in[0,y_{n,1}]\\|y-y'|\leq\nu}}\delta(\phi_{\Delta_{r_n}}(y),\phi_{\Delta_{r_n}}(y'))&\leq\frac{\varepsilon}{N+1}, \label{15}
\end{align}
with $y_{n,k_n+1}=1$ by convention. We now take a parameter $\eta>0$ that respects the following inequalities:
\begin{equation}
\label{16}
\eta\leq \min\left(\frac{\Delta}{4},\frac{\varepsilon}{N+1}\right)\quad\text{ and }\quad\left(1+\frac{4}{\Delta}+\frac{4}{\Delta^2}||f||_\infty\right)\eta\leq\nu.
\end{equation}
The fact that $f$ is an excursion ensures that for $1\leq n\leq N$ and $1\leq i\leq k_n$, \[t_{n,i}=\inf\{t\geq r_n\ :\ f(t)-f(r_n-)=y_{n,i}\Delta_{r_n}\}\] is well-defined, $f(t_{n,i})=f(t_{n,i}-)$, and $x_{r_n}^{t_{n,i}}=y_{n,i}\Delta_{r_n}>0$. In particular, the $t_{n,i}$, for $1\leq n\leq N$ and $1\leq i\leq k_n$, are continuity points of $f$, so they are distinct from the $r_n$, for $1\leq n\leq N$. Moreover, if $t_{n,i}=t_{m,j}$ with $n<m$, $1\leq i\leq k_n$ and $1\leq j\leq k_m$, then $x_{r_n}^{t_{n,i}}=x_{r_n}^{r_m}$ because $f(r_m-)\leq \inf_{[r_m,t_{m,j}]}f$. Since $t_{n,i}\neq r_m$, we find $(\Delta_{r_n},y_{n,i})\in B_f$. By choice of $y_{n,i}$, we also have $(\Delta_{r_n},y_{n,i})\in\mathrm{B}(\Phi)$, which contradicts the assumption $B_f\cap\mathrm{B}(\Phi)=\emptyset$. If $t_{n,i}=t_{n,j}$, with $1\leq i,j\leq k_n$, then $y_{n,i}\Delta_{r_n}=x_{r_n}^{t_{n,i}}=x_{r_n}^{t_{n,j}}=y_{n,j}\Delta_{r_n}$ so $i=j$. Thus, the $t_{n,i}$ are distinct.

Eventually, we obtain the existence of some $\gamma\in(0,\varepsilon)$ such that the intervals $[t_{n,i}-\gamma,t_{n,i}+\gamma]$, for $1\leq n\leq N$ and $1\leq i\leq k_n$, are included in $[0,1]$, disjoint, do not contain any of the $r_m$, and such that
\begin{equation}
\label{17}
\text{if }t\in[0,1]\text{ with }|t-t_{n,i}|\leq\gamma\quad\text{ then }\quad|f(t)-f(t_{n,i})|\leq\eta/3.
\end{equation}
We set
\[\omega=\min_{\substack{1\leq n\leq N\\ 1\leq i\leq k_n}}\min(x_{r_n}^{t_{n,i}-\gamma}-x_{r_n}^{t_{n,i}},x_{r_n}^{t_{n,i}}-x_{r_n}^{t_{n,i}+\gamma}),\] and we know $\omega>0$ thanks to the assumption $B_f\cap\mathrm{B}(\Phi)=\emptyset$. Then, we choose $\xi\in(0,\eta/3)$ such that
\begin{equation}
\label{19}
\left(\frac{8}{\Delta}+1\right)\xi<\frac{\omega}{2+||f||_\infty}.
\end{equation}
Recall from (\ref{sko_dist_circle}) the Skorokhod distance $\overleftarrow{\rho}$ for $\mathcal{C}$-valued c\`agl\`ad functions. Since $\Phi:u\in(0,\infty)\longmapsto \phi_u$ is continuous at $\Delta_{r_n}$ for $1\leq n\leq N$ by assumption $B_f\cap\mathrm{B}(\Phi)=\emptyset$, we can choose some $\nu'\in(0,\Delta)$ such that for all $1\leq n\leq N$,
\begin{equation}
\label{18}
\sup_{|u-\Delta_{r_n}|\leq\nu'}\overleftarrow{\rho}(\phi_u,\phi_{\Delta_{r_n}})\leq\xi/2.
\end{equation}

\paragraph{Correspondence between big discontinuities.}Let $g\in\mathbb{H}$ such that $||f-g||_\infty\leq\min(\xi,\nu'/2)$. We are going to construct an increasing bijection from $[0,1]$ to itself which makes correspond the discontinuity points of $\dvt[f]$ and $\dvt[g]$. If $s,t\in[0,1]$ then $|x_s^t(f)-x_s^t(g)|\leq 2||f-g||_\infty$. In particular for $s=t$, we have $|\Delta_s(f)-\Delta_s(g)|\leq\nu'$, so (\ref{18}) gives us some increasing bijections from $[0,1]$ to itself, denoted by $\mu_n$ for $1\leq n\leq N$, such that
\begin{equation}
\label{20}
||\mu_n-\mathsf{id}||_\infty\leq\xi\quad\text{ and }\quad\sup_{x\in[0,1]}\delta\left(\phi_{\Delta_{r_n}(g)}(x),\phi_{\Delta_{r_n}(f)}\circ\mu_n(x)\right)\leq\xi.
\end{equation}
Let $\lambda$ be the continuous function on $[0,1]$ which is affine on the intervals generated by the points \[\{t_{n,i},\ t_{n,i}+\gamma,\ t_{n,i}-\gamma\ :\ 1\leq n\leq N\text{ and }1\leq i\leq k_n\}\] such that $\lambda(0)=0,\ \lambda(1)=1,\ \lambda(t_{n,i}-\gamma)=t_{n,i}-\gamma,\ \lambda(t_{n,i}+\gamma)=t_{n,i}+\gamma$, and \[\lambda(t_{n,i})=\inf\left\{t\geq r_n\ :\ \frac{x_{r_n}^t}{\Delta_{r_n}}(g)\leq\mu_n^{-1}(y_{n,i})\right\}.\]It is well-defined because $g$ is an excursion. We extend $\lambda$ on $[-1,1]$ by setting $\lambda(s)=s$ for any $s\in[0,-1)$. Observe that $x_{r_n}^{\lambda(t_{n,i})}(g)=\Delta_{r_n}(g)\times \mu_n^{-1}(y_{n,i})$. Let us prove that $\lambda$ is an increasing function, close to the identity. Let $1\leq n\leq N$ and $1\leq i\leq k_n$. We use $||f-g||_\infty\leq\xi$ then we recall the definitions of $t_{n,i}$ and $\omega$ to find
\[\frac{x_{r_n}^{t_{n,i}-\gamma}}{\Delta_{r_n}}(g)\geq\frac{x_{r_n}^{t_{n,i}-\gamma}(f)-2\xi}{\Delta_{r_n}(g)}\geq\frac{y_{n,i}\Delta_{r_n}(f)+\omega-2\xi}{\Delta_{r_n}(g)}.\]
Plus, we found $\Delta_{r_n}(g)\geq \Delta_{r_n}(f)-2\xi\geq \Delta-2\eta$, so $\Delta_{r_n}(g)\geq\Delta/2$ according to (\ref{16}). We also have $\Delta_{r_n}(g)\leq \Delta_{r_n}(f)+2\xi\leq 2+||f||_\infty$. It follows that
\[\frac{x_{r_n}^{t_{n,i}-\gamma}}{\Delta_{r_n}}(g)\geq y_{n,i}+\frac{\omega}{2+||f||_\infty}-\frac{8\xi}{\Delta},\]
then successive applications of (\ref{20}) and (\ref{19}) lead to
\[\frac{x_{r_n}^{t_{n,i}-\gamma}}{\Delta_{r_n}}(g)\geq \mu_n^{-1}(y_{n,i})+\frac{\omega}{2+||f||_\infty}-\left(\frac{8}{\Delta}+1\right)\xi>\mu_n^{-1}(y_{n,i}).\]
We prove $x_{r_n}^{t_{n,i}+\gamma}(g)/\Delta_{r_n}(g)<\mu_n^{-1}(y_{n,i})$ in the same way. Thus, $|\lambda(t_{n,i})-t_{n,i}|<\gamma$. We deduce that $\lambda$ is strictly increasing and 
\begin{equation}
\label{lambda_near_id}
||\lambda-\mathsf{id}||_\infty\leq\gamma\leq\varepsilon.
\end{equation}
Moreover, $\lambda$ is a bijection from $[-1,1]$ to itself because $\lambda(0)=0$, $\lambda(1)=1$, and $\lambda$ is continuous.

\paragraph{Control on the big loops.}We easily see that if $t$ is outside all the intervals $[t_{n,i}-\gamma,t_{n,i}+\gamma]$ then $\lambda(t)=t$. But if $t\in[t_{n,i}-\gamma,t_{n,i}+\gamma]$ then $\lambda(t)\in[t_{n,i}-\gamma,t_{n,i}+\gamma]$ too. Therefore, (\ref{17}) allows us to deduce \[||f-g\circ\lambda||_\infty\leq||f-f\circ\lambda||_\infty+||f-g||_\infty\leq2\eta/3+\eta/3=\eta.\]
Now, we set $h=g\circ\lambda$ so that $||f-h||_\infty\leq\eta$. Our goal is to show that $||\dvt[f]-\dvt[h]||_\infty$ is small, but before that, we are going to bound the difference of the distances on the big loops. We begin by recalling that the point $r_n$ is outside all the intervals $[t_{n,i}-\gamma,t_{n,i}+\gamma]$ so $\lambda(r_n)=r_n$, which implies $\Delta_{r_n}(h)=\Delta_{r_n}(g)$ and $x_{r_n}^t(h)=x_{r_n}^{\lambda(t)}(g)$. Let $t\geq r_n$ and $1\leq i\leq k_n$. Recall $(\Delta_{r_n}(f),y_{n,i})\notin B_f$ because $(\Delta_{r_n}(f),y_{n,i})\in\mathrm{B}(\Phi)$, so \[\frac{x_{r_n}^t}{\Delta_{r_n}}(f)\leq y_{n,i}\Longleftrightarrow t_{n,i}\leq t.\]Then, $\lambda(r_n)=r_n\leq \lambda(t)$ because $\lambda$ is increasing. By definition of $\lambda(t_{n,i})$ together with the observation that $x_{r_n}^{\lambda(t_{n,i})}(g)=\Delta_{r_n}(g)\times \mu_n^{-1}(y_{n,i})$, we have $x_{r_n}^{\lambda(t)}(g)/\Delta_{r_n}(g)\leq\mu_n^{-1}(y_{n,i})\Longleftrightarrow\lambda(t_{n,i})\leq\lambda(t)$. The functions $\lambda$ and $\mu_n$ are strictly increasing so it follows that
\[\frac{x_{r_n}^t}{\Delta_{r_n}}(f)\leq y_{n,i}\Longleftrightarrow \mu_n\left(\frac{x_{r_n}^{t}}{\Delta_{r_n}}(h)\right)\leq y_{n,i}.\]
Hence, the points $x_{r_n}^t(f)/\Delta_{r_n}(f)$ and $\mu_n\left(x_{r_n}^{t}(h)/\Delta_{r_n}(h)\right)$ are either both in the interval $[0,y_{n,1}]$ or both in the same interval $(y_{n,i},y_{n,i+1}]$ with $1\leq i\leq k_n$. Furthermore, we can bound
\[\left|\frac{x_{r_n}^t}{\Delta_{r_n}}(f)-\mu_n\left(\frac{x_{r_n}^{t}}{\Delta_{r_n}}(h)\right)\right|\leq||\mu_n-\mathsf{id}||_\infty+\frac{2||f-h||_\infty}{\Delta_{r_n}(g)}+|x_{r_n}^t(f)|\frac{2||f-h||_\infty}{\Delta_{r_n}(f)\Delta_{r_n}(g)},\]so that the inequalities $\Delta_{r_n}(g)\geq\Delta/2$, (\ref{20}), and (\ref{16}) give
\[\left|\frac{x_{r_n}^t}{\Delta_{r_n}}(f)-\mu_n\left(\frac{x_{r_n}^{t}}{\Delta_{r_n}}(h)\right)\right|\leq\left(1+\frac{4}{\Delta}+\frac{4}{\Delta^2}||f||_\infty\right)\eta\leq \nu.\]
Therefore, it follows from (\ref{14}) and (\ref{15}) that
\begin{equation}
\label{21}
\delta\left(\phi_{\Delta_{r_n}(f)}\left(\frac{x_{r_n}^t}{\Delta_{r_n}}(f)\right),\phi_{\Delta_{r_n}(f)}\left(\mu_n\left(\frac{x_{r_n}^{t}}{\Delta_{r_n}}(h)\right)\right)\right)\leq\frac{\varepsilon}{N+1}
\end{equation}
for all $t\in[0,1]$, the case $t<r_n$ being obvious. Now, let $s,t\in[0,1]$, the triangular inequality on $\delta$ implies
\begin{multline*}
\left|\tilde{\delta}_{r_n}^f\left(x_{r_n}^s(f),x_{r_n}^t(f)\right)-\tilde{\delta}_{r_n}^h\left(x_{r_n}^s(h),x_{r_n}^t(h)\right)\right|\leq |\Delta_{r_n}(f)-\Delta_{r_n}(h)|\\
+\Delta_{r_n}(f)\times\delta\left(\phi_{\Delta_{r_n}(f)}\left(\frac{x_{r_n}^t}{\Delta_{r_n}}(f)\right),\phi_{\Delta_{r_n}(h)}\left(\frac{x_{r_n}^{t}}{\Delta_{r_n}}(h)\right)\right)\\
+\Delta_{r_n}(f)\times\delta\left(\phi_{\Delta_{r_n}(f)}\left(\frac{x_{r_n}^s}{\Delta_{r_n}}(f)\right),\phi_{\Delta_{r_n}(h)}\left(\frac{x_{r_n}^{s}}{\Delta_{r_n}}(h)\right)\right).
\end{multline*}
Recall that $\Delta_{r_n}(h)=\Delta_{r_n}(g)$, $||f-h||_\infty\leq \eta$, and $\xi\leq \eta$. We apply the inequalities (\ref{16}), (\ref{20}), and (\ref{21}) into the previous bound to conclude that for all $s,t\in[0,1]$,
\begin{equation}
\label{22}
\left|\tilde{\delta}_{r_n}^f\left(x_{r_n}^s(f),x_{r_n}^t(f)\right)-\tilde{\delta}_{r_n}^h\left(x_{r_n}^s(h),x_{r_n}^t(h)\right)\right|\leq(2+4||f||_\infty)\frac{\varepsilon}{N+1}.
\end{equation}
\paragraph{Uniform control.}We are now ready to bound $||\dvt[f]-\dvt[h]||_\infty$. Note that if $s\in[-1,0)$ then $\dvt(t,s)=\dvt(t,1)$ for any $t\in[-1,1]$. Thus, we focus on the case where $0\leq s\leq t\leq 1$. Using both (\ref{dtree_easy}) and (\ref{dlt_easy}) for the big jumps, and (\ref{dtree_hard}) and (\ref{dlt_hard}) for the small jumps, we verify that we can write
\begin{align*}
\left|\dvt[f](s,t)-\dvt[h](s,t)\right|\leq &|f(s)-h(s)|+|f(t)-h(t)|+2\left|\inf_{[s,t]}f-\inf_{[s,t]}h\right|\\
&+\sum_{n=1}^N\left|x_{r_n}^s(f)-x_{r_n}^s(h)\right|+\sum_{n=1}^N\left|x_{r_n}^t(f)-x_{r_n}^t(h)\right|\\
&+2\sum_{n=1}^N\left|\tilde{\delta}_{r_n}^f\left(x_{r_n}^s(f),x_{r_n}^t(f)\right)-\tilde{\delta}_{r_n}^h\left(x_{r_n}^s(h),x_{r_n}^t(h)\right)\right|\\
&+3\Delta_{s\wedge_f t}(f)\I{\Delta_{s\wedge_f t}(f)<\Delta}+3\Delta_{s\wedge_h t}(h)\I{\Delta_{s\wedge_h t}(f)<\Delta}\\
&+\sum_{\substack{r\in[0,1]\\ \Delta_r(f)<\Delta}}\left|2\tilde{\delta}_r^f\left(0,x_r^s(f)\right)-x_r^s(f)\right|+\sum_{\substack{r\in[0,1]\\ \Delta_r(f)<\Delta}}\left|2\tilde{\delta}_r^f\left(0,x_r^t(f)\right)-x_r^t(f)\right|\\
&+\sum_{\substack{r\in[0,1]\\ \Delta_r(f)<\Delta}}\left|2\tilde{\delta}_r^h\left(0,x_r^s(h)\right)-x_r^s(h)\right|+\sum_{\substack{r\in[0,1]\\ \Delta_r(f)<\Delta}}\left|2\tilde{\delta}_r^h\left(0,x_r^t(h)\right)-x_r^t(h)\right|.
\end{align*}
The sum of all the terms in the first two rows of the right-hand side can be easily bounded by $8\varepsilon$ because $||f-h||_\infty\leq\eta\leq\varepsilon/(N+1)$, thanks to (\ref{16}). The term of the third row is smaller than $(4+8||f||_\infty)\varepsilon$ by a direct application of (\ref{22}). Next, if $\Delta_r(f)<\Delta$ then $\Delta_r(h)<\Delta+2\eta< 2\Delta$, since the inequality (\ref{16}) justifies $\eta\leq \Delta/4$. As we chose $\Delta$ so that $\Delta<\varepsilon$, the sum of the terms in the fourth row is bounded by $9\varepsilon$. Moreover, the fact (\ref{13}) ensures that if $\Delta_r(f)<\Delta$, then \[\left|2\tilde{\delta}_r^f(0,x)-x\right|\leq \varepsilon x\quad\text{ and }\quad\left|2\tilde{\delta}_r^h(0,y)-y\right|\leq \varepsilon y\] for all $x\in[0,\Delta_r(f)]$ and all $y\in[0,\Delta_r(h)]$. This allows controlling the last four terms of the right-hand side. For instance,
\begin{align*}
\sum_{\substack{r\in[0,1]\\ \Delta_r(f)<\Delta}}\left|2\tilde{\delta}_r^f\left(0,x_r^s(f)\right)-x_r^s(f)\right|&\leq \varepsilon\sum_{\substack{r\in[0,1]\\ \Delta_r(f)<\Delta}}x_r^s(f)\leq\varepsilon Jf(s)\leq ||f||_\infty\varepsilon,\\
\sum_{\substack{r\in[0,1]\\ \Delta_r(f)<\Delta}}\left|2\tilde{\delta}_r^h\left(0,x_r^s(h)\right)-x_r^s(h)\right|&\leq \varepsilon\sum_{\substack{r\in[0,1]\\ \Delta_r(f)<\Delta}}x_r^s(h)\leq\varepsilon Jh(s)\leq\left(1+||f||_\infty\right)\varepsilon,
\end{align*}
because we know from Theorem~\ref{intro_uni} that $f-Jf\geq 0$. Eventually, we find\[\left|\dvt[f](s,t)-\dvt[h](s,t)\right|\leq \left(23+12||f||_\infty\right)\varepsilon.\]
\paragraph{Conclusion.}To sum up, we have shown that for all $\varepsilon>0$, there exists $\eta>0$ such that if $||f-g||_\infty\leq\eta$ then there exists $\lambda$ an increasing bijection from $[-1,1]$ to itself with $||\dvt[f]-\dvt[g\circ\lambda]||_\infty\leq \varepsilon$ and $||\lambda-\mathsf{id}||_\infty\leq\varepsilon$. Indeed, recall (\ref{lambda_near_id}). Now, if the Skorokhod distance between $f$ and $g$ is smaller than $\min(\eta,\varepsilon)/2$, then we have $\mu$ an increasing bijection from $[-1,1]$ to itself such that $||\mu-\mathsf{id}||_\infty\leq\varepsilon$ and $||f-g\circ\mu||_\infty\leq\eta$. Hence, there exists $\lambda$ an increasing bijection from $[-1,1]$ to itself such that $||\mu\circ\lambda-\mathsf{id}||_\infty\leq2\varepsilon$ and $||\dvt[f]-\dvt[g\circ\mu\circ\lambda]||_\infty\leq\varepsilon$. Eventually, thanks to Proposition~\ref{time-change}, $\dvt[g\circ\mu\circ\lambda]=\dvt[g]\circ(\mu\circ\lambda,\mu\circ\lambda)$. Thereby, if $\rho(f,g)\leq\min(\eta,\varepsilon)/2$ then $\rho_2\left(\dvt[f],\dvt[g]\right)\leq 2\varepsilon$, where we remind that $\rho$ is the Skorokhod distance on $\mathbb{H}$ given by (\ref{sko_dist}) and $\rho_2$ is the Skorokhod distance for bivariate functions given by (\ref{sko_dist_biv}).
\end{proof}

\subsection{Two particular cases for convergences of unshuffled pseudo-distances}

Here, we deduce from Theorem~\ref{continu} a limit theorem for the codings $\dl,\dtree,\dv$ respectively given by (\ref{dl_easy}), (\ref{dtree_easy}), and (\ref{dv_easy}). This result in terms of real functions is slightly stronger than Theorem~\ref{gh-continu_particular}, but requires the same specific assumptions.

\begin{corollary}
\label{continu_particular}
Let $f$ be an excursion and let $(f_n)$ be a sequence of excursions such that $f_n\longrightarrow f$ for the Skorokhod topology on $[-1,1]$. We assume that at least one of the following assumptions holds true.
\begin{enumerate}
\item[$(a)$] There is $N\geq 1$ such that all the $f_n$ have at most $N$ jumps.
\item[$(b)$] The excursion $f$ is PJG in the sense of Definition~\ref{type_exc}.
\end{enumerate}
Then, the three convergences $\dl[f_n]\longrightarrow\dl[f]$, $\dtree[f_n]\longrightarrow\dtree[f]$, and $\dv[f_n]\longrightarrow\dv[f]$ hold uniformly on $[-1,1]^2$.
\end{corollary}

\begin{proof}[Proof under assumption $(a)$]
The limiting excursion $f$ also has at most $N$ jumps. Let $\varepsilon\in(0,1)$ such that all the jumps of $f$ are higher than $2\varepsilon$. Hence, we get $J^{\varepsilon}\!f_n\longrightarrow J^{\varepsilon}\!f=Jf$ for the Skorokhod topology thanks to Proposition~\ref{cv_J} and to Theorem~\ref{J_prop} $(iv)$. Let $s\in[-1,1]$, we know that there is $t\in[-1,1]$ such that $|\Delta_t(f)-\Delta_s(f_n)|\leq 2\rho(f_n,f)$, where $\rho$ is the Skorokhod distance given by (\ref{sko_dist}). If $\rho(f_n,f)<\varepsilon/2$ and $\Delta_s(f_n)<\varepsilon$, then $\Delta_t(f)<2\varepsilon$, so that $\Delta_t(f)=0$ by assumption. So, we get $\Delta_s(f_n)\leq 2\rho(f_n,f)$ in that case. Hence, when $n$ is large enough, we find
\[||Jf_n-J^\varepsilon \! f_n||_\infty\leq\sum_{s\in[0,1]}\Delta_s(f_n)\I{0<\Delta_s(f_n)<\varepsilon}\leq 2N\rho(f_n,f)\underset{n\rightarrow\infty}{\longrightarrow}0.\] In particular, the sequence of excursions $g_n:=f_n-Jf_n+J^\varepsilon f_n$ converges to $f$ for the Skorokhod topology (recall Theorem~\ref{J_prop} $(iii)$). Let us fix $\Phi:\Delta>0\longmapsto \phi_\Delta$ such that $\phi_\Delta(x)=x$ for all $\Delta\geq\varepsilon$ and $\phi_{1-\Delta}(x)$ is expressed as in (\ref{remark_shuffle}) for all $1-\Delta\in(0,\varepsilon)$, for all $x\in[0,1]$. It is straightforward to check that $\Phi$ is a shuffle as in Definition~\ref{shuffle}. It is clear that $B_f\cap\mathrm{B}(\Phi)=\emptyset$ and $B_{Jf}\cap\mathrm{B}(\Phi)=\emptyset$, so Theorem~\ref{continu} yields that $\dvt[J^\varepsilon f_n]\longrightarrow\dvt[Jf]$ and $\dvt[g_n]\longrightarrow\dvt[f]$ for the Skorokhod topology on $[-1,1]^2$. Then, for all $n\geq1$, we see with (\ref{dl_easy}) and (\ref{dlt_easy}) that $\dlt[J^\varepsilon f_n]=\dl[J^\varepsilon f_n]$ and $\dlt[f]=\dl[f]$. Indeed, it holds that $\Delta_t(J^\varepsilon f_n)$ and $\Delta_t(f)$ are not in $(0,\varepsilon)$ for all $t\in[0,1]$. By Theorems~\ref{J_prop} $(i)$, the excursions $J^\varepsilon f_n$ and $Jf$ are PJG, so we deduce from Theorem~\ref{d_J} that $\dl[J^\varepsilon f_n]$ converges to $\dl[f]$. Since we know from Proposition~\ref{d_continu} that $\dl[f]$ is continuous on $[-1,1]^2$, this convergence also happens uniformly on $[-1,1]^2$: see Remark~\ref{bivariate_unif}. Similarly, Theorems~\ref{J_prop} and \ref{d_J} ensure that $\dlt[g_n]=\dlt[Jg_n]=\dlt[J^\varepsilon f_n]=\dl[J^\varepsilon f_n]$ and $\dtree[g_n]=\dtree[g_n-Jg_n]=\dtree[f_n-Jf_n]=\dtree[f_n]$. Hence, $\dtree[f_n]+2\dl[J^\varepsilon f_n]\longrightarrow \dtree[f]+2\dl[f]$ for the Skorokhod topology on $[-1,1]^2$, which implies that $\dtree[f_n]$ uniformly converges to $\dtree[f]$ on $[-1,1]^2$, because $\dtree[f]$ is continuous too. To conclude, recall that $||Jf_n-J^\varepsilon f_n||_\infty\underset{n\rightarrow\infty}{\longrightarrow}0$. We deduce the uniform convergence of $\dl[f_n]$ to $\dl[f]$ on $[-1,1]^2$ by applying Corollary~\ref{propo}.
\end{proof}

\begin{proof}[Proof under assumption $(b)$] We know that $\dtree[f]=0$ thanks to Theorem~\ref{intro_uni}.
Let $\varepsilon>0$ such that $\varepsilon\notin\{\Delta_t(f)\ :\ t\in[0,1]\}$. This condition ensures that $J^{\varepsilon} f_n\longrightarrow J^\varepsilon f$ for the Skorokhod topology on $[-1,1]$ by Proposition~\ref{cv_J}, and even more, it holds that $||f_n-J^\varepsilon f_n||_\infty\longrightarrow||f-J^\varepsilon f||_\infty$. The triangular inequality of $\dtree[f_n]$ and the formula (\ref{calcul_Jf}) lead to $||\dtree[f_n]||_\infty\leq 2||f_n-J^\varepsilon f_n||_\infty$, because $J^\varepsilon f_n\leq Jf_n\leq f_n$. Moreover, Corollary~\ref{propo} implies that
\[||\dl[f_n]-\dl[f]||_\infty\leq ||\dl[J^\varepsilon f_n]-\dl[J^\varepsilon f]||_\infty+2||f_n-J^\varepsilon f_n||_\infty+2||f-J^\varepsilon f||_\infty.\]
All the jumps of the $J^\varepsilon f_n$ are higher or equal to $\varepsilon$ and $J^\varepsilon f_n$ converges to $J^\varepsilon f$, thus there is $N\geq 1$ such that all the $J^\varepsilon f_n$ have at most $N$ jumps. By the previous case, $||\dl[J^\varepsilon f_n]-\dl[J^\varepsilon f]||_\infty\longrightarrow0$, so we get
\[\limsup||\dtree[f_n]||_\infty+\limsup||\dl[f_n]-\dl[f]||_\infty\leq 6||f-J^\varepsilon f||_\infty.\]
Since the excursion $f$ is PJG, we have $||f-J^\varepsilon f||_\infty\underset{\varepsilon\rightarrow0}{\longrightarrow}0$ thanks to Proposition~\ref{cv_J} $(iii)$. The set $\{\Delta_t(f)\, :\, t\in[0,1]\}$ is countable, so we can make $\varepsilon\rightarrow 0$ within the above inequality.
\end{proof}

\subsection{From convergences of pseudo-distances to convergences of quotient metric spaces}
\label{def_GH}

Recall the notions presented at the start of Section~\ref{fst_properties_sec}. To see how Theorem~\ref{continu} and Corollary~\ref{continu_particular} yield convergences of pointed weighted metric spaces under the form of Theorems~\ref{gh-continu_particular} and \ref{gh-continu}, we have to present the definition of the pointed Gromov--Hausdorff--Prokhorov distance, which truly induces a topology on the set of the GHP-isometry classes of pointed weighted compact metric spaces. Recall that when no confusion is possible, we simply denote a metric space (possibly endowed with some additional structure) by its underlying set.

A \emph{correspondence} between two compact metric spaces $(X_1,d_1)$ and $(X_2,d_2)$ is a subset $\mathcal{R}$ of $X_1\times X_2$ such that for all $x_1\in X_1$ and $y_2\in X_2$, there are $x_2\in X_2$ and $y_1\in X_1$ such that $(x_1,x_2),(y_1,y_2)\in \mathcal{R}$. We say that $\mathcal{R}$ is \emph{compact} when it is a compact subset of the product space $X_1\times X_2$. The \emph{distortion} of a correspondence $\mathcal{R}$ is defined by \[\mathrm{dis}(\mathcal{R})=\sup\left\{\left|d_1(x_1,y_1)-d_2(x_2,y_2)\right|\ :\ (x_1,x_2),(y_1,y_2)\in\mathcal{R}\right\}.\]The \emph{Gromov--Hausdorff (GH) distance} between $X_1$ and $X_2$ is then expressed by the formula \[\mathtt{d}_{\mathrm{GH}}(X_1,X_2)=\frac{1}{2}\inf_{\mathcal{R}}\mathrm{dis}(\mathcal{R}),\]
where the infimum is taken over all correspondences $\mathcal{R}$ between $X_1$ and $X_2$. Similarly, the \emph{pointed Gromov--Hausdorff (GH) distance} $\mathtt{d}_{\mathrm{GH}}^\bullet(X_1,X_2)$ between two pointed compact metric spaces $(X_1,d_1,a_1)$ and $(X_2,d_2,a_2)$ is expressed by the same formula up to the difference that the infimum is instead taken over all correspondences $\mathcal{R}$ between $X_1$ and $X_2$ such that $(a_1,a_2)\in\mathcal{R}$. For both distances, we may restrict the infimum to compact correspondences without modifying the value. Indeed, a correspondence and its closure have the same distortion, and the latter is a compact correspondence because $X_1\times X_2$ is compact. Let $(X_1,d_1,a_1,\mu_1)$ and $(X_2,d_2,a_2,\mu_2)$ be two pointed weighted compact metric spaces. A \emph{coupling} between $\mu_1$ and $\mu_2$ is a Borel probability measure on $X_1\times X_2$ whose marginals on $X_1$ and $X_2$ are $\mu_1$ and $\mu_2$. The \emph{pointed Gromov--Hausdorff--Prokhorov (GHP) distance} between $X_1$ and $X_2$ is then defined by the formula
\[\mathtt{d}_{\mathrm{GHP}}^\bullet(X_1,X_2)=\inf_{\mathcal{R},\nu}\max\left(\frac{1}{2}\mathrm{dis}(\mathcal{R}),1-\nu(\mathcal{R})\right)\]
where the infimum is taken over all couplings $\nu$ between $\mu_1$ and $\mu_2$ and all compact correspondences $\mathcal{R}$ between $X_1$ and $X_2$ such that $(a_1,a_2)\in\mathcal{R}$. The functions $\mathtt{d}_{\mathrm{GH}}$, $\mathtt{d}_{\mathrm{GH}}^\bullet$, and $\mathtt{d}_{\mathrm{GHP}}^\bullet$ are only pseudo-distances but $\mathtt{d}_{\mathrm{GH}}(X_1,X_2)=0$ if and only if $X_1$ and $X_2$ are isometric, $\mathtt{d}_{\mathrm{GH}}^\bullet(X_1,X_2)=0$ if and only if $X_1$ and $X_2$ are pointed-isometric, and $\mathtt{d}_{\mathrm{GHP}}^\bullet(X_1,X_2)=0$ if and only if $X_1$ and $X_2$ are GHP-isometric. Hence, $\mathtt{d}_{\mathrm{GH}},\mathtt{d}_{\mathrm{GH}}^\bullet,\mathtt{d}_{\mathrm{GHP}}^\bullet$ respectively define a genuine distance on the space $\mathbb{K}$ of isometry classes of compact metric spaces, on the space $\mathbb{K}^\bullet$ of pointed-isometry classes of pointed compact metric spaces, and on the space $\mathbb{K}_{\mathrm{w}}^\bullet$ of GHP-isometry classes of pointed weighted compact metric spaces. We call their respective topologies the Gromov--Hausdorff (GH), the pointed Gromov--Hausdorff (GH), and the pointed Gromov--Hausdorff--Prokhorov (GHP) topologies. The metric spaces $\mathbb{K}$, $\mathbb{K}^\bullet$, and $\mathbb{K}_{\mathrm{w}}^\bullet$ are separable and complete. See Khezeli~\cite{GHP_correspondence} and Abraham, Delmas \& Hoscheit~\cite{GHP_polish} for proof of the stated facts and for more information.

Now, let $d$ and $d_n,n\geq 1$ be pseudo-distances on $[-1,1]$. For any $n\geq 1$, we denote by $(X,d)$ and $(X_n,d_n)$ the quotient metric spaces of $[0,1]$ respectively induced by $d$ and $d_n$, as defined at the start of Section~\ref{space}. We also denote by $\mathsf{p}:[0,1]\to X$ and $\mathsf{p}_n:[0,1]\to X_n$ the canonical projections. We make $X$ and $X_n$ into pointed weighted metric spaces by endowing them with $\mathsf{p}(1)$ and $\mathsf{p}_n(1)$ as their root, and with the push-forward measures $\mu$ and $\mu_n$ of the Lebesgue measure on $[0,1]$ by $\mathsf{p}$ and $\mathsf{p}_n$ as their Borel probability measures. Then, recall from Definition~\ref{bivariate} the space $\mathbb{D}([-1,1]^2)$ equipped with the distance $\rho_2$, given by (\ref{sko_dist_biv}), that induces the Skorokhod topology on $[-1,1]^2$.

\begin{lemma}
\label{GHP_lemma}
We keep the above notation. We assume that $d_n,d\in\mathbb{D}([-1,1]^2)$ and that $X,X_n$ are compact for all $n\geq 1$. We also suppose that $d(s,1)=d_n(s,1)=0$ for all $s\in[-1,0)$ and $n\geq 1$. Then, it holds
\begin{align}
\label{GH_lips}
\mathtt{d}_{\mathrm{GH}}^\bullet(X_n,X)&\leq\rho_2(d_n,d),\\
\label{GHP_lips}
\mathtt{d}_{\mathrm{GHP}}^\bullet(X_n,X)&\leq||d_n-d||_\infty.
\end{align}
Furthermore, if $d_n\longrightarrow d$ for the Skorokhod topology on $[-1,1]^2$, then $X_n\longrightarrow X$ for the pointed GHP topology.
\end{lemma}

\begin{proof}
We set $\mathsf{p}(s)=\mathsf{p}(1)$ and $\mathsf{p}_n(s)=\mathsf{p}_n(1)$ for all $s\in[-1,0)$. Let $\lambda_n,\mu_n:[-1,1]\to [-1,1]$ be two increasing bijections. We have $||d_n-d\circ(\lambda_n,\lambda_n)||_{\infty}\leq2||d_n-d\circ(\lambda_n,\mu_n)||_{\infty}$ by triangular inequality. We observe that the quantity $||d_n-d\circ(\lambda_n,\lambda_n)||_{\infty}$ is the distortion of the correspondence $\mathcal{R}_n=\{(\mathsf{p}_n(s),\mathsf{p}(\lambda_n(s)))\, :\, s\in [-1,1]\}$. This proves (\ref{GH_lips}) by taking the infimum over $\lambda_n,\mu_n$. For $\lambda_n=\mathsf{id}$, $\mathcal{R}_n$ has total mass for the coupling $\nu_n$ between $\mu_n$ and $\mu$ defined by \[\int_{X_n\times X}g\, \dd\nu_n=\int_0^1 g(\mathsf{p}_n(s),\mathsf{p}(s))\, \dd s\]for any bounded measurable $g$. This shows (\ref{GHP_lips}). If $d_n\longrightarrow d$ for the Skorokhod topology on $[-1,1]^2$ then we can assume that $||\lambda_n-\mathsf{id}||_\infty+||d_n-d\circ(\lambda_n,\lambda_n)||_{\infty}\longrightarrow 0$. For all $\varepsilon>0$, we define another compact correspondence between $X_n$ and $X$ by setting $\mathcal{R}_n^\varepsilon=\{(\mathsf{p}_n(s),\mathsf{p}(x))\ :\ s,x\in [-1,1]\text{ such that }d(\lambda_n(s),x)\leq\varepsilon\}$. We have $\limsup \mathrm{dis}(\mathcal{R}_n^\varepsilon)\leq 2\varepsilon$. It is not hard to check that $\mathsf{p}:[0,1]\to X$ is c\`adl\`ag by using the compactness of $X$ and that $d\in\mathbb{D}([-1,1]^2)$. Hence, $\mathsf{p}$ is continuous almost everywhere and $\mathsf{p}(\lambda_n(s))\longrightarrow \mathsf{p}(s)$ almost everywhere. Thanks to the dominated convergence theorem, it follows that $\nu_n(\mathcal{R}_n^\varepsilon)\longrightarrow 1$, which ends the proof.
\end{proof}

\begin{proof}[Proof of Theorems~\ref{gh-continu_particular} and \ref{gh-continu}]
Let $f$ be an excursion and let $s\in[-1,0)$. It is clear from the formulas (\ref{dl_easy}), (\ref{dtree_easy}), (\ref{dv_easy}), and (\ref{dlt_easy}) that $\dl[f](s,1)=\dtree[f](s,1)=\dv[f](s,1)=\dvt[f](s,1)=0$. Then, Propositions~\ref{d_continu} and \ref{dt_continu} ensure that the pseudo-distances $\dl[f],\dtree[f],\dv[f]$ and $\dvt[f]$ satisfy the assumptions in Lemma~\ref{GHP_lemma}. Thanks to this lemma, Theorem~\ref{gh-continu_particular} follows from Corollary~\ref{continu_particular}, and Theorem~\ref{gh-continu} follows from Theorem~\ref{continu}.
\end{proof}

\section{Topological space of vernation trees}
\label{topo}

In this section, we study the topological notion of vernation trees that we have introduced as Definition~\ref{vernation_topo_def}. We eventually relate it with our codings by excursions by proving Theorem~\ref{topo_conclusion}. First, we fix our framework and our notation. Let $(X,d)$ be a metric space and let $x,y\in X$. We say that a map $\gamma:[0,1]\to X$ is a \emph{path on $X$ from $x$ to $y$} when $\gamma$ is continuous with $\gamma(0)=x$ and $\gamma(1)=y$. In that case, we will denote by $\Ima \gamma=\gamma([0,1])$ its image. Observe that $\Ima \gamma$ is compact and closed as a continuous image of the compact space $[0,1]$. If a path $\gamma$ is also injective, we say that $\gamma$ is an \emph{arc on $X$ from $x$ to $y$}. By compactness of $[0,1]$, an arc provides a topological embedding of the segment $[0,1]$ into $X$. Moreover, we say that $\gamma$ is a \emph{geodesic on $X$ from $x$ to $y$} when $d(\gamma(t),\gamma(s))=|t-s|d(x,y)$ for all $t,s\in[0,1]$. A geodesic from $x$ to $y$ is an arc if and only if $x\neq y$. Finally, we say that $\gamma$ is a \emph{loop on $X$ based at $x$} when $\gamma:[0,1]\longrightarrow X$ is continuous, injective on $[0,1)$, and such that $\gamma(0)=\gamma(1)=x$. A loop provides a topological embedding of the circle $\mathcal{C}$ into $X$.

In addition to vernation trees, we will also consider the following types of metric spaces.

\begin{definition}
\label{geo_def}
A \emph{geodesic space} is a metric space $(X,d)$ such that for any $x,y\in X$, there is a geodesic on $X$ from $x$ to $y$. A \emph{real tree} is a geodesic space $(X,d)$ such that for any $x,y\in X$, all arcs on $X$ from $x$ to $y$ share the same image.\cq
\end{definition}

\noindent 
Note that Definition~\ref{vernation_topo_def} asserts that a \emph{vernation tree} is exactly a geodesic space $(X,d)$ such that for any two loops $\gamma_1,\gamma_2$ on $X$ based at the same point $x$, then either $\Ima\gamma_1=\Ima\gamma_2$ or $\Ima\gamma_1\cap\Ima\gamma_2=\{x\}$. For our work, we mostly are interested in \emph{compact} metric spaces. When a metric space is indeed compact, it is geodesic if and only if it is a length space. See Burago \& Burago~\cite[Chapter 2]{burago} for a definition of length spaces, a proof of the just mentioned fact, and an extensive study of the notion. For a detailed study of real trees, we refer to Evans~\cite{evans} and to Paulin~\cite{PAULIN1989197}.

The next lemma asserts that when working with (geodesic) metric spaces, different notions of connectedness coincide. 

\begin{lemma}
\label{arcwise_lemma}
A metric space is pathwise connected if and only if it is arcwise connected. If $F$ is a finite subset of a geodesic space $X$, then the connected components of $X\backslash F$ are arcwise connected.
\end{lemma}

\noindent
The first statement is well-known since metric spaces are Hausdorff: see Willard~\cite[Chapter 31]{willard2004general} or Börger~\cite{arcwise} for example. We deduce the second statement from the fact that a geodesic space is locally arcwise connected. Indeed, for any $x\in X$ and $r>0$, if $d(x,y)<r$ then a geodesic between $x$ and $y$ stays inside the ball of radius $r$ and of center $x$.

\subsection{From arcs and geodesics to loops}

Here, we gather some general results that will be useful to construct loops on a metric space out of arcs or geodesics. 

\begin{lemma}
\label{three_arcs_loop}
Let $(X,d)$ be a metric space and let $x_0,x_1,x_2\in X$ be distinct. For all $i\in\mathbb{Z}/3\mathbb{Z}$, let $\gamma_i$ be an arc on $X$ from $x_{i-1}$ to $x_{i+1}$. If $\Ima \gamma_0\cap \Ima \gamma_1\cap \Ima\gamma_2=\emptyset$, then there exist three points $x_0',x_1',x_2'\in X$ and a loop $\gamma$ on $X$ such that $\Ima\gamma$ is a subset of $\Ima \gamma_0\cup \Ima \gamma_1\cup \Ima\gamma_2$ and $x_i'\in \Ima\gamma\cap\Ima \gamma_{i-1}\cap \Ima\gamma_{i+1}$ for all $i\in\mathbb{Z}/3\mathbb{Z}$. Furthermore, if $\gamma_0,\gamma_1,\gamma_2$ are geodesics, then it holds that $\gamma_i^{-1}(x_{i-1}')\leq \gamma_i^{-1}(x_{i+1}')$ for all $i\in\mathbb{Z}/3\mathbb{Z}$.
\end{lemma}

\begin{proof}
Since $\gamma_1(0)=x_0$ is an element of $\Ima\gamma_2$, we can define $s_{1,0}=\sup \gamma_1^{-1}(\Ima \gamma_2)$. We then set $x_0'=\gamma_1(s_{1,0})$, which is in $\Ima\gamma_2$ as this set is closed, and $s_{2,0}=\gamma_2^{-1}(x_0')$. Similarly, we can define $s_{1,2}=\inf \, [s_{1,0},1]\cap \gamma_1^{-1}(\Ima\gamma_0)$ because $\gamma_1(1)=x_2\in\Ima\gamma_0$. We set $x_2'=\gamma_1(s_{1,2})$ and $s_{0,2}=\gamma_0^{-1}(x_2')$. Next, we define $s_{0,1}=\inf\, [s_{0,2},1]\cap \gamma_0^{-1}(\Ima\gamma_2)$, and we set $x_1'=\gamma_0(s_{0,1})$ and $s_{2,1}=\gamma_2^{-1}(x_1')$. Finally, we construct a path $\gamma:[0,1]\to X$ as follows:
\[\gamma(t)=\begin{cases}
				\gamma_1\big(s_{1,0}+3t(s_{1,2}-s_{1,0})\big)&\text{ if }0\leq t\leq 1/3,\\
				\gamma_0\big(s_{0,2}+(3t-1)(s_{0,1}-s_{0,2})\big)&\text{ if }1/3\leq t\leq 2/3,\\
				\gamma_2\big(s_{2,1}+(3t-2)(s_{2,0}-s_{2,1})\big)&\text{ if }2/3\leq t\leq 1.
			\end{cases}\]
It is clear that $\gamma$ is continuous, that $\gamma(0)=\gamma(1)=x_0'$, and that $\Ima\gamma\subset \Ima \gamma_0\cup \Ima \gamma_1\cup \Ima\gamma_2$. Moreover, we have that $x_i'\in\Ima\gamma\cap\Ima\gamma_{i-1}\cap\Ima\gamma_{i+1}$ for all $i\in\mathbb{Z}/3\mathbb{Z}$ by construction. The assumption that $\Ima \gamma_0\cap \Ima \gamma_1\cap \Ima\gamma_2=\emptyset$ entails that $s_{1,0}<s_{1,2}$, $s_{0,2}<s_{0,1}$, and $s_{2,1}\neq s_{2,0}$. It follows that $\gamma$ is injective on $[0,1/3]$, on $[1/3,2/3]$, and on $[2/3,1]$. Furthermore, this also yields that $\gamma(0)=x_0'\notin \Ima\gamma_0$ and $\gamma(1/3)=x_2'\notin \Ima\gamma_2$. Then, observe that it holds by construction that $\gamma_1\big((s_{1,0},s_{1,2})\big)\cap (\Ima\gamma_2\cup\Ima\gamma_0)=\emptyset$ and $\gamma_0\big((s_{0,2},s_{0,1})\big)\cap\Ima \gamma_2=\emptyset$. Combining all these properties shows that $\gamma$ is injective on $[0,1)$. Thus, $\gamma$ is indeed a loop.

Now, let us assume that $\gamma_0,\gamma_1,\gamma_2$ are geodesics. We use the triangular inequality to write
\begin{align}
\label{three_arcs_loop1}
d(x_0,x_0')+d(x_0',x_1)=d(x_0,x_1)&\leq d(x_0,x_2')+d(x_2',x_1)\\\label{three_arcs_loop2}
d(x_2,x_2')+d(x_2',x_1)=d(x_2,x_1)&\leq d(x_2,x_0')+d(x_0',x_1)\\
\label{three_arcs_loop3}
d(x_0,x_2')+d(x_2',x_2)=d(x_0,x_2)&=d(x_0,x_0')+d(x_0',x_2).
\end{align}
The equalities inside (\ref{three_arcs_loop1}), (\ref{three_arcs_loop2}), and (\ref{three_arcs_loop3}) come respectively from $x_0'\in\Ima\gamma_2$, $x_2'\in\Ima\gamma_0$, and $x_2',x_0'\in\Ima\gamma_1$. Adding these inequalities yields that $d(x_2',x_2)\leq d(x_0',x_2)$. This entails that $\gamma_1^{-1}(x_0')\leq \gamma_1^{-1}(x_2')$ since $\gamma_1$ is a geodesic on $X$ from $x_0$ to $x_2$. We obtain the other desired inequalities by symmetry.
\end{proof}

\begin{lemma}
\label{two_arcs_loop}
Let $(X,d)$ be a metric space and let $x,y\in X$ be distinct. Let $s\in[0,1]$ and $\gamma_1,\gamma_2$ be two arcs on $X$ from $x$ to $y$. If $\gamma_1(s)\notin\Ima\gamma_2$, then there exists a loop $\gamma$ on $X$ based at $\gamma_1(s)$ such that $\Ima\gamma\backslash\Ima\gamma_i$ is an open subset of $\Ima\gamma_{3-i}$ for both $i\in\{1,2\}$. Moreover, if $\gamma_1$ is a geodesic, then there is $z\in \Ima \gamma\backslash\Ima\gamma_1$ such that $d(x,\gamma_1(s))=d(x,z)$.
\end{lemma}

\begin{proof}
We set $x_0=y$, $x_1=x$, and $x_2=\gamma_1(s)$. For all $t\in[0,1]$, we then define $\hat{\gamma}_0(t)=\gamma_1(s(1-t))$, and
$\hat{\gamma}_1(t)=\gamma_1(1-(1-s)t)$, and $\hat{\gamma}_2(t)=\gamma_2(t)$. Clearly, $\hat{\gamma}_0,\hat{\gamma}_1,\hat{\gamma}_2$ are arcs. Since $\gamma_1$ is injective, we have $\Ima\hat{\gamma}_0\cap\Ima\hat{\gamma}_1=\{\gamma_1(s)\}$. By assumption, this implies that $x_0,x_1,x_2$ are distinct and that $\Ima\hat{\gamma}_0\cap\Ima\hat{\gamma}_1\cap\Ima\hat{\gamma}_2=\emptyset$. Hence, Lemma~\ref{three_arcs_loop} provides a loop $\gamma$ on $X$ which hits $\gamma_1(s)$ and such that $\Ima\gamma \subset \Ima \gamma_1\cup \Ima \gamma_2$. We can assume that $\gamma$ is based at $\gamma_1(s)$ by doing a time-shift. Let $i\in\{1,2\}$, we already know that $\Ima\gamma\backslash\Ima\gamma_i$ is a subset of $\Ima\gamma_{3-i}\backslash\{x,y\}$. Recall that an arc is a topological embedding of $[0,1]$, so $\Ima\gamma_{3-i}\backslash\{x,y\}$ is homeomorphic to $\mathbb{R}$. Similarly, a loop is a topological embedding of the circle, which cannot be embedded into $\mathbb{R}$, so there is $u\in \Ima\gamma\cap \Ima\gamma_i$ and $\Ima\gamma\backslash\{u\}$ is homeomorphic to $\mathbb{R}$. Moreover, the subset $\Ima\gamma_i$ is closed in $X$, so $\Ima\gamma\backslash\Ima\gamma_i$ is open in $\Ima\gamma\backslash\{u\}$. Therefore, $\Ima\gamma\backslash \Ima\gamma_i$ is homeomorphic to an open subset of $\mathbb{R}$. The Brouwer invariance of domain theorem thus implies that it is an open subset of $\Ima\gamma_{3-i}\backslash\{x,y\}$. This completes the proof because $\Ima\gamma_{3-i}\backslash\{x,y\}$ is open in $\Ima\gamma_{3-i}$.

Let us now assume that $\gamma_1$ is a geodesic, which implies that $t\in[0,1]\mapsto d(x,\gamma_1(t))$ is strictly increasing. The fact that $\Ima\gamma\backslash\Ima\gamma_2$ is an open neighborhood of $\gamma_1(s)$ in $\Ima\gamma_1$ ensures that we have $\varepsilon\in(0,s)$ such that $\gamma_1(s-\varepsilon)$ and $\gamma_1(s+\varepsilon)$ belong to $\Ima\gamma$. Hence, there are $t_-,t_+\in(0,1)$ such that $d(x,\gamma(t_-))<d(x,\gamma_1(s))<d(x,\gamma(t_+))$. Then, the intermediate value theorem gives some $t_0\in(0,1)$ such that $d(x,\gamma(t_0))=d(x,\gamma_1(s))$. We set $z=\gamma(t_0)\in\Ima\gamma$ and we have $z\neq\gamma(0)=\gamma_1(s)$ because $\gamma$ is a loop based at $\gamma_1(s)$. Since $t\mapsto d(x,\gamma_1(t))$ is injective, we get that $z\notin\Ima\gamma_1$.
\end{proof}

Now, recall from (\ref{circle_dist_cano}) that $(\mathcal{C},\delta)$ stands for the circle with unit length. It is isometric to the quotient metric space (see Section~\ref{space}) of $[0,1]$ induced by the pseudo-distance $\delta(a,b)=\min(|a\!-\!b|,1\!-\!|a\!-\!b|)$. Recall that $r\cdot\mathcal{C}=(\mathcal{C},r\delta)$ for $r\!>\!0$.

\begin{proposition}
\label{loop_iso}
Let $X$ be a geodesic space. If $X$ is homeomorphic to $\mathcal{C}$, then $X$ is isometric to $2\Delta\cdot \mathcal{C}$ with some $\Delta>0$.
\end{proposition}

\begin{proof}
Let $o\in X$. We know that there is a loop $\gamma$ on $X$ based at $o$ such that $\Ima\gamma=X$. In particular, $X$ is compact so we have $s\in(0,1)$ such that $d(o,\gamma(s))=\sup_{x\in X}d(o,x)=:\Delta$. Let $t_1\in(0,s)$ and let $g$ be a geodesic on $X$ from $o$ to $\gamma(t_1)$. If $x\in\Ima g$ but $x\neq \gamma(t_1)$, then $d(o,x)< d(o,\gamma(t_1))\leq \Delta$. Thus, we get that $\gamma(s)\notin\Ima g$, because $\gamma(t_1)\neq \gamma(s)$ since $t_1<s$. If $\gamma([0,t_1])\neq\Ima g$ then Lemma~\ref{two_arcs_loop} implies that there is another loop $\gamma'$ on $X$ such that $\Ima\gamma'\subset \gamma([0,t_1])\cup\Ima g$. However, the only subset of $X$ homeomorphic to a circle is $X$ itself, because any proper connected subset of a circle is homeomorphic to an interval of $\mathbb{R}$. Hence, $\Ima\gamma'=X$ and then $\gamma(s)\in\gamma([0,t_1])$, which contradicts the injectivity of $\gamma$ on $[0,1)$. Therefore, we find that $\Ima g=\gamma([0,t_1])$. In particular, if $0\leq t_2< t_1<s$ then $d(o,\gamma(t_1))=d(o,\gamma(t_2))+d(\gamma(t_2),\gamma(t_1))$. By time-reversing $\gamma$, if $s<t_1<t_2\leq 1$ then $d(o,\gamma(t_1))=d(o,\gamma(t_2))+d(\gamma(t_2),\gamma(t_1))$.

It follows that the continuous functions $t\in[0,s]\mapsto d(o,\gamma(t))$ and $t\in[0,1-s]\mapsto d(o,\gamma(1-t))$ are strictly increasing. We have an increasing and continuous bijection $\lambda:[0,1]\to[0,1]$ such that $\lambda(1/2)=s$ and $d(o,\gamma\circ\lambda(t))=2\Delta\delta(0,t)$ for all $t\in[0,1]$. Moreover, if $t_1,t_2\in[0,s]$ or if $t_1,t_2\in[s,1]$, then $d(\gamma\circ\lambda(t_1),\gamma\circ\lambda(t_2))=2\Delta|t_1-t_2|=2\Delta\delta(t_1,t_2)$. A circle without two points has exactly two connected components. Since the subsets $\gamma((0,s))$ and $\gamma((s,1))$ are connected and disjoint, we identify them as the connected components of $X\backslash\{o,\gamma(s)\}$. This means that if $t_1\in(0,s)$ and $t_2\in(s,1)$, then a geodesic on $X$ from $\gamma(t_1)$ to $\gamma(t_2)$ hits $o$ or $\gamma(s)$, which further entails that
\[d(\gamma\circ\lambda(t_1),\gamma\circ\lambda(t_2))=\min\big(d(\gamma\circ\lambda(t_1),o)+d(o,\gamma\circ\lambda(t_2)),d(\gamma\circ\lambda(t_1),\gamma(s))+d(\gamma(s),\gamma\circ\lambda(t_2))\big).\]
The previous case yields that the right-hand side is equal to $2\Delta\delta(t_1,t_2)$. To sum up, the continuous and surjective function $\gamma\circ\lambda:[0,1]\to X$ satisfies $d(\gamma\circ\lambda(t_1),\gamma\circ\lambda(t_2))=2\Delta\delta(t_1,t_2)$ for any $t_1,t_2\in[0,1]$. Hence, $X$ is isometric to $2\Delta\cdot \mathcal{C}$.
\end{proof}

\subsection{General properties of topological vernation trees}

The properties of topological vernation trees studied in this section give a better understanding of Definition~\ref{vernation_topo_def} by echoing those of real trees and of the coding (\ref{dv_easy}) by excursions. Recall Definition~\ref{geo_def} of geodesic spaces and real trees.

\begin{proposition}
\label{equiv_tree}
If $(X,d)$ is a compact geodesic space, then the following statements are equivalent.
\begin{enumerate}
\item[$(i)$] The space $(X,d)$ is a real tree.
\item[$(ii)$] For all distinct $x,y\! \in \! X$, there is $u\! \in \! X\backslash\{x,y\}$ such that $x$ and $y$ are in different connected components of $X\backslash\{u\}$.
\item[$(iii)$] There are no loops on $X$.
\end{enumerate}
\end{proposition}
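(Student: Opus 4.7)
The plan is to prove the cycle $(i) \Rightarrow (ii) \Rightarrow (iii) \Rightarrow (i)$, with Lemma \ref{arcwise_lemma} providing the means to extract arcs from the complement of any finite set whenever needed.

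For $(i) \Rightarrow (ii)$, fix $x \neq y$ and take a geodesic $\gamma$ on $X$ from $x$ to $y$, which is an arc since its endpoints differ. Set $u = \gamma(1/2) \in X \setminus \{x, y\}$. If $x$ and $y$ were in the same connected component of $X \setminus \{u\}$, then Lemma \ref{arcwise_lemma} would yield an arc from $x$ to $y$ inside $X \setminus \{u\}$; its image would omit $u$ and thus differ from $\Ima \gamma$, contradicting the real tree property.

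For $(ii) \Rightarrow (iii)$, suppose there exists a loop $\gamma$ based at some $x \in X$. Set $y = \gamma(1/2)$, which is distinct from $x$ by injectivity of $\gamma$ on $[0,1)$. For every $u \in X \setminus \{x, y\}$, one of the half-loops $\gamma|_{[0, 1/2]}$ or $\gamma|_{[1/2, 1]}$ provides a path from $x$ to $y$ inside $X \setminus \{u\}$: if $u \notin \Ima \gamma$, either half works; otherwise injectivity of $\gamma$ on $[0,1)$ gives a unique $t \in (0, 1) \setminus \{1/2\}$ with $\gamma(t) = u$, and one takes the half not containing $t$. This contradicts $(ii)$.

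The heart of the proof is $(iii) \Rightarrow (i)$. Since $X$ is already geodesic, we need only verify uniqueness of arc images. Assume, for contradiction, two arcs $\gamma_1, \gamma_2$ on $X$ from $x$ to $y$ satisfying $\Ima \gamma_1 \neq \Ima \gamma_2$; without loss of generality pick $t_0 \in (0, 1)$ with $\gamma_1(t_0) \notin \Ima \gamma_2$. Because $\Ima \gamma_2$ is compact, hence closed, the quantities
\[
s_1 = \sup\{ s \le t_0 : \gamma_1(s) \in \Ima \gamma_2 \}, \qquad s_2 = \inf\{ s \ge t_0 : \gamma_1(s) \in \Ima \gamma_2 \}
\]
are attained (the sets are nonempty since $\gamma_1(0) = x$ and $\gamma_1(1) = y$ both lie in $\Ima \gamma_2$), satisfy $s_1 < t_0 < s_2$, and $\gamma_1((s_1, s_2)) \cap \Ima \gamma_2 = \emptyset$. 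Set $u_i = \gamma_2^{-1}(\gamma_1(s_i))$ for $i = 1, 2$; injectivity of $\gamma_1$ ensures $\gamma_1(s_1) \neq \gamma_1(s_2)$ so $u_1 \neq u_2$. Concatenating $\gamma_1|_{[s_1, s_2]}$ with $\gamma_2$ traversed between $u_2$ and $u_1$ in the appropriate direction (and reparametrising on $[0,1]$) produces a closed path $\alpha$ based at $\gamma_1(s_1)$. Each of the two halves of $\alpha$ is injective by restriction of an arc, and their open interiors are disjoint since $\gamma_1((s_1, s_2))$ avoids the entirety of $\Ima \gamma_2$. Hence $\alpha$ is injective on $[0, 1)$, i.e.\ a loop, contradicting $(iii)$.

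The main obstacle is the last implication, and more precisely the careful choice of $s_1, s_2$ and the verification that the resulting concatenation is globally injective on $[0,1)$; the other two implications are essentially one-line arguments once Lemma \ref{arcwise_lemma} is at hand.
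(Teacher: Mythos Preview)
Your proof is correct and follows the same cycle $(i)\Rightarrow(ii)\Rightarrow(iii)\Rightarrow(i)$ that the paper indicates. The paper does not actually prove the implication $(iii)\Rightarrow(i)$ in the text but defers to \cite[Proposition~2.3]{chiswell2001introduction}; your self-contained argument for that step---extracting a sub-arc of $\gamma_1$ whose interior avoids $\Ima\gamma_2$ and concatenating it with the corresponding piece of $\gamma_2$ to form a loop---is the standard one and is carried out cleanly.
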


\noindent
The implications $(i)\Rightarrow(ii)\Rightarrow(iii)$ are obvious. The implication $(iii)\Rightarrow (i)$ follows from Lemma~\ref{two_arcs_loop}. See also Chiswell~\cite[Proposition 2.3]{chiswell2001introduction}. Note that the point $(iii)$ ensures that a real tree is also a vernation tree, which might not be so clear by only looking at the definitions. The interest of the point $(ii)$ is that it can be naturally adapted into an equivalent definition for vernation trees as follows.

\begin{proposition}
\label{equiv_vern}
If $(X,d)$ is a compact geodesic space, then the following statements are equivalent.
\begin{enumerate}
\item[$(i)$] The space $(X,d)$ is a vernation tree.
\item[$(ii)$] For all distinct $x,y\! \in\! X$, there are $u,v\! \in\! X\backslash\{x,y\}$ such that $x,y$ are in different connected components of $X\backslash\{u,v\}$.
\item[$(iii)$] For all distinct $x,y\in X$, there are $u,v\in X\backslash\{x,y\}$ such that $x,y$ are in different connected components of $X\backslash\{u,v\}$ and such that $2\min\left(d(x,u),d(x,v),d(y,u),d(y,v)\right)\geq d(x,y)$.
\end{enumerate}
\end{proposition}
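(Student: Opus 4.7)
The plan is to prove the cycle $(iii) \Rightarrow (ii) \Rightarrow (i) \Rightarrow (iii)$; the first implication is immediate. For $(ii) \Rightarrow (i)$ I take the contrapositive and suppose two loops $\gamma_1, \gamma_2$ based at a common $x$ have different images yet meet in more than $\{x\}$. Since two embedded circles cannot sit one strictly inside the other, the preimage of $\Ima\gamma_1$ under the injection $\mathbb{S}^1 \to X$ induced by $\gamma_2$ is a proper closed subset of $\mathbb{S}^1$, and its complement contains a maximal open arc $I$ with distinct endpoints mapping to $a, b \in \Ima\gamma_1$. The two sub-arcs of $\Ima\gamma_1$ cut at $\{a, b\}$, together with $\gamma_2(\bar I)$, form three arcs from $a$ to $b$ with pairwise disjoint interiors; any pair $\{u, v\} \subset X \setminus \{a, b\}$ meets at most two of those interiors, so at least one arc survives in $X \setminus \{u, v\}$, keeping $a, b$ connected and falsifying $(ii)$.

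For $(i) \Rightarrow (iii)$, pick a geodesic $\gamma : [0, 1] \to X$ with $\gamma(0) = x$, $\gamma(1) = y$, and set $m = \gamma(1/2)$, $L = d(x, y)$. If $X \setminus \{m\}$ separates $x$ from $y$, I take $u = v = m$ and the metric condition holds since $d(x, m) = d(y, m) = L/2$. Otherwise Lemma \ref{arcwise_lemma} furnishes an arc $\alpha$ from $x$ to $y$ avoiding $m$, and I select the maximal open interval $(s, t) \ni 1/2$ on which $\gamma$ misses $\Ima\alpha$. Setting $a = \gamma(s)$, $b = \gamma(t)$, the arcs $\gamma_0 = \gamma|_{[s,t]}$ and the sub-arc $\alpha_0$ of $\alpha$ joining $a$ and $b$ meet only in $\{a, b\}$, so their union $\Ima\ell$ traces a loop $\ell$ at $a$ through $b$. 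I then take $u = m$ and choose $v \in \Ima\alpha_0$ with $d(x, v) = d(y, v)$; such $v$ exists by the intermediate value theorem applied to $z \mapsto d(x, z) - d(y, z)$ on $\Ima\alpha_0$, which changes sign between $a$ and $b$ because $s < 1/2 < t$ and $\gamma$ is a geodesic, and the triangle inequality forces $d(x, v) = d(y, v) \geq L/2$.

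The main technical step is a separation lemma: in a vernation tree, every arc from $a$ to $b$ has image contained in $\Ima\ell$. I will prove it by contradiction: any arc $\delta$ from $a$ to $b$ with $\Ima\delta \not\subset \Ima\ell$ admits a maximal open subinterval on which $\delta$ leaves $\Ima\ell$; its endpoints $a', b'$ split the circle $\Ima\ell$ into two arcs which, together with the straying piece of $\delta$, give three arcs between $a'$ and $b'$ with pairwise disjoint interiors, hence three loops based at $a'$ passing through $b'$ with pairwise different images, contradicting the vernation axiom. Given the lemma, any arc from $a$ to $b$ is exactly $\gamma_0$ or $\alpha_0$, hence contains $u$ or $v$, so $\{u, v\}$ separates $a$ from $b$ in $X$. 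A short check then shows $v \notin \Ima\gamma$ (otherwise the geodesic parametrisation and $d(x, v) = d(y, v)$ would force $v = m$, which is not in $\Ima\alpha$), so both $\gamma|_{[0, s]}$ and $\gamma|_{[t, 1]}$ avoid $\{u, v\}$; thus $x$ is linked to $a$ and $y$ to $b$ within $X \setminus \{u, v\}$, and $x, y$ end up in distinct components, completing $(iii)$.
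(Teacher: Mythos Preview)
Your argument is correct. For $(ii)\Rightarrow(i)$ you and the paper run the same three-arcs-with-disjoint-interiors argument, only phrased through the circle structure of $\mathbb{S}^1$ rather than with explicit $\sup/\inf$ parameters. For $(i)\Rightarrow(iii)$ the two proofs diverge in packaging: you first build a single loop $\ell$ through the midpoint $m$, prove a clean containment lemma (every arc from $a$ to $b$ lies in $\Ima\ell$) by exhibiting two loops at $a'$ with \emph{different} images that nonetheless share $b'$, and then read off separation of $x,y$ since the only arcs from $a$ to $b$ are $\gamma_0$ and $\alpha_0$. The paper instead assumes directly an arc from $x$ to $y$ avoiding $\{u,v\}$, constructs two loops based at $u$ that share an entire segment of the geodesic near $u$, uses vernation to force their images to \emph{coincide}, and obtains a contradiction because $v$ lies on one but not the other. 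The two routes invoke the vernation axiom in contrapositive directions but are logically equivalent; yours has the advantage of isolating a reusable intermediate statement, while the paper's is slightly more direct. Your choice of $v$ via $d(x,v)=d(y,v)$ rather than the paper's $d(x,v)=L/2$ is a harmless variant that equally yields the metric bound in $(iii)$.
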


\begin{proof}
The implication $(iii)\Rightarrow(ii)$ is obvious. Let us assume that $(ii)$ holds. Let $\gamma_1,\gamma_2$ be two loops on $X$ based at $x$ and let $s\in(0,1)$ such that $\gamma_1(s)\notin\Ima\gamma_2$. We set $a_1=\sup\, [0,s]\cap\gamma_1^{-1}(\Ima\gamma_2)$ and $b_1=\inf\, [s,1]\cap\gamma_1^{-1}(\Ima\gamma_2)$. We have $a_1<s<a_2$ and $\gamma_1(a_1),\gamma_2(b_1)\in\Ima\gamma_2$ because $\gamma_1$ is continuous and $\Ima\gamma_2$ is closed. We choose $a_2,b_2\in[0,1]$ such that $\gamma_2(a_2)=\gamma_1(a_1)$ and $\gamma_2(b_2)=\gamma_1(b_1)$. Even if it means reversing $\gamma_2$, we can assume that $a_2\leq b_2$ without loss of generality. We can define three paths $\gamma_1',\gamma_2',\gamma_3'$ on $X$ from $\gamma_1(a_1)$ to $\gamma_1(b_1)$ by setting for all $t\in[0,1]$,
\[\gamma_1'(t)=\gamma_1(a_1+t(b_1-a_1)),\quad \gamma_2'(t)=\gamma_2(a_2+t(b_2-a_2)),\quad\text{ and }\quad \gamma_3'(t)=\begin{cases}
								\gamma_2(a_2-t)&\text{ if }t\leq a_2,\\
								x&\text{ if }a_2\leq t\leq b_2,\\
								\gamma_2(b_2+1-t)&\text{ if }b_2\leq t\leq 1.
							   \end{cases}\]
By construction, it is clear that $\Ima\gamma_1'\cap\Ima\gamma_2=\{\gamma_1(a_1),\gamma_1(b_1)\}$, so it follows that $\Ima\gamma_1'\cap\Ima\gamma_2'=\Ima\gamma_1'\cap\Ima\gamma_3'=\{\gamma_1(a_1),\gamma_1(b_1)\}$. Moreover, $\Ima\gamma_2'\cap\Ima\gamma_3'=\{\gamma_1(a_1),\gamma_1(b_1)\}$ because $\gamma_2$ is injective on $[0,1)$. In the case where $\gamma_1(a_1)\neq\gamma_1(b_1)$, there exist $u,v\in X\backslash\{\gamma_1(a_1),\gamma_1(b_1)\}$ such that $\gamma_1(a_1)$ and $\gamma_1(b_1)$ are in different connected components of $X\backslash\{u,v\}$ thanks to $(ii)$. In particular, we would have $\Ima\gamma_i'\cap\{u,v\}\neq\emptyset$ for $i=1,2,3$, which is impossible regarding the intersections between the $\Ima\gamma_i'$. Hence, we find that $\gamma_1(a_1)=\gamma_1(b_1)$ and since $a_1<s<b_1$, we conclude that $a_1=0$ and $b_1=1$. This means that $\Ima\gamma_1\cap\Ima\gamma_2=\{x\}$. By contraposition, if $\Ima\gamma_1\cap\Ima\gamma_2\neq\{x\}$ then $\Ima\gamma_1\subset\Ima\gamma_2$ and $\Ima\gamma_1=\Ima\gamma_2$ by symmetry, which ends the proof of $(ii)\Rightarrow(i)$.

Let us assume that $(i)$ holds to prove $(iii)$. Let $g$ be a geodesic on $X$ from $x$ to $y$ with $x\neq y$, and let us set $u=g(1/2)$ so that $d(x,u)=d(y,u)=d(x,y)/2$. If $x$ and $y$ are in different connected components of $X\backslash\{u\}$, then we simply set $v=u$. Otherwise, we have by Lemma~\ref{arcwise_lemma} an arc $\gamma_2$ on $X$ from $x$ to $y$ such that $g(1/2)\notin\Ima\gamma_2$. Thus, Lemma~\ref{two_arcs_loop} provides a loop $\gamma$ on $X$ such that $\Ima\gamma\backslash\Ima\gamma_2$ is an open neighborhood of $u$ in $\Ima g$, and a point $v\in \Ima\gamma\backslash\Ima g$ such that $d(x,u)=d(x,v)$. Observe that $d(x,y)\leq d(x,y)/2+d(y,v)$, so we only need to verify that $x$ and $y$ are in different connected components of $X\backslash\{u,v\}$. By contradiction, let $\gamma_2'$ be an arc on $X$ from $x$ to $y$ such that $g(1/2),v\notin \Ima \gamma_2'$. By Lemma~\ref{two_arcs_loop}, there is a loop $\gamma'$ on $X$ such that $\Ima\gamma'\subset \Ima g\cup\Ima \gamma_2'$ and such that $\Ima\gamma'\backslash\Ima\gamma_2'$ is an open neighborhood of $u$ in $\Ima g$. In particular, $v$ does not belong to $\Ima\gamma'$ so $\Ima\gamma'\neq \Ima\gamma$. However, $\Ima\gamma\cap\Ima\gamma'$ contains an open neighborhood of $u$ in $\Ima g$. Thus, this intersection is infinite, which yields a contradiction because $X$ is a vernation tree.
\end{proof}

The next result describes the structure of a vernation tree decomposed at one of its loops.

\begin{proposition}
\label{branching_at_loop}
Let $(X,d)$ be a compact vernation tree and let $\gamma$ be a loop on $X$. We write $L=\Ima\gamma$ and $d(x,L)=\min_{u\in L} d(x,u)$ for all $x\in X$. For all $u\in L$, we set $X_u=\{x\in X\, :\, d(x,L)=d(x,u)\}$. The following points hold true.
\begin{enumerate}
\item[$(i)$] The collection $\{X_u\, :\, u\in L\}$ is a partition of $X$.
\item[$(ii)$] For all $u\in L$, the subset $X_u$ of $X$ is closed and its boundary $\partial X_u$ is $\{u\}$.
\item[$(iii)$] For all $u\in L$, the subspace $(X_u,d)$ is a compact vernation tree.
\item[$(iv)$] The subspace $(L,d)$ is a compact vernation tree.
\item[$(v)$] Let $u,v\in L$ be distinct. For all $x\in X_u$ and $y\in X_v$, it holds that
\begin{equation}
\label{central_loop}
d(x,y)=d(x,u)+d(u,y)=d(x,u)+d(u,v)+d(v,y).
\end{equation}
\end{enumerate}
\end{proposition}

\begin{proof}
First, the function $d(\cdot,L)$ is well-defined and continuous on $X$ because $L$ is compact. Thus, the subsets $X_u,u\in L$ are closed in $X$, and so are compact, and they cover $X$. For all $u\in L$, we denote by $Y_u$ the pathwise connected component of $X\backslash(L\backslash\{u\})$ that contains $u$. Clearly, $X_u\cap L=Y_u\cap L=\{u\}$. We claim that $X_u$ is a subset of $Y_u$. Indeed, if $g$ is a geodesic from $x\in X_u\backslash\{u\}$ to $u$, then $d(x,g(t))<d(x,L)$ for all $t\in[0,1)$, which ensures that $g$ is a path on $X\backslash(L\backslash\{u\})$. Now, let $u,v\in L$ with $u\neq v$ and let $x\in Y_u\cap Y_v$. The metric spaces $Y_u,Y_v,L$ are pathwise connected. By Lemma~\ref{arcwise_lemma}, we have an arc $\gamma_u$ on $Y_u$ from $x$ to $u$, an arc $\gamma_v$ on $Y_v$ from $v$ to $x$, and an arc $\gamma_L$ on $L$ from $u$ to $v$. Note that $\Ima\gamma_u\cap L=\{u\}$ and $\Ima\gamma_v\cap L=\{v\}$, and thus $\Ima\gamma_u\cap \Ima\gamma_v\cap L=\emptyset$. Lemma~\ref{three_arcs_loop} then gives us a loop $\gamma$ on $X$ which meets $\Ima\gamma_u\cap\Ima\gamma_v$ and which hits $u$ and $v$. In particular, $\Ima\gamma\neq L$ but $\Ima\gamma\cap L$ contains at least two points. This contradicts the fact that $X$ is a vernation tree. Hence, the $Y_u,u\in L$ are pairwise disjoint. Since the $X_u,u\in L$ cover $X$, we get that $X_u=Y_u$ for all $u\in L$ and that $(i)$ holds true.

We already showed that $X_u$ is closed in $X$. Let $u\in L$, $x\in X_u\backslash\{u\}$ and $y\in X$ with $d(x,y)<d(x,L)$. A geodesic from $x$ to $y$ cannot hit $L$, so $x,y$ are in the same pathwise connected component of $X\backslash(L\backslash\{u\})$. This means that $y\in Y_u=X_u$ and $y\neq u$, and so $X_u\backslash\{u\}$ is open. We obtain $(ii)$ because $X_u$ is not open, since $X$ is connected as a geodesic space.

Let $u,v\in L$ be distinct, $x\in X_u$ and $y\in X_v$. A geodesic from $x$ to $y$ has to hit $L\backslash\{u\}$ by definition of $Y_u$. Thus, we can define $s=\inf g^{-1}(L)$ and we have $g(s)\in L$ by continuity of $g$. Since $g$ induces a path from $x$ to $g(s)$ that stays outside $L\backslash\{g(s)\}$, we have $x\in Y_{g(s)}$ and so $g(s)=u$ thanks to $(i)$. In particular, we get $d(x,y)=d(x,u)+d(u,y)$. By symmetry, we also have $d(u,y)=d(u,v)+d(v,y)$, which yields $(v)$. We just proved that any geodesic from some point of $X_u$ to some point outside $X_u$ must hit $u$. Therefore, a geodesic between two points of $X_u$ stays in $X_u$, because it would have to hit $u$ at least twice otherwise. Similarly, a geodesic between two points of $L$ cannot leave $L$ without hitting some $u\in L$ twice. Hence, the compact subspace $(X_u,d)$ and $(L,d)$ are geodesic for all $u\in L$. Then, they are vernation trees by Definition~\ref{vernation_topo_def} because a loop on one of them is also a loop on $X$. This concludes the proof of $(iii)$ and $(iv)$.
\end{proof}

\noindent
We point out that this decomposition can also be observed on a vernation tree coded by an excursion, in the sense of Definition~\ref{excursion}, via (\ref{dv_easy}). Indeed, recall $\Delta_s,x_s^t,\delta_s$ from (\ref{delta_x}) and (\ref{circle_dist_s}), and let $f$ be an excursion and $s\in[0,1]$ such that $\Delta_s>0$. If $X=\vern[f]$ then one can use (\ref{loop_on_coded}) and Proposition~\ref{branching_property} to construct a loop $\gamma$ on $X$ such that $\Ima\gamma$ is the loop associated with $s$ and such that $X_{\gamma(r)}$ is the projection of $\{t\in[0,1]\, :\, \delta_s(x_s^t,r\Delta_s)=0\}$ for any $r\in[0,1)$.

When $(X,d)$ is a compact metric space, we write $\diam(X)=\sup_{x,y\in X}d(x,y)$. According to the next result, a compact vernation tree has only a countable number of subsets homeomorphic to the circle.

\begin{proposition}
\label{number_loops}
If $(X,d)$ is a compact vernation tree, then $\{\Ima\gamma : \gamma\text{ loop on }X,\diam(\Ima\gamma)\geq\varepsilon\}$ is finite for all $\varepsilon\! >\! 0$.
\end{proposition}

\begin{proof}
We assume by contradiction that the set of interest is infinite. Then, the compactness of $X$ yields that it admits two distinct elements $L,L'$ such that there are $x,y\! \in\! L$  and $x',y'\!\in\! L'$ with $d(x,y)\geq\varepsilon$, $d(x',y')\geq\varepsilon$, and $d(x,x')+d(y,y')<\varepsilon$. By Definition~\ref{vernation_topo_def} of vernation trees, there is $v\in L'$ such that $L$ and $L'\backslash\{v\}$ are disjoint. Then, there is $u\in L$ such that $L'\backslash\{v\}$ encounters $X_u$, where $\{X_u:u\in L\}$ is the partition as in Proposition~\ref{branching_at_loop}. We observe that $L'\backslash\{v\}$ is connected, since it is homeomorphic to $(0,1)$, so it is in fact a subset of $X_u$ because it would need to meet $\partial X_u=\{u\}\subset L$ otherwise. As $X_u$ is closed, it even holds that $L'\subset X_u$. By (\ref{central_loop}), we have $d(a,a')\geq d(a',u)$ for all $a\in L$ and $a'\in L'$. Thus, $\varepsilon\leq d(x',y')\leq d(x',u)+d(y',u)\leq d(x',x)+d(y',y)<\varepsilon$, which is a contradiction.
\end{proof}

Now, recall from Section~\ref{def_GH} that $\mathbb{K}$ is the space of compact metric spaces up to an isometry, endowed with the Gromov--Hausdorff distance $\mathtt{d}_{\mathrm{GH}}$, and that it is separable and complete. Let $\mathbb{L}$ be the space of isometry classes of compact geodesic spaces and let $\mathbb{T}$ be the space of isometry classes of compact real trees. They are both closed subsets of $\mathbb{K}$, see for example Evans~\cite[Theorem 4.19 and Lemma 4.22]{evans}, so they are also separable and complete. We denote by $\mathbb{V}$ the space of isometry classes of compact vernation trees. We have the chain of inclusions $\mathbb{T}\subset\mathbb{V}\subset\mathbb{L}\subset\mathbb{K}$. The next theorem asserts that $\mathbb{V}$ is closed in $\mathbb{K}$, and so that it is separable and complete.

\begin{theorem}
\label{closed_vern}
Let $(X_n,d_n)$ be a compact vernation tree for all $n\geq 0$ and let $(X,d)$ be a compact metric space. If $X_n\longrightarrow X$ for the Gromov--Hausdorff topology then $X$ is a compact vernation tree.
\end{theorem}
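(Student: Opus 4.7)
The plan is to verify the quantitative separation criterion of Proposition \ref{equiv_vern}$(iii)$ for $X$ by transferring separating pairs from the $X_n$ through Gromov-Hausdorff correspondences. First, since the class $\mathbb{L}$ of compact geodesic spaces is closed in $\mathbb{K}$ (cf.\ the remarks preceding the theorem), the limit $(X,d)$ is already a compact geodesic space. It then suffices to fix an arbitrary $x\neq y$ in $X$ and exhibit $u,v\in X\setminus\{x,y\}$ that separate $x$ and $y$ in $X$ and satisfy $2\min(d(x,u),d(x,v),d(y,u),d(y,v))\geq d(x,y)$.

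Fix compact correspondences $\mathcal{R}_n$ between $X_n$ and $X$ with distortions $\eps_n\to 0$. Pick $x_n,y_n\in X_n$ with $(x_n,x),(y_n,y)\in\mathcal{R}_n$, so that $d_n(x_n,y_n)\to d(x,y)>0$ and $x_n\neq y_n$ for $n$ large. By Proposition \ref{equiv_vern}$(iii)$ applied in the vernation tree $X_n$, there exist $u_n,v_n\in X_n\setminus\{x_n,y_n\}$ that separate $x_n$ and $y_n$ in $X_n\setminus\{u_n,v_n\}$ and satisfy the distance inequality $2\min(d_n(x_n,u_n),d_n(x_n,v_n),d_n(y_n,u_n),d_n(y_n,v_n))\geq d_n(x_n,y_n)$. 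Choose $u^{(n)},v^{(n)}\in X$ so that $(u_n,u^{(n)}),(v_n,v^{(n)})\in\mathcal{R}_n$; by compactness of $X$, extract a subsequence along which $u^{(n)}\to u$ and $v^{(n)}\to v$ for some $u,v\in X$. The distortion bound lets the inequality pass to the limit, yielding $2\min(d(x,u),d(x,v),d(y,u),d(y,v))\geq d(x,y)>0$; in particular $u,v\notin\{x,y\}$.

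The remaining (and main) task is to show that $u$ and $v$ separate $x$ from $y$ in $X$. Suppose, for contradiction, that they do not. Then by Lemma \ref{arcwise_lemma} there is an arc $\alpha\colon[0,1]\to X\setminus\{u,v\}$ joining $x$ to $y$, and $\sigma:=d(\Ima\alpha,\{u,v\})>0$ since $\Ima\alpha$ is compact. By uniform continuity of $\alpha$, choose $0=t_0<t_1<\cdots<t_k=1$ with $d(\alpha(t_i),\alpha(t_{i+1}))<\sigma/8$. For each $i$ pick $a_i^n\in X_n$ with $(a_i^n,\alpha(t_i))\in\mathcal{R}_n$; note $d_n(x_n,a_0^n)\leq\eps_n$ and $d_n(y_n,a_k^n)\leq\eps_n$ because both sides of each pair correspond to $x$, resp.\ $y$. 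Concatenating geodesics in $X_n$ along $x_n\to a_0^n\to a_1^n\to\cdots\to a_k^n\to y_n$ yields a path $\beta_n$ from $x_n$ to $y_n$. Any point $z_n$ on $\beta_n$ lies within $d_n$-distance $\sigma/8+O(\eps_n)$ of some $a_i^n$; taking $z\in X$ corresponding to $z_n$, the distortion bound gives $d(z,\alpha(t_i))\leq\sigma/4$ for $n$ large, hence $d(z,u)\geq 3\sigma/4$ and, using $u^{(n)}\to u$ and a final application of the distortion bound, $d_n(z_n,u_n)\geq\sigma/2>0$; the same bound holds for $v_n$. Thus $\beta_n$ avoids $\{u_n,v_n\}$, contradicting that $u_n,v_n$ separate $x_n,y_n$ in $X_n$.

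The main obstacle is this last step: lifting the purely topological separation from the $X_n$ to the limit $X$. The geodesic structure of the $X_n$ is essential twice over—first to concatenate short geodesics into a continuous path $\beta_n$ approximating the arc $\alpha$ pointwise, and second to convert the asymptotic metric lower bound $d(z,u)\geq 3\sigma/4$ into a genuine avoidance $z_n\neq u_n$ inside $X_n$. The quantitative refinement $(iii)$ of Proposition \ref{equiv_vern} (as opposed to the qualitative statement $(ii)$) is what keeps $u,v$ away from $\{x,y\}$ in the limit, preventing the separating points from collapsing onto the endpoints.
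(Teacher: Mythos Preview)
Your proof is correct and follows essentially the same approach as the paper: both use Proposition~\ref{equiv_vern}$(iii)$, pull separating pairs $u_n,v_n$ through correspondences to a limiting pair $u,v$, and then transfer the separation by discretizing a path in $X$ and concatenating geodesics in $X_n$. The only cosmetic difference is that the paper argues directly (any path $\gamma$ in $X$ must hit $\{u,v\}$, by showing the hitting point of the lifted path $\gamma_n$ with $\{u_n,v_n\}$ converges to a point of $\Ima\gamma$ equal to $u$ or $v$), whereas you argue the contrapositive (an arc avoiding $\{u,v\}$ by a margin $\sigma>0$ lifts to a path avoiding $\{u_n,v_n\}$); these are the same idea read in opposite directions.
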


\begin{proof}
First, the subspace $\mathbb{L}$ is closed so $X$ is a compact geodesic space. Let $x,y\in X$ with $x\neq y$, we only need to find $u,v\in X\backslash\{x,y\}$ such that $x$ and $y$ are in different connected components of $X\backslash\{u,v\}$. There exists a sequence $\left(\mathcal{R}_n\right)$ of correspondences respectively between $X_n$ and $X$ such that $\mathrm{dis}\left(\mathcal{R}_n\right)\longrightarrow0$. For any $n\geq 0$, there are $x_n,y_n\in X_n$ such that $(x_n,x)\in\mathcal{R}_n$ and $(y_n,y)\in\mathcal{R}_n$. In particular, $\lim d_n(x_n,y_n)=d(x,y)>0$ so we can assume that $x_n\neq y_n$ for all $n\geq 0$ by only considering large enough $n$. Hence, there are $u_n,v_n\in X_n$ as in the point $(iii)$ of Proposition~\ref{equiv_vern}. Then, we have $u_n',v_n'\in X$ such that $(u_n,u_n')\in\mathcal{R}_n$ and $(v_n,v_n')\in\mathcal{R}_n$ for all $n\geq 0$. Since $X$ is compact, we can assume that there exist $u,v\in X$ such that $u_n'\longrightarrow u$ and $v_n'\longrightarrow v$, by restricting ourselves on a subsequence. It follows that $2d(x,u)=\lim 2d_n(x_n,u_n)\geq\lim d_n(x_n,y_n)=d(x,y)$, so $x\neq u$. In the same way, we show that $u,v\in X\backslash\{x,y\}$. We are going to show that $x$ and $y$ are in different connected components of $X\backslash\{u,v\}$.

Let $\gamma:[0,1]\longrightarrow X$ be a path on $X$ from $x$ to $y$. For all $n\geq 1$ and $1\leq i\leq n-1$, there exists $w_{n,i}\in X_n$ such that $(w_{n,i},\gamma(i/n))\in \mathcal{R}_n$. We also write $w_{n,0}=x_n$ and $w_{n,n}=y_n$. We consider a path $\gamma_n:[0,1]\longrightarrow X_n$ such that the map $s\in[0,1]\mapsto \gamma_n((i+s)/n)$ is a geodesic from $w_{n,i}$ to $w_{n,i+1}$ for all $0\leq i\leq n-1$. Since $\gamma_n$ is a path on $X_n$ from $x_n$ to $y_n$, we have $\{u_n,v_n\}\cap\Ima \gamma_n\neq\emptyset$, so by restricting ourselves on another subsequence, we can assume that $u_n\in\Ima\gamma_n$ for all $n\geq 1$ without loss of generality. We choose $t_n\in[0,1)$ such that $\gamma_n(t_n)=u_n$, and we can assume that there exists $t\in[0,1]$ such that $t_n\longrightarrow t$. Let us prove that $\gamma(t)=u$. Observe that $d_n\left(w_{n,\lfloor nt_n\rfloor},u_n\right)\leq d_n\left(w_{n,\lfloor nt_n\rfloor },w_{n,\lfloor nt_n\rfloor +1}\right)$ by definition of $\gamma_n$. By definition of the $w_{n,i}$, we find \[d_n\left(w_{n,\lfloor nt_n\rfloor},u_n\right)\leq2\mathrm{dis}\left(\mathcal{R}_n\right)+\sup_{\substack{s,r\in[0,1]\\ |s-r|\leq 1/n}}|\gamma(s)-\gamma(r)|,\]
so $\lim d_n(w_{n,\lfloor nt_n\rfloor},u_n)=0$ thanks to the continuity of $\gamma$. It follows that $\lim d\left(\gamma(\lfloor nt_n\rfloor/n),u_n'\right)=0$ too. Moreover, we have $t_n\longrightarrow t$, so $\gamma(\lfloor n t_n\rfloor/n)\longrightarrow \gamma(t)$. Recall that $u_n'\longrightarrow u$, thus $d(\gamma(t),u)=0$. 
\end{proof} 

\subsection{Contraction of small loops on a vernation tree}

This section is devoted to the construction of an operation that rescales every small loop of a given topological vernation tree. This then yields a transformation that contracts every loop whose diameter is smaller than a threshold $\varepsilon>0$ into a single point. Through our coding (\ref{dv_easy}) by excursions, this is analogous to the map $f\mapsto f-Jf+J^{2\varepsilon}\! f$, where $Jf$ and $J^{2\varepsilon}\! f$ are as in (\ref{Jeps_def}), which removes all the small jumps of an excursion. Here, if $S$ is a subset of $(0,1)$ then we denote by $\mathrm{Int}\, S$ its interior. We also denote by $\mathrm{Leb}$ the Lebesgue measure on $(0,1)$. We fix a compact vernation tree $(X,d)$ and we set
\begin{equation}
\label{set_loops_metric}
\ell(X)=\{\Ima\gamma\ :\ \gamma\text{ loop on }X\}=\{L\subset X\ :\ \text{homeomorphic to the circle}\}.
\end{equation}

\begin{proposition}
Let $x,y\!\in\! X$, $\varepsilon\!>\!0$, $\eta\!\in\![0,1]$. There is $d_{\varepsilon,\eta}(x,y)\in\mathbb{R}$ such that for any geodesic $g$ on $X$ from $x$ to $y$,
\begin{equation}
\label{vern_contracte}
d_{\varepsilon,\eta}(x,y)=d(x,y)-(1-\eta)d(x,y)\mathrm{Leb}\left(\{t\in(0,1)\ :\ \exists L\in\ell(X),\ t\in\mathrm{Int}\, g^{-1}(L)\text{ and }\diam(L)<\varepsilon\}\right).
\end{equation}
\end{proposition}
\begin{proof}
First, the right-hand side of (\ref{vern_contracte}) is well-defined because the subset measured by $\mathrm{Leb}$ is open. We only need to show that it does not depend on the choice of the geodesic $g$. Let $g_1,g_2$ be two geodesics on $X$ from $x$ to $y$. Let $t\in(0,1)$ and let $I$ be an open interval containing $t$ such that there is $L\in\ell(X)$ with $g_1(I)\subset L$ and $\diam(L)<\varepsilon$. Let $s\in I$. If $g_2(s)\in\Ima g_1$ then $g_2(s)=g_1(s)$ because $g_1,g_2$ are geodesics, so $g_2(s)\in L$. If $g_2(s)\notin\Ima g_1$ then Lemma~\ref{two_arcs_loop} asserts that there is a loop $\gamma$ on $X$ based at $g_2(s)$ such that $\Ima\gamma\backslash\Ima g_2$ is an open subset of $\Ima g_1$. Moreover, there exists $z\in\Ima\gamma\cap\Ima g_1$ such that $d(x,z)=d(x,g_2(s))$, so $z\in\Ima g_1$ and $z=g_1(s)$ because $g_1,g_2$ are geodesics. Both $g_1^{-1}(\Ima\gamma)$ and $g_1^{-1}(L)$ contain a neighborhood of $s$, thus $\Ima\gamma=L$ by Definition~\ref{vernation_topo_def}. Whatever the case, we have proved that $g_2(s)\in L$, which gives $g_2(I)\subset L$. By symmetry, it follows that $d_{\varepsilon,\eta}$ is indeed well-defined.
\end{proof}

Clearly, we have $\eta d(x,y)\leq d_{\varepsilon,\eta}(x,y)\leq d(x,y)$ for any $x,y\in X$, so we have $d_{\varepsilon,\eta}(x,x)=0$. Then, $d_{\varepsilon,\eta}$ is symmetric because a geodesic $g_1$ from $x$ to $y$ gives a geodesic from $y$ to $x$ after reversing the time. Therefore, we only need to show that $d_{\varepsilon,\eta}$ enjoys the triangular inequality to obtain the following.

\begin{proposition}
For any $\varepsilon>0$ and $\eta\in(0,1]$, the function $d_{\varepsilon,\eta}$ is a distance on $X$, and $d_{\varepsilon,0}$ is a pseudo-distance on $X$.
\end{proposition}

\begin{proof}
Let $L\subset X$ be homeomorphic to the circle and let $u,v\in L$ be distinct. Thanks to Proposition~\ref{branching_at_loop} $(iv)$, we know that there is a geodesic $g$ on $X$ from $u$ to $v$ such that $\Ima g\subset L$. Now, if $L'\in\ell(X)$ verifies that $g^{-1}(L)$ has non-empty interior in $(0,1)$, then $L\cap L'$ is infinite and so $L=L'$ by Definition~\ref{vernation_topo_def}. Hence, (\ref{vern_contracte}) yields that if $u,v\in L$ then 
\begin{equation}
\label{distance_contracte1}
d_{\varepsilon,\eta}(u,v)=(1-(1-\eta)\I{\diam(L)<\varepsilon})d(u,v).
\end{equation}
Let $x_0,x_1,x_2\in X$ and let $g_1$ be a geodesic on $X$ from $x_0$ to $x_2$. If $x_1\in \Ima g_1$ then $d(x_0,x_2)=d(x_0,x_1)+d(x_1,x_2)$ and $g$ induces a geodesic from $x_0$ to $x_1$ and a geodesic from $x_1$ to $x_2$. Hence, we find that if $x_1\in\Ima g_1$ then
\begin{equation}
\label{distance_contracte2}
d_{\varepsilon,\eta}(x_0,x_2)=d_{\varepsilon,\eta}(x_0,x_1)+d_{\varepsilon,\eta}(x_1,x_2).
\end{equation}
Now we assume that $x_1\notin\Ima g_1$, and we fix a geodesic $g_0$ from $x_2$ to $x_1$ and a geodesic $g_2$ from $x_1$ to $x_0$. If $\Ima g_0\cap\Ima g_1\cap \Ima g_2=\emptyset$, then we have a loop $\gamma$ on $X$ and three points $x_0',x_1',x_2'\in \Ima\gamma$ as given by Lemma~\ref{three_arcs_loop}. If $\Ima g_0\cap\Ima g_1\cap \Ima g_2$ is non-empty then we choose $x_0'=x_1'=x_2'$ in this set. Thanks to (\ref{distance_contracte1}), there is $\kappa\in\{\eta,1\}$ such that $d_{\varepsilon,\eta}(x_i',x_{i+1}')=\kappa d(x_i',x_{i+1}')$ for all $i\in\mathbb{Z}/3\mathbb{Z}$, whatever the case we are in. Furthermore, (\ref{distance_contracte2}) and Lemma~\ref{three_arcs_loop} yield that
\begin{align*}
d_{\varepsilon,\eta}(x_0,x_1)&=d_{\varepsilon,\eta}(x_0,x_0')+d_{\varepsilon,\eta}(x_0',x_1')+d_{\varepsilon,\eta}(x_1',x_1),\\
d_{\varepsilon,\eta}(x_0,x_2)&=d_{\varepsilon,\eta}(x_0,x_0')+d_{\varepsilon,\eta}(x_0',x_2')+d_{\varepsilon,\eta}(x_2',x_2),\\
d_{\varepsilon,\eta}(x_1,x_2)&=d_{\varepsilon,\eta}(x_1,x_1')+d_{\varepsilon,\eta}(x_1',x_2')+d_{\varepsilon,\eta}(x_2',x_2).
\end{align*}
The triangular inequality $d_{\varepsilon,\eta}(x_0,x_2)\leq d_{\varepsilon,\eta}(x_0,x_1)+d_{\varepsilon,\eta}(x_1,x_2)$ follows from that of $d$.
\end{proof}

Let $\varepsilon>0$ and $\eta\in(0,1]$. Recall that $d$ and $d_{\varepsilon,\eta}$ are bilipschitz, so $d_{\varepsilon,\eta}$ is continuous on $X\times X$, and $(X,d)$ and $(X,d_{\varepsilon,\eta})$ admit the same topology. Thus, $(X,d_{\varepsilon,\eta})$ is compact and satisfies $(b)$ in Definition~\ref{vernation_topo_def}. Let $x,y\in X$ be distinct. If $g$ is a geodesic on $(X,d)$ from $x$ to $y$, then the function $t\in[0,1]\longmapsto d_{\varepsilon,\eta}(x,g(t))$ is continuous and increasing, so there exists a continuous and increasing bijection $\lambda:[0,1]\to[0,1]$ such that $d_{\varepsilon,\eta}(x,g\circ \lambda(t))=td_{\varepsilon,\eta}(x,y)$ for all $t\in[0,1]$. Thanks to (\ref{distance_contracte2}), it follows that $g\circ\lambda$ is a geodesic on $(X,d_{\varepsilon,\eta})$ from $x$ to $y$. Thus, $(X,d_{\varepsilon,\eta})$ is a compact vernation tree when $\eta>0$. This still holds for $\eta=0$ in the following sense because $d_{\varepsilon,0}$ is a Lipschitz pseudo-distance on $X$.

For all $\varepsilon>0$, let us denote by $\rG_\varepsilon(X)$ the quotient metric space of $X$ induced by the pseudo-distance $d_{\varepsilon,0}$, as defined in Section~\ref{space}. Recall from Section~\ref{def_GH} the Gromov--Hausdorff topology and from (\ref{set_loops_metric}) the notation $\ell$.

\begin{theorem}
\label{vern_contracte_thm}
Let $(X,d)$ be a compact vernation tree. Then, for all $\varepsilon>0$, $(\rG_\varepsilon(X),d_{\varepsilon,0})$ is a compact vernation tree such that $\diam(L)\geq\varepsilon$ for any $L\in\ell(\rG_\varepsilon(X))$. Moreover, $\rG_\varepsilon(X)\longrightarrow X$ for the Gromov--Hausdorff topology as $\varepsilon\to 0^+$.
\end{theorem}

\begin{proof}
First note that if $g$ is a geodesic on $X$ between two distinct points and if $s\in(0,1)$, then Definition~\ref{vernation_topo_def} yields that there is at most one $L\in\ell(X)$ such that $g^{-1}(L)$ is a neighborhood of $s$. By Fubini's theorem, (\ref{vern_contracte}) entails that $(d_{\varepsilon,\eta})_{0<\eta\leq 1}$ non-increasingly converges pointwise to $d_{\varepsilon,0}$ when $\eta\to 0^+$, and that $(d_{\varepsilon,0})_{\varepsilon>0}$ non-decreasingly converges pointwise to $d$ when $\varepsilon\to 0^+$. These functions are continuous on the compact space $X\times X$, so Dini's theorem implies that these convergences happen uniformly. Thus, $(X,d_{\varepsilon,\eta})\longrightarrow\rG_\varepsilon(X)$ as $\eta\to 0^+$ and $\rG_\varepsilon(X)\longrightarrow X$ as $\varepsilon\to 0^+$ for the GH topology. In particular, $\rG_\varepsilon(X)$ is a compact vernation tree by Theorem~\ref{closed_vern}.

We now focus on the diameters of the loops of $\rG_\varepsilon(X)$. We denote by $\mathsf{p}:X\to\rG_\varepsilon(X)$ the canonical projection. Let $L\in\ell(\rG_\varepsilon(X))$. Proposition~\ref{loop_iso} and Proposition~\ref{branching_at_loop} $(iv)$ yields that $(L,d_{\varepsilon,0})$ is isometric to $4\Delta\cdot\mathcal{C}$, where $2\Delta:=\diam(L)>0$. It follows that there exist $x_0,x_1,x_2,x_3\in X$ whose projections on $\rG_\varepsilon(X)$ are in $L$ and such that $d_{\varepsilon,0}(x_i,x_{i+1})=\Delta$ and $d_{\varepsilon,0}(x_i,x_{i+2})=2\Delta$ for all $i\in\mathbb{Z}/4\mathbb{Z}$. We choose a geodesic $g_1$ on $(X,d)$ from $x_0$ to $x_2$. Since $d_{\varepsilon,0}$ is continuous on $X^2$, the intermediate value theorem yields that there is $s\in(0,1)$ such that $d_{\varepsilon,0}(x_0,g_1(s))=\Delta$. We can assume that $d_{\varepsilon,0}(g_1(s),x_1)\geq \Delta$ without loss of generality because $2\Delta\leq d_{\varepsilon,0}(x_1,g_1(s))+d_{\varepsilon,0}(g_1(s),x_3)$. We then fix a geodesic $g_0$ on $(X,d)$ from $x_2$ to $x_1$ and a geodesic $g_2$ on $(X,d)$ from $x_1$ to $x_0$. 

Let us first assume that there exist $x_0'\in\Ima g_2\cap\Ima g_1$, $x_1'\in\Ima g_0\cap\Ima g_2$, and $x_2'\in\Ima g_1\cap \Ima g_0$ such that $\mathsf{p}(x_0')=\mathsf{p}(x_1')=\mathsf{p}(x_2')$. Then, the choice of $x_0,x_1,x_2,x_3$ and (\ref{distance_contracte2}) allow us to rewrite $\Delta+\Delta=2\Delta$ as
\[d_{\varepsilon,0}(x_0,x_1')+2d_{\varepsilon,0}(x_1',x_1)+d_{\varepsilon,0}(x_1',x_2)= d_{\varepsilon,0}(x_0,x_1)+d_{\varepsilon,0}(x_1,x_2)=d_{\varepsilon,0}(x_0,x_2)=d_{\varepsilon,0}(x_0,x_1')+d_{\varepsilon,0}(x_1',x_2).\]
Thus, we get $d_{\varepsilon,0}(x_1',x_1)=0$ and then $\Delta\leq d_{\varepsilon,0}(g_1(s),x_1)=d_{\varepsilon,0}(g_1(s),x_0')$ by triangular inequality. Then, using (\ref{distance_contracte2}) again gives that if $g_1^{-1}(x_0')\leq s$ (resp.~$g_1^{-1}(x_0')\geq s$) then $d_{\varepsilon,0}(x_0,x_0')=0$ (resp.~$d_{\varepsilon,0}(x_0',x_2)=0$). This contradicts $d_{\varepsilon,0}(x_0,x_1)=d_{\varepsilon,0}(x_1,x_2)=\Delta$. Therefore, we have shown that the projections of $\Ima g_2\cap\Ima g_1$, $\Ima g_0\cap\Ima g_2$, and $\Ima g_1\cap \Ima g_0$ do not intersect. In particular, we have a loop $\gamma$ on $X$ and three points $x_0',x_1',x_2'\in \Ima\gamma$ as given by Lemma~\ref{three_arcs_loop}. Moreover, $\mathsf{p}(x_0'),\mathsf{p}(x_1'),\mathsf{p}(x_2')$ are distinct. Let us write $L'=\Ima\gamma$ to lighten the notation.

We know from (\ref{distance_contracte1}) that $d_{\varepsilon,0}(x_0',x_1')=\I{\diam(L')\geq\varepsilon}d(x_0',x_1')$, which implies that $\diam(L')\geq\varepsilon$. As a result, (\ref{distance_contracte1}) entails that $(L',d)$ and $(\mathsf{p}(L'),d_{\varepsilon,0})$ are isometric. Now, recall that $(\rG_\varepsilon(X),d_{\varepsilon,0})$ is a compact vernation tree. By Proposition~\ref{branching_at_loop}, for all $i\in\{0,1,2\}$, we choose $x_i''\in X$ so that $\mathsf{p}(x_i'')$ is the unique point $u\in L$ such that $d_{\varepsilon,0}(\mathsf{p}(x_i'),u)=d_{\varepsilon,0}(\mathsf{p}(x_i'),L)$. We then successively apply (\ref{distance_contracte2}), (\ref{central_loop}) on $\rG_\varepsilon(X)$, and the triangular inequality of $d_{\varepsilon,0}$ to write
\[d_{\varepsilon,0}(x_0,x_2)=d_{\varepsilon,0}(x_0,x_0')+d_{\varepsilon,0}(x_0',x_2)=d_{\varepsilon,0}(x_0,x_0'')+2d_{\varepsilon,0}(x_0'',x_0')+d_{\varepsilon,0}(x_0'',x_2)\geq d_{\varepsilon,0}(x_0,x_2)+2d_{\varepsilon,0}(x_0'',x_0').\]
So we get $\mathsf{p}(x_0')=\mathsf{p}(x_0'')$. We similarly prove that $\mathsf{p}(x_1')=\mathsf{p}(x_1'')$ and $\mathsf{p}(x_2')=\mathsf{p}(x_2'')$. Thus, $L\cap \mathsf{p}(L')$ contains at least $3$ points, which ensures that $L=\mathsf{p}(L')$ and then $\diam(L)=\diam(L')\geq\varepsilon$.
\end{proof}

\subsection{Proof of Theorem~\ref{topo_conclusion}}

Here, we prove Theorem~\ref{topo_conclusion}. Recall from Sections~\ref{fst_properties_sec} and \ref{def_GH} the notions of pointed metric spaces and the definition of the pointed GH topology. We remind Definition~\ref{excursion} of excursions and $\Delta_s,x_s^t,\delta_s,\tilde{\delta}_s$ from (\ref{delta_x}), (\ref{circle_dist_s}), and (\ref{circle_shuffled_dist_s}). Finally, recall from Sections~\ref{intro_desc} and \ref{intro_shuffling} the tree $(\tree[f],\dtree[f])$, the looptrees $(\lop[f],\dl[f])$ and $(\lopt[f],\dlt[f])$, and the vernation trees $(\vern[f],\dv[f])$ and $(\vernt[f],\dvt[f])$ coded by an excursion $f$. We begin by restating in terms of the present work the classic analogous result about the coding of real trees by real-valued functions. Recall Definition~\ref{type_exc} of continuous excursions.

\begin{proposition}
\label{real_tree_topo_coded}
If $f$ is a continuous excursion, then $\tree[f]$ is a compact real tree. Conversely, if $(X,d)$ is a compact real tree, then for	all $a\in X$, there is a continuous excursion $f$ such that $(X,d,a)$ and $\tree[f]$ are pointed-isometric.
\end{proposition}

\begin{proof}
If $f$ is a continuous excursion then $\dtree$ is expressed by (\ref{classical_tree_distance}), and it is well-known that the quotient metric space $\tree[f]$ it induces is a real tree (see Le Gall~\cite{legall_trees} for example). Conversely, Duquesne~\cite[Lemma 4.2]{duquesne_coding} shows that any pointed compact real tree is coded by a continuous function $F:[0,M]\to[0,\infty)$ via the classic pseudo-distance given by (\ref{classical_tree_distance}), with some $M>0$. Although $F(0)=0$, $F$ is not an excursion in the sense of Definition~\ref{excursion} because it is not defined on $[-1,1]$ and because its last value does not have to be $0$. Nevertheless, the function $f:[-1,1]\to [0,\infty)$ defined by $f(t)=0$ when $t\leq 0$, $f(t)=F(2tM)$ when $0\leq t\leq 1/2$, and $f(t)=2F(M)(1-t)$ when $t\geq 1/2$ is a genuine continuous excursion. It is then straightforward to check that $\tree[f]$ is pointed-isometric to the pointed tree coded by $F$ via (\ref{classical_tree_distance}).
\end{proof}

Let us begin the proof of Theorem~\ref{topo_conclusion}. The first step is showing the following result.

\begin{theorem}
\label{vern_code/topo}
If $f$ is an excursion, then the spaces $\vern[f]$ and $\vernt[f]$ are compact vernation trees in the sense of Definition~\ref{vernation_topo_def}.
\end{theorem}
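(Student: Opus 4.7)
The plan is to verify the two defining properties of a vernation tree, since compactness of both spaces is already known from Propositions \ref{d_continu} and \ref{dt_continu}. Namely, I need to show (a) $\vern[f]$ and $\vernt[f]$ are geodesic and (b) they satisfy the loop-intersection axiom, for which I will use the equivalent characterization \emph{(ii)} of Proposition \ref{equiv_vern}.

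For geodesicity, I would construct explicit geodesics by concatenation. Given representatives $s,t\in[0,1]$, one reduces via Lemma \ref{autre_utile} and Corollary \ref{geo_tree} to the case $s\preceq t$. The decomposition $\dv(s,t)=\dtree(s,t)+2\sum_{s\prec r\preceq t}\delta_r(0,x_r^t)$ naturally splits the distance into a tree part and a sum of loop contributions. By Theorem \ref{d_J}, the tree part equals $\dclas[f-Jf]$, which is classically realized by a geodesic since $f-Jf$ is a continuous excursion (\cite{legall_trees,duquesne_coding}); for each $r$ with $s\prec r\preceq t$, the semi-arc of length $2\delta_r(0,x_r^t)$ on the isometric copy of the metric circle of length $2\Delta_r$ inside $\vern[f]$ (as discussed at the end of Section 2.4) realizes the loop contribution. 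Concatenating these pieces in the order given by the ancestral chain from $s$ to $t$ produces a path of length exactly $\dv(s,t)$. An identical argument, substituting $\tilde\delta_r$ for $\delta_r$ and using the shuffled parametrizations, handles $\vernt[f]$.

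For the loop-intersection axiom, given distinct $x=[s], y=[t]$ in $\vern[f]$, take a geodesic $g$ between them of length $L=\dv(s,t)>0$ and set $u=g(L/3)$, $v=g(2L/3)$; both differ from $x$ and $y$. I claim $\{u,v\}$ separates $x$ from $y$. The argument is a case analysis on where $u,v$ sit in the decomposition of $\vern[f]$ into tree segments and jump-loops: if $u$ lies on a continuous-growth tree segment it already acts as a one-point cut by Proposition \ref{equiv_tree}\emph{(ii)} applied to the sub-real-tree from $\tree[f-Jf]$; if $u$ lies on a jump-loop of some $r$ with $\Delta_r>0$, then I adjust $v$ to lie on the same loop (shortening/lengthening the interval if needed) so that $\{u,v\}$ severs that metric circle into two disjoint arcs, one bounding $x$ and the other bounding $y$.

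The main obstacle is ruling out alternative paths in $\vern[f]\setminus\{u,v\}$, since a naive topological picture only suggests rather than proves separation. My strategy is to exploit the recursive gluing structure of $\vern[f]$ via the branching property (Proposition \ref{branching_property}): every sub-excursion codes a sub-space glued to the rest at a single point, so the ambient space is tree-structured at the level of its loops. To make the argument rigorous I would first treat excursions with finitely many jumps, where $\vern[f]$ is visibly a finite tree of metric circles and tree segments and separation is transparent, and then transfer the result to general $f$ via the approximation $f\approx(f-Jf)+J^\varepsilon f$ from Theorem \ref{J_prop} together with the continuity provided by Theorem \ref{corollary1} and Corollary \ref{propo}. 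For $\vernt[f]$ the same argument applies verbatim: the shuffle $\Phi$ only permutes the attachment positions on each loop without altering the incidence pattern between loops and tree segments that underlies the separation property.
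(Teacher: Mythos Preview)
Your overall strategy---reduce to excursions with finitely many jumps via the branching/gluing structure, then approximate the general case using $g_n=f-Jf+J^{1/n}f$ and Corollary~\ref{propo}---is exactly the skeleton of the paper's proof. But two pieces of your execution diverge from the paper and one of them is a genuine gap.

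First, the gap: for the approximation step you say you will ``transfer the result to general $f$'' once you know each $\vern[g_n]$ is a vernation tree and $\vern[g_n]\to\vern[f]$ in the Gromov--Hausdorff sense. This inference requires that the class of compact vernation trees is \emph{closed} under Gromov--Hausdorff limits, which is precisely Theorem~\ref{closed_vern}. You never invoke it, and without it the approximation is inconclusive. The same applies to $\vernt[f]$. (Incidentally, the convergence $\vern[g_n]\to\vern[f]$ does not come from Theorem~\ref{corollary1}, which needs a uniform bound on the number of jumps; it comes from Corollary~\ref{propo} combined with Proposition~\ref{cv_J}\,\textit{(iv)}.)

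Second, a difference in route rather than a gap: your direct construction of geodesics by concatenating tree segments and loop arcs along the ancestral chain is more hands-on than what the paper does, and it is delicate because the chain $\{r:s\prec r\preceq t,\ \Delta_r>0\}$ is typically infinite, so you are concatenating countably many arcs and must argue continuity of the resulting path. The paper sidesteps this entirely: once the finite-jump case is handled by successive gluings (Lemma~\ref{glu_vern}, which gives both geodesicity and the loop axiom for free), geodesicity of the limit follows because $\mathbb{L}$ is closed in $\mathbb{K}$. Similarly, for the finite-jump case the paper does not attempt your cut-point case analysis at all; it proceeds by induction on the number of jumps, writing $\vern[f]$ as a gluing of a vernation tree with one fewer jump and a space coded by an excursion with a single jump at $0$ (itself handled by a further induction/approximation on the set $B(f)$). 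This is cleaner than choosing $u=g(L/3),\,v=g(2L/3)$ and then ``adjusting'' $v$ onto the same loop as $u$, which as you note only suggests separation.
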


\noindent
Observe that the spaces $\tree[f]$, $\lop[f]$, and $\lopt[f]$ are also vernation trees, thanks to Theorems~\ref{intro_uni} and \ref{d_J}. To demonstrate Theorem~\ref{vern_code/topo}, we will use the following lemma that involves the gluing of pointed metric spaces presented in Definition~\ref{gluing}.

\begin{lemma}
\label{glu_vern}
Let $(X_0,d_0,a_0)$ and $(X_1,d_1,a_1)$ be two pointed compact vernation trees, and let $a\in X_0$. The gluing of $X_1$ on $X_0$ at $a$ is also a pointed compact vernation tree.
\end{lemma}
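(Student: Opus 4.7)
The overall plan is to verify the three requirements directly: compactness, geodesicity, and the loop criterion in the definition of a vernation tree. Compactness is immediate, since $X_0\vee_a X_1$ is the continuous image of the compact space $X_0\sqcup X_1$ under the quotient map identifying $a$ with $a_1$. For geodesicity, I will use the formula defining the gluing distance: when $x,y\in X_i$ the restriction of $d$ to $X_i$ coincides with $d_i$, so a geodesic of $X_i$ remains one in the gluing; when $x\in X_0$ and $y\in X_1$, the identity $d(x,y)=d_0(x,a)+d_1(a_1,y)$ lets me concatenate a geodesic of $X_0$ from $x$ to $a$ with a geodesic of $X_1$ from $a_1$ to $y$, reparametrized affinely so that the junction is reached at time $d_0(x,a)/d(x,y)$. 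A routine case-by-case computation (depending on whether both parameters lie before, after, or straddle the junction) checks that this concatenation is a geodesic.

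The core work is the vernation tree property, and the plan is to sidestep the intrinsic criteria of Proposition \ref{equiv_vern} and argue directly from the loop definition. The starting observation is that $(X_0\vee_a X_1)\setminus\{a\}$ is the disjoint union of the two open subsets $X_0\setminus\{a\}$ and $X_1\setminus\{a\}$, hence disconnected. I then claim that every loop $\gamma:[0,1]\to X_0\vee_aX_1$ has image contained entirely in $X_0$ or entirely in $X_1$. The argument is to count preimages of $a$: because $\gamma$ is injective on $[0,1)$ with $\gamma(0)=\gamma(1)=x$, the set $\gamma^{-1}(\{a\})$ is either empty, a single interior point $\{t_0\}$, or exactly $\{0,1\}$ (the last only when $x=a$). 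In each case, the complement $[0,1]\setminus\gamma^{-1}(\{a\})$ is a disjoint union of intervals, each of which is connected and so maps by continuity into one of the two sides $X_i\setminus\{a\}$. Since these intervals share the endpoint value $\gamma(0)=\gamma(1)=x$, they must all lie in the same side (when $x\neq a$), while in the case $x=a$ the single middle interval $(0,1)$ already lies in one side, and in every case $\Ima\gamma\subset X_i$ for some $i$.

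With this confinement principle in hand, the vernation property follows quickly. Let $\gamma_1,\gamma_2$ be two loops based at some $x\in X_0\vee_a X_1$. If both lie in the same $X_i$, then the vernation property of $X_i$ provides the required dichotomy. If $\gamma_1\subset X_0$ while $\gamma_2\subset X_1$, then $x\in X_0\cap X_1=\{a\}$, so $x=a$, and $\Ima\gamma_1\cap\Ima\gamma_2\subset X_0\cap X_1=\{a\}=\{x\}$, with the reverse inclusion trivial; this is the second alternative.

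I expect the main obstacle to be the claim that loops are confined to a single side: one must track carefully the preimage $\gamma^{-1}(\{a\})$ and use injectivity on $[0,1)$ to control its size, before invoking the connectedness of each leftover subinterval and the disconnectedness of $(X_0\vee_a X_1)\setminus\{a\}$. Once this is established, the remaining verifications reduce to unpacking the definition of the gluing metric and quoting the hypotheses on $X_0$ and $X_1$.
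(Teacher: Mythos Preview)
Your proof is correct and follows essentially the same approach as the paper: compactness and geodesicity are handled identically, and the key loop-confinement step uses the same ingredients (injectivity of $\gamma$ on $[0,1)$ together with the fact that $(X_0\vee_a X_1)\setminus\{a\}$ splits into the two disjoint open pieces $X_i\setminus\{a\}$), after which the vernation dichotomy is read off exactly as in the paper. The only cosmetic difference is that you enumerate the possible shapes of $\gamma^{-1}(\{a\})$ explicitly, whereas the paper argues by contradiction that a loop hitting both $X_0\setminus\{a\}$ and $X_1\setminus\{a\}$ would force two distinct preimages of $a$ in $[0,1)$.
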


\begin{proof}
The space $X:=X_0\vee_a X_1$ is compact as a union of two compact spaces. Let $x,y\in X_i$ with $i\in\{0,1\}$. There is a geodesic on $X_i$ from $x$ to $y$, so seen as an $X$-valued function, it is also a geodesic on $X$ from $x$ to $y$. Let $x\in X_i$ and $y\in X_{1-i}$ with $i\in\{0,1\}$. We check that the concatenation of a geodesic on $X_i$ from $x$ to $a$ with a geodesic on $X_{1-i}$ from $a$ to $y$ induces a geodesic on $X$ from $x$ to $y$. Hence, the metric space $X$ is geodesic. Remark that $\partial X_0=\partial X_1=\{a\}$, so any path $\gamma$ on $X$ from a point of $X_0$ to a point of $X_1$ must hit $a$ by connectedness. Thus, if $\gamma$ is a loop on $X$ then $\Ima\gamma\subset X_0$ or $\Ima\gamma\subset X_1$. Indeed, if $\gamma(s_0)\in X_0\backslash\{a\}$ and $\gamma(s_1)\in X_1\backslash\{a\}$ with $s_0<s_1$ for example, then there exists $t\in(s_0,s_1)$ such that $\gamma(t)=a$ but there also exists $t'\in[0,1)\backslash(s_0,s_1)$ such that $\gamma(t')=a$. This contradicts the injectivity of $\gamma$ on $[0,1)$. Let $\gamma,\gamma'$ be two loops on $X$ based at the same point $x$. If $\Ima\gamma\subset X_i$ and $\Ima\gamma'\subset X_i$ with the same $i\in\{0,1\}$, then they can be seen as loops on the vernation tree $X_i$ based at the same point. Otherwise, it holds that $\{x\}\subset\Ima\gamma\cap\Ima\gamma'\subset X_0\cap X_1=\{a\}$. Eventually, the metric space $X_1\vee_a X_2$ is a vernation tree.
\end{proof}

\begin{proof}[Proof of Theorem~\ref{vern_code/topo}]
We already know from Propositions~\ref{d_continu} and \ref{dt_continu} that $\vern[f]$ and $\vernt[f]$ are compact. If $f$ is a continuous excursion, then $\vern[f]=\vernt[f]=\tree[f]$ by Theorems~\ref{intro_uni} and \ref{d_J}. Proposition~\ref{real_tree_topo_coded} thus asserts that they are real trees, and a fortiori vernation trees. Now, we suppose that the only jump of $f$ is at $0$, namely $\Delta_0=f(0)>0$ and $\Delta_t=0$ for all $t\in(0,1]$. The set $A(f)=\left\{x_0^t(f)\ :\ t\in[0,1]\text{ such that }x_0^t(f)<f(t)\right\}$ is countable because $f$ is c\`adl\`ag. Informally, $A(f)$ is the set of positions where non-trivial subspaces of $\vern[f]$ are attached to the loop associated with $0$. If $A(f)=\emptyset$ then $f$ is non-increasing on $[0,1]$, and we obtain from (\ref{dv_easy}) and (\ref{dlt_easy}) that $\dv(s,t)=2\delta_0\left(f(s),f(t)\right)$ and $\dvt(s,t)=2\tilde{\delta}_0\left(f(s),f(t)\right)$ for all $s,t\in[0,1]$. It follows that both $\vern[f]$ and $\vernt[f]$ are metric circles of perimeter $2\Delta_0$, so they are vernation trees by Proposition~\ref{equiv_vern}. If $x\in A(f)$, a judicious application of the branching property, as stated by Proposition~\ref{branching_property}, shows there are a continuous excursion $h$ and an excursion $g$ with its only jump at $0$ such that $A(g)=A(f)\backslash\{x\}$, such that $\vern[f]$ (resp.~$\vernt[f]$) is isometric to a gluing of $\vern[h]$ (resp.~$\vernt[h]$) on $\vern[g]$ (resp.~$\vernt[g]$). Thus, Lemma~\ref{glu_vern} allows us to prove by induction on $\#A(f)$ that if $f$ has its only jump at $0$ and if $A(f)$ is finite, then $\vern[f]$ and $\vernt[f]$ are vernation trees. If $A(f)$ is infinite, set
\[f_n(t)=\begin{cases}
			f(t)&\text{ if there exists } s\in[0,1]\text{ such that }x_0^s(f)=x_0^t(f)\leq f(s)-1/n,\\
			x_0^t(f)&\text{ otherwise, }
		 \end{cases}\]
for all $t\in[0,1]$ and $n\geq 1$. We easily check that $f_n$ is an excursion with a unique jump at $0$, that $|f_n(t)-f(t)|\leq 1/n$, and that $x_0^t(f_n)=x_0^t(f)$ for all $t\in[0,1]$. Then, (\ref{dl_easy}) and (\ref{dlt_easy}) become $\dl[f_n]=\dl[f]$ and $\dlt[f_n]=\dlt[f]$, and we get that $\dtree[f_n]\longrightarrow \dtree[f]$ uniformly on $[0,1]^2$ by applying Corollary~\ref{continu_particular}. Hence, Lemma~\ref{GHP_lemma} yields that $\vern[f_n]\longrightarrow\vern[f]$ and $\vernt[f_n]\longrightarrow\vernt[f]$ for the GH topology. Moreover, we observe that $A(f_n)=\{x_0^t(f)\, :\, t\in[0,1]\text{ such that }x_0^t(f)\leq f(t)-1/n\}$, so $A(f_n)$ is finite because $f$ is c\`adl\`ag, and so $\vern[f_n]$ and $\vernt[f_n]$ are vernation trees. Theorem~\ref{closed_vern} then implies that so are $\vern[f]$ and $\vernt[f]$.

To sum up, we have proved that if $f$ has at most one jump at $0$, then $\vern[f]$ and $\vernt[f]$ are vernation trees. Induction over the number of jumps of $f$ shows this still holds if $f$ has finitely many jumps. Indeed, using the branching property at the last jump of $f$ allows writing $\vern[f]$ (resp.~$\vernt[f]$) as a gluing of $\vern[h]$ (resp.~$\vernt[h]$) on $\vern[g]$ (resp.~$\vernt[g]$), where $g$ is an excursion with one less jump than $f$ and where $h$ is an excursion with a unique jump at $0$. Finally, we consider a general excursion $f$. We set $g_n=f-Jf+J^{1/n}f$ for all $n\geq1$, where $Jf$ and $J^{1/n}f$ are given by (\ref{Jeps_def}). By (\ref{x(Jf)}), all the jumps of $g_n$ are not lower than $1/n$ so there is only a finite number of them, which implies that $\vern[g_n]$ and $\vernt[g_n]$ are vernation trees. Furthermore, we know $J g_n=J^{1/n}f$ from Theorem~\ref{J_prop} $(iii)$, so Theorem~\ref{d_J} yields $\dtree[g_n]=\dtree[f]$, $\dl[g_n]=\dl[J^{1/n}f]$, and $\dlt[g_n]=\dlt[J^{1/n}f]$. Corollary~\ref{propo} and Proposition~\ref{cv_J} $(iii)$ give $\vern[g_n]\longrightarrow\vern[f]$ and $\vernt[g_n]\longrightarrow\vernt[f]$ for the GH topology. Theorem~\ref{closed_vern} concludes the proof.
\end{proof}

We already know from Theorem~\ref{closed_vern} that the space of compact vernation trees is closed for the GH topology. Hence, Theorem~\ref{vern_code/topo} entails the reverse implications claimed by the desired Theorem~\ref{topo_conclusion}. Now, let us denote by $\mathbb{V}'$ the set of isometry classes of compact metric spaces $(X,d)$ such that for any $a\in X$, there are two sequences $(f_n)$ and $(g_n)$ of excursions such that $(X,d,a)$ is the limit of $\vern[f_n]$ and of $\vernt[g_n]$ for the pointed GH topology. In the following, we show that any compact vernation tree $(X,d)$ is an element of $\mathbb{V}'$, which readily completes the proof of Theorem~\ref{topo_conclusion}. Let us already observe that $\mathbb{V}'$ is closed for the GH topology. We denote the diameter of $(X,d)$ by $\diam(X)=\sup_{x,y\in X}d(x,y)$. Recall from (\ref{set_loops_metric}) that $\ell(X)$ stands for the set of the images of loops on $X$.

\begin{proof}[End of the proof of Theorem~\ref{topo_conclusion}]
We first consider the case where $\ell(X)$ is finite. If $\ell(X)=\emptyset$ then $X$ is a real tree according to Proposition~\ref{equiv_tree} $(iii)$. Thus, if $\ell(X)=\emptyset$ then Proposition~\ref{real_tree_topo_coded} yields that $X\in\mathbb{V}'$. By induction, we assume that $\#\ell(X)\geq 1$ and that $Y\in\mathbb{V}'$ whenever $Y$ is a compact vernation tree with $\#\ell(Y)\leq\#\ell(X)-1$. Let $\gamma$ be a loop on $X$, so that $L=\Ima\gamma\in\ell(X)$, and let us use the notation of Proposition~\ref{branching_at_loop}. For any $u\in L$, the subspace $(X_u,d)$ is a compact vernation tree for all $u\in \Ima\gamma$. Plus, $L$ is not included in $X_u$, so $\#\ell(X_u)\leq \#\ell(X)-1$ and $X_u\in \mathbb{V}'$ by induction hypothesis. Moreover, $(\Ima \gamma,d)$ is a geodesic space that is homeomorphic to the metric circle with unit length $\mathcal{C}$. Thus, Proposition~\ref{loop_iso} asserts that $L$ is isometric to $2\Delta\cdot\mathcal{C}$ with some $\Delta>0$. In particular, $L\in \mathbb{V}'$ because the (shuffled) vernation tree coded by the excursion $f:t\in[-1,1]\longmapsto \I{t\geq 0}\Delta(1-t)$ is pointed-isometric to $2\Delta\cdot\mathcal{C}$, for any choice of the root. Then, (\ref{central_loop}) and the compactness of $X$ ensure that there is only a finite number of $u\in L$ such that $\diam(X_u)\geq \varepsilon$ for any $\varepsilon>0$. It follows that there is a sequence of points $u_k\in L$ such that $\diam(X_{u_k})\longrightarrow 0$ and $X=\bigcup_{k\geq 1} (X_{u_k}\cup L)$. In particular, if we set $X_n=\bigcup_{k=1}^n(X_{u_k}\cup L)$ for all $n\geq 1$, then (\ref{central_loop}) yields that $X_n\longrightarrow X$ for the GH topology. Furthermore, this same identity entails that $X_n$ can be constructed with a finite number of successive gluings only involving elements of $\mathbb{V}'$. With the help of Proposition~\ref{branching_property} and of the bounds (\ref{maj_dl}), (\ref{maj_dtree}), and (\ref{maj_dlt}), it is not hard to show that if $g,h$ are excursions, then the gluing of $\vern[h]$ on $\vern[g]$ at any point of $\vern[g]$ is pointed-isometric to another vernation tree coded by some excursion $f$. The same result also holds with shuffled vernation trees. Moreover, we observe that if $(Y_n)$ and $(Z_n)$ respectively converge to $Y$ and $Z$ for the pointed GH topology, then for all $a\in Y$, there is a sequence of points $a_n\in Y_n$ such that $(Y_n\vee_{a_n} Z_n)$ converges to $Y\vee_a Z$ for the pointed GH topology. Indeed, one can simply choose $a_n$ such that $(a_n,a)$ is in a correspondence between $Y_n$ and $Y$ with small distortion. Hence, $\mathbb{V}'$ is stable by gluing, so it follows that $X_n\in \mathbb{V}'$ for all $n\geq1$, and so $X\in \mathbb{V}'$ because $\mathbb{V}'$ is closed for the GH topology.

Now, we consider the general case where $\ell(X)$ may be infinite. By Theorem~\ref{vern_contracte_thm}, $X$ is the limit for the GH topology of a sequence of compact vernation trees $\rG_{1/n}(X),n\geq 1$ such that $\diam(L_{1/n})\geq 1/n$ for all $L_{1/n}\in\ell(\rG_{1/n}(X))$. In particular, Proposition~\ref{number_loops} entails that $\ell(\rG_{1/n}(X))$ is finite, which implies that $\rG_{1/n}(X)\in\mathbb{V}'$ thanks to the above case. Since $\mathbb{V}'$ is closed for the GH topology, we eventually get $X\in\mathbb{V}'$ in general.
\end{proof}

The reader might wonder if any compact vernation tree can be coded by an excursion. Let us informally explain why the answer is no for $f\mapsto \vern[f]$. For all $n\geq 1$ and for all $u\in\{+1,-1\}^n$, we give ourselves $\mathcal{C}_u$ a pointed metric circle of length $2/n$, seen as $[-1/n,1/n]$ with the identification $1/n=-1/n$ and whose root is $0$. The metric space $X$ is constructed by gluing each $\mathcal{C}_{(\epsilon_1,\ldots,\epsilon_{n+1})}$ on $\mathcal{C}_{(\epsilon_1,\ldots,\epsilon_{n})}$ at the point $2\epsilon_{n+1}/(n+1)^2$, and then by taking the closure. One can prove $X$ is compact because $\diam(X)\leq 2+4\sum 1/n^2<\infty$, and that it is a vernation tree thanks to Lemma~\ref{glu_vern} and Theorem~\ref{closed_vern}. However, if there was an excursion $f$ such that $\vern[f]$ was isometric to $X$, there would be a point $t\in[0,1]$ whose projection would be on $\mathcal{C}_{(1)}$ or on $\mathcal{C}_{(-1)}$. The reader should now be able to convince oneself that there would be $t\in[0,1]$ and distinct $r_1,\ldots,r_n\in[0,1]$ such that $x_{r_i}^t=1/i-1/(i+1)^2$ for all $1\leq i\leq n$, for all $n\geq 1$. Recalling the expression (\ref{J_def}) and Theorem~\ref{intro_uni}, this leads to $\infty=\sup Jf\leq\sup f$ and contradicts that $f$ is c\`adl\`ag. The issue here is the same as what prevents $\dv$ from enjoying a functional continuity, namely that high variations of excursions may code short distances on the vernation tree. This problem disappears for $\dvt$ thanks to (\ref{11}), so we believe that any compact vernation tree is isometric to a shuffled vernation tree $\vernt[f]$ coded by an excursion $f$. Nevertheless, we were not able to demonstrate it. We have recently realized that Blanc-Renaudie~\cite{blanc-renaudie} managed to construct a contour path continuously exploring an inhomogeneous continuum random looptree in finite time. Adapting his construction and the coding of any real tree by a continuous excursion of Duquesne~\cite{duquesne_coding} may result in a proof of our conjecture.

\section{Probabilistic applications}
\label{application_proba}

\paragraph{Words and plane trees.}
\label{word_trees}
Before presenting our three applications, we recall the formalism of plane trees: see Le Gall~\cite{legall_trees} for example. Let $\mathbb{N}^*=\{1,2,3,\ldots\}$ be the set of positive integers and let\[\mathbb{U}=\bigcup_{n=0}^\infty (\mathbb{N}^*)^n,\quad \text{with the convention }\quad (\mathbb{N}^*)^0=\{\varnothing\}.\]The set $\mathbb{U}$ is totally ordered by the lexicographic order, denoted by $\leq$. An element of $\mathbb{U}\backslash\{\varnothing\}$ is a finite sequence of positive integers $u=(u_1,\cdots, u_m)$; we set $|u|=m$ the generation or height of $u$, and $\overleftarrow{u}=(u_1,\cdots,u_{m-1})$ the parent of $u$. We also set $|\varnothing|=0$. If $u,v\in\mathbb{U}$, we write $u*v\in\mathbb{U}$ for the concatenation of $u$ and $v$, we say $u$ is an ancestor of $u*v$, and we write $u\preceq u*v$. If $j\in\mathbb{N}^*$, we say that $u*(j)$ is a child of $u$. A \emph{plane tree} $\tau$ is a subset of $\mathbb{U}$ such that:
\begin{enumerate}
\item[$(a)$] $\varnothing\in\tau$ and $\tau$ is finite;
\item[$(b)$] if $v\in\tau$ and $v\neq\varnothing$, then $\overleftarrow{v}\in\tau$;
\item[$(c)$] for all $u\in\tau$, there is a nonnegative integer $k_u(\tau)$ (called the number of children of $u$ in $\tau$) such that for every $j\in\mathbb{N}^*$, $u*(j)\in\tau$ if and only if $1\leq j\leq k_u(\tau)$.
\end{enumerate}
We denote the total progeny of $\tau$, which is the total number of vertices of $\tau$, by $\#\tau$ and the height of $\tau$, which is the maximal generation, by $|\tau|=\max_{u\in\tau}|u|$. For $u\in\tau$, its subtree stemming from $u$ is $\theta_u\tau=\{v\in\mathbb{U}\, :\, u*v\in\tau\}$, which is also a plane tree. Furthermore, we denote by $\varnothing=u(0)<u(1)<\cdots<u(\#\tau-1)$ the vertices of $\tau$ listed in lexicographic order. We call $\left(u(i)\right)_{0\leq i\leq\#\tau-1}$ the \emph{depth-first exploration} of $\tau$. The \emph{exploration by contour} $c=\left(c(i)\right)_{0\leq i\leq 2(\#\tau-1)}$ gives another way to explore the tree. Informally, $c$ starts at the root and continuously visits the whole tree from the left to the right. Maybe more precisely, $c(0)=\varnothing$, and $c(i+1)$ is the $\leq$-smallest child of $c(i)$ that has not been visited yet if one exists or $c(i+1)$ is the parent of $c(i)$ otherwise. This process crosses each edge two times, one time upwards and one time downwards, so the needed time to complete the exploration is indeed $2(\#\tau-1)$ and $c(2\#\tau-2)=\varnothing$. In order to link those two explorations, we define $\xi(i)$ for all $0\leq i\leq 2\#\tau-2$ as the largest integer $l$ such that $u(l)$ has been visited by $\left(c(j)\right)_{0\leq j\leq i}$. Moreover, the exploration processes induce finite sequences of integers that characterize the plane tree. Namely, the \emph{height process} $H(\tau)=\left(H_i(\tau)\right)_{0\leq i\leq\#\tau-1}$ is defined by $H_i(\tau)=|u(i)|$, and the \emph{contour process} $C(\tau)=\left(C_i(\tau)\right)_{0\leq i\leq2\#\tau-2}$ is defined by $C_i(\tau)=|c(i)|$. General arguments (see e.g.~Le Gall~\cite[Section 1.6]{legall_trees}) draw relations between the height process and the contour process via the two bounds
\begin{align}
\label{depth-first_to_contour}
\max_{0\leq i\leq 2\#\tau-2}\left|\xi(i)-\frac{i}{2}\right|&\leq \frac{|\tau|}{2}+1,\\
\label{height_vs_contour}
\max_{0\leq i\leq 2\#\tau-2}\left|C_i(\tau)-H_{\xi(i)}(\tau)\right|&\leq 1+\max_{0\leq j\leq \#\tau-1}\left|H_{i+1}(\tau)-H_i(\tau)\right|,
\end{align}
with the convention $H_{\#\tau}(\tau)=0$. The process $C(\tau)$ is extended to the real interval $[0,2\#\tau-2]$ by specifying $C(\tau)$ is affine on each interval $[i,i+1]$ for $0\leq i\leq 2\#\tau-3$. Then, $\left(C_{2t(\#\tau-1)}(\tau)\right)_{t\in[0,1]}$ is a continuous excursion in the sense of Definition~\ref{type_exc} and it codes via (\ref{classical_tree_distance}) or (\ref{dtree_easy}) the pointed weighted metric space spanned by $\tau$. More precisely, the tree $T$ coded by this excursion is obtained by replacing each edge $(\overleftarrow{u},u)$ of $\tau$ with a line segment of length $1$ and by endowing it with the normalized sum of the Lebesgue measures of those segments. With a slight abuse, $\tau$ can be seen as a finite subset of $T$, so that $\varnothing$ is the distinguished point of $T$ and that the graph distance of $\tau$ is inherited from the metric on $T$.

\subsection{Metric asymptotics for uniform random mappings}
\label{random_mapping_sec}

We write $[n]=\{1,\ldots,n\}$ for $n\geq 1$. We are interested in the asymptotic behavior of the patterns of random uniform mappings on $[n]$. We follow the presentation of Aldous, Miermont \& Pitman~\cite{p-mapping}. A mapping $m:[n]\longrightarrow[n]$ may be interpreted as a digraph with set of vertices $[n]$ and with oriented edges $i\rightarrow m(i)$, thus allowing edges of the form $i\rightarrow i$. It naturally induces a simple graph (that may have edges of the form $(i,i)$), that we will denote by $\mathcal{G}(m)$. We define $m^0(i)=i$ and $m^{k+1}(i)=m(m^k(i))$ the $(k+1)$-fold iteration of $m$ on $i\in[n]$, for any $k\geq0$. We say that $i$ is a cyclic point of $m$ if $m^k(i)=i$ for some $k\geq1$ and we denote by $\Gamma(m)$ the set of cyclic points of $m$. If $\gamma\in\Gamma(m)$, we say that the finite set $\{m^k(\gamma)\ :\ k\geq1\}\subset\Gamma(m)$ is a cycle of $m$ and we write \[\mathcal{T}_\gamma(m)=\{\gamma\}\cup\left\{i\in[n]\backslash\Gamma(m)\ :\ \exists k\geq 0,\ m^k(i)=\gamma\right\}\] for the tree component of the mapping graph with root $\gamma$. By independently putting each set of children of the vertices of $\mathcal{T}_\gamma(m)$ into uniform random order, the graph $\mathcal{T}_\gamma(m)$ naturally corresponds to a plane tree $T_\gamma(m)$. The tree components are bundled by the disjoint cycles $\Gamma_j(m)$, for $1\leq j\leq k(m)$, to form the basins of attraction $\mathcal{B}_j(m)$ of $m$, that are the connected components of the graph $\mathcal{G}(m)$, namely \[\mathcal{B}_j(m)=\bigsqcup_{\gamma\in\Gamma_j(m)}\mathcal{T}_\gamma(m).\] While the $\Gamma_j(m)$ partition $\Gamma(m)$, the $\mathcal{B}_j(m)$ partition $[n]$. Let us explain how to index the cycles $\Gamma_j(m)$ while ordering $\Gamma(m)$. We consider a random sample $(X_k)_{k\geq 1}$ of independent uniform random points of $[n]$, independent from the random orders on the tree components. We index the basins of attraction $\mathcal{B}_j(m)$ for $1\leq j\leq k(m)$ by the order of their first appearances in the sample. For example, $\mathcal{B}_1(m)$ is the basin that contains $X_1$ and $\mathcal{B}_2(m)$ is the basin which contains the first $X_k\notin \mathcal{B}_1(m)$ if it exists, and so on. When $\gamma_i\in \Gamma_{i}(m)$ and $\gamma_j\in \Gamma_j(m)$ with $1\leq i<j\leq k(m)$, we write $\gamma_i\leq_m \gamma_j$. To extend $\leq_m$ into a total order on $\Gamma(m)$, we only need to order each cycle $\Gamma_j(m)$. Let $\gamma_j^\bullet\in \Gamma_j(m)$ be the root of the tree component that contains the first $X_k\in \mathcal{B}_j(m)$. We then specify \[m(\gamma_j^\bullet)\leq_m m^2(\gamma_j^\bullet)\leq_m \ldots\leq_m m^{\#\Gamma_j(m)-1}(\gamma_j^\bullet)\leq_m \gamma_j^\bullet.\]
We point out that $\leq_m$ may even be extended to $[n]$ using the lexicographic orders on the tree components. See Figure~\ref{mapping_fig} for a complete example. When the mapping $M$ is random, that whole ordering procedure is done independently from $M$. Let us write $\gamma(1)<_m \gamma(2)<_m \ldots<_m\gamma(\#\Gamma(m))$ for the elements of $\Gamma(m)$ listed in the $\leq_m$-increasing order. Finally, let us define the metric object associated with the mapping we are interested in. For all $1\leq j\leq k(m)$, we denote by $B_j(m)$ the pointed weighted metric space $\mathcal{B}_j(m)$ endowed with its graph distance, with its uniform probability measure, and with $\gamma_j^\bullet$ as its distinguished point. For all $j>k(m)$, we set $B_j(m)=\partial$ where $\partial$ is the pointed weighted metric space with a unique point. We define $G(m)=\left(\I{j\leq k(m)}\#\mathcal{B}_j(m)/n,B_j(m)\right)_{j\geq 1}$. One can observe that $G(m)$ determines the mapping $m$ up to the labels of the vertices and the orientations of the cycles. Hence, it is a good metric interpretation of the mapping pattern.

Given the random order on the graph, one can construct several processes associated with the mapping. We define the height process $H(m)=\left(H_i(m)\right)_{0\leq i\leq n}$ of the mapping as the concatenation of the height processes of the tree components $\left(H_i(T_{\gamma(j)}(m))\ ;\ 0\leq i\leq \# T_{\gamma(j)}(m)-1\right)$ for $1\leq j\leq\#\Gamma(m)$, in that order, followed by a last zero term. Similarly, the contour process $C(m)=\left(C_i(m)\right)_{0\leq i\leq 2n}$ of the mapping is the concatenation of the sequences $\left(C_i(T_{\gamma(j)}(m))\ ;\ 0\leq i\leq 2\# T_{\gamma(j)}(m)-1\right)$ for $1\leq j\leq\#\Gamma(m)$, in that order and with the convention $C_{2\#T_{\gamma(j)}(m)-1}(T_{\gamma(j)}(m))=0$, followed by a last zero term. We stress that we have inserted a zero term between the contour processes of successive tree components. Alternatively, $H(m)$ and $C(m)$ can be expressed as variations of the height and contour processes of a single plane tree. Indeed, let $T(m)$ be the unique plane tree such that $k_\varnothing(T(m))=\#\Gamma(m)$ and $\theta_{(i)}T(m)=T_{\gamma(i)}(m)$ for all $1\leq i\leq\#\Gamma(m)$, so that $\#T(m)=n+1$. We let the reader observe that $H_n(m)=C_{2n}(m)=0$, \[H_i(m)=H_{i+1}(T(m))-1\quad\text{ and }\quad C_{i'}(m)=\max\left(0,C_{i'+1}(T(m))-1\right)\]
for all $0\leq i\leq n-1$ and for all $0\leq i'\leq 2n-1$. We keep track of the number of cyclic points via two processes $\ell(m)=\left(\ell_i(m)\right)_{0\leq i\leq n}$ and $\ell'(m)=\left(\ell_i'(m)\right)_{0\leq i\leq 2n}$ defined by
\[\ell_i(m)=\#\left\{1\leq j\leq i\ :\ H_i(m)=0\right\}\quad\text{ and }\quad\ell_{i'}'(m)=\frac{1}{2}\#\left\{1\leq j\leq i'\ :\ C_i(m)=C_{i-1}(m)=0\right\}\]
for all $0\leq i\leq n$ and for all $0\leq i\leq 2n$. We extend the processes $C(m)$ and $\ell'(m)$ to the real interval $[0,2n]$ by specifying they are affine on each interval $[i,i+1]$ for $0\leq i\leq 2n-1$. Moreover, we set $Z_0(m)=0$ and $Z_j(m)=Z_{j-1}(m)+\I{j\leq k(m)}\#\mathcal{B}_j(m)$ for all $j\geq 1$. These marks delimit the intervals corresponding to the basins of attraction. Observe $C_{2Z_j(m)}(m)=0$ and $\ell_{2Z_j(m)}'(m)-\ell_{2Z_{j-1}(m)}'(m)=\#\Gamma_j(m)$ for all $1\leq j\leq k(m)$. See Figure~\ref{mapping_fig} for an example.

\begin{figure}
\begin{center}
\begin{tikzpicture}[baseline={(current bounding box.center)}, bulle/.style={shape=circle,draw=black,minimum size=21pt}, Caption/.style={minimum size=21pt},edge from parent/.style={draw,latex-}, every path/.style={draw,-latex},
	level distance=15mm,level 1/.style={sibling distance=12mm},
    level 2/.style={sibling distance=10mm}]
\node[bulle,draw=orange] (A) at (0,0) {$4$} [grow=up]
child {node[bulle]{$12$}};
\node[bulle,draw=orange] (B) at (2,0) {$14$};
\node[bulle,draw=mystery] (C) at (4,0) {$7$} [grow=up]
child[draw=green] {node[bulle,draw=green]{$13$}}
child[draw=green] {node[bulle,draw=green]{$5$}
	child[draw=green] {node[bulle,draw=green]{$8$}}
	child[draw=green] {node[bulle,draw=green]{$11$}}
	};
\node[bulle] (D) at (8,0) {$3$} [grow=up]
child {node[bulle]{$2$}}
child {node[bulle]{$6$}}
child {node[bulle]{$9$}};
\node[bulle] (E) at (12,0) {$10$};
\node[bulle] (F) at (14,0) {$1$};

\path [->,color=orange] (A) edge (B);
\path [->,color=orange] (B) edge (C);
\path [->,color=orange] (C) edge [bend left=30] (A);
\path [->] (D) edge[loop below, in=-150, out=-30,looseness=5] (D);
\path [->] (E) edge (F);
\path [->] (F) edge [bend left=50] (E);

\draw[draw=black,dashed] (-0.7,-0.9) rectangle ++(6,4.5);

\node at (4.85,0) {$=\gamma_1^\bullet$};
\node at (8.85,0) {$=\gamma_2^\bullet$};
\node at (14.85,0) {$=\gamma_3^\bullet$};
\node at (4.8,3) {$=X_1$};
\node at (5.9,1.5) {$X_2=$};
\node at (0.9,1.5) {$=X_3$};
\node at (14.85,0.5) {$=X_4$};
\end{tikzpicture}

\begin{tikzpicture}[line cap=round,line join=round,>=triangle 45,x=1cm,y=2cm,scale=0.55]
\begin{axis}[x=1cm,y=2cm,
axis lines=middle,
xmin=0,
xmax=28.2,
ymin=-3.2,
ymax=2.2,
xtick={0,1,...,29},
ytick={-3,-2.5,-2,-1.5,-1,-0.5,0,1,2},
xticklabel=\empty,clip=false,]
\clip(-2,-3.5) rectangle (28.5,3);
\draw [line width=1.2pt,color=qqqqff] (0,0)-- (1,1);
\draw [line width=1.2pt,color=qqqqff] (1,1)-- (2,0);
\draw [line width=1.2pt,color=qqqqff] (2,0)-- (3,0);
\draw [line width=1.2pt,color=qqqqff] (3,0)-- (4,0);
\draw [line width=1.2pt,color=qqqqff] (4,0)-- (5,0);
\draw [line width=1.2pt,color=qqqqff] (5,0)-- (6,0);
\draw [line width=1.2pt,color=qqqqff] (6,0)-- (7,1);
\draw [line width=1.2pt,color=qqqqff] (7,1)-- (8,2);
\draw [line width=1.2pt,color=qqqqff] (8,2)-- (9,1);
\draw [line width=1.2pt,color=qqqqff] (9,1)-- (10,2);
\draw [line width=1.2pt,color=qqqqff] (10,2)-- (11,1);
\draw [line width=1.2pt,color=qqqqff] (11,1)-- (12,0);
\draw [line width=1.2pt,color=qqqqff] (12,0)-- (13,1);
\draw [line width=1.2pt,color=qqqqff] (13,1)-- (14,0);
\draw [line width=1.2pt,color=qqqqff] (14,0)-- (15,0);
\draw [line width=1.2pt,color=qqqqff] (15,0)-- (16,0);
\draw [line width=1.2pt,color=qqqqff] (16,0)-- (17,1);
\draw [line width=1.2pt,color=qqqqff] (17,1)-- (18,0);
\draw [line width=1.2pt,color=qqqqff] (18,0)-- (19,1);
\draw [line width=1.2pt,color=qqqqff] (19,1)-- (20,0);
\draw [line width=1.2pt,color=qqqqff] (20,0)-- (21,1);
\draw [line width=1.2pt,color=qqqqff] (21,1)-- (22,0);
\draw [line width=1.2pt,color=qqqqff] (22,0)-- (23,0);
\draw [line width=1.2pt,color=qqqqff] (23,0)-- (24,0);
\draw [line width=1.2pt,color=qqqqff] (24,0)-- (25,0);
\draw [line width=1.2pt,color=qqqqff] (25,0)-- (26,0);
\draw [line width=1.2pt,color=qqqqff] (26,0)-- (27,0);
\draw [line width=1.2pt,color=qqqqff] (27,0)-- (28,0);

\draw [line width=1.2pt,color=ffqqqq] (0,0)-- (1,0);
\draw [line width=1.2pt,color=ffqqqq] (0,0)-- (1,0);
\draw [line width=1.2pt,color=ffqqqq] (1,0)-- (2,0);
\draw [line width=1.2pt,color=ffqqqq] (2,0)-- (3,-0.25);
\draw [line width=1.2pt,color=ffqqqq] (3,-0.25)-- (4,-0.5);
\draw [line width=1.2pt,color=ffqqqq] (4,-0.5)-- (5,-0.75);
\draw [line width=1.2pt,color=ffqqqq] (5,-0.75)-- (6,-1);
\draw [line width=1.2pt,color=ffqqqq] (6,-1)-- (7,-1);
\draw [line width=1.2pt,color=ffqqqq] (7,-1)-- (8,-1);
\draw [line width=1.2pt,color=ffqqqq] (8,-1)-- (9,-1);
\draw [line width=1.2pt,color=ffqqqq] (9,-1)-- (10,-1);
\draw [line width=1.2pt,color=ffqqqq] (10,-1)-- (11,-1);
\draw [line width=1.2pt,color=ffqqqq] (11,-1)-- (12,-1);
\draw [line width=1.2pt,color=ffqqqq] (12,-1)-- (13,-1);
\draw [line width=1.2pt,color=ffqqqq] (13,-1)-- (14,-1);
\draw [line width=1.2pt,color=ffqqqq] (14,-1)-- (15,-1.25);
\draw [line width=1.2pt,color=ffqqqq] (15,-1.25)-- (16,-1.5);
\draw [line width=1.2pt,color=ffqqqq] (16,-1.5)-- (17,-1.5);
\draw [line width=1.2pt,color=ffqqqq] (17,-1.5)-- (18,-1.5);
\draw [line width=1.2pt,color=ffqqqq] (18,-1.5)-- (19,-1.5);
\draw [line width=1.2pt,color=ffqqqq] (19,-1.5)-- (20,-1.5);
\draw [line width=1.2pt,color=ffqqqq] (20,-1.5)-- (21,-1.5);
\draw [line width=1.2pt,color=ffqqqq] (21,-1.5)-- (22,-1.5);
\draw [line width=1.2pt,color=ffqqqq] (22,-1.5)-- (23,-1.75);
\draw [line width=1.2pt,color=ffqqqq] (23,-1.75)-- (24,-2);
\draw [line width=1.2pt,color=ffqqqq] (24,-2)-- (25,-2.25);
\draw [line width=1.2pt,color=ffqqqq] (25,-2.25)-- (26,-2.5);
\draw [line width=1.2pt,color=ffqqqq] (26,-2.5)-- (27,-2.75);
\draw [line width=1.2pt,color=ffqqqq] (27,-2.75)-- (28,-3);

\begin{scriptsize}
\draw [fill=ffqqff] (0,0) circle [radius=3pt];
\draw [fill=ffqqff] (2,0) circle [radius=3pt];

\draw [fill=ududff] (1,1) circle [radius=3pt];
\draw [fill=ududff] (3,0) circle [radius=3pt];
\draw [fill=ududff] (4,0) circle [radius=3pt];
\draw [fill=ududff] (5,0) circle [radius=3pt];
\draw [fill=ududff] (6,0) circle [radius=3pt];
\draw [fill=ududff] (7,1) circle [radius=3pt];
\draw [fill=ududff] (8,2) circle [radius=3pt];
\draw [fill=ududff] (9,1) circle [radius=3pt];
\draw [fill=ududff] (10,2) circle [radius=3pt];
\draw [fill=ududff] (11,1) circle [radius=3pt];
\draw [fill=ududff] (12,0) circle [radius=3pt];
\draw [fill=ududff] (13,1) circle [radius=3pt];
\draw [fill=ududff] (14,0) circle [radius=3pt];
\draw [fill=ududff] (15,0) circle [radius=3pt];
\draw [fill=ududff] (16,0) circle [radius=3pt];
\draw [fill=ududff] (17,1) circle [radius=3pt];
\draw [fill=ududff] (18,0) circle [radius=3pt];
\draw [fill=ududff] (19,1) circle [radius=3pt];
\draw [fill=ududff] (20,0) circle [radius=3pt];
\draw [fill=ududff] (21,1) circle [radius=3pt];
\draw [fill=ududff] (22,0) circle [radius=3pt];
\draw [fill=ududff] (23,0) circle [radius=3pt];
\draw [fill=ududff] (24,0) circle [radius=3pt];
\draw [fill=ududff] (25,0) circle [radius=3pt];
\draw [fill=ududff] (26,0) circle [radius=3pt];
\draw [fill=ududff] (27,0) circle [radius=3pt];
\draw [fill=ududff] (28,0) circle [radius=3pt];

\draw [fill=ffqqqq] (1,0) circle [radius=3pt];
\draw [fill=ffudud] (3,-0.25) circle [radius=3pt];
\draw [fill=ffudud] (4,-0.5) circle [radius=3pt];
\draw [fill=ffudud] (5,-0.75) circle [radius=3pt];
\draw [fill=ffudud] (6,-1) circle [radius=3pt];
\draw [fill=ffudud] (7,-1) circle [radius=3pt];
\draw [fill=ffudud] (8,-1) circle [radius=3pt];
\draw [fill=ffudud] (9,-1) circle [radius=3pt];
\draw [fill=ffudud] (10,-1) circle [radius=3pt];
\draw [fill=ffudud] (11,-1) circle [radius=3pt];
\draw [fill=ffudud] (12,-1) circle [radius=3pt];
\draw [fill=ffudud] (13,-1) circle [radius=3pt];
\draw [fill=ffudud] (14,-1) circle [radius=3pt];
\draw [fill=ffudud] (15,-1.25) circle [radius=3pt];
\draw [fill=ffudud] (16,-1.5) circle [radius=3pt];
\draw [fill=ffudud] (17,-1.5) circle [radius=3pt];
\draw [fill=ffudud] (18,-1.5) circle [radius=3pt];
\draw [fill=ffudud] (19,-1.5) circle [radius=3pt];
\draw [fill=ffudud] (20,-1.5) circle [radius=3pt];
\draw [fill=ffudud] (21,-1.5) circle [radius=3pt];
\draw [fill=ffudud] (22,-1.5) circle [radius=3pt];
\draw [fill=ffudud] (23,-1.75) circle [radius=3pt];
\draw [fill=ffudud] (24,-2) circle [radius=3pt];
\draw [fill=ffudud] (25,-2.25) circle [radius=3pt];
\draw [fill=ffudud] (26,-2.5) circle [radius=3pt];
\draw [fill=ffudud] (27,-2.75) circle [radius=3pt];
\draw [fill=ffudud] (28,-3) circle [radius=3pt];

\draw [color=black] (-0.75,0) node {\huge $2 Z_0$};
\draw [color=black] (16,-0.35) node {\huge $2 Z_1$};
\draw [color=black] (24,-0.35) node {\huge $2 Z_2$};
\draw [color=black] (28,-0.35) node {\huge $2 Z_3$};

\draw [color=ududff] (-0.9,2.3) node {\huge $C(m)$};
\draw [color=ffudud] (-1.2,-3.3) node {\huge $-\tfrac{1}{2}\ell'(m)$};
\end{scriptsize}
\end{axis}
\end{tikzpicture}
\end{center}
\caption{\textit{Top :} A mapping $m$ on $[14]$ represented as a plane forest so that its random order coincides with the depth-first order. The positions of $X_1,X_2,X_3,X_4$ are shown. The tree $\mathcal{T}_7(m)$ is in green and the cycle $\Gamma_1(m)$ is in orange. The basin $\mathcal{B}_1(m)$ is contained in the dashed box. \textit{Bottom :} The processes $C(m)$, $\frac{1}{2}\ell'(M)$, $2Z(m)$ associated with the mapping $m$.}
\label{mapping_fig}
\end{figure}

Now, let $B^{|\mathsf{br}|}$ be the standard reflected Brownian bridge on $[0,1]$ and let $L$ be half its local time at $0$, which is normalized to be the density of the occupation measure at $0$ of the reflected Brownian bridge, so that for any $t\in[0,1]$, it verifies the convergence in probability\[\frac{1}{2\varepsilon}\int_0^t\I{\{B_s^{|\mathsf{br}|}\leq\varepsilon\}}\dd s\overset{\P}{\underset{\varepsilon\rightarrow 0^+}{\longrightarrow}}L_t.\]Let $(U_j)_{j\geq 1}$ be a sequence of independent uniform random variables on $[0,1]$. We define a sequence of random points $(D_j)_{j\geq 0}$ by setting $D_0=0$ and $D_j=\inf\{t\geq D_{j-1}+U_j(1-D_{j-1})\ :\ B_t^{|\mathsf{br}|}=0\}$ for all $j\geq 1$. For all $n\geq 1$, let $M_n$ be a uniform random mapping on $[n]$ and consider the associated processes $H(M_n)$, $C(M_n)$, $\ell(M_n)$, $\ell'(M_n)$, and $\left(Z_j(M_n)\right)_{j\geq 1}$. Aldous, Miermont \& Pitman~\cite[Theorem 1]{p-mapping} proved that $2B^{|\mathsf{br}|}$, $L$, and $(D_j)_{j\geq 1}$ are the respective scaling limits of $H(M_n)$, $\ell(M_n)$, and $\left(Z_j(M_n)\right)_{j\geq 1}$ for the uniform topology. Since $B^{|\mathsf{br}|}$ and $L$ are almost surely continuous, the estimates (\ref{depth-first_to_contour}) and (\ref{height_vs_contour}) yield the following corollary.

\begin{corollary}
\label{mapping_processes}
The marks $\left(Z_j(M_n)/n\right)_{j\geq 1}$ converge in distribution to the sequence $(D_j)_{j\geq 1}$. Jointly with that convergence, the following convergence holds in distribution for the uniform topology on $[0,1]^2$:
\[\left(\frac{1}{\sqrt{n}}C_{2nt}(M_n),\frac{1}{\sqrt{n}}\ell_{2nt}'(M_n)\right)_{t\in[0,1]}\convd \left(2B_t^{|\mathsf{br}|},L_t\right)_{t\in[0,1]}.\]
\end{corollary}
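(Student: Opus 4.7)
The plan is to leverage the joint convergence provided by the Aldous--Miermont--Pitman theorem and transfer it from the height process to the contour process via the classical estimates~(\ref{depth-first_to_contour}) and~(\ref{height_vs_contour}). By Skorokhod's representation theorem, I would first work on an auxiliary probability space on which the convergences $H_{\lfloor n\cdot \rfloor}(M_n)/\sqrt{n} \to 2B^{|\mathsf{br}|}$, $\ell_{\lfloor n\cdot \rfloor}(M_n)/\sqrt{n} \to L$, and $(Z_j(M_n)/n)_{j\geq 1} \to (D_j)_{j\geq 1}$ hold almost surely, with the first two uniformly on $[0,1]$.

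Next, I would pass from $H(M_n)$ to $C(M_n)$ through the associated plane tree $T(M_n)$. Since $H(M_n)/\sqrt{n}$ converges uniformly to the bounded continuous process $2B^{|\mathsf{br}|}$, the rescaled height $|T(M_n)|/\sqrt{n}$ stays almost surely bounded. The estimate~(\ref{depth-first_to_contour}) then yields $\sup_{t\in[0,1]} |\xi(2nt)/n - t| = O(n^{-1/2}) \to 0$, so the uniform convergence of $H(M_n)/\sqrt{n}$ toward a continuous limit gives $H_{\xi(2nt)}(M_n)/\sqrt{n} \to 2B^{|\mathsf{br}|}_t$ uniformly in $t$. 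Moreover, the same uniform convergence to a continuous limit forces the maximal increment of $H(T(M_n))/\sqrt{n}$ to vanish, so~(\ref{height_vs_contour}) delivers $|C_{2nt}(T(M_n)) - H_{\xi(2nt)}(T(M_n))|/\sqrt{n} \to 0$ uniformly, and combining with the identity $C_{i'}(M_n) = \max(0, C_{i'+1}(T(M_n)) - 1)$ produces $C_{2n\cdot}(M_n)/\sqrt{n} \to 2B^{|\mathsf{br}|}$ uniformly on $[0,1]$.

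For the local time $\ell'(M_n)$, I would exploit a discrete identification with $\ell(M_n)$: by construction of $C(M_n)$, the convention that each tree component's contour is padded to length $2\#T_{\gamma(j)}(M_n)$ and ends with a zero ensures that each tree component contributes exactly one pair of consecutive zeros of $C(M_n)$ at its end, plus another at the junction with the next component. Hence, modulo a bounded per-basin error, the pairs of consecutive zeros of $C(M_n)$ are in bijective correspondence with the roots of tree components, so that $\ell'_{2Z_j(M_n)}(M_n) = \ell_{Z_j(M_n)}(M_n) + O(j)$. Combining this with the almost sure convergence $Z_j(M_n)/n \to D_j$, the continuity of $L$, and the monotonicity of both $\ell(M_n)$ and $\ell'(M_n)$ in the index, I would interpolate between consecutive marks to upgrade the discrete identity to the uniform convergence $\ell'_{2n\cdot}(M_n)/\sqrt{n} \to L$ on $[0,1]$.

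The hard part will be the local time piece: the combinatorial correspondence between pairs of consecutive zeros of $C(M_n)$ and zeros of $H(M_n)$ must be carefully audited, especially for degenerate tree components reduced to a single cyclic point, and the interpolation between the random marks $Z_j(M_n)/n$ has to be made uniform in $j$ and $n$ in order to obtain convergence on all of $[0,1]$ and not merely at those discrete times. The process-level transfer from $H$ to $C$ is, by contrast, a well-established application of~(\ref{depth-first_to_contour}) and~(\ref{height_vs_contour}) once boundedness of $|T(M_n)|/\sqrt{n}$ is secured.
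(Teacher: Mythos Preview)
Your transfer from $H(M_n)$ to $C(M_n)$ via~(\ref{depth-first_to_contour}) and~(\ref{height_vs_contour}) is correct and is exactly what the paper does in its one-sentence proof.

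The local-time piece, however, has a genuine gap as you have written it. You propose to check $\ell'_{2Z_j(M_n)}(M_n)=\ell_{Z_j(M_n)}(M_n)+O(j)$ only at the basin marks $Z_j$ and then ``interpolate between consecutive marks'' using monotonicity. This cannot work: inside the $j$-th basin there are $\#\Gamma_j(M_n)$ tree components, and both $\ell$ and $\ell'$ increase by $\#\Gamma_j(M_n)$ across that basin. Monotonicity only traps $\ell'_{2nt}/\sqrt{n}$ between $\ell'_{2Z_{j-1}}/\sqrt{n}\to L_{D_{j-1}}$ and $\ell'_{2Z_j}/\sqrt{n}\to L_{D_j}$, which says nothing about convergence to $L_t$ for $t\in(D_{j-1},D_j)$, since $L$ is not constant on those intervals.

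The fix is simpler than your plan: your own combinatorial observation that each tree component contributes exactly two consecutive-zero pairs of $C(M_n)$ (one at its padded end, one at the junction with the next component) already gives the \emph{exact} identity $\ell'_{2i}(M_n)=\ell_i(M_n)$ for every integer $0\le i\le n$, not only at the $Z_j$. Indeed, if $i$ lies in the range of the $k$-th tree component, then the consecutive-zero pairs with index $\le 2i$ are precisely the $2(k-1)$ pairs produced by the first $k-1$ components plus their junctions, and $\ell_i=k-1$. (Single-vertex components cause no exception; the division by $2$ absorbs everything.) With this identity, $|\ell'_{2nt}(M_n)-\ell_{\lfloor nt\rfloor}(M_n)|\le 1$, and the uniform convergence of $\ell'_{2n\cdot}(M_n)/\sqrt{n}$ to $L$ follows directly from the AMP theorem. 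No interpolation between random marks is needed, and the ``hard part'' you flagged dissolves.
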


Recall from Section~\ref{def_GH} the definition of the space $\mathbb{K}_{\mathrm{w}}^\bullet$ of GHP-isometry classes of pointed weighted compact metric spaces endowed with the pointed Gromov--Hausdorff--Prokhorov distance $\mathtt{d}_{\mathrm{GHP}}^\bullet$. When $(X,d)$ is a compact metric space, we write $\diam(X)=\sup_{x,y\in X}d(x,y)$. The space \[\mathbb{S}=\left\{\left(\alpha_j,X_j\right)_{j\geq 1}\ :\ \alpha_j\geq 0,\sum_{j\geq 1}\alpha_j=1,X_j\in\mathbb{K}_{\mathrm{w}}^\bullet,\diam(X_j)\longrightarrow0\right\}\]is made separable and complete by the uniform distance \[\mathtt{d}_{\mathbb{S}}\left((\alpha_j,X_j)_{j\geq 1},(\beta_j,Y_j)_{j\geq 1}\right)=\sup_{j\geq 1}\max\left(|\alpha_j-\beta_j|,\mathtt{d}_{\mathrm{GHP}}^\bullet(X_j,Y_j)\right).\]We write $\beta\cdot \left(\alpha_j,X_j\right)_{j\geq 1}=\left(\alpha_j,\beta\cdot X_j\right)_{j\geq 1}$ when $\beta>0$. Up to the small abuse of identifying a pointed weighted compact metric space with its class in $\mathbb{K}_{\mathrm{w}}^\bullet$, it holds that $G(m)\in\mathbb{S}$ for any mapping $m$. Let us write $Z_j^n=Z_j(M_n)$ to lighten the notations. For $n,j\geq 1$, $t\in[-1,0)$ and $s\in[0,1]$, we set $f_j^n(t)=f_j(t)=0$ and
\begin{align*}
f_j^n(s)&=C_{2Z_{j-1}^n+2s(Z_j^n-Z_{j-1}^n)}(M_n)+\frac{1}{2}\left(\ell_{2Z_j^n}'(M_n)-\ell_{2Z_{j-1}^n+2s(Z_j^n-Z_{j-1}^n)}'(M_n)\right),\\
f_j(s)&=2B_{D_{j-1}+s(D_j-D_{j-1})}^{|\mathsf{br}|}+\frac{1}{2}\left(L_{D_j}-L_{D_{j-1}+s(D_j-D_{j-1})}\right).
\end{align*}
They are excursions with a unique jump at $0$ and it holds that $\Delta_0(f_j^n)=\#\Gamma_j(M_n)/2$. Recall that the vernation tree coded by the contour process of a plane tree $\tau$ is the pointed weighted metric space spanned by $\tau$, that we denote by $\overline{\tau}$ here. Let us remark that $\ell'(M_n)$ only decreases (at speed $1/2$) on the intervals where $C(M_n)$ is constant. Thanks to the branching property of $\dv$, we observe that $\vern[f_j^n]$ is obtained by regularly gluing the $\overline{T_\gamma(M_n)}$, for $\gamma\in\Gamma(j)$ and in the order prescribed by $\leq_{M_n}$, on a metric circle of length $\#\Gamma_j(M^n)$ endowed with its Lebesgue measure, and by normalizing the sum of the measures of each component. Thus, we find $\mathtt{d}_{\mathrm{GHP}}^\bullet(n^{-1/2}\cdot B_j(M_n),n^{-1/2}\cdot\vern[f_j^n])\leq n^{-1/2}$ for all $j\geq 1$. The root of $\vern[f_j^n]$ corresponds to $M_n(\gamma_j^\bullet)$ instead of $\gamma_j^\bullet$, but they are at distance at most $1$ anyway. Using Skorokhod's representation theorem, we assume the convergences in distribution of Corollary~\ref{mapping_processes} happen almost surely. It follows that $n^{-1/2}f_j^n\longrightarrow f_j$ uniformly on $[0,1]$ for all $j\geq 1$ almost surely, by continuity of $B^{|\mathsf{br}|}$ and $L$. Since $\dv$ is homogeneous, Theorem~\ref{gh-continu_particular} under $(a)$ directly ensures that $n^{-1/2}\cdot\vern[f_j^n]\longrightarrow \vern[f_j]$ for the pointed GHP topology for all $j\geq 1$ almost surely. Next, we point out that clearly $D_j\longrightarrow 1$ almost surely, so for all $\varepsilon>0$, there is $N\geq 1$ such that $||f_j||_\infty\leq \varepsilon$ and $||f_j^n||_\infty\leq \varepsilon\sqrt{n}$ for all $n\geq N$ and $j\geq N$. In that case, $\diam(\vern[f_j])\leq 6\varepsilon$ and $\diam(n^{-1/2}\cdot\vern[f_j^n])\leq 6\varepsilon$ according to the inequalities (\ref{maj_dl}) and (\ref{maj_dtree}). Eventually, we deduce the desired scaling limit in distribution for $G(M_n)$. 

\begin{theorem}
The following convergence holds in distribution on the space $(\mathbb{S},\mathtt{d}_{\mathbb{S}})$:
\[\frac{1}{\sqrt{n}}\cdot G(M_n)\convd \left(D_j-D_{j-1},\vern[f_j]\right)_{j\geq 1}.\]
\end{theorem}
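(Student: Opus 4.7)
The plan is that most of the work has already been carried out in the discussion preceding the theorem statement, and the argument amounts to packaging those observations into a convergence on $(\mathbb{S},\mathtt{d}_{\mathbb{S}})$. First, I would invoke Skorokhod's representation theorem on Corollary \ref{mapping_processes} to realize, on a single probability space, almost-sure joint versions of the convergences $(Z_j^n/n)_{j\geq 1}\to (D_j)_{j\geq 1}$ and $(n^{-1/2}C_{2nt}(M_n),n^{-1/2}\ell'_{2nt}(M_n))_{t\in[0,1]}\to (2B_t^{|\mathsf{br}|},L_t)_{t\in[0,1]}$ uniformly in $t$. Since $B^{|\mathsf{br}|}$ and $L$ are almost surely continuous and the time-changes $s\mapsto D_{j-1}^n+s(D_j^n-D_{j-1}^n)$ converge uniformly to $s\mapsto D_{j-1}+s(D_j-D_{j-1})$, this yields $n^{-1/2}f_j^n\to f_j$ uniformly on $[0,1]$ for each fixed $j\geq 1$, almost surely.

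For each such $j$, the rescaled excursion $n^{-1/2}f_j^n$ has exactly one jump, located at $0$, so Theorem \ref{corollary1} applies (with $N=1$) and, together with the homogeneity of $\dv$, delivers $n^{-1/2}\cdot\vern[f_j^n]\longrightarrow\vern[f_j]$ for the pointed Gromov-Hausdorff-Prokhorov topology almost surely. Combined with the already recorded estimate $\mathtt{d}_{\mathrm{GHP}}^\bullet(n^{-1/2}\cdot B_j(M_n),n^{-1/2}\cdot\vern[f_j^n])\leq n^{-1/2}$, which accounts both for the discretization on the loop $\Gamma_j(M_n)$ and for the shift of the root from $\gamma_j^\bullet$ to $M_n(\gamma_j^\bullet)$, this produces the componentwise convergence $n^{-1/2}\cdot B_j(M_n)\longrightarrow \vern[f_j]$ for each $j\geq 1$. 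The corresponding convergence of masses $\un_{j\leq k(M_n)}\#\mathcal{B}_j(M_n)/n\longrightarrow D_j-D_{j-1}$ is directly read off from the convergence of the marks $Z_j^n/n\to D_j$.

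The one step requiring a little extra care is the uniform control over $j$ needed to pass to the limit in $(\mathbb{S},\mathtt{d}_{\mathbb{S}})$, whose distance is a supremum over $j$. For this I would exploit the fact that $D_j\to 1$ almost surely, so that for any $\varepsilon>0$ there exists $N$ with $\|f_j\|_\infty\leq\varepsilon$ and, for all $n$ large enough, $n^{-1/2}\|f_j^n\|_\infty\leq\varepsilon$ simultaneously for every $j\geq N$; the bounds (\ref{maj_dl}) and (\ref{maj_dtree}) then force $\mathrm{diam}(\vern[f_j])\leq 6\varepsilon$ and $\mathrm{diam}(n^{-1/2}\cdot\vern[f_j^n])\leq 6\varepsilon$ for $j\geq N$, so both spaces lie within $6\varepsilon$ of $\partial$ in the pointed GHP distance. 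Coupling this tail estimate with the finitely many componentwise convergences for $j<N$ gives $\mathtt{d}_{\mathbb{S}}$-convergence on the Skorokhod copy, and undoing the coupling yields the claimed convergence in distribution.

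The proof encounters no genuine obstacle; the hardest point to check is arguably the joint measurability and joint convergence needed to apply Skorokhod's theorem to the whole infinite family $((Z_j^n)_{j\geq 1},C(M_n),\ell'(M_n))$ at once, but this is standard because the ambient target space in Corollary \ref{mapping_processes} is itself Polish. Everything else is a direct assembly of the continuity result of Theorem \ref{corollary1}, the branching property of $\dv$ that identifies $\vern[f_j^n]$ with the basin $B_j(M_n)$ up to a single edge, and elementary diameter estimates on excursions with a single initial jump.
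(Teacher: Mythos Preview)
Your proposal is correct and follows essentially the same route as the paper: Skorokhod representation of Corollary~\ref{mapping_processes}, componentwise convergence via Theorem~\ref{corollary1} applied to the single-jump excursions $f_j^n$, the comparison $\mathtt{d}_{\mathrm{GHP}}^\bullet(n^{-1/2}\cdot B_j(M_n),n^{-1/2}\cdot\vern[f_j^n])\leq n^{-1/2}$, and the uniform tail control from $D_j\to 1$ together with the diameter bounds~(\ref{maj_dl}) and~(\ref{maj_dtree}). The paper presents exactly this argument in the discussion immediately preceding the theorem statement.
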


\subsection{Continuity in distribution of random stable looptrees}
\label{stable}

We briefly present the stable Lévy processes as Curien \& Kortchemski~\cite{curien2014}. Let us fix $\alpha\in(1,2)$. Let $X^{(\alpha)}$ be the \emph{$\alpha$-stable Lévy process}, which is defined as the stable spectrally positive Lévy process such that $\E\left[\exp\left(-\lambda X_t^{(\alpha)}\right)\right]=\exp(t\lambda^\alpha)$ for all $t\geq0$ and $\lambda>0$. It is a random c\`adl\`ag process from $[0,\infty)$ to $\mathbb{R}$ whose all jumps are positive, and it satisfies the important scaling property that $\left(c^{-1/\alpha}X_{ct}^{(\alpha)}\right)_{t\geq0}$ has the same distribution as $X^{(\alpha)}$ for all $c>0$. As a Lévy process, it is also characterized by its Lévy measure: \[\Pi_\alpha(\dd r)=\frac{\alpha(\alpha-1)}{\Gamma(2-\alpha)}r^{-\alpha-1}\un_{(0,\infty)}(r)\dd r.\]
Furthermore, $X^{(\alpha)}$ is a recurring process because $\limsup_{t\rightarrow+\infty}X_t^{(\alpha)}=+\infty$ and $\liminf_{t\rightarrow+\infty}X_t^{(\alpha)}=-\infty$ almost surely. We also know the local minima of $X^{(\alpha)}$ are distinct and reached almost surely: namely for all $t\geq 0$, if $X_{t-}^{(\alpha)}=\inf_{[0,t]}X^{(\alpha)}$ then $X_t^{(\alpha)}=X_{t-}^{(\alpha)}$. Plus, it holds 
\begin{equation}
\label{>0pp-ps}
\P\left(X_t^{(\alpha)}=\inf_{[0,t]}X^{(\alpha)}\right)=0
\end{equation}for all $t>0$. For more information about stable processes or the Lévy processes in general, and for the proof of the facts mentioned here, we refer to Bertoin~\cite{taylor_1998}.

Now, we define the normalized excursion of $X^{(\alpha)}$ above its infimum straddling the time $1$ by following Chaumont~\cite{Chaumont1997ExcursionNM}. Its left and right boundaries are
\[\underline{g}=\underline{g}(X^{(\alpha)})=\sup\left\{s\leq 1\ :\ X_s^{(\alpha)}=\inf_{[0,s]}X^{(\alpha)}\right\}\quad\text{ and }\quad
\underline{d}=\underline{d}(X^{(\alpha)})=\inf\left\{s\geq 1\ :\ X_s^{(\alpha)}=\inf_{[0,s]}X^{(\alpha)}\right\},\]
and its length is $\underline{\zeta}=\underline{\zeta}(X^{(\alpha)})=\underline{d}-\underline{g}$. Almost surely, it holds $0<\underline{g}<1<\underline{d}$ and 
\begin{equation}
\label{g,d_running_inf}
X_{\underline{g}}^{(\alpha)}=\inf_{[0,\underline{g}]}X^{(\alpha)}=\inf_{[0,1]}X^{(\alpha)}=\inf_{[0,\underline{d}]}X^{(\alpha)}=X_{\underline{d}}^{(\alpha)}.
\end{equation}
Moreover, the random variable $\underline{\zeta}$ admits a positive density on $(0,\infty)$. Then, we eventually set
\[X_t^{\mathsf{exc},(\alpha)}=\un_{[0,1]}(t)\underline{\zeta}^{-1/\alpha}\left(X_{\underline{g}+\underline{\zeta}t}^{(\alpha)}-X_{\underline{g}-}^{(\alpha)}\right)\]
for all $t\in[-1,1]$. This process can be interpreted as an excursion of $X^{(\alpha)}$ above its infimum conditioned to have length $1$. It is strictly positive on $(0,1)$ because the minima of $X^{(\alpha)}$ are distinct, and it is an excursion in the sense of Definition~\ref{excursion}. Furthermore, it is independent from $\underline{\zeta}$. To lighten the notations, we write $\lop[\alpha]=\lop[X^{\mathsf{exc},(\alpha)}]$, $\lopt[\alpha]=\lopt[X^{\mathsf{exc},(\alpha)}]$, $\vern[\alpha]=\vern[X^{\mathsf{exc},(\alpha)}]$, and $\vernt[\alpha]=\vernt[X^{\mathsf{exc},(\alpha)}]$. We point out that $\lop[\alpha]$ is exactly the random $\alpha$-stable looptree constructed by Curien \& Kortchemski~\cite{curien2014}. By analogy, one could call $\vern[\alpha]$ the random $\alpha$-stable vernation tree. However, this name would not be so useful because Theorem~\ref{intro_uni} and the next result, coming from \cite[Corollary 3.4]{curien2014}, imply that $\vern[\alpha]=2\cdot\lop[\alpha]$ a.s. 

\begin{proposition}
For all $\alpha\in(1,2)$, the excursion $X^{\mathsf{exc},(\alpha)}$ is PJG in the sense of Definition~\ref{type_exc} almost surely.
\end{proposition}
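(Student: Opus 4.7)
By Theorem~\ref{J_prop}(ii) and the identity (\ref{calcul_Jf}), the proposition is equivalent to showing $\dtree[X^{\mathsf{exc},(\alpha)}](t,1) = 0$ for every $t \in [0,1]$ almost surely. Since the function $(s,t) \mapsto \dtree[X^{\mathsf{exc},(\alpha)}](s,t)$ is continuous on $[0,1]^2$ by Proposition~\ref{d_continu}, the plan is to first reduce, via a union bound over a countable dense subset $D \subset [0,1]$, to the pointwise statement: for each fixed $t \in D$, $\dtree[X^{\mathsf{exc},(\alpha)}](t,1) = 0$ almost surely.

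Going back to the definition of $X^{\mathsf{exc},(\alpha)}$ as a time-rescaled piece of the excursion of $X^{(\alpha)}$ above its infimum straddling time $1$, together with the homogeneity and time-changing properties of $\dtree$ established in Section~2.4, the identity at a fixed $t$ is equivalent to the following identity for the stable Lévy process at the random time $u = \underline{g} + \underline{\zeta}\, t$:
\[
X_u^{(\alpha)} - I_u^{(\alpha)} \;=\; \sum_{\substack{r \in [0,u] \\ X_{r-}^{(\alpha)} \leq \inf_{[r,u]} X^{(\alpha)}}} \Bigl( \inf_{[r,u]} X^{(\alpha)} - X_{r-}^{(\alpha)} \Bigr),
\]
where $I_u^{(\alpha)} = \inf_{[0,u]} X^{(\alpha)}$. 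This identity is a classical consequence of the Duquesne--Le Gall theory of the exploration process for spectrally positive Lévy processes \cite{levytree_DLG}: since $X^{(\alpha)}$ has no Brownian component when $\alpha \in (1,2)$, the exploration process $\rho_u$ is purely atomic with total mass $X_u^{(\alpha)} - I_u^{(\alpha)}$, and its atoms are precisely the contributions $\inf_{[r,u]} X^{(\alpha)} - X_{r-}^{(\alpha)}$ of the ancestors $r$ of $u$ in the Lévy genealogy.

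To deal with the randomness of $u = \underline{g} + \underline{\zeta}\, t$, I would first establish the Lévy-process identity for every deterministic $u > 0$ almost surely, and then upgrade it to hold simultaneously for every $u > 0$ almost surely. The extension step relies on verifying that both sides of the identity are c\`adl\`ag in $u$ with matching jumps of size $\Delta X_u^{(\alpha)}$ at every jump time of $X^{(\alpha)}$, so that coincidence on a countable dense set such as $\mathbb{Q} \cap (0,\infty)$ propagates to every $u > 0$. Alternatively, one can bypass this by invoking It\^o's excursion theory: under the excursion measure of $X^{(\alpha)}$ above its infimum, a generic excursion $e$ satisfies $e(u) = \sum_{r \preceq_e u} x_r^u(e)$ identically, and the claim for $X^{\mathsf{exc},(\alpha)}$ follows by conditioning on $\underline{\zeta}$ and scaling.

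The principal difficulty is the rigorous justification of the Lévy-process identity, and especially the upgrade from deterministic to random $u$. The vanishing of the Brownian coefficient for $\alpha \in (1,2)$ is precisely what rules out any continuous contribution to the height of $X^{(\alpha)}$ above its infimum and leaves only the ancestral jumps; however, the infinite activity of small jumps of $X^{(\alpha)}$ demands a careful use of the compensation formula for the Poisson random measure of jumps, which is why appealing to the exploration process framework of Duquesne--Le Gall is the cleanest path to follow.
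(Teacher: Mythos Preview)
Your proposal is correct and follows essentially the same route as the paper: reduce to the Lévy-process identity $X_u - \inf_{[0,u]}X = \sum_{s\preceq_X u} x_s^u(X)$, invoke an external reference for it at fixed $u$ (the paper cites \cite{curien2014}, you cite Duquesne--Le~Gall), upgrade to all $u$ by a c\`adl\`ag argument, and pass to the normalized excursion.

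Two small points of comparison. First, your initial reduction to a countable dense $D\subset[0,1]$ via continuity of $\dtree$ is harmless but does no real work: since $u=\underline{g}+\underline{\zeta}\,t$ is random for each $t\in D$, you are forced into the ``upgrade to all $u$'' step anyway, so you may as well go there directly. Second, for the c\`adl\`ag upgrade the paper avoids checking by hand that the jump-sum side is c\`adl\`ag: it introduces auxiliary excursions $f_n(s)=X_{sT_n}+n$ with $T_n=\inf\{t:X_t=-n\}$, observes that the right-hand side on $[0,T_n]$ equals $Jf_n(t/T_n)-x_0^{t/T_n}(f_n)$, and invokes Theorem~\ref{J_prop} to get c\`adl\`ag-ness for free. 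It then passes to $X^{\mathsf{exc},(\alpha)}$ via the branching property on the interval $(\underline{g}/T_N,\underline{d}/T_N)$ rather than by your direct translation (which also works, once you note that contributions from $r<\underline{g}$ vanish because $X_{\underline{g}}=\inf_{[0,\underline{g}]}X$). The paper's packaging is slightly cleaner because it reuses its own toolkit; your packaging is slightly more direct.
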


Before proving the proposition, we extend the notation $f(s-)$, $\Delta_s(f)$, and $x_s^t(f)$ given by (\ref{delta_x}) for any càdlàg function $f:[0,\infty)\longrightarrow\mathbb{R}$. We also set $f(0-)=0$ and $\Delta_0(f)=f(0)$, and we define $\underline{f}(t)=\inf_{[0,t]}f=x_0^t(f)$ for all $t\geq 0$.

\begin{proof}
The proof can be essentially found in \cite{curien2014}, so we only detail some tedious steps using tools we have developed earlier. We fix $\alpha\in(1,2)$ and forget it in the notations. For $n\geq 0$ and $s\in[-1,1]$, let $T_n=\inf\{t\geq0\ :\ X_t=-n\}$ and $f_n(s)=\un_{[0,1]}(s)(X_{sT_n}+n)$. By recurrence of $X$, $T_n$ is finite almost surely, and $f_n$ is an excursion. We see that $x_s^t(f_n)=x_{sT_n}^{tT_n}(X)$ for all $s,t\in(0,1]$, and $x_0^t(f_n)=\underline{X}_{tT_n}+n$ for all $t\in[0,1]$. We first prove that almost surely, $f_n$ is PJG for all $n\geq 0$, which is equivalent to
\begin{equation}
\label{X_sauts_purs}
X_t-\inf_{[0,t]}X=\sum_{s\in [0,t]}x_s^t(X)
\end{equation}
for all $t\geq 0$ since $T_n\longrightarrow+\infty$ almost surely. The proof that (\ref{X_sauts_purs}) holds almost surely for every fixed $t\geq 0$ is given by Curien \& Kortchemski~\cite[Lemma 3.3]{curien2014}. The left-hand side of (\ref{X_sauts_purs}) is obviously càdlàg, and it is clear that if $t\in[0,T_n]$ then the right-hand side is equal to $J f_n(t/T_n)-x_0^{t/T_n}(f_n)$ and thus it is càdlàg by Theorem~\ref{intro_uni}. Hence, (\ref{X_sauts_purs}) holds for all $t\geq 0$ almost surely. Now, recall with (\ref{calcul_Jf}) that an excursion $f$ is PJG if and only if $\dtree[f](t,1)=0$ for any $t\in[0,1]$. Almost surely, there is $N\geq 1$ such that $\underline{d}<T_N$. The excursion $f_N$ is PJG so $\dtree[f_N](t,1)=0$ for any $t\in[0,1]$, and it follows that $\dtree[X^{\mathsf{exc}}](t,1)=0$ for any $t\in[0,1]$ by applying Proposition~\ref{time-change} and Proposition~\ref{branching_property} on the interval $(\underline{g}/T_N,\underline{d}/T_N)$.
\end{proof}

We set $X_t^{\mathsf{exc},(1)}=\un_{[0,1]}(t)(1-t)$ for all $t\in[-1,1]$, and we write $\vern[1]=\vern[X^{\mathsf{exc},(1)}]$ and $\vernt[1]=\vernt[X^{\mathsf{exc},(1)}]$. It is immediate that $X^{\mathsf{exc},(1)}$ is a PJG excursion and that $\vern[1]=\vernt[1]=2\cdot\lop[X^{\mathsf{exc},(1)}]=2\cdot\lopt[X^{\mathsf{exc},(1)}]=2\cdot\mathcal{C}$, where we recall from (\ref{circle_dist_cano}) that $\mathcal{C}$ is the metric circle of length $1$. Let $\mathbf{e}$ be the standard Brownian excursion on $[0,1]$. We set $X_t^{\mathsf{exc},(2)}=\un_{[0,1]}(t)\sqrt{2}\cdot\mathbf{e}$ for any $t\in[-1,1]$, and we write $\vern[2]=\vern[X^{\mathsf{exc},(2)}]$ and $\vernt[2]=\vernt[X^{\mathsf{exc},(2)}]$. It is immediate that $X^{\mathsf{exc},(2)}$ is a continuous excursion and $\vern[2]=\vernt[2]=\tree[X^{\mathsf{exc},(2)}]=\sqrt{2}\cdot\tree[\mathbf{e}]$, where $\tree[\mathbf{e}]$ is the Brownian Continuum Random Tree introduced by Aldous~\cite{aldousI,aldous1993}.

\begin{theorem}
\label{stable_main_result}
For all $\beta\in(1,2)$, the following three convergences hold in distribution for the pointed GHP topology:  \[\lop[\alpha]\xrightarrow[\alpha\rightarrow 1+]{d}\mathcal{C},\quad\quad \lop[\alpha]\xrightarrow[\alpha\rightarrow \beta]{d}\lop[\beta],\quad\quad \lop[\alpha]\xrightarrow[\alpha\rightarrow 2-]{d}\tfrac{\sqrt{2}}{2}\cdot\tree[\mathbf{e}].\]
In other words, the family $\alpha\in[1,2]\longmapsto \vern[\alpha]$ is continuous in distribution for the pointed GHP topology. 
\end{theorem}

The convergences when $\alpha$ tends toward $1$ or $2$ have been shown by Curien \& Kortchemski~\cite{curien2014}. Here, we retrieve and generalize them by using Theorem~\ref{gh-continu} and the lemma below.

\begin{lemma}
\label{excursion_stable_continu}
The family $\alpha\in[1,2]\longmapsto X^{\mathsf{exc},(\alpha)}$ is continuous in distribution for the Skorokhod topology on $[-1,1]$.
\end{lemma}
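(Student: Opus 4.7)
The plan is to split into three regimes according to the limit $\alpha_0$: the interior $\alpha_0\in(1,2)$, the upper endpoint $\alpha_0=2$, and the lower endpoint $\alpha_0=1$. In the first two regimes the underlying Lévy processes themselves converge, whereas at $\alpha_0=1$ they do not, and this is precisely the situation for which the \emph{relaxed} Skorokhod topology was tailored.

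For $\alpha_0\in(1,2)$, the Laplace exponents $\psi_\alpha(\lambda)=-\lambda^\alpha$ vary continuously with $\alpha$, so by a standard convergence theorem for Lévy processes (e.g.\ Jacod--Shiryaev) the processes $X^{(\alpha)}$ converge in distribution to $X^{(\alpha_0)}$ in the Skorokhod topology on $D([0,\infty))$. For $\alpha_0=2$, the same tool yields convergence of $X^{(\alpha)}$ to $\sqrt 2$ times a standard Brownian motion. To pass from the Lévy path to the normalized excursion straddling $1$ via Chaumont's construction, I would verify that this functional is almost surely continuous at the limit path: the boundaries $\underline g,\underline d$ are a.s.\ uniquely determined thanks to (\ref{>0pp-ps}) and to the a.s.\ distinctness of local minima of the limit process, the rescaling by $\underline\zeta^{-1/\alpha}$ is jointly continuous in $(\underline\zeta,\alpha)$, and the limit path has no fixed-time discontinuity with positive probability. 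This gives convergence of $X^{\mathsf{exc},(\alpha)}$ to $X^{\mathsf{exc},(\alpha_0)}$ even in the (rigid) Skorokhod topology on $D([0,1])$, hence in the relaxed one. For $\alpha_0=2$, a quick reflection-principle computation ($B-\underline B$ is distributed as $|B|$) identifies the limiting excursion with $\sqrt 2\,\mathbf e$.

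The main obstacle is the case $\alpha_0=1$: the Lévy paths have no useful limit on a fixed time interval, and $X^{\mathsf{exc},(1)}(t)=1-t$ is not the excursion of any non-degenerate Lévy process. I would work directly at the excursion level and aim for the two quantitative estimates
\begin{enumerate}[label=(\alph*)]
\item $X^{\mathsf{exc},(\alpha)}(0)\overset{\P}{\longrightarrow}1$ as $\alpha\downarrow 1$;
\item for every $\varepsilon>0$, $\sup_{t\in[\varepsilon,1]}\bigl|X^{\mathsf{exc},(\alpha)}(t)-(1-t)\bigr|\overset{\P}{\longrightarrow}0$ as $\alpha\downarrow 1$.
\end{enumerate}
Since $\Theta$ sends any jump of $f$ at $0$ into a jump of $\Theta f$ at $1/2$, which the (rigid) Skorokhod topology on $[0,1]$ handles cleanly, estimates (a)--(b) together yield $\Theta X^{\mathsf{exc},(\alpha)}\to\Theta(1-t)$ in Skorokhod, that is, relaxed-Skorokhod convergence of $X^{\mathsf{exc},(\alpha)}$ to $1-t$.

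To obtain (a) and (b) I would exploit the Itô excursion measure $\mathbf n_\alpha$ of $X^{(\alpha)}-\underline{X^{(\alpha)}}$ together with the scaling property of stable processes: disintegrating $\mathbf n_\alpha$ over the excursion length gives the conditional law of $X^{\mathsf{exc},(\alpha)}$, and as $\alpha\downarrow 1$ the Lévy measure $\Pi_\alpha$ concentrates on small jumps while the normalization $\underline\zeta^{-1/\alpha}$ forces the initial jump of the excursion to carry a mass of order one. A Bismut-type decomposition of the excursion at a uniform sampling time should then control the remainder as a tight perturbation of the deterministic descent $1-t$. Obtaining these estimates uniformly in $\alpha$ near $1$ is the bulk of the work.
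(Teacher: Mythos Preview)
Your treatment of the interior regime $\alpha_0\in(1,2)$ and the upper endpoint $\alpha_0=2$ matches the paper's approach: convergence of the L\'evy processes (the paper verifies tightness by a Kolmogorov criterion on fractional moments rather than invoking Jacod--Shiryaev, but this is interchangeable), then almost-sure continuity of Chaumont's excursion functional at the limit via uniqueness of $\underline g$ and $\underline d$. The paper is slightly more careful about $\underline d$, arguing separately that it is a.s.\ not a local minimum of the limit; you should make that explicit, since distinctness of local minima alone does not immediately give continuity of $\underline d$ under Skorokhod perturbations.

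For $\alpha_0=1$ the paper does not prove the result itself but invokes Curien--Kortchemski \cite[Proposition~3.6]{curien2014}, while pointing out (as you do) that the rigid Skorokhod topology is too strong at $t=0$ and that the relaxed version is exactly what is needed. The route in \cite{curien2014} is different from yours: they first show that the \emph{stable bridge} converges to the deterministic process $t\mapsto -t+\un_{t\ge U}$ (one uniform jump of height $1$), and then apply the Vervaat transform (cyclic shift to the infimum) to obtain the excursion. Your reduction to the two estimates (a) and (b) is correct and is equivalent to relaxed-Skorokhod convergence, but your proposed derivation via the It\^o measure and a Bismut decomposition is left vague (``should then control'', ``is the bulk of the work''). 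The bridge+Vervaat route is more direct here because convergence of the bridge is easier to quantify and the Vervaat shift automatically places the single macroscopic jump at time $0$, producing (a) and (b) at once; if you pursue your excursion-theoretic route you will in effect be reproving this.
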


\begin{proof}[Proof of the lemma]
The desired convergences when $\alpha$ tends towards $1$ or $2$ have already been proven in \cite[Propositions 3.5 and 3.6]{curien2014}. However, we point out that when $\alpha\rightarrow 1$, the convergence as stated in \cite{curien2014} is false even with the time-reversing, because it would entail $0=X_0^{\mathsf{exc},(\alpha)}\convd 1$. The issue is that the Skorokhod topology on $[0,1]$ is too rigid for the boundary value $f(0)$. The mistake is at the end of the proof of Proposition~3.6 with the application of the Vervaat transform because the time of the infimum of the bridge is close, but not equal, to the unique big jump of the bridge. Our use of the Skorokhod topology on $[-1,1]$ fixes the problem as it allows $f_n\to f$ even when $f_n(0)=0$ and $f(0)=1$.

If $\beta\in(1,2)$, we obtain the convergence in finite-dimensional distribution thanks to the Lévy property and to the convergences $\E\left[\exp(-\lambda X_t^{(\alpha)})\right]=\exp(t\lambda^\alpha)\underset{\alpha\rightarrow\beta}{\longrightarrow}\exp(t\lambda^\beta)=\E\left[\exp(-\lambda X_t^{(\beta)})\right]$ for any $t,\lambda\geq0$. To show the tightness, we use the scaling property and bounds on fractional moments of stable distributions (see e.g.~\cite{stablemoments1,stablemoments2}) to prove there is $C(\beta)\in(0,\infty)$ that only depends on $\beta$ such that 
\[\E\left[|X_t^{(\alpha)}-X_s^{(\alpha)}|^\gamma |X_s^{(\alpha)}-X_r^{(\alpha)}|^\gamma \right]=\E\left[(X_1^{(\alpha)})^\gamma\right]^2|t-s|^{\gamma/\alpha} |s-r|^{\gamma/\alpha}\leq C(\beta)|t-r|^{1+1/(2\beta+1)}\]
with $2\gamma=\beta+1$, for all $\beta+2<2\alpha<2\beta+1$ and $r\leq s\leq t\leq r+1$. This implies the tightness by the Kolmogorov criterion for c\`adl\`ag processes: see Billingsley~\cite[Theorem 13.5]{billingsley2013convergence}. Hence, $X^{(\alpha)}$ converges in distribution to $X^{(\beta)}$ for the Skorokhod topology on $[0,T]$ when $\alpha\rightarrow\beta$, for all $T\geq 0$. Then, we can suppose that convergence happens a.s.~on $[0,T]$ for all integers $T$ by Skorokhod's representation theorem. Since local minima of $X^{(\beta)}$ are distinct and reached a.s., we get $\underline{g}(X^{(\alpha)})\longrightarrow \underline{g}(X^{(\beta)})$ a.s. Moreover, either by applying the strong Markov property at $\underline{d}(X^{(\beta)})$ or by recalling (\ref{g,d_running_inf}) and that the local local minima of $X^{(\beta)}$ are distinct, we check that $\underline{d}(X^{(\beta)})$ is not a local minima of $X^{(\beta)}$ a.s., so $\underline{d}(X^{(\alpha)})\longrightarrow \underline{d}(X^{(\beta)})$ a.s. See Bertoin~\cite{taylor_1998} for more details. Since $X^{(\beta)}$ is a.s.~continuous at $\underline{g}(X^{(\beta)})$ and $\underline{d}(X^{(\beta)})$, it follows that $X^{\mathsf{exc},(\alpha)}$ converges in law to $X^{\mathsf{exc},(\beta)}$ for the Skorokhod topology on $[0,1]$ when $\alpha\rightarrow\beta$.
\end{proof}

Because we know that $X^{\mathsf{exc},(\alpha)}$ is almost surely PJG for $\alpha\in[1,2)$, Theorem~\ref{gh-continu_particular} automatically shows the continuity in distribution of the family $\alpha\longmapsto \lop[\alpha]$ on $[1,2)$. However, it is not enough to prove the convergence for $\alpha\rightarrow 2$ because $\mathbf{e}$ is continuous. Hence, we present a probabilistic method that will work for all $\alpha\in[1,2]$.

Let us quickly present Itô's excursion theory applied to the stable Lévy process. We refer to Bertoin~\cite[Chapter IV]{taylor_1998} for proof and details. Recall that $\underline{X}_t=\inf\{X_s\ :\ s\in[0,t]\}$ is the running infimum process of $X$. Since the process $X-\underline{X}$ is strong Markov and $0$ is regular for itself, one can use Itô's excursion theory and take $-\underline{X}$ as the local time of $X-\underline{X}$ at level $0$. Let $(g_j,d_j)$, $j\in\mathcal{J}$, be the excursion intervals of $X-\underline{X}$ away from $0$. Almost surely, we have $X_{g_j-}=X_{g_j}=\underline{X}_{g_j}$ for all $j\in\mathcal{J}$ and for all $t\geq 0$, if $t$ is a jump time of $X$ then there exists $j\in\mathcal{J}$ such that $t\in(g_j,d_j)$. For all $j\in\mathcal{J}$, we set $\omega_j(s)=X_{\min(g_j+s,d_j)}-X_{g_j}$ with $s\geq0$, the excursion away from $0$ indexed by $j$. The $\omega_j$ are elements of the space $\mathcal{E}=\{\omega:[0,\infty)\longrightarrow[0,\infty)\ :\ \text{c\`adl\`ag such that }\exists M\geq0,\ \forall t\geq M,\ \omega(t)=0\}$. From Itô's excursion theory, the point measure \[\mathcal{N}_X=\sum_{j\in\mathcal{J}}\delta_{(-X_{g_j},\omega_j)}\]is a Poisson point measure on $[0,\infty)\times\mathcal{E}$ with intensity $\dd t\, \underline{n}(\dd\omega)$, where $\underline{n}$ is a $\sigma$-finite measure on $\mathcal{E}$. Moreover, $X$ is measurable with respect to the measure $\mathcal{N}_X$ because $X_t>\underline{X}_t$ almost everywhere almost surely thanks to (\ref{>0pp-ps}) and because the local minima $X_{g_j}$ are almost surely distinct. 

Let us choose the shuffle we will work with. When $\Delta\in[1/2,1)$, we define $\phi_{1-\Delta}$ as in (\ref{remark_shuffle}), and when $\Delta\geq1/2$, we set $\phi_\Delta=\phi_{1/2}$. It is straightforward to verify that $\Phi:\Delta>0\longmapsto \phi_\Delta$ is a continuous shuffle as in Definition~\ref{shuffle}. Recall the sets $B_{X^{\text{exc},(\alpha)}}$ and $\mathrm{B}(\Phi)$ from Theorem~\ref{gh-continu}. Notice that all the $\phi_\Delta$ have only a countable number of discontinuity points. From the strong Markov property and the law of $\mathcal{N}_X$, we see that $B_{X^{\text{exc},(\alpha)}}\cap\mathrm{B}(\Phi)=\emptyset$ almost surely for all $\alpha\in(1,2)$. We also have $B_{X^{\text{exc},(1)}}=\{1\}$ and $B_{X^{\text{exc},(2)}}=\emptyset$. Thus, Lemma~\ref{excursion_stable_continu} and Theorem~\ref{gh-continu} ensure that the family $\alpha\in[1,2]\longmapsto \vernt[\alpha]=2\cdot\lopt[\alpha]$ is continuous in law for the pointed GHP topology. The end of the proof of Theorem~\ref{stable_main_result} is based on the observation that the Lebesgue measure on $[0,1]$ is invariant under all the $x\in[0,1]\longmapsto\phi_\Delta(x)\in[0,1]$. Informally, the shuffle under $\Phi$ of joint points between loops does not change the law of the stable looptree. This is formally asserted by the next theorem.

\begin{theorem}
\label{stable_tilte_thm}
For all $\alpha\in(1,2)$, the random pointed weighted compact metric spaces $\lop[\alpha]$ and $\lopt[\alpha]$ have the same law.
\end{theorem}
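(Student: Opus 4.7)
The plan is to construct, coupled with $X^{\mathsf{exc},(\alpha)}$, a random excursion $\widetilde{X}$ such that $\widetilde{X}\ed X^{\mathsf{exc},(\alpha)}$ and such that $\lop[\widetilde{X}]$ equals $\lopt[X^{\mathsf{exc},(\alpha)}]$ as a pointed weighted metric space. The equality of distributions of $\lop[\alpha]$ and $\lopt[\alpha]$ follows immediately. The coupling is built by rearranging, for each jump of size $\Delta$, the sub-excursions hanging off the corresponding loop according to the inverse shuffle $\phi_\Delta^{-1}$, while using the Itô excursion decomposition to preserve the joint law.

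First, I would exploit the Itô decomposition already recalled in the paper: almost surely, $X^{\mathsf{exc},(\alpha)}$ is measurable with respect to a Poisson point measure $\mathcal{N}_X$ of excursions of $X-\underline{X}$ above zero. For each jump of the underlying Lévy process at a time $t$ with size $\Delta$, the strong Markov property and the scaling property of $X^{(\alpha)}$ show that the post-jump piece until the first return to $X_{t-}$ is an independent stable Lévy process killed at hitting $-\Delta$. The positions $\{x_t^s/\Delta:s\succ_{X^{\mathsf{exc},(\alpha)}}t\}$ on the loop of $t$ are precisely the normalized levels of the running infimum of this piece at each of its jump times, and by the independent increments property, sub-excursions corresponding to disjoint position-intervals $(a,b),(c,d)\subset[0,\Delta]$ are conditionally independent given $\Delta$, with law depending only on $\Delta$ and on $b-a$ or $d-c$.

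The key observation is that the specific shuffle $\phi_\Delta$ fixed just before Theorem \ref{stable_tilte_thm} is a piecewise isometry of $[0,1]$: it partitions $[0,1]$ into a countable family of intervals on each of which it acts as a translation or as a reflection-plus-translation, and it is Lebesgue-measure preserving. By the conditional independence described above, cutting each loop along that countable partition and re-gluing the sub-excursions in the order prescribed by $\phi_\Delta$ defines a new random function $\widetilde{X}$ whose Itô excursion measure is again a Poisson point measure with the same intensity $\dd t\,\underline{n}(\dd\omega)$; hence $\widetilde{X}\ed X^{\mathsf{exc},(\alpha)}$. By construction, $x_t^s(\widetilde{X})/\Delta_t(\widetilde{X})=\phi_{\Delta_t}(x_t^s(X^{\mathsf{exc},(\alpha)})/\Delta_t(X^{\mathsf{exc},(\alpha)}))$ for every descendant, so $\lop[\widetilde{X}]$ is pointed-isometric to $\lopt[X^{\mathsf{exc},(\alpha)}]$.

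The main obstacle will be making this rearrangement rigorous at the level of the whole excursion, since the loops are nested via the genealogy $\preceq$ and there are infinitely many jumps accumulating near $0$. To avoid measurability and càdlàg issues, I would first perform the rearrangement only on jumps of size at least $\varepsilon$ (working with $J^\varepsilon X^{\mathsf{exc},(\alpha)}$ in the spirit of Theorem \ref{J_prop}), where the decomposition involves only finitely many loops and is handled by a direct induction using the branching property of Proposition \ref{branching_property}; this reduces the coupling argument to a finite product of independent distribution-preserving cut-and-rearrange operations on stable sub-excursions. Then I would let $\varepsilon\downarrow 0$, using the continuity estimates of Corollary \ref{propo} and condition (\ref{11})—which was designed precisely so that $\phi_\Delta$-rearranging small loops produces only uniformly small perturbations—together with Lemma \ref{GHP_lemma} to pass to the limit in the Gromov-Hausdorff-Prokhorov topology and conclude.
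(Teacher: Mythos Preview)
Your proposal follows essentially the same strategy as the paper: exploit that the chosen shuffle $\phi_\Delta$ is a piecewise isometry preserving Lebesgue measure, use It\^o's excursion theory to show that rearranging sub-excursions along each loop according to $\phi_\Delta$ preserves the law, truncate to jumps of size $\geq\varepsilon$ so that only finitely many loops are involved, and pass to the limit via Corollary~\ref{propo}. The paper formalises the rearrangement in Lemma~\ref{stable_tilte} as an inductive construction on the unconditioned L\'evy process $X^{(\alpha)}$ rather than directly on the normalised excursion $X^{\mathsf{exc},(\alpha)}$: the strong Markov property and the Poisson structure of $\mathcal{N}_X$ are available for $X^{(\alpha)}$ but not cleanly for the length-$1$ conditioned object, so your statement that ``$\widetilde{X}\ed X^{\mathsf{exc},(\alpha)}$ because the It\^o excursion measure is again Poisson with the same intensity'' needs to be carried out on $X^{(\alpha)}$ and then transferred to the excursion straddling time~$1$. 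This transfer is where the paper does one extra step you omit: since $\dlt$ is \emph{not} homogeneous, one cannot simply rescale, and the paper instead uses the independence of $\underline{\zeta}$ and $X^{\mathsf{exc},(\alpha)}$ together with the positive density of $\underline{\zeta}$ to condition on $|\underline{\zeta}-1|<\varepsilon$ and let $\varepsilon\to0$.
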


Let $\mathbb{N}^*$ be the set of positive integers. We fix $\alpha\!\in\!(1,2)$ and forget it in the notations. Let
$\varepsilon\!>\!0$. We define a sequence $\left(\tau_n(X)\right)_{n\geq 0}$ of $\sigma(X)$-measurable stopping times by setting $\tau_0(X)=0$ and $\tau_n(X)=\inf\left\{t>\tau_{n-1}(X)\ :\ \Delta_t(X)\geq\varepsilon\right\}$ for $n\geq 1$. The $\tau_{n+1}(X)-\tau_n(X)$ are independent and have the same law as $\tau_1(X)$. Plus, $\tau_1(X)>0$ almost surely because $X$ is c\`adl\`ag with $X_0=0$. We postpone the proof of the following lemma.

\begin{lemma}
\label{stable_tilte}
There almost surely exist an $\alpha$-stable Lévy process $Y$, a bijection $\psi:\mathbb{N}^*\longrightarrow\mathbb{N}^*$, and a function $\lambda:[0,\infty)\longrightarrow [0,\infty)$ which avoids at most countably many points and preserves the Lebesgue measure on $[0,\infty)$ such that for all $t\geq 0$ and $k\in\mathbb{N}^*$, it holds $\underline{Y}_{\lambda(t)}=\underline{Y}_t=\underline{X}_t$, $\underline{Y}_{\tau_{\psi(k)}(Y)}=\underline{X}_{\tau_k(X)}$, $\Delta_{\tau_{\psi(k)}}(Y)=\Delta_{\tau_k}(X)$, and
\[\frac{x_{\tau_{\psi(k)}}^{\lambda(t)}}{\Delta_{\tau_{\psi(k)}}}(Y)=\phi_{\Delta_{\tau_k}(X)}\left(\frac{x_{\tau_k}^{t}}{\Delta_{\tau_k}}(X)\right)\quad\text{ whenever }\quad\tau_k(X)\leq\underline{d}(X).\]
\end{lemma}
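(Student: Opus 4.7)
The plan is to construct $Y$ from $X$ by shuffling, within each ``loop'' of $X$ corresponding to a big jump, the sub-excursions of $X$ above its running infimum, using the measure-preserving map $\phi_\Delta$ associated with the jump size. The key point is that the specific shuffle $\Phi$ defined just before this theorem preserves Lebesgue measure on $[0,1]$ (a direct computation on the definition in Remark \ref{remark_shuffle}: each piece of the piecewise-defined $\phi_{1-\Delta}$ is an isometry onto a sub-interval, and these sub-intervals tile $[0,1]$ up to a null set), and Poisson point measures on $[0,1]$ with intensity $dx$ are invariant under such measure-preserving transformations.

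First I would isolate one big jump. For each $k\geq 1$ with $\tau_k(X)\leq\underline{d}(X)$, by the strong Markov property the process $V^{(k)}_s:=X_{\tau_k+s}-X_{\tau_k-}$ is independent of the pre-$\tau_k$ $\sigma$-field and equals $\Delta_k:=\Delta_{\tau_k}(X)$ plus an independent copy of $X$. Set $T^{(k)}=\inf\{s\geq0:V^{(k)}_s\leq0\}$, the duration of the ``loop''. Applying Itô's excursion theory to $V^{(k)}-\underline{V^{(k)}}$ with local time $\Delta_k-\underline{V^{(k)}}$, the excursions of $V^{(k)}$ above its running infimum on $[0,T^{(k)}]$ form a Poisson point measure
\[
\mathcal{M}^{(k)}=\sum_{j}\delta_{(y_j,\omega_j)}
\]
on $[0,\Delta_k]\times\mathcal{E}$ with intensity $dy\,\underline{n}(d\omega)$, stopped at total mass $\Delta_k$, where $y_j$ is the depth of the $j$-th excursion's base below $\Delta_k$. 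The process $V^{(k)}$ is fully measurable with respect to $\mathcal{M}^{(k)}$: one recovers $\underline{V^{(k)}}$ as $\Delta_k$ minus the right-continuous inverse of the sum of excursion durations, and then $V^{(k)}$ itself by inserting each $\omega_j$ at the time when $\underline{V^{(k)}}$ first reaches $\Delta_k-y_j$.

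The heart of the argument is then the shuffle. Define $\tilde{\mathcal{M}}^{(k)}=\sum_j\delta_{(\Delta_k\phi_{\Delta_k}(y_j/\Delta_k),\,\omega_j)}$. Because $y\mapsto\Delta_k\phi_{\Delta_k}(y/\Delta_k)$ is Lebesgue-preserving on $[0,\Delta_k]$, $\tilde{\mathcal{M}}^{(k)}$ is again Poisson with intensity $dy\,\underline{n}(d\omega)$, stopped at total mass $\Delta_k$, so it has the same law as $\mathcal{M}^{(k)}$. Let $\widetilde V^{(k)}$ be the process reconstructed from $\tilde{\mathcal{M}}^{(k)}$ by the same recipe; then $\widetilde V^{(k)}\stackrel{d}{=}V^{(k)}$, they have the same duration $T^{(k)}$, and by construction the infimum process of $\widetilde V^{(k)}$ coincides, after an appropriate time-change $\lambda^{(k)}$ that permutes the excursion slots, with the infimum process of $V^{(k)}$; moreover the level first reached at time $\lambda^{(k)}(s)$ in $\widetilde V^{(k)}$ is exactly $\Delta_k\phi_{\Delta_k}$ of the normalized level reached at time $s$ in $V^{(k)}$. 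I would then assemble $Y$ by replacing each $V^{(k)}$ with $\widetilde V^{(k)}$ in $X$, leaving $X$ untouched outside all loop intervals $[\tau_k,R_k)$, and define $\lambda$ as the identity outside these intervals and as $\tau_k+\lambda^{(k)}(\cdot-\tau_k)$ inside. The bijection $\psi$ is read off from the reordering of big jumps: the $k$-th big jump of $X$ sits at normalized level $y_j/\Delta_k=1$ in its loop (the ``first'' sub-excursion), and is sent by the shuffle to some other time position inside $\widetilde V^{(k)}$, whose rank among big jumps of $Y$ defines $\psi(k)$.

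The four claimed identities then follow directly: $\underline{Y}_{\lambda(t)}=\underline{X}_t$ since each $\lambda^{(k)}$ preserves the infimum trajectory of the loop as a set; constancy of $\underline{Y}$ on the interval between $t$ and $\lambda(t)$ holds because within a loop the infimum does not move during a sub-excursion; $\underline{Y}_{\tau_{\psi(k)}(Y)}=\underline{X}_{\tau_k(X)}$ and $\Delta_{\tau_{\psi(k)}}(Y)=\Delta_k$ by construction; and the final identity is the defining property of the shuffle. The time-change $\lambda$ is Lebesgue-preserving and bijective except on the countable set of endpoints of shuffled sub-excursions, which accounts for the ``avoids at most countably many points'' clause coming from the jumps of the $\phi_{\Delta_k}$. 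The main obstacle will be Step~(5)--(6), namely handling the potential nesting of loops (a big jump of $X$ may sit inside another loop) and checking that the simultaneous shuffling at all levels, done via the combined Poisson structure rather than recursively, is well-defined and jointly measurable in $\omega$; once the Poisson-invariance argument is set up carefully, distributional equality $Y\stackrel{d}{=}X$ and the measurable construction of $(Y,\psi,\lambda)$ on the same probability space follow from the Markov and independence properties of $X$.
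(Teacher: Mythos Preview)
Your conceptual framework is correct: shuffle the excursions above the running infimum inside each big-jump loop using the Lebesgue-preserving map induced by $\phi_\Delta$, and invoke invariance of the Poisson point measure to conclude that the resulting process is again $\alpha$-stable. This is exactly the mechanism the paper uses.

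However, the gap you yourself flag in Step~(5)--(6) is genuine and not resolved by your outline. Big jumps of $X$ \emph{do} occur inside other loops (indeed, for the stable process this happens almost surely), so the intervals $[\tau_k, R_k)$ overlap and your ``simultaneous'' replacement of each $V^{(k)}$ by $\widetilde V^{(k)}$ is ill-defined as written: shuffling inside the loop of $\tau_1$ moves the time position of $\tau_2$, so you cannot independently specify what happens ``inside the loop of $\tau_2$'' without first deciding where that loop now sits. Attempting to do all shuffles at once via ``the combined Poisson structure'' would require a hierarchical construction whose measurability and distributional invariance are not established by what you have written.

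The paper resolves this by proceeding \emph{sequentially} rather than simultaneously. One sets $X^0=X$ and, at stage $n$, shuffles only the loop attached to the $(n+1)$-th big jump $\tau_{n+1}(X^n)$ of the \emph{current} process $X^n$, producing $X^{n+1}$. The strong Markov property at $\tau_{n+1}(X^n)$ gives independence of the post-jump process, so the Poisson-invariance argument applies cleanly to that single loop; one checks that $X^{n+1}$ agrees with $X^n$ outside the shuffled loop, that $\underline{X^{n+1}}=\underline{X^n}$, and updates $\psi_{n+1}$ and $\lambda_{n+1}$ accordingly. A further subtlety you do not address is that the number $N$ of big jumps in $[0,\underline d(X)]$ is random, so one cannot simply stop the induction at a deterministic stage; the paper observes that $N$ and $\underline d$ are invariant along the sequence $(X^n)$, so taking $Y=X^N$ is legitimate and one verifies $Y\stackrel{d}{=}X$ by conditioning on $\{N=n\}$.

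A minor point: the relevant level map in the paper is $\varphi(x)=\boldsymbol\Delta-\boldsymbol\Delta\,\phi_{\boldsymbol\Delta}(1-x/\boldsymbol\Delta)$ rather than $x\mapsto\boldsymbol\Delta\,\phi_{\boldsymbol\Delta}(x/\boldsymbol\Delta)$, reflecting that the excursion local-time index $-\mathbf X_{g_i}$ runs from $0$ upward while $x_\tau^t/\Delta_\tau$ runs from $1$ downward; your parametrization via $V^{(k)}=X_{\tau_k+\cdot}-X_{\tau_k-}$ would need the analogous adjustment to produce exactly the identity claimed in the lemma.
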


The local minima of a stable Lévy process are a.s.~distinct, so (\ref{g,d_running_inf}) entails that $\underline{g}(X)=\inf\left\{t\leq 1\ :\ \underline{X}_t=\underline{X}_1\right\}$ and $\underline{d}(X)=\sup\left\{t\geq 1\ :\ \underline{X}_t=\underline{X}_1\right\}$. The identity $\underline{Y}=\underline{X}$ thus yields $\underline{g}(X)=\underline{g}(Y)$, $\underline{d}(X)=\underline{d}(Y)$, and $\underline{\zeta}(X)=\underline{\zeta}(Y)$. Furthermore, as $t\in\left[\underline{g},\underline{d}\right]$ if and only if $\underline{X}_t=\underline{X}_1$, we deduce from the properties provided by Lemma~\ref{stable_tilte} that
\begin{align*}
\tau_k(X)\in\left[\underline{g},\underline{d}\right]&\Longleftrightarrow \tau_{\psi(k)}(Y)\in\left[\underline{g},\underline{d}\right],\\
t\in\left[\underline{g},\underline{d}\right]&\Longleftrightarrow \lambda(t)\in\left[\underline{g},\underline{d}\right],
\end{align*}
for all $t\geq 0$ and $k\in\mathbb{N}^*$. Hence, $\lambda$ induces a function $\lambda^{\mathsf{exc}}:[0,1]\to[0,1]$ such that $\lambda(\underline{g}+\underline{\zeta}t)=\underline{g}+\underline{\zeta}\lambda^{\mathsf{exc}}(t)$ for all $t\in[0,1]$. This function avoids at most countably many points and preserves the Lebesgue measure on $[0,1]$. Now, recall from (\ref{Jeps_def}) the operator $J^\varepsilon$. For all $s,t\in[0,1]$, Lemma~\ref{stable_tilte} and the identity (\ref{dl_Jeps}) allow us to compute
\begin{align*}
\dlt[\underline{\zeta}^{1/\alpha} J^\varepsilon  X^{\mathsf{exc}}](s,t)&=\sum_{\substack{k\in\mathbb{N}^*\\ \tau_k(X)\in\left[\underline{g},\underline{d}\right]}}\tilde{\delta}_{\tau_k(X)}^X\left(x_{\tau_k}^{\underline{g}+\underline{\zeta}s}(X),x_{\tau_k}^{\underline{g}+\underline{\zeta}t}(X)\right)\\
&=\sum_{\substack{k\in\mathbb{N}^*\\ \tau_{k}(Y)\in\left[\underline{g},\underline{d}\right]}}\delta_{\tau_{k}(Y)}^Y\left(x_{\tau_{k}}^{\lambda(\underline{g}+\underline{\zeta}s)}(Y),x_{\tau_{k}}^{\lambda(\underline{g}+\underline{\zeta}t)}(Y)\right)=\dl[\underline{\zeta}^{1/\alpha} J^\varepsilon  Y^{\mathsf{exc}}]\left(\lambda^{\mathsf{exc}}(s),\lambda^{\mathsf{exc}}(t)\right).
\end{align*}
The same method gives that $0=\dl[\underline{\zeta}^{1/\alpha} J^\varepsilon  Y^{\mathsf{exc}}]\left(\lambda^{\mathsf{exc}}(1),1\right)$. It follows that the map $\lambda^{\mathsf{exc}}$ naturally induces an isometry $\lambda^0$ from $\lopt[\underline{\zeta}^{1/\alpha} J^\varepsilon  X^{\mathsf{exc}}]$ to $\lop[\underline{\zeta}^{1/\alpha} J^\varepsilon  Y^{\mathsf{exc}}]$ which avoids at most countably many points and maps the respective roots onto one another. In fact, Proposition~\ref{dt_continu} implies that $\lambda^0$ is genuinely surjective. Let us denote by $\tilde{\mu}$ and $\mu$ the respective measures of $\lopt[\underline{\zeta}^{1/\alpha} J^\varepsilon  X^{\mathsf{exc}}]$ and $\lop[\underline{\zeta}^{1/\alpha} J^\varepsilon  Y^{\mathsf{exc}}]$, which are the images of the Lebesgue measure on $[0,1]$ by the respective canonical projections. The invariance of the Lebesgue measure on $[0,1]$ by $\lambda^{\mathsf{exc}}$ readily yields $\mu=\lambda_*^0\tilde{\mu}$. Hence, the pointed weighted metric spaces $\lopt[\underline{\zeta}^{1/\alpha}J^\varepsilon X^{\mathsf{exc}}]$ and $\lop[\underline{\zeta}^{1/\alpha}J^\varepsilon  Y^{\mathsf{exc}}]$ are GHP-isometric. Since $X$ and $Y$ have the same distribution, we get that
\[\left(\underline{\zeta}(X),\lopt[\underline{\zeta}^{1/\alpha}(X)J^\varepsilon X^{\mathsf{exc}}]\right)\ed\left(\underline{\zeta}(X),\lop[\underline{\zeta}^{1/\alpha}(X)J^\varepsilon X^{\mathsf{exc}}]\right).\]Proposition~\ref{cv_J} $(iii)$, Corollary~\ref{propo}, and (\ref{GHP_lips}) ensure that the couples $\left(\underline{\zeta},\lopt[\underline{\zeta}^{1/\alpha} X^{\mathsf{exc}}]\right)$ and $\left(\underline{\zeta},\lop[\underline{\zeta}^{1/\alpha} X^{\mathsf{exc}}]\right)$ have the same law, by making $\varepsilon\rightarrow 0^+$. Recall that $\dl$ is homogeneous but not $\dlt$. Nevertheless, recall that $\underline{\zeta}$ admits a positive density on $(0,\infty)$ and that $\underline{\zeta}$ and $X^{\mathsf{exc}}$ are independent. For any bounded continuous function $G$ and any $\varepsilon>0$, we can then write
\[\E\left[G\left(\lop[X^{\mathsf{exc}}]\right)\right]=\frac{1}{\P\left(|\underline{\zeta}-1|<\varepsilon\right)}\E\left[\un_{\left\{|\underline{\zeta}-1|<\varepsilon\right\}}G\left(\underline{\zeta}^{-1/\alpha}\cdot\lopt[\underline{\zeta}^{1/\alpha}X^{\mathsf{exc}}]\right)\right].\]
With the help of the dominated convergence theorem and Theorem~\ref{gh-continu}, we end the proof by making $\varepsilon\rightarrow0^+$.

\begin{proof}[Proof of Lemma~\ref{stable_tilte}]
We set $X^0=X$, $\psi_0=\mathsf{id}_{\mathbb{N}^*}$, and $\lambda_0=\mathsf{id}_{[0,\infty)}$. Let $n\geq 0$. By induction, let us assume we have constructed an $\alpha$-stable Lévy process $X^n$, a bijection $\psi_n:\mathbb{N}^*\longrightarrow\mathbb{N}^*$, and a function $\lambda_n:[0,\infty)\longrightarrow[0,\infty)$ which avoids at most countably many points and preserves the Lebesgue measure on $[0,\infty)$ such that for all $t\geq 0$ and $k\in\mathbb{N}^*$, it holds $\underline{X^n}_{\lambda_n(t)}=\underline{X^n}_t=\underline{X}_t$, $\underline{X^n}_{\tau_{\psi_n(k)}(X^n)}=\underline{X}_{\tau_k(X)}$, $\Delta_{\tau_{\psi_n(k)}}(X^n)=\Delta_{\tau_k}(X)$, and
\[\frac{x_{\tau_{\psi_n(k)}}^{\lambda_n(t)}}{\Delta_{\tau_{\psi_n(k)}}}(X^n)=\begin{cases}
	\phi_{\Delta_{\tau_k}(X)}\left(\frac{x_{\tau_k}^{t}}{\Delta_{\tau_k}}(X)\right)&\text{ if }\psi_n(k)\leq n,\\
	\frac{x_{\tau_k}^t}{\Delta_{\tau_k}}(X)&\text{ if }\psi_n(k)>n.
	\end{cases}\]

We write $\boldsymbol{\Delta}=\Delta_{\tau_{n+1}(X^n)}(X^n)$ and $\ftau=\tau_{n+1}(X^n)$ to lighten the notations. For any $t\geq0$, we set $\mathbf{X}_t=X_{\ftau+t}^n-X_{\ftau}^n$. Thanks to the strong Markov property of an $\alpha$-stable Lévy process, we know that $\mathbf{X}$ is an $\alpha$-stable Lévy process independent from $X_{\min(\cdot,\ftau)}^n$. Let us denote by $(g_i,d_i)$, $i\in\mathcal{I}$, the excursion intervals of $\mathbf{X}-\underline{\mathbf{X}}$ away from $0$ and by $\omega_i$ its excursion away from $0$ on $(g_i,d_i)$, so that the point measure $\mathcal{N}_{\mathbf{X}}=\sum_{i\in\mathcal{I}}\delta_{(-\mathbf{X}_{g_i},\omega_i)}$ is a Poisson point measure on $[0,\infty)\times\mathcal{E}$ with intensity $\dd t \underline{n}(\dd \omega)$. It is independent from $X_{\min(\cdot,\ftau)}^n$, and so from $\boldsymbol{\Delta}$. We define two maps as follows:
\[\varphi:x\in[0,\infty) \longmapsto \begin{cases}
                          \boldsymbol{\Delta}-\boldsymbol{\Delta}\phi_{\boldsymbol{\Delta}}\left(1-x/\boldsymbol{\Delta}\right)&\text{ if }x<\boldsymbol{\Delta},\\
                          x&\text{ if }x\geq\boldsymbol{\Delta},
                        \end{cases}\quad\text{ and }\quad\Upsilon:(x,\omega)\in[0,\infty)\times\mathcal{E}\longmapsto(\varphi(x),\omega)\in[0,\infty)\times\mathcal{E}.\]

Recall from (\ref{remark_shuffle}) the expression of $\phi_\Delta$. We check that the Lebesgue measure on $[0,1]$ is invariant by $\phi_\Delta$ for all $\Delta>0$. Indeed, $\phi_\Delta:[0,1]\longrightarrow[0,1)$ is surjective and we can partition $(0,1]$ into countably many intervals on whose $\phi_\Delta$ is affine with slope $\pm 1$. Whatever the value of $\boldsymbol{\Delta}$ is, it follows that the product measure $\dd t\underline{n}(\dd\omega)$ is invariant by $\Upsilon$, so conditionally given $X_{\min(\cdot,\ftau)}^n$, the point measure $\Upsilon_*\mathcal{N}_{\mathbf{X}}$ is also a Poisson point measure with intensity $\dd t\underline{n}(\dd \omega)$. Therefore, there exists an $\alpha$-stable Lévy process $X^{n+1}$ that satisfies $\tau_{n+1}(X^{n+1})=\ftau$, $X_{\min(\cdot,\ftau)}^{n+1}=X_{\min(\cdot,\ftau)}^n$, and $\mathcal{N}_{\hat{\mathbf{X}}}=\Upsilon_*\mathcal{N}_{\mathbf{X}}$ almost surely, where $\hat{\mathbf{X}}_t=X_{\ftau+t}^{n+1}-X_{\ftau}^{n+1}$ for all $t\geq 0$.
 Analogously, we denote by $(\hat{g}_j,\hat{d}_j)$, $j\in\mathcal{J}$, the excursion intervals of $\hat{\mathbf{X}}-\underline{\hat{\mathbf{X}}}$ away from $0$ and by $\hat{\omega}_j$ its excursion away from $0$ on $(\hat{g}_j,\hat{d}_j)$, so that \[\mathcal{N}_{\hat{\mathbf{X}}}=\sum_{j\in\mathcal{J}}\delta_{(-\hat{\mathbf{X}}_{\hat{g}_j},\hat{\omega}_j)}=\sum_{i\in\mathcal{I}}\delta_{\Upsilon(-\mathbf{X}_{g_i},\omega_i)}.\]
For all $x\geq 0$, we define $\sigma_x(\mathbf{X})=\inf\{t\geq 0\, :\, \underline{\mathbf{X}}_t\leq -x\}$ and $\sigma_x(\hat{\mathbf{X}})$ similarly. We set $\zeta:\omega\in\mathcal{E}\mapsto\sup\{t\geq 0\, :\, \omega(t)>0\}$, so that $\zeta(\omega_i)=d_i-g_i$ is the lifetime of $\omega_i$ for any $i\in\mathcal{I}$. It follows from (\ref{>0pp-ps}) that for all $x\geq 0$, it almost surely holds \[\sigma_x(\mathbf{X})=\sum_{i\in\mathcal{I}}\I{-\mathbf{X}_{g_i}<x}\zeta(\omega_i)=\mathcal{N}_{\mathbf{X}}\left[\un_{[0,x)}\zeta\right].\] But if $x\geq\boldsymbol{\Delta}$, we notice that $(\un_{[0,x)}\zeta)\circ\Upsilon=\un_{[0,x)}\zeta$, which gives $\sigma_x(\mathbf{X})=\sigma_x(\hat{\mathbf{X}})$ in that case. The processes $\mathbf{X}$ and $\hat{\mathbf{X}}$ are continuous, so $\min\left(-\boldsymbol{\Delta},\underline{\mathbf{X}}\right)=\min\left(-\boldsymbol{\Delta},\underline{\hat{\mathbf{X}}}\right)$ and $\sigma_{\boldsymbol{\Delta}}(\mathbf{X})=\sigma_{\boldsymbol{\Delta}}(\hat{\mathbf{X}})$ almost surely. By definition of $\boldsymbol{\Delta}$, it is now clear that $\underline{X^{n+1}}=\underline{X^n}=\underline{X}$ almost surely. Next, recall that the local minima of an $\alpha$-stable Lévy process are distinct almost surely. In particular, the map $\Upsilon$ naturally induces a bijection $\upsilon:\mathcal{I}\to\mathcal{J}$, so that $(-\hat{\mathbf{X}}_{\hat{g}_{\upsilon(i)}},\hat{\omega}_{\upsilon(i)})=\Upsilon(-\mathbf{X}_{g_i},\omega_i)$ for all $i\in\mathcal{I}$. When $\sigma_{\boldsymbol{\Delta}}(\mathbf{X})<g_i$ for some $i\in\mathcal{I}$, we can describe $[\hat{g}_{\upsilon(i)},\hat{d}_{\upsilon(i)}]$ as the interval on which $\underline{\hat{\mathbf{X}}}=\underline{\mathbf{X}}_{g_i}<-\boldsymbol{\Delta}$, and this justifies $\hat{\mathbf{X}}=\mathbf{X}$ on the interval $[\hat{g}_{\upsilon(i)},\hat{d}_{\upsilon(i)}]=[g_i,d_i]$. Eventually, we get $X_t^{n+1}=X_t^n$ for all $t\in[0,\ftau]\cup[\ftau+\sigma_{\boldsymbol{\Delta}}(\mathbf{X}),\infty)$.

Next, we build a function $\lambda':[0,\infty)\to[0,\infty)$. If $t\in[0,\ftau)$, then we set $\lambda'(t)=t$. If $t\geq\ftau$ and $t-\ftau\in[g_i,d_i]$ for $i\in\mathcal{I}$, then we set $\lambda'(t)=\hat{g}_{\upsilon(i)}+t-g_i$. If $t\geq\ftau$ but $t-\ftau$ is not in any of the $[g_i,d_i]$, then we set $\lambda'(t)=\ftau+\sigma_{\varphi(-\underline{\mathbf{X}}_{t-\ftau})}(\hat{\mathbf{X}})$. Observe that $\lambda'$ induces bijections resp.~from $[\ftau+g_i,\ftau+d_i]$ and $[0,\ftau)$ to $[\ftau+\hat{g}_{\upsilon(i)},\ftau+\hat{d}_{\upsilon(i)}]$ and $[0,\ftau)$ which preserve their Lebesgue measures, for any $i\in\mathcal{I}$. A.s., each of these two countable families of intervals partitions a conull subset of $[0,\infty)$, according to (\ref{>0pp-ps}), so the Lebesgue measure on $[0,\infty)$ is invariant by $\lambda'$. It is easy to check that if $s\neq\ftau$ then
\[\underline{X^{n+1}}_{\lambda'(t)}=\underline{X^n}_t,\ \quad x_{\ftau}^{\lambda'(t)}(X^{n+1})=\boldsymbol{\Delta}\phi_{\boldsymbol{\Delta}}\left(\frac{x_{\ftau}^t(X^n)}{\boldsymbol{\Delta}}\right),\quad\text{ and }\quad x_{\lambda'(s)}^{\lambda'(t)}(X^{n+1})=x_s^t(X^n),\]
for all $s,t\geq 0$. If $t'-\ftau>0$ is not in any of the $[\hat{g}_{\upsilon(i)},\hat{d}_{\upsilon(i)}]$ then $t'-\ftau$ is the unique point where $\underline{\hat{\mathbf{X}}}$ reaches $\underline{\hat{\mathbf{X}}}_{t'-\ftau}<0$, which means $t'=\ftau+\sigma_{-\underline{\hat{\mathbf{X}}}_{t'-\ftau}}(\hat{\mathbf{X}})$. The function $\varphi:[0,\infty)\to(0,\infty)$ is surjective, so there is $t\geq\ftau$ such that $\lambda'(t)=t'$. Hence, the only point avoided by $\lambda'$ is $\ftau$. Notice a jump has to be inside an excursion interval away from the infimum and that $\lambda'(\ftau)=\ftau+\sigma_{\boldsymbol{\Delta}/2}(\hat{\mathbf{X}})$ is not a jump of $X^{n+1}$. Eventually, our reasoning justifies the existence of a unique bijection $\psi_{n+1}:\mathbb{N}^*\longrightarrow\mathbb{N}^*$ such that $\psi_{n+1}(k)=n+1$ if $\psi_n(k)=n+1$ and $\tau_{\psi_{n+1}(k)}(X^{n+1})=\lambda'(\tau_{\psi_n(k)}(X^n))$ otherwise, for all $k\in\mathbb{N}^*$. Furthermore, we set $\lambda_{n+1}=\lambda'\circ\lambda_n$. It is then straightforward to check that $(X^{n+1},\psi_{n+1},\lambda_{n+1})$ satisfies the properties desired for the induction.

Now, observe that $t\leq \underline{d}(X)$ if and only if $\underline{X}_t\geq \underline{X}_1$, for any $t\geq 0$. Hence, for any $n\geq 0$ and $k\in\mathbb{N}^*$, we find $\underline{d}(X^n)=\underline{d}(X)$, and that $\tau_k(X)\leq \underline{d}(X)$ if and only if $\tau_{\psi_n(k)}(X^n)\leq \underline{d}(X)$. This yields that $X^n$ has the same number $N$ of jumps of height at least $\varepsilon$ in $[0,\underline{d}(X)]$ as $X$, which is finite because $X$ is c\`adl\`ag. Thus, $\tau_k(X)\leq \underline{d}(X)$ if and only if $\psi_n(k)\leq N$. Hence, $Y=X^N$, $\psi=\psi_N$, and $\lambda=\lambda_N$ verify all the desired properties. In particular, $X^N$ is an $\alpha$-stable Lévy process because $N$ and $\underline{d}$ are the same for all the $X^n$, so we can write for any nonnegative measurable function $G$:
\[\E\left[G\left(X^N\right)\right]=\sum_{n\geq 0}\E\left[\un_{\{\tau_n(X^n)\leq \underline{d}(X^n)<\tau_{n+1}(X^n)\}}G(X^n)\right]=\sum_{n\geq 0}\E\left[\un_{\{N=n\}}G(X)\right]=G(X).\]
\end{proof}

\begin{remark}
Even if it was needed to replace the looptrees with vernation trees, we believe Theorem~\ref{stable_tilte_thm} should stay true for a larger class of random processes. The main argument was that the point measure $\Upsilon_*\mathcal{N}_{\mathbf{X}}$ is distributed as $\mathcal{N}_{\mathbf{X}}$ conditionally given $X_{\min(\cdot,\ftau)}^n$, which should be obtained with some kind of exchangeability. We think in particular of the first passage bridges of processes with exchangeable increments occurring in the work \cite{marzouk_similaire} of Marzouk.\cq
\end{remark}

\subsection{Invariance principle for random discrete looptrees}
\label{subsection_discrete_loop}

Recall the formalism of words presented at the start of Section~\ref{application_proba}. Let $\tau$ be a plane tree. The discrete looptree $\mathsf{Loop}(\tau)$ associated with $\tau$ is defined as the graph on the set of vertices of $\tau$ where two vertices $u\leq v$ are joined by as many edges as verified conditions among the following:
\begin{itemize}
\item $u$ and $v$ are consecutive siblings of the same parent, which means $u=\overleftarrow{u}\! *\! (j)$ and $v=\overleftarrow{u}\! *\! (j\! +\! 1)$ with some $j\! \in\! \mathbb{N}^*$.
\item $v$ is the first child of $u$ in $\tau$, which means $v=u*(1)$,
\item $v$ is the last child of $u$ in $\tau$, which means $v=u*(k_u(\tau))$,
\item $u=v$ and $u$ does not have any children, which means $k_u(\tau)=0$.
\end{itemize}
In particular, if $v$ is the unique child of $u$ in $\tau$, then they are joined by exactly two edges. We endow this graph with the graph distance (every edge has unit length) and with the uniform probability measure on its vertices, and we distinguish $\varnothing$ as its root so that $\mathsf{Loop}(\tau)$ can be seen as a pointed weighted compact metric space. We can also inductively construct $\mathsf{Loop}(\tau)$ by first arranging $\varnothing,(1),(2),\ldots,(k_\varnothing(\tau))$ into a cycle of length $k_\varnothing(\tau)+1$, then by gluing the root of $\mathsf{Loop}\left(\theta_{(j)}\tau\right)$ respectively on $(j)$ for each $1\leq j\leq k_\varnothing(\tau)$. See Figure~\ref{discrete_looptree_associated} for example. There are other ways to associate a discrete looptree with a tree. For example, Curien \& Kortchemski~\cite{curien2014} make $\varnothing$ correspond to a cycle of length $k_\varnothing(\tau)$. For another version of the looptree, they erase the edges linking a leaf to itself. Marzouk~\cite{marzouk_similaire} contracts the last edge of each cycle, so that more than two cycles can share the same point.

Let $\left(u(i)\right)_{0\leq i\leq \#\tau-1}$ be the depth-first exploration of $\tau$ and let $\left(c(i)\right)_{0\leq i\leq 2\#\tau-2}$ be its exploration by contour. It is classic that the plane tree $\tau$ is coded by its Lukasiewicz walk $L(\tau)=\left(L_i(\tau)\right)_{0\leq i\leq \#\tau}$, which is defined by $L_0(\tau)=0$ and $L_{i+1}(\tau)=L_i(\tau)+k_{u(i)}(\tau)-1$ for all $0\leq i\leq \#\tau-1$. We introduce a variant of this process that is adapted with the exploration by contour instead of the depth-first exploration. Let $W(\tau)=\left(W_t(\tau)\right)_{t\in[0,2\#\tau-1]}$ be the process which is affine with slope $-1$ on each of the intervals $[i,i+1)$ for $0\leq i\leq 2\#\tau-2$, and such that $W_0(\tau)=k_\varnothing(\tau)+1$, $W_{2\#\tau-1}(\tau)=0$, and
\[W_{i}(\tau)=\begin{cases}
				W_{i-1}(\tau)+k_{c(i)}(\tau)&\text{ if }|c(i)|=|c(i-1)|+1,\\
				W_{i-1}(\tau)-1&\text{ if }|c(i)|=|c(i-1)|-1,
			  \end{cases}\]
for all $0\leq i\leq 2\#\tau-2$. Observe $W(\tau)$ can be expressed in terms of the Lukasiewicz walk and of the contour process as
\begin{equation}
\label{modif_luka}
W_i(\tau)=L_{\xi(i)+1}(\tau)+C_i(\tau)+2
\end{equation}
for all $0\leq i\leq 2\#\tau-2$, where we remind that $\xi(i)$ is the largest integer $l$ such that $u(l)$ has been visited by $\left(c(j)\right)_{0\leq j\leq i}$. The Lukasiewicz walk is nonnegative before its last value which is $-1$ (see e.g.~Le Gall~\cite[Proposition 1.1]{legall_trees}), so the process $W(\tau)$ is positive on $[0,2\#\tau-1)$ and is continuous at $2\#\tau-1$. Hence, the process \[w(\tau):t\in[-1,1]\longmapsto w_t(\tau)=\un_{[0,1]}(t)W_{(2\#\tau-1)t}(\tau)\] is an excursion whose jumps are at the times $i/(2\#\tau-1)$ with $i=0$ or $|c(i)|=|c(i-1)|+1$. Plus, $w(\tau)$ is PJG by Lemma~\ref{discrete_PJG}. Let $1\leq i\leq 2\#\tau-2$ such that $|c(i)|=|c(i-1)|+1$. We remark that $W_t\left(\theta_{c(i)}\tau\right)=W_{i+t}(\tau)-W_{i-}(\tau)$ for any $t\in[0,2\#(\theta_{c(i)}\tau)-1]$. This kind of branching property can be used with the help of Lemma~\ref{tri_utile} to compute that
\begin{equation}
\label{x_v}
x_{i_1/(2\#\tau-1)}^{i_2/(2\#\tau-1)}\left(w(\tau)\right)=\begin{cases}
	k_{c(i_1)}(\tau)+1&\text{ if }c(i_1)=c(i_2),\\
	k_{c(i_1)}(\tau)+1-j&\text{ if }c(i_1)*(j)\preceq c(i_2),\\
	0&\text{ otherwise},
	\end{cases}
\end{equation}
when $0\leq i_1,i_2\leq 2\#\tau-2$ are respectively either equal to $0$ or such that $|c(i)|=|c(i-1)|+1$. Furthermore, applying Proposition~\ref{branching_property} while recalling that $W(\tau)$ is affine of slope $-1$ by parts allows us to prove that $\lop[w(\tau)]$ can be constructed by gluing the pointed weighted metric spaces $\lop[w(\theta_{(j)}\tau)]$ for $1\leq j\leq k_\varnothing(\tau)$ respectively at the position $j$ on a metric circle of length $k_\varnothing(\tau)+1$ which is rooted at position $0$ and endowed with its Lebesgue measure, and then by normalizing the sum of the measures of the components. Hence, an easy induction on $\#\tau$ yields that $\lop[w(\tau)]$ is the pointed weighted metric space spanned with the graph $\mathsf{Loop}(\tau)$. Namely, it is obtained by replacing each edge with a line segment of length $1$ and by endowing this space with the normalized sum of the Lebesgue measures of those segments. With a slight abuse of notations, we can also see $\mathsf{Loop}(\tau)$ as a finite subset of $\lop[w(\tau)]$, so that they share the same root and that the graph distance of $\mathsf{Loop}(\tau)$ is inherited from the metric on $\lop[w(\tau)]$. See Figure~\ref{discrete_looptree_associated} for example. It is now clear that for any $a>0$, it holds
\begin{equation}
\label{GHloop_tau_v}
\mathtt{d}_{\mathrm{GHP}}^\bullet\left(\lop[\frac{1}{a}w(\tau)],\frac{1}{a}\cdot\mathsf{Loop}(\tau)\right)\leq \frac{1}{a}+\frac{1}{\#\tau},
\end{equation}
where recall from Section~\ref{def_GH} that $\mathtt{d}_{\mathrm{GHP}}^\bullet$ is the pointed GHP distance. Thus, the excursion $w(\tau)$ is crucial for understanding the scaling limits of large discrete looptrees. The following theorem confirms this intuition in the PJG case.

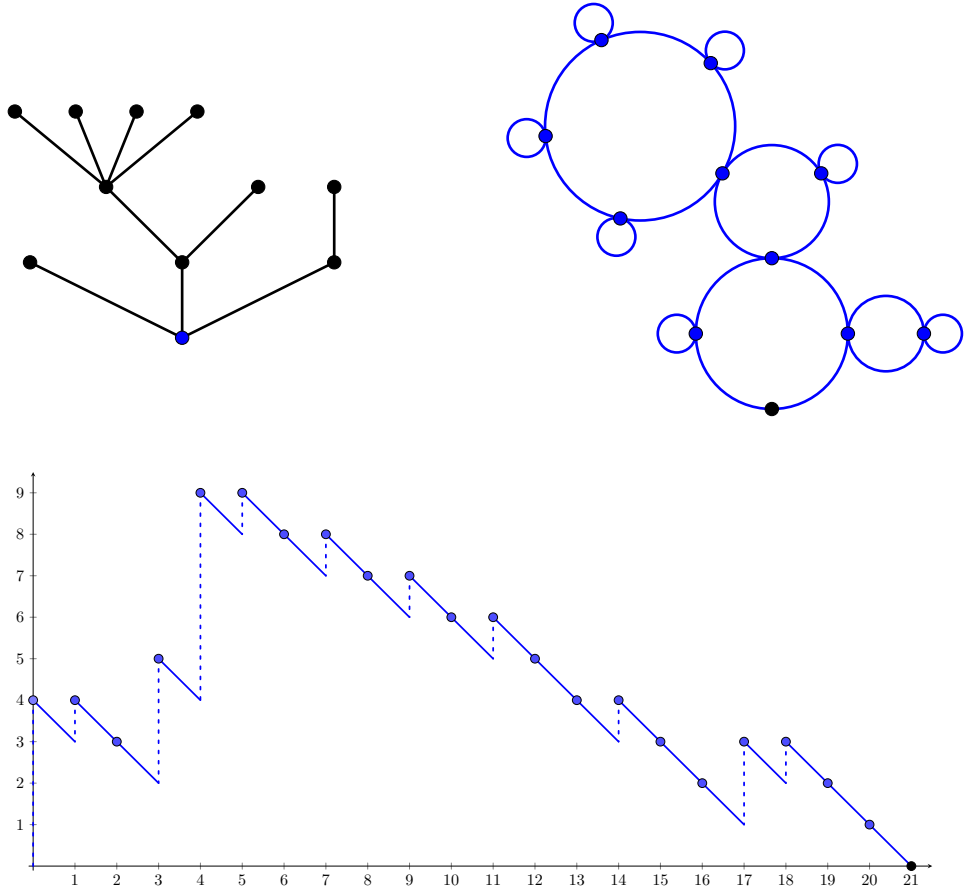
\begin{figure}
\begin{center}
\begin{subfigure}{0.4\textwidth}
\begin{tikzpicture}[line cap=round,line join=round,>=triangle 45,x=1cm,y=1cm,scale=1]
\clip(-2.3,-0.3253850531052378) rectangle (2.3,3.2);
\draw [line width=1pt] (0,0)-- (0,1);
\draw [line width=1pt] (0,0)-- (-2,1);
\draw [line width=1pt] (0,0)-- (2,1);
\draw [line width=1pt] (2,1)-- (2,2);
\draw [line width=1pt] (1,2)-- (0,1);
\draw [line width=1pt] (0,1)-- (-1,2);
\draw [line width=1pt] (-1,2)-- (0.2,3);
\draw [line width=1pt] (-1,2)-- (-0.6,3);
\draw [line width=1pt] (-1,2)-- (-1.4,3);
\draw [line width=1pt] (-1,2)-- (-2.2,3);
\begin{scriptsize}
\draw [fill=blue] (0,0) circle (2.5pt);
\draw [fill=black] (0,1) circle (2.5pt);
\draw [fill=black] (-2,1) circle (2.5pt);
\draw [fill=black] (2,1) circle (2.5pt);
\draw [fill=black] (2,2) circle (2.5pt);
\draw [fill=black] (1,2) circle (2.5pt);
\draw [fill=black] (-1,2) circle (2.5pt);
\draw [fill=black] (-0.6,3) circle (2.5pt);
\draw [fill=black] (0.2,3) circle (2.5pt);
\draw [fill=black] (-1.4,3) circle (2.5pt);
\draw [fill=black] (-2.2,3) circle (2.5pt);
\end{scriptsize}
\end{tikzpicture}
\end{subfigure}
\begin{subfigure}{0.4\textwidth}
\begin{tikzpicture}[line cap=round,line join=round,>=triangle 45,x=1cm,y=1cm,scale=0.5]
\clip(-7.1,-3.59719470642) rectangle (5.2,9.13408476118452);
\draw [line width=1pt,color=qqqqff] (0,0) circle (2cm);
\draw [line width=1pt,color=qqqqff] (-2.5,0) circle (0.5cm);
\draw [line width=1pt,color=qqqqff] (3,0) circle (1cm);
\draw [line width=1pt,color=qqqqff] (0,3.5) circle (1.5cm);
\draw [line width=1pt,color=qqqqff] (4.5,0) circle (0.5cm);
\draw [line width=1pt,color=qqqqff] (1.735047998310382,4.50173042888142) circle (0.5034608577628408cm);
\draw [line width=1pt,color=qqqqff] (-3.4644997335770853,5.500229853788119) circle (2.5004597075762347cm);
\draw [line width=1pt,color=qqqqff] (-1.2347236278383877,7.50792927727393) circle (0.5cm);
\draw [line width=1pt,color=qqqqff] (-4.684896642720836,8.241286190484029) circle (0.5cm);
\draw [line width=1pt,color=qqqqff] (-6.448522608931939,5.186596411458663) circle (0.5cm);
\draw [line width=1pt,color=qqqqff] (-4.088330384609823,2.5653373897239655) circle (0.5cm);
\begin{scriptsize}
\draw [fill=qqqqff] (0,2) circle (5pt);
\draw [fill=black] (0,-2) circle (5pt);
\draw [fill=qqqqff] (2,0) circle (5pt);
\draw [fill=qqqqff] (-2,0) circle (5pt);
\draw [fill=qqqqff] (4,0) circle (5pt);
\draw [fill=qqqqff] (1.299038105676658,4.25) circle (5pt);
\draw [fill=qqqqff] (-1.2990381056766578,4.25) circle (5pt);
\draw [fill=qqqqff] (-1.6062960405770874,7.173363974094498) circle (5pt);
\draw [fill=qqqqff] (-4.481528321182939,7.784513461662735) circle (5pt);
\draw [fill=qqqqff] (-5.9512616612478,5.238860643092491) circle (5pt);
\draw [fill=qqqqff] (-3.9843745392009424,3.0544111900908724) circle (5pt);
\end{scriptsize}
\end{tikzpicture}
\end{subfigure}
\hfill
\begin{subfigure}{0.8\textwidth}
\begin{tikzpicture}[line cap=round,line join=round,>=triangle 45,x=1cm,y=1cm,scale=0.55]
\begin{axis}[x=1cm,y=1cm,
axis lines=middle,
xmin=-0.1,
xmax=21.5,
ymin=-0.1,
ymax=9.5,
xtick={0,1,...,21},
ytick={0,1,...,12},]
\clip(-0.4924160820364542,-1.3673533859272964) rectangle (21.84483488601739,10);
\draw [line width=1.2pt,color=qqqqff] (0,4)-- (1,3);
\draw [line width=1.2pt,color=qqqqff] (1,4)-- (3,2);
\draw [line width=1.2pt,color=qqqqff] (3,5)-- (4,4);
\draw [line width=1.2pt,color=qqqqff] (4,9)-- (5,8);
\draw [line width=1.2pt,color=qqqqff] (5,9)-- (7,7);
\draw [line width=1.2pt,color=qqqqff] (7,8)-- (9,6);
\draw [line width=1.2pt,color=qqqqff] (9,7)-- (11,5);
\draw [line width=1.2pt,color=qqqqff] (11,6)-- (14,3);
\draw [line width=1.2pt,color=qqqqff] (14,4)-- (17,1);
\draw [line width=1.2pt,color=qqqqff] (17,3)-- (18,2);
\draw [line width=1.2pt,color=qqqqff] (18,3)-- (21,0);
\draw [line width=1.2pt,dash pattern=on 2pt off 5pt,color=qqqqff] (4,9)-- (4,4);
\draw [line width=1.2pt,dash pattern=on 2pt off 5pt,color=qqqqff] (3,5)-- (3,2);
\draw [line width=1.2pt,dash pattern=on 2pt off 5pt,color=qqqqff] (1,4)-- (1,3);
\draw [line width=1.2pt,dash pattern=on 2pt off 5pt,color=qqqqff] (0,4)-- (0,0);
\draw [line width=1.2pt,dash pattern=on 2pt off 5pt,color=qqqqff] (5,9)-- (5,8);
\draw [line width=1.2pt,dash pattern=on 2pt off 5pt,color=qqqqff] (7,8)-- (7,7);
\draw [line width=1.2pt,dash pattern=on 2pt off 5pt,color=qqqqff] (9,7)-- (9,6);
\draw [line width=1.2pt,dash pattern=on 2pt off 5pt,color=qqqqff] (11,6)-- (11,5);
\draw [line width=1.2pt,dash pattern=on 2pt off 5pt,color=qqqqff] (14,4)-- (14,3);
\draw [line width=1.2pt,dash pattern=on 2pt off 5pt,color=qqqqff] (17,3)-- (17,1);
\draw [line width=1.2pt,dash pattern=on 2pt off 5pt,color=qqqqff] (18,3)-- (18,2);
\begin{scriptsize}
\draw [fill=xdxdff] (0,4) circle [radius=3pt];
\draw [fill=ududff] (1,4) circle [radius=3pt];
\draw [fill=ududff] (2,3) circle [radius=3pt];
\draw [fill=ududff] (3,5) circle [radius=3pt];
\draw [fill=ududff] (4,9) circle [radius=3pt];
\draw [fill=ududff] (5,9) circle [radius=3pt];
\draw [fill=ududff] (6,8) circle [radius=3pt];
\draw [fill=ududff] (7,8) circle [radius=3pt];
\draw [fill=ududff] (8,7) circle [radius=3pt];
\draw [fill=ududff] (9,7) circle [radius=3pt];
\draw [fill=ududff] (10,6) circle [radius=3pt];
\draw [fill=ududff] (11,6) circle [radius=3pt];
\draw [fill=ududff] (12,5) circle [radius=3pt];
\draw [fill=ududff] (13,4) circle [radius=3pt];
\draw [fill=ududff] (14,4) circle [radius=3pt];
\draw [fill=ududff] (15,3) circle [radius=3pt];
\draw [fill=ududff] (16,2) circle [radius=3pt];
\draw [fill=ududff] (17,3) circle [radius=3pt];
\draw [fill=ududff] (18,3) circle [radius=3pt];
\draw [fill=ududff] (19,2) circle [radius=3pt];
\draw [fill=ududff] (20,1) circle [radius=3pt];
\draw [fill=black] (21,0) circle [radius=3pt];
\end{scriptsize}
\end{axis}
\end{tikzpicture}
\end{subfigure}
\end{center}
\caption{\textit{Top Left:} A plane tree $\tau$. \textit{Top right:} Its looptree $\mathsf{Loop}(\tau)$. \textit{Bottom:} Its process $W(\tau)$.}
\label{discrete_looptree_associated}
\end{figure}

\begin{theorem}
\label{inv_principle_PJG}
Let $(\tau_n)$ be a sequence of plane trees, let $f$ be an excursion, and let $(a_n)$ be a sequence of positive real numbers which tends to $\infty$ such that $|\tau_n|/a_n\longrightarrow0$ and $\left(\un_{[0,1]}(t)a_n^{-1}L_{\lfloor t\#\tau_n\rfloor}(\tau_n)\right)_{t\in[-1,1]}\longrightarrow f$ for the Skorokhod topology on $[-1,1]$. If $f$ is a PJG excursion, then $a_n^{-1}\cdot\mathsf{Loop}(\tau_n)\longrightarrow\lop[f]$ for the pointed GHP topology.
\end{theorem}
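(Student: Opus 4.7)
The strategy is to apply Theorem~\ref{gh-continu}$(iii)$ to the continuum looptree coded by the rescaled contour-exploration excursion $w(\tau_n)$, then to replace it with the discrete looptree $\mathsf{Loop}(\tau_n)$ via the estimate~(\ref{GHloop_tau_v}). By homogeneity of $\dl$, the spaces $\lop[a_n^{-1}w(\tau_n)]$ and $a_n^{-1}\cdot\lop[w(\tau_n)]$ are GHP-isometric, and (\ref{GHloop_tau_v}) gives
\[\mathtt{d}_{\mathrm{GHP}}^\bullet\!\left(\frac{1}{a_n}\cdot\lop[w(\tau_n)],\frac{1}{a_n}\cdot\mathsf{Loop}(\tau_n)\right)\leq\frac{1}{a_n}+\frac{1}{\#\tau_n}.\]
Since $f$ is PJG by hypothesis, Theorem~\ref{gh-continu}$(iii)$ reduces the result to the functional convergence $a_n^{-1}w(\tau_n)\longrightarrow f$ for the relaxed Skorokhod topology.

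To prove this, I would rely on the identity (\ref{modif_luka}): at each integer $i\in\{0,\ldots,2\#\tau_n-2\}$,
\[W_i(\tau_n)=L_{\xi(i)+1}(\tau_n)+C_i(\tau_n)+2.\]
Setting $\mu_n(t)=(\xi(\lfloor(2\#\tau_n-1)t\rfloor)+1)/\#\tau_n$ for $t\in[0,1]$, the bound $|C_i(\tau_n)|\leq|\tau_n|$ combined with $|\tau_n|/a_n\to 0$ yields
\[\sup_{t\in[0,1]}\left|\frac{w_t(\tau_n)}{a_n}-\frac{L_{\lfloor\mu_n(t)\#\tau_n\rfloor}(\tau_n)}{a_n}\right|\leq\frac{|\tau_n|+3}{a_n}\longrightarrow 0,\]
while (\ref{depth-first_to_contour}) provides $|\mu_n(t)-t|\leq(|\tau_n|+4)/(2\#\tau_n)$. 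When $|\tau_n|/\#\tau_n\to 0$, the time change $\mu_n$ tends uniformly to the identity, and I would explicitly build an increasing bijection $\widetilde\mu_n$ of $[0,1]$ (for instance, affine on $[0,1/2]$ with value $(\mu_n(0)+1)/2$ at $1/2$, and $t\mapsto(\mu_n(2t-1)+1)/2$ on $[1/2,1]$) so that $\Theta(a_n^{-1}w(\tau_n))$ and $\Theta(a_n^{-1}L_{\lfloor\cdot\#\tau_n\rfloor}(\tau_n))\circ\widetilde\mu_n$ are uniformly close. The crucial point is that $a_n^{-1}L_{\lfloor s\#\tau_n\rfloor}(\tau_n)=0$ for $s<1/\#\tau_n$, which absorbs the small discrepancy on $[0,1/2]$ produced by $\mu_n(0)=1/\#\tau_n\neq 0$. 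Composing with the Skorokhod time change furnished by the hypothesis transfers its convergence into $a_n^{-1}w(\tau_n)\to f$ for the relaxed Skorokhod topology.

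The condition $|\tau_n|/\#\tau_n\to 0$ is enforced by a subsequence argument on $\#\tau_n/a_n$: when bounded, $|\tau_n|/\#\tau_n\leq C|\tau_n|/a_n\to 0$ and the previous argument applies. Along any subsequence where $\#\tau_n/a_n\to\infty$, the crude bound $|L_i(\tau_n)|\leq\#\tau_n$ forces the rescaled Lukasiewicz walk to vanish uniformly, hence $f\equiv 0$ and $\lop[f]=\partial$, while combining (\ref{maj_dl}), (\ref{modif_luka}) and (\ref{GHloop_tau_v}) gives
\[\mathrm{diam}\!\left(\frac{1}{a_n}\cdot\mathsf{Loop}(\tau_n)\right)\leq\frac{2(\|L(\tau_n)\|_\infty+|\tau_n|+2)}{a_n}+\frac{2}{a_n}+\frac{2}{\#\tau_n}\longrightarrow 0,\]
so the conclusion holds trivially along that subsequence too. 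The main obstacle is the functional step above: one has to reconcile two different time scales ($\#\tau_n$ for the depth-first walk and $2\#\tau_n-1$ for the contour walk) within a topology whose Skorokhod time changes do not straightforwardly commute with the shift operator $\Theta$ used to define the relaxed Skorokhod distance.
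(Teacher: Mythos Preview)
Your overall strategy is exactly the paper's: reduce to $a_n^{-1}w(\tau_n)\to f$ in the relaxed Skorokhod topology via the identity~(\ref{modif_luka}) and the bound~(\ref{depth-first_to_contour}), then invoke Theorem~\ref{gh-continu}$(iii)$ and the estimate~(\ref{GHloop_tau_v}). The functional step you spell out is more detailed than the paper's one-line mention but follows the same mechanism.

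There is, however, a genuine error in your subsequence dichotomy: you have the two cases reversed. If $\#\tau_n/a_n$ is bounded \emph{from above} by $C$, then $1/\#\tau_n\geq 1/(Ca_n)$, so your displayed inequality $|\tau_n|/\#\tau_n\leq C|\tau_n|/a_n$ is false (it points the wrong way). Likewise, along a subsequence where $\#\tau_n/a_n\to\infty$, the bound $|L_i(\tau_n)|\leq\#\tau_n$ only gives $a_n^{-1}|L_i(\tau_n)|\leq\#\tau_n/a_n\to\infty$, which does \emph{not} force the rescaled Lukasiewicz walk to vanish.

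The correct split is the opposite one, as in the paper. Along any subsequence where $\#\tau_n/a_n\to 0$, the bound $\sup L(\tau_n)\leq\#\tau_n$ forces $a_n^{-1}\|L(\tau_n)\|_\infty\to 0$, hence $f\equiv 0$ and $\lop[f]=\partial$; since $\mathrm{diam}(\mathsf{Loop}(\tau_n))\leq\#\tau_n$, the conclusion is immediate on that subsequence. On the complementary event one may assume $\#\tau_n/a_n\geq\varepsilon>0$ for some $\varepsilon$, whence $\#\tau_n\to\infty$ and $|\tau_n|/\#\tau_n\leq\varepsilon^{-1}|\tau_n|/a_n\to 0$, and now your main argument (the time change $\mu_n$ converging to the identity) goes through. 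With this correction your proof is complete and coincides with the paper's.
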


\begin{proof}
If $\#\tau_n/a_n\longrightarrow 0$ then $f=0$ because $\sup L(\tau_n)\leq \#\tau_n$, and the desired result holds because the diameter of $\mathsf{Loop}(\tau_n)$ is bounded by $\#\tau_n$. We can now assume $\#\tau_n/a_n\geq \varepsilon$ for some constant $\varepsilon>0$, so that $\#\tau_n\longrightarrow \infty$. Plus, $|\tau_n|/\#\tau_n\longrightarrow 0$ since $|\tau_n|/a_n\longrightarrow 0$. The identity (\ref{modif_luka}) and the inequality (\ref{depth-first_to_contour}) ensure that $a_n^{-1}w(\tau_n)\longrightarrow f$ for the Skorokhod topology on $[-1,1]$. Finally, we apply Theorem~\ref{gh-continu_particular} and the inequality (\ref{GHloop_tau_v}).
\end{proof}

Recall the definitions of $X^{\mathsf{exc},(\alpha)}$ and $\lop[\alpha]$ presented in Section~\ref{stable}, as well as that $X^{\mathsf{exc},(\alpha)}$ is a PJG excursion. We thus retrieve the invariance principle of Curien \& Kortchemski~\cite[Theorem 4.1]{curien2014} stating that if $|\tau_n|/a_n\convd 0$ and $\displaystyle{\left(a_n^{-1}L_{\lfloor t\#\tau_n\rfloor}(\tau_n)\right)_{t\in[0,1]}\convd X^{\mathsf{exc},(\alpha)}}$ for the Skorokhod topology, with some $\alpha\in(1,2)$, then $a_n^{-1}\cdot\mathsf{Loop}(\tau_n)\convd\lop[\alpha]$ for the Gromov--Hausdorff topology. We stress again that our definition of $\mathsf{Loop}(\tau)$ slightly differs from Curien \& Kortchemski's, but the two objects are at Gromov--Hausdorff distance at most $10$, so the result stays true anyway. Moreover, this result stills holds with $\alpha=1$, when $X^{\mathsf{exc},(1)}=1-t$ and $\lop[1]=\mathcal{C}$. In the light of the asymptotic behavior when $\alpha\rightarrow 2$, previously studied in Section~\ref{stable}, it is tempting to guess that if 
$|\tau_n|/a_n\convd 0$ and $\displaystyle{\left(a_n^{-1}L_{\lfloor t\#\tau_n\rfloor}(\tau_n)\right)_{t\in[0,1]}\convd \mathbf{e}}$ for the Skorokhod topology then $2a_n^{-1}\cdot\text{Loop}(\tau_n)\convd\tree[\mathbf{e}]$ for the Gromov--Hausdorff topology, where $\mathbf{e}$ is the standard Brownian excursion. Sadly, the reader could convince oneself this is false by approximating $\mathbf{e}$ by PJG excursions, in a similar fashion as the counterexample given at the beginning of Section~\ref{intro_shuffling}. Here, the problem comes from a lack of exchangeability in the order of siblings inside the plane trees. Nevertheless, by adding a hypothesis that expresses such exchangeability, we will be able to state a general result for convergences of random discrete looptrees.

Let $\Psi=\left(\psi_k\right)_{k\geq 1}$ be a family of permutations $\psi_k$ respectively of $\{1,\ldots,k\}$. The family $\Psi$ induces a bijection, also denoted by $\Psi$ with a slight abuse of notations, from any plane tree $\tau$ to another plane tree $\Psi(\tau)=\left\{\Psi(u)\ :\ u\in\tau\right\}$ by inductively setting $\Psi(\varnothing)=\varnothing$ and $\Psi\left(u*(j)\right)=\Psi\left(u\right)*\left(\psi_{k_u(\tau)}(j)\right)$ for $u\in\tau$ and $1\leq j\leq k_u(\tau)$. Observe that if $u,v\in\tau$ then $|\Psi(u)|=|u|$, $k_{\Psi(u)}(\Psi(\tau))=k_u(\tau)$, and that $u\preceq v$ if and only if $\Psi(u)\preceq \Psi(v)$. Basically, $\Psi(\tau)$ is the plane tree recursively constructed by shuffling the order of siblings of the same parents, so $\mathsf{Loop}(\Psi(\tau))$ can be retrieved by shuffling the joint points between the loops of $\mathsf{Loop}(\tau)$. It is natural to try to compare the looptree shuffled by a family of permutations $\Psi$ with the shuffled looptree $\lopt[w(\tau)]$ induced by a shuffle $\Phi$, as defined via (\ref{dlt_easy}). Recall from (\ref{circle_dist_cano}) that $\delta$ is the pseudo-distance on $[0,1]$ inducing the circle with unit length $(\mathcal{C},\delta)$ by quotient.

\begin{lemma}
\label{lemma_phi_vs_psi}
We assume that the shuffle $\Phi$ is such that the Lebesgue measure on $[0,1]$ is invariant by $\phi_\Delta$, and that $\phi_\Delta(0)=0$ for all $\Delta>0$. Let $\tau$ be a plane tree, let $\Psi$ be a family of permutations, and let $a>0$. Then, it holds that
\[\mathtt{d}_{\mathrm{GHP}}^\bullet\left(\lopt[\frac{1}{a}w(\tau)],\lop[\frac{1}{a}w(\Psi(\tau))]\right)\leq \max_{v\in\tau}\sum_{\substack{u\in\tau,j\geq 1\\u*(j)\preceq v}}\frac{k_{u}(\tau)+1}{a}\delta\left(\phi_{\frac{k_{u}(\tau)+1}{a}}\left(1-\frac{j}{k_{u}(\tau)+1}\right),1-\frac{\psi_{k_{u}(\tau)}(j)}{k_{u}(\tau)+1}\right).\]
\end{lemma}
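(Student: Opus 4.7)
The plan is to construct a correspondence $\mathcal{R}$ and a coupling $\nu$ between the two pointed weighted spaces with $\nu(\mathcal{R})=1$ and $\mathrm{dis}(\mathcal{R})$ at most twice the claimed quantity; since $\mathtt{d}_{\mathrm{GHP}}^\bullet \leq \tfrac{1}{2}\mathrm{dis}(\mathcal{R})$ whenever $\nu(\mathcal{R})=1$, this yields the lemma.

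First, I would build a Lebesgue-measure-preserving map $\lambda:[0,1] \to [0,1]$ encoding the passage from $\tau$ to $\Psi(\tau)$. Recursively, on the contour-exploration interval of each $\theta_u\tau$, $\lambda$ permutes the sub-intervals of the child subtrees $\theta_{u*(1)}\tau, \ldots, \theta_{u*(k_u(\tau))}\tau$ according to $\psi_{k_u(\tau)}$, then applies the construction within each; the sub-interval lengths match because $\#\theta_{u*(j)}\tau = \#\theta_{\Psi(u*(j))}\Psi(\tau)$. Thus $\lambda$ is piecewise affine of slope $1$, a bijection up to countably many points, preserves Lebesgue, fixes $0$ and $1$, and sends the contour-visit time of each $v \in \tau$ to that of $\Psi(v)$. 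Setting $\mathcal{R}$ equal to the closure of $\{(\pi_1(t), \pi_2(\lambda(t))) : t \in [0,1]\}$ and $\nu$ equal to the pushforward of Lebesgue by $t \mapsto (\pi_1(t), \pi_2(\lambda(t)))$, where $\pi_1, \pi_2$ are the canonical projections, yields a compact correspondence containing the pair of roots, together with a coupling with the correct marginals and $\nu(\mathcal{R})=1$.

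Second, I would bound $\mathrm{dis}(\mathcal{R})$ via the identities (\ref{dlt_easy}) and (\ref{dl_easy}), which decompose both $\dlt[w(\tau)/a](s,t)$ and $\dl[w(\Psi(\tau))/a](\lambda(s),\lambda(t))$ as sums over $r \in [0,1]$ of $\delta$-terms on loops of common length $(k_u(\tau)+1)/a$ (for the corresponding $u \in \tau$). The hypothesis $\phi_\Delta(0)=0$ and the triangular inequality for $\delta$ bound each term-by-term difference by the sum of two displacements of the form $(k_u(\tau)+1)/a \cdot \delta(\phi_{(k_u(\tau)+1)/a}(x_r^s/\Delta_r), x_{\lambda(r)}^{\lambda(s)}/\Delta_r)$, one attached to $s$ and one to $t$. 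When $s$ is the contour-visit time of a vertex $v \in \tau$, formula (\ref{x_v}) identifies the fractions $x_r^s/\Delta_r = 1 - j/(k_u(\tau)+1)$ and $x_{\lambda(r)}^{\lambda(s)}/\Delta_r = 1 - \psi_{k_u(\tau)}(j)/(k_u(\tau)+1)$ for the unique $j$ with $u*(j) \preceq v$, so summing over $r$ yields exactly the sum appearing on the right-hand side of the lemma, call it $E_v$. For interior $s$ inside a loop, the same triangular-inequality argument dominates the per-loop contribution by the vertex-attachment displacement, using that the cyclic order of child attachments on each loop is a specific discrete datum unaffected by interior positions. Repeating the argument for $t$ gives $\mathrm{dis}(\mathcal{R}) \leq 2\max_{v \in \tau} E_v$, which is exactly twice the bound in the lemma.

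The main obstacle is this last point about interior-of-loop times: there the ratios $x_r^s/\Delta_r$ are not of the clean form $1 - j/(k_u(\tau)+1)$, and $\phi_\Delta$ may behave arbitrarily on the $1/(k_u(\tau)+1)$-intervals between consecutive attachment positions, so a naive arc-length identification of loops would not suffice. The bound must be obtained by a careful local analysis tracking how $\lambda$ transports such interior times and how $\phi_{\Delta_u}$ interacts with the piecewise-affine structure of $\lambda$ on each sibling sub-interval, so that the per-loop error is absorbed into the vertex-displacement terms that form $E_v$.
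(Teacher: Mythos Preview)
Your proposal has a genuine gap at precisely the point you flag as ``the main obstacle'', and the suggested fix does not work.

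Your map $\lambda$ encodes only the permutation $\Psi$; it never involves the shuffle $\Phi$. Consequently, for a point $s=t_1(u,y)$ lying in the interior of the loop of $u$ (i.e.\ $y$ is not one of the attachment values $1-j/(k_u(\tau)+1)$), the own--loop term in the distortion is
\[
\frac{k_u(\tau)+1}{a}\,\delta\!\Big(\phi_{\frac{k_u(\tau)+1}{a}}(y),\,y'\Big),
\]
where $y'=x_{\lambda(r)}^{\lambda(s)}/\Delta_r$ is determined by $\lambda$ and $\Psi$ alone. There is no reason for this to be bounded by any of the terms $\delta\big(\phi_{\Delta}(1-\tfrac{j}{k_u+1}),\,1-\tfrac{\psi_{k_u}(j)}{k_u+1}\big)$ appearing on the right--hand side of the lemma: one can have $\phi_\Delta(1-\tfrac{j}{k_u+1})=1-\tfrac{\psi_{k_u}(j)}{k_u+1}$ for every $j$ (so the lemma's bound vanishes) while $\phi_\Delta(y)$ is far from $y'$ at an interior $y$. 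Your ``careful local analysis'' cannot absorb this into the attachment displacements, because the latter carry no information about $\phi_\Delta$ between attachment points. A further symptom is that your argument never uses the hypothesis that Lebesgue measure is invariant under $\phi_\Delta$; that hypothesis is not decorative.

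The paper's proof closes this gap by building $\phi$ into the correspondence itself: it parametrises each loop via $t_1(u,y)$ and $t_2(u,y)$ and pairs $[t_1(u,y)]_1$ with $[t_2(u,\phi_{(k_u(\tau)+1)/a}(y))]_2$. With this choice, the contribution of the loop of $u$ to the distortion is identically zero (since $\tilde\delta_{r_1(u)}$ already has $\phi$ applied), and the only non--vanishing contributions come from strict ancestors $u'$, where $x_{r_1(u')}^{t_1(u,y)}/\Delta_{u'}=1-\tfrac{j}{k_{u'}+1}$ is independent of $y$. The coupling is then taken as the pushforward of $(u,y)\mapsto([t_1(u,y)]_1,[t_2(u,\phi(y))]_2)$ under the natural measure $\sum_u (k_u+1)^{-1}$--weighted Lebesgue in $y$, and it is exactly the $\phi_\Delta$--invariance of Lebesgue that makes the second marginal correct.
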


\begin{proof}
We denote by $c_1$ the exploration by contour of $\tau$ and by $c_2$ the exploration by contour of $\Psi(\tau)$. For all $u\in\tau\backslash\{\varnothing\}$, we define $(2\#\tau-1)r_1(u)$ as the unique integer $i$ such that $c_1(i)=u$ and $|c_1(i)|=|c_1(i-1)|+1$. Similarly, we define $(2\#\tau-1)r_2(u)$ as the unique integer $i$ such that $c_2(i)=\Psi(u)$ and $|c_2(i)|=|c_2(i-1)|+1$. We also set $r_1(\varnothing)=r_2(\varnothing)=0$. The $r_1(u)$ and the $r_2(u)$ for $u\in\tau$ are respectively all the jumps of $w(\tau)$ and $w(\Psi(\tau))$. For all $u\in\tau$ and $y\in[0,1]$,
\begin{align*}
t_1(u,y)&=\inf\left\{t\geq r_1(u)\ :\ w_t(\tau)=w_{r_1(u)-}(\tau)+y\Delta_{r_1(u)}(w(\tau))\right\}\\
t_2(u,y)&=\inf\left\{t\geq r_2(u)\ :\ w_t(\Psi(\tau))=w_{r_2(u)-}(\Psi(\tau))+y\Delta_{r_2(u)}\left(w(\Psi(\tau))\right)\right\}
\end{align*}
are well-defined and the functions $t_1,t_2:\tau\times(0,1]\longrightarrow [0,1)$ are bijective. It is a simple consequence of the fact that $W(\tau)$ is strictly decreasing on the intervals $[i,i+1)$. We use the conventions $t_2(u,\phi_\Delta(x))=t_2(u,1)$ when $\phi_\Delta(x)=0=1\in\mathcal{C}$ and $t_2(u,\phi_\Delta(x))=t_2(u,y)$ when $\phi_\Delta(x)=y\in(0,1)\subset\mathcal{C}$. Let us denote by $[s]_1$ and $[s]_2$ the canonical projections of $s\in[0,1]$ respectively on $\lopt[\frac{1}{a}w(\tau)]$ and $\lop[\frac{1}{a}w(\Psi(\tau))]$. Since $\phi_\Delta:[0,1]\longrightarrow\mathcal{C}$ is always surjective, the set
\[\mathcal{R}=\left\{\left([1]_1,[1]_2\right)\right\}\cup\left\{\left(\left[t_1(u,y)\right]_1,\left[t_2(u,\phi_{\frac{k_u(\tau)+1}{a}}(y))\right]_2\right)\ :\ u\in\tau,y\in[0,1]\right\}\]
is a correspondence between $\lopt[\frac{1}{a}w(\tau)]$ and $\lop[\frac{1}{a}w(\Psi(\tau))]$. We check that its distortion is bounded by $2$ times the right-hand side of the desired inequality thanks to (\ref{x_v}), Lemma~\ref{tri_utile}, basic observations about $\Psi$, and the identities $\phi_\Delta(0)=0$. We define a probability measure $\nu$ on $\lopt[\frac{1}{a}w(\tau)]\times\lop[\frac{1}{a}w(\Psi(\tau))]$ by setting for any bounded measurable function $g$,
\[\int_{\lopt[\frac{1}{a}w(\tau)]\times\lop[\frac{1}{a}w(\Psi(\tau))]}g\dd\nu=\frac{1}{2\#\tau-1}\sum_{u\in\tau}(k_u(\tau)+1)\int_0^1 g\left(\left[t_1(u,y)\right]_1,\left[t_2(u,\phi_{\frac{k_u(\tau)+1}{a}}(y))\right]_2\right)\dd y.\]
Obviously, $\nu(\mathcal{R})=1$. The process $W(\tau)$ takes integer values at integer times and is affine of slope $-1$ on the intervals $[i,i+1)$, so it follows that for all $u\in\tau$, $y\in[0,1)$, and $1\leq j\leq k_u(\tau)+1$, it holds that\[t_1\left(u,\frac{j-y}{k_u(\tau)+1}\right)=t_1\left(u,\frac{j}{k_u(\tau)+1}\right)+\frac{y}{2\#\tau-1}.\]
The function $t_2$ has an analogous behavior. Together with the invariance of the Lebesgue measure on $[0,1]$ by $\phi_\Delta$, this allows us to verify that $\nu$ is a coupling between the probability measures of $\lopt[\frac{1}{a}w(\tau)]$ and $\lop[\frac{1}{a}w(\Psi(\tau))]$.
\end{proof}

We say a random plane tree $\tau$ is called \emph{exchangeable} if for any deterministic family $\Psi$ of permutations, $\Psi(\tau)$ has the same distribution as $\tau$. This will be the main assumption of our invariance principle for random discrete looptrees. For example, it is satisfied by uniform random trees or Galton--Watson trees conditioned on an event or a quantity that does not depend on the plane order: the number of vertices, the number of leaves, the height, the degree sequence, the generation size sequence, the existence of a chain with prescribed degrees, etc...

\begin{theorem}
\label{inv_principle_exch}
Let $(\tau_n)$ be a sequence of random exchangeable plane trees, let $X$ be a random excursion, and let $(a_n)$ be a sequence of positive real numbers with $a_n\rightarrow\infty$. If the convergence $|\tau_n|/a_n\convd 0$ holds in distribution and if the convergence $\left(\un_{[0,1]}(t)a_n^{-1}L_{\lfloor t\#\tau_n\rfloor}(\tau_n)\right)_{t\in[-1,1]}\convd X$ holds in distribution for the Skorokhod topology on $[-1,1]$, then the convergence $a_n^{-1}\cdot\mathsf{Loop}(\tau_n)\convd\frac{1}{2}\cdot\vern[X]$ holds in distribution for the pointed GHP topology.
\end{theorem}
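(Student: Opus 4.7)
The plan is to reduce to studying $\vern[w(\tau_n)/a_n]$ via the comparison (\ref{GHloop_tau_v}), then to leverage exchangeability together with Lemma \ref{lemma_phi_vs_psi} to replace unshuffled looptrees with shuffled ones at no distributional cost, enabling an application of Theorem \ref{gh-continu}(i). First, (\ref{GHloop_tau_v}) yields $\mathtt{d}_{\mathrm{GHP}}^\bullet\!\left(\frac{1}{a_n}\cdot\mathsf{Loop}(\tau_n),\,\lop[w(\tau_n)/a_n]\right) \to 0$ in probability. Since each $w(\tau_n)$ is PJG by Lemma \ref{discrete_PJG}, one has $\lop[w(\tau_n)/a_n] = \frac{1}{2}\cdot\vern[w(\tau_n)/a_n]$. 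Combining the identity (\ref{modif_luka}) with the estimate (\ref{depth-first_to_contour}) and the hypothesis $|\tau_n|/a_n \convd 0$, the convergence $a_n^{-1}w(\tau_n) \convd X$ holds for the relaxed Skorokhod topology. If $X$ were PJG then Theorem \ref{gh-continu}(iii) would close the argument immediately; the substantive difficulty is that $X$ may carry a nontrivial continuous part.

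Fix a shuffle $\Phi$ satisfying $\phi_\Delta(0) = 0$ for every $\Delta > 0$, Lebesgue-invariance of every $\phi_\Delta$, and $B_X \cap \mathbb{B}(\Phi) = \emptyset$ almost surely (this last point is arrangeable because $\mathbb{B}(\Phi)$ is at most countable and one may perturb the construction of $\Phi$ to avoid the at most countably many random heights in $B_X$). For each $n$, build a deterministic family of permutations $\Psi^{(n)} = (\psi_k^{(n)})_{k \geq 1}$ matching the shuffle in the sense that
\[
\delta\!\left(\phi_{(k+1)/a_n}\!\left(1 - \frac{j}{k+1}\right),\; 1 - \frac{\psi_k^{(n)}(j)}{k+1}\right) \leq \frac{C}{k+1}
\]
for every $k \geq 1$, every $j \in \{1,\ldots,k\}$, and some absolute constant $C$; such a matching exists by a Hall-type bipartite argument exploiting the Lebesgue-invariance of $\phi_\Delta$ together with the uniform spacing of the lattice $\{i/(k+1) : 1 \leq i \leq k\}$ in $\mathcal{C}$. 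Lemma \ref{lemma_phi_vs_psi} then gives
\[
\mathtt{d}_{\mathrm{GHP}}^\bullet\!\left(\lopt[w(\tau_n)/a_n],\; \lop[w(\Psi^{(n)}(\tau_n))/a_n]\right) \leq \frac{C|\tau_n|}{a_n} \convd 0.
\]
As $\Psi^{(n)}$ is deterministic, exchangeability of $\tau_n$ gives $\Psi^{(n)}(\tau_n) \ed \tau_n$, hence $\lop[w(\Psi^{(n)}(\tau_n))/a_n] \ed \lop[w(\tau_n)/a_n]$. Since $w(\tau_n)$ is PJG, $\lopt[w(\tau_n)/a_n] = \frac{1}{2}\cdot\vernt[w(\tau_n)/a_n]$, and Theorem \ref{gh-continu}(i) applied with the shuffle $\Phi$ yields $\vernt[w(\tau_n)/a_n] \convd \vernt[X]$. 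Chaining these relations produces $\frac{1}{a_n}\cdot\mathsf{Loop}(\tau_n) \convd \frac{1}{2}\cdot\vernt[X]$.

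To identify this limit with $\frac{1}{2}\cdot\vern[X]$, note that the preceding argument works for every admissible shuffle $\Phi$, so the distribution of $\vernt[X]$ equals the limiting law of $\frac{1}{2a_n}\cdot\mathsf{Loop}(\tau_n)$ and is in particular independent of the choice of $\Phi$. Construct a sequence of admissible shuffles $\Phi^{(m)}$ such that $\phi^{(m)}_\Delta = \mathsf{id}$ for all $\Delta \geq 1/m$, with an oscillating behavior on $(0, 1/m)$ chosen to preserve property (\ref{11}) and to ensure $B_X \cap \mathbb{B}(\Phi^{(m)}) = \emptyset$ almost surely. When $\vernt[X]$ is computed with respect to $\Phi^{(m)}$, Corollary \ref{propo} and the cutoff $\phi^{(m)}_\Delta = \mathsf{id}$ on jumps of height at least $1/m$ together imply $\|\dvt[X] - \dv[X]\|_\infty \leq C'\|JX - J^{1/m}X\|_\infty$, which tends to zero almost surely by Proposition \ref{cv_J}(iv). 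Lemma \ref{GHP_lemma} then gives $\vernt[X] \to \vern[X]$ almost surely in the pointed Gromov-Hausdorff-Prokhorov topology. Since each $\vernt[X]$ has the common limit distribution, $\vern[X]$ shares it, establishing $\frac{1}{a_n}\cdot\mathsf{Loop}(\tau_n) \convd \frac{1}{2}\cdot\vern[X]$.

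The main obstacle is the construction of the permutation families $\Psi^{(n)}$: securing per-term bound $O(1/(k+1))$ uniformly in $k$ in Lemma \ref{lemma_phi_vs_psi} requires a careful bipartite matching argument exploiting the Lebesgue-invariance of $\phi_\Delta$, so that the total discrepancy collapses to $O(|\tau_n|/a_n) \to 0$. A secondary technical point is the joint realization of admissibility, measurability, and the avoidance condition $B_X \cap \mathbb{B}(\Phi^{(m)}) = \emptyset$ for the approximating shuffles $\Phi^{(m)}$ used in the final identification step.
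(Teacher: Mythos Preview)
Your overall architecture is right: reduce to $\lop[w(\tau_n)/a_n]$ via (\ref{GHloop_tau_v}) and (\ref{modif_luka}), use exchangeability together with Lemma \ref{lemma_phi_vs_psi} to swap unshuffled for shuffled looptrees, apply Theorem \ref{gh-continu}(i), and finally identify $\vernt[X]$ with $\vern[X]$ by letting the identity part of the shuffle swallow all jumps. This is also the paper's strategy. The gap is in the step you flag yourself as the ``main obstacle'': the claimed Hall-type matching with uniform constant $C/(k+1)$.

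Measure-preservation of $\phi_\Delta$ says nothing about where the \emph{finitely many specific points} $1-j/(k+1)$ land, so no Hall argument based on Lebesgue-invariance alone can force $|N(S)|\geq|S|$ for the bipartite graph at scale $C/(k+1)$. One can build a measure-preserving c\`agl\`ad $\phi_\Delta$ sending every $1-j/(k+1)$ into an interval of length $2/(k+1)$ (put a tiny left-neighbourhood of each lattice point into the preimage of that interval, and spread the complement bijectively over the rest), which kills any uniform matching. The only way to get $O(1/(k+1))$ is to \emph{tailor} $\phi_{(k+1)/a_n}$ to the $k$-lattice. But then you must define $\phi_\Delta$ at every value $\Delta=(k+1)/a_n$ with $k,n\geq 1$, and this set can be dense in $(0,\infty)$; a shuffle that is prescribed independently on a dense set cannot satisfy the continuity requirement $B_X\cap\mathbb{B}(\Phi)=\emptyset$ needed for Theorem \ref{gh-continu}(i). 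The same obstruction recurs in your identification step: for $\Phi^{(m)}$ with $\phi_\Delta=\mathsf{id}$ on $[1/m,\infty)$, the sum in Lemma \ref{lemma_phi_vs_psi} over small jumps is only bounded by $\|w(\tau_n)/a_n-J^{1/m}(w(\tau_n)/a_n)\|_\infty$, which converges to $\|X-J^{1/m}X\|_\infty\neq 0$ when $X$ is not PJG, so you do not get $\mathsf{Loop}(\tau_n)/a_n\convd\frac12\vernt[X]$ for fixed $m$.

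The paper resolves this with a three-regime split. First perturb $(a_n)$ so that $(k+1)/a_n=(l+1)/a_m$ forces $n=m$. Fix $\varepsilon>0$ outside a countable bad set, choose a diagonal sequence $\eta_n\downarrow 0$ so that $\|J^{\eta_n}(w(\tau_n)/a_n)-J^{\varepsilon}(w(\tau_n)/a_n)\|_\infty\to\|JX-J^{\varepsilon}X\|_\infty$ almost surely, and set $D=\{(k+1)/a_n: k+1<a_n\eta_n\}$, which now has $0$ as its only accumulation point. Build $\Phi^{\varepsilon}$ piecewise constant on intervals $[\beta_{p+1},\beta_p)$ each containing exactly one point of $D$, with $\phi_\Delta=\mathsf{id}$ for $\Delta\geq\varepsilon$; on $D$ one constructs $\phi_{(k+1)/a_n}$ and an explicit permutation $\psi_k$ \emph{together} to get the bound $2/(k+1)$ (this is (\ref{phi_vs_psi})). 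In Lemma \ref{lemma_phi_vs_psi} the large jumps contribute $0$, the small jumps contribute $2|\tau_n|/a_n$, and the medium jumps $[\eta_n,\varepsilon)$ (where neither matching works) contribute at most $2\|J^{\eta_n}-J^{\varepsilon}\|_\infty$ via the crude bound $\delta(0,\phi_\Delta(x))\leq x$. After exchangeability and Theorem \ref{gh-continu}(i) this yields
\[
\limsup_n \mathtt{d}_{\mathrm{GHP}}^\bullet\!\left(\lop[w(\tau_n)/a_n],\tfrac12\vernt[X]\right)\leq 2\|JX-J^{\varepsilon}X\|_\infty,
\]
and a separate comparison (via $Y=X-JX+J^\varepsilon X$ and Theorem \ref{d_J}) gives $\mathtt{d}_{\mathrm{GHP}}^\bullet(\vernt[X],\vern[X])\leq 4\|JX-J^\varepsilon X\|_\infty$. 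Sending $\varepsilon\to 0$ along the allowed values finishes the proof. The missing idea in your attempt is precisely this $\varepsilon$--$\eta_n$ decomposition that makes the tailored construction of $\Phi$ compatible with the shuffle axioms.
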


In particular, this theorem gives scaling limits for $\text{Loop}(\tau_n)$ when $\tau_n$ is a large critical Galton--Watson tree conditioned to have $n$ vertices with offspring distribution in the domain of attraction of the Gaussian law but with infinite variance, because the scaling limit of the Lukasiewicz walk and $|\tau_n|$ are known in this case: see Duquesne~\cite{duquesne_contour_stable}. This was conjectured by Curien \& Kortchemski~\cite{curien2014} and later proved by Kortchemski \& Richier~\cite{KR2020} with a spinal decomposition argument. As scaling limits of Lukasiewicz walks of uniform random trees with prescribed degree sequence are given by \cite[Theorem 16.23]{kallenberg2002foundations}, our theorem yields another proof for scaling limits of uniform random looptrees with prescribed cycle lengths obtained by Marzouk~\cite{marzouk_similaire}, but only when the height of the associated tree is negligible against $a_n$.

\begin{proof}
Thanks to Skorokhod's representation theorem, we can assume that all the convergences in the hypothesis happen almost surely. We can adapt the argument in the proof of Theorem~\ref{inv_principle_PJG} to prove that almost surely, $\mathtt{d}_{\mathrm{GHP}}^\bullet\left(a_n^{-1}\cdot\mathsf{Loop}(\tau_n),\lop[a_n^{-1}w(\tau_n)]\right)\longrightarrow 0$ and $a_n^{-1}w(\tau_n)\longrightarrow X$ for the Skorokhod topology on $[-1,1]$. Thus, we only need to find the limit in distribution of $\lop[a_n^{-1}w(\tau_n)]$. The strategy of the proof is to construct a family of permutations $\Psi$ and a shuffle $\Phi$ such that we can show with Lemma~\ref{lemma_phi_vs_psi} that $\lop[a_n^{-1}w(\Psi(\tau_n))]$ and $\lopt[a_n^{-1}w(\tau_n)]$ are close. Then, as Theorem~\ref{intro_uni} ensures that $\vernt[a_n^{-1}w(\tau_n)]=2\cdot\lopt[a_n^{-1}w(\tau_n)]$ because $w(\tau_n)$ is PJG, we would find the desired limit thanks to Theorem~\ref{gh-continu} and the exchangeability of $\tau_n$. However, we have two technical difficulties to overcome. On the one hand, if $(k+1)/a_n=(l+1)/a_m=\Delta$, then $\phi_\Delta$ would have to verify some relations with both $\psi_k$ and $\psi_l$, which could complexify its construction. On the other hand, if the set of the $(k+1)/a_n$ is dense, it may be impossible for $\Phi$ to be continuous at some points, and so to almost surely verify the condition $B_X\cap\mathrm{B}(\Phi)=\emptyset$ required by Theorem~\ref{gh-continu}. 

By induction, we can choose a sequence $(b_n)_{n\geq 1}$ such that $b_1=a_1$ and $|b_{n+1}-a_{n+1}|\leq 1$ but $b_{n+1}\notin\bigcup_{m=1}^n b_m\mathbb{Q}$ for all $n\geq 1$. Therefore, we can assume that $a_n/a_m\in\mathbb{Q}\Longrightarrow n=m$ without loss of generality, which resolves the first difficulty. It is not difficult to express $B_X$ as the set of values of a sequence of $\sigma(X)$-measurable random variables because $X$ is almost surely c\`adl\`ag, so that the set
\[E=\left\{x\in\mathbb{R}\ :\ \P(x\in B_X)+\P(\exists \Delta>0,\ (\Delta,x)\in B_X)>0\right\}\]
is countable. Let us fix $\varepsilon\in(0,\infty)\backslash E$ for the time being, and recall the operator $J^\varepsilon$ from (\ref{Jeps_def}). For all $\eta\in(0,\varepsilon)\backslash E$, we are able to prove the convergence $||J^\eta (a_n^{-1}w(\tau_n))-J^\varepsilon(a_n^{-1}w(\tau_n))||_\infty\longrightarrow ||J^\eta X-J^\varepsilon X||_\infty$ holds almost surely with Proposition~\ref{cv_J} $(i)$ and $(ii)$, since neither $\varepsilon$ nor $\eta$ are the height of some jump of $X$ by definition of $E$. The set $E$ is countable so we can give ourselves a deterministic strictly decreasing sequence $(\eta_p')_{p\geq 1}$ of points of $(0,\varepsilon)\backslash E$ that tends to $0$. Then, there exists a deterministic strictly increasing sequence $(m_p)_{p\geq 1}$ of integers such that \[\P\left(\exists n\geq m_p,\ \left|||J^{\eta_p'} (a_n^{-1}w(\tau_n))-J^\varepsilon(a_n^{-1}w(\tau_n))||_\infty - ||J^{\eta_p'} X-J^\varepsilon X||_\infty\right|\geq\frac{1}{2^p}\right)\leq\frac{1}{2^p}\]for all $p\geq 1$. We construct another deterministic sequence $(\eta_n)$ by setting $\eta_n=\varepsilon$ if $n<m_1$ and $\eta_n=\eta_p'$ if $m_p\leq n< m_{p+1}$ with $p\geq 1$. This sequence tends to $0$ so $||JX-J^{\eta_n}X||_\infty\longrightarrow 0$ almost surely according to Proposition~\ref{cv_J} $(iii)$, and the Borel-Cantelli lemma ensures that almost surely,
\begin{equation}
\label{Jnu_Jeta}
||J^{\eta_n}(a_n^{-1}w(\tau_n))-J^{\varepsilon}(a_n^{-1}w(\tau_n))||_\infty\longrightarrow ||JX-J^{\varepsilon}X||_\infty.
\end{equation}
Plus, the only limit point of the set $D=\{(k+1)/a_n\ :\ k,n\geq 1\text{ such that }k+1<a_n\eta_n\}$ is $0$ because $\eta_n\longrightarrow 0$, and that will resolve the second difficulty.

Let $k,n\geq 1$. We define a deterministic permutation $\psi_k$ of $\{1,\ldots,k\}$ by setting
\[\psi_k(l)=\begin{cases}
			\frac{k+1-l+1}{2}&\text{ if }k+1-l\text{ is odd},\\
			k+1-\frac{k+1-l}{2}&\text{ if }k+1-l\text{ is even},
		\end{cases}\]for all $1\leq l\leq k$. Since $E$ is countable, it is possible to choose a deterministic $\alpha\in(1/2,1)$ such that $\alpha^m(1+\alpha)/2\notin E$ and $\alpha^m\notin E$ for all $m\geq 1$ and such that $\max(1-\alpha,1/\alpha-1)\leq \min\left((k+1)/a_n,1/(k+1)\right)$. For all $m\geq 0$, we write $I_{2m}=\left(\alpha^m(1+\alpha)/2,\alpha^m\right]$ and $I_{2m+1}=\left(\alpha^{m+1},\alpha^m(1+\alpha)/2\right]$. Plus, we also set for all $x\in(0,1]$,
\[\varphi(x)=\begin{cases}
	x-\frac{\alpha^m}{2}&\text{ if }x\in I_{2m}\text{ with an integer }m,\\
	1-x+\frac{\alpha^{m+1}}{2}&\text{ if }x\in I_{2m+1}\text{ with an integer }m.
	\end{cases}\]
Note that there is at most one $1\leq j\leq k$ such that $j/(k+1)\in I_{2m}\cup I_{2m+1}$ because $1-\alpha\leq 1/(k+1)$. We denote by $b$ the parity function defined on the set of integers by $b(2m)=0$ and $b(2m+1)=1$. We define $\phi_{\frac{k+1}{a_n}}:[0,1]\to \mathcal{C}$ by setting
\[\phi_{\frac{k+1}{a_n}}(x)=\begin{cases}
	1-\varphi(x)&\text{ if }\exists j\in\{1,\ldots,k\}\text{ such that }j/(k+1)\in I_{2m+1-b(j)},\\
	\varphi(x)&\text{ otherwise },
	\end{cases}\]
for all $x\in I_{2m}\cup I_{2m+1}$ with an integer $m$, and $\phi_{\frac{k+1}{a_n}}(0)=0$. We emphasize that the notation $\phi_{\frac{k+1}{a_n}}$ does not lead to any inconsistency. Indeed, if $(k+1)/a_n=(l+1)/a_m$ then $a_n/a_m\in\mathbb{Q}$, so $n=m$ and $k=l$. It is clear that $\phi_{\frac{k+1}{a_n}}$ is c\`agl\`ad and surjective. The intervals $(I_m)_{m\geq 0}$ partition $(0,1]$ and $\phi_{\frac{k+1}{a_n}}$ is affine with slope $\pm 1$ on each one of them, so it preserves the Lebesgue measure on $[0,1]$. Finally, we check that for all $k,n\geq 1$,
\begin{equation}
\label{phi_vs_psi}
\sup_{x\in(0,1]}\left|\frac{2}{x}\delta\left(0,\phi_{\frac{k+1}{a_n}}(x)\right)-1\right|\leq \min\left(\frac{k+1}{a_n},1\right)\quad\text{ and }\quad\sup_{1\leq j\leq k}\delta\left(\phi_{\frac{k+1}{a_n}}\left(1-\frac{j}{k+1}\right),1-\frac{\psi_k(j)}{k+1}\right)\leq \frac{2}{k+1}.
\end{equation}

For all $n\geq 1$, we define a deterministic family of permutation $\Psi^n=(\psi_k^n)_{k\geq 1}$ by setting $\psi_k^n=\psi_k$ if $k+1< a_n\eta_n$ and $\psi_k^n=\mathsf{id}_{\{1,\ldots,k\}}$ if $k+1\geq a_n\eta_n$. Recall that the only limit point of the set $D$ is $0$, $D$ is bounded from above by $\varepsilon$, and $E$ is countable. Hence, we can give ourselves a deterministic strictly decreasing sequence $(\beta_p)_{p\geq 1}$ of points of $(0,\infty)\backslash E$ which tends to $0$, such that $\beta_1=\varepsilon$, and such that there is exactly one element of $D$ inside $[\beta_{p+1},\beta_p)$ for all $p\geq 1$. Then, we define a deterministic shuffle $\Phi^\varepsilon:\Delta>0\longmapsto \phi_\Delta^\varepsilon$ by setting $\phi_\Delta^\varepsilon(x)=x$ for all $x\in[0,1]$ if $\Delta\geq\beta_1=\varepsilon$, and by setting $\phi_\Delta^\varepsilon=\phi_{\frac{k+1}{a_n}}$ if $\Delta\in[\beta_{p+1},\beta_p)$ with $p\geq 1$, where $(k+1)/a_n$ is the unique point of $D$ also inside $[\beta_{p+1},\beta_p)$. This indeed verifies the conditions of Definition~\ref{shuffle}. By definition of $E$ and of the $\phi_{\frac{k+1}{a_n}}$, it holds $B_X\cap\mathrm{B}(\Phi^\varepsilon)=\emptyset$ almost surely. Theorems~\ref{intro_uni} and \ref{gh-continu} yield that almost surely, $2\cdot\lopt[a_n^{-1}w(\tau_n)]=\vernt[a_n^{-1}w(\tau_n)]\longrightarrow\vernt[f]$ for the pointed GHP topology, because $w(\tau_n)$ is PJG. Moreover, the Lebesgue measure on $[0,1]$ is invariant by $\phi_\Delta^\varepsilon$ and $\phi_\Delta^\varepsilon(0)=0$ for all $\Delta>0$, and
\begin{equation}
\label{K_for_discrete_looptree}
\sup_{\Delta>0}\sup_{x\in[0,1]}\left|\frac{2}{x}\delta\left(0,\phi_\Delta^\varepsilon(x)\right)-1\right|\leq 1.
\end{equation}
Thus, we can apply Lemma~\ref{lemma_phi_vs_psi}. Recall the constructions of $\Phi^\varepsilon$ and $\Psi^n$, and that $\eta_n\leq\varepsilon$. Let $u*(j)\in\tau_n$ with $j\geq 1$, we want to bound the corresponding term in the right-hand-side of the inequality provided by Lemma~\ref{lemma_phi_vs_psi}. If $k_{u}(\tau_n)+1\geq a_n\varepsilon$, then this term is equal to $0$ because $\phi_\Delta^\varepsilon(x)=x$ when $\Delta\geq\varepsilon$ and $\psi_k^n(j)=j$ when $k+1\geq a_n\eta_n$. If $a_n\eta_n\leq k_{u}(\tau_n)+1< a_n\varepsilon$, then we control the term with the inequality (\ref{K_for_discrete_looptree}). If $k_u(\tau_n)+1<a_n\eta_n$, then we use the inequality (\ref{phi_vs_psi}). Eventually, we obtain that
\begin{align*}
\mathtt{d}_{\mathrm{GHP}}^\bullet\left(\lop[\frac{1}{a_n}w(\Psi^n(\tau_n))],\lopt[\frac{1}{a_n}w(\tau_n)]\right)&\leq \max_{v\in\tau_n}\sum_{\substack{u\in\tau_n,j\geq 1\\ u*(j)\preceq v}}\left(0+\un_{\left\{\eta_n\leq \frac{k_u(\tau_n)+1}{a_n}<\varepsilon\right\}} 2\frac{k_u(\tau_n)+1-j}{a_n}+\frac{2}{a_n}\right)\\
&\leq \frac{2|\tau_n|}{a_n}+2\sup_{t\in[0,1]}\sum_{\substack{s\in[0,1]\\ \eta_n\leq \Delta_s(a_n^{-1}w(\tau_n))<\varepsilon}}x_s^t\left(\frac{1}{a_n}w(\tau_n)\right)\\
&\leq \frac{2|\tau_n|}{a_n}+2||J^{\eta_n}(a_n^{-1}w(\tau_n))-J^\varepsilon(a_n^{-1}w(\tau_n))||_\infty.
\end{align*}
The second inequality comes from the identity (\ref{x_v}), and the third follows from the expression (\ref{Jeps_def}) of $J^\varepsilon$. Then, the almost sure convergences $|\tau_n|/a_n\longrightarrow 0$, $2\cdot\lopt[a_n^{-1}w(\tau_n)]\longrightarrow\vernt[X]$, and (\ref{Jnu_Jeta}) yield that almost surely,
\begin{equation}
\label{GH3}
\limsup_{n\rightarrow\infty}\mathtt{d}_{\mathrm{GHP}}^\bullet\left(\lop[\frac{1}{a_n}w(\Psi^n(\tau_n))],\frac{1}{2}\cdot\vernt[X]\right)\leq2\left|\left|JX-J^{\varepsilon}X\right|\right|_\infty.
\end{equation}

We remind that $\phi_\Delta^\varepsilon(x)=x$ for all $x\in[0,1]$ whenever $\Delta\geq\varepsilon$. The identity (\ref{dl_Jeps}) from Theorem~\ref{d_J} then implies that $\dl[J^\varepsilon X]=\dlt[J^\varepsilon X]$. Let us set $Y=X-JX+J^{\varepsilon}X$. We know that $JY=J^{\varepsilon}Y$ from the point $(iii)$ of Theorem~\ref{J_prop}, thus $\dtree[Y-JY]=\dtree[X-JX]$ and $\dlt[JY]=\dl[JY]=\dl[J^\varepsilon X]$. Therefore, Theorem~\ref{d_J} gives $\dtree[Y]=\dtree[X]$ and $\dv[Y]=\dvt[Y]$. Eventually, we find
\[||\dvt[X]-\dv[Y]||_\infty= 2||\dlt[X]-\dlt[J^{\varepsilon}X]||_\infty\leq 4||JX-J^{\varepsilon}X||_\infty\quad\text{ and }\quad||\dv[X]-\dv[Y]||_\infty=2||\dl[X]-\dl[J^{\varepsilon}X]||_\infty\leq 4||JX-J^{\varepsilon}X||_\infty,\]
thanks to Corollary~\ref{propo}, which is properly used with the constant $K_{\Phi^\varepsilon}=1$ according to (\ref{K_for_discrete_looptree}). Then, we combine (\ref{GH3}) and (\ref{GHP_lips}) to obtain $\limsup\mathtt{d}_{\mathsf{GHP}}^\bullet\left(\lop[\frac{1}{a_n}w(\Psi^n(\tau_n))],\frac{1}{2}\cdot\vern[X]\right)\leq 10\left|\left|JX-J^{\varepsilon}X\right|\right|_\infty$ almost surely. The random plane tree $\tau_n$ is exchangeable so it has the same distribution as $\Psi^n(\tau_n)$. Furthermore, neither $\lop[a_n^{-1}w(\tau_n)]$ nor $\vern[X]$ depend on $\varepsilon$. Plus, $\varepsilon$ may be chosen arbitrarily small because $E$ is countable. Hence, we deduce that $\lop[a_n^{-1}w(\tau_n)]\convd\frac{1}{2}\cdot\vern[X]$ for the pointed GHP topology thanks to Proposition~\ref{cv_J} $(iii)$.
\end{proof}

\begin{remark}
In Theorem~\ref{inv_principle_PJG}, the assumption $|\tau_n|/a_n\longrightarrow 0$ may be removed if one directly knows the limit of $a_n^{-1}w(\tau_n)$. However, doing the same for Theorem~\ref{inv_principle_exch} is not possible because we also used $|\tau_n|/a_n\convd 0$ to obtain the estimate (\ref{GH3}). In fact, Kortchemski \& Richier~\cite[Theorem 2]{KR2020} and Marzouk~\cite[Theorem 7.5]{marzouk_similaire} provide examples that confirm that the latter assumption is not superfluous. As shown by Marzouk, when $|\tau_n|/a_n$ is not negligible, an unbalance between $\dl$ and $\dtree$ may occur so that the scaling limit of the looptrees is the quotient metric space induced by the pseudo-distance $a\dtree+2\dl$, with some $a>1$.\cq
\end{remark}

%
%

\begin{acks}[Acknowledgments]
This work was written under the successive supervision of Nicolas Broutin and Thomas Duquesne, whom the author thanks warmly. The author would like to express his gratitude to an anonymous referee for a detailed report that led to substantial improvements and simplifications of the presentation and proofs.
\end{acks}
%


\bibliographystyle{imsart-number} 
\bibliography{looptree_final}       


\end{document}